\pdfoutput=1
% RECOMMENDED %%%%%%%%%%%%%%%%%%%%%%%%%%%%%%%%%%%%%%%%%%%%%%%%%%%
\documentclass[11pt]{article}
\usepackage{authblk}
% choose options for [] as required from the list
% in the Reference Guide
\usepackage[toc,page]{appendix}
\usepackage{color}
\usepackage{helvet}         % selects\textbf{\textbf{•}} Helvetica as sans-serif font
\usepackage{courier}        % selects Courier as typewriter font
\usepackage{type1cm}        % activate if the above 3 fonts are
                            % not available on your system
%
\usepackage{makeidx}         % allows index generation
\usepackage{graphicx}        % standard LaTeX graphics tool
                             % when including figure files
\usepackage{multicol}        % used for the two-column index
\usepackage[bottom]{footmisc}% places footnotes at page bottom
\usepackage{amsmath}
\usepackage{amssymb}
\usepackage{bbold}
\usepackage{amsthm}
\usepackage{subcaption}
\usepackage{sidecap}
\usepackage{comment}
\usepackage[font=small]{caption}
\newtheorem{theorem}{Theorem}
\newtheorem{corollary}[theorem]{Corollary}

\newtheorem{lemma}[theorem]{Lemma}
\newtheorem*{conjecture}{Conjecture}
\newtheorem{proposition}[theorem]{Proposition}

\newtheorem{definition}[theorem]{Definition}
\usepackage[top=2cm, bottom=2cm, left=2cm, right=2cm]{geometry}
\numberwithin{theorem}{section}
\numberwithin{figure}{section}
\numberwithin{equation}{section}

\DeclareMathOperator{\dist}{dist}
\DeclareMathOperator{\SLE}{SLE}
\DeclareMathOperator{\hSLE}{hSLE}

\DeclareMathOperator{\GFF}{GFF}

\DeclareMathOperator{\free}{free}
\DeclareMathOperator{\hF}{{}_2F_1}
\DeclareMathOperator{\simple}{simple}
\DeclareMathOperator{\LP}{LP}
\DeclareMathOperator{\removal}{/}

% see the list of further useful packages
% in the Reference Guide

% see the list of further useful packages
% in the Reference Guide

%\makeindex             % used for the subject index
                       % please use the style svind.ist with
                       % your makeindex program

%%%%%%%%%%%%%%%%%%%%%%%%%%%%%%%%%%%%%%%%%%%%%%%%%%%%%%%%%%%%%%%%%%%%%%%%%%%%%%%%%%%%%%%%%

\begin{document}

\title{Hypergeometric SLE: \\Conformal Markov Characterization and Applications}

\author{Hao Wu\thanks{H.W. is supported by the startup funding no. 042-53331001017 of Tsinghua
University.
Part of this work was done while H.W. was at Geneva University under the support of the NCCR/SwissMAP, the ERC AG COMPASP, the Swiss NSF. 
Email: hao.wu.proba@gmail.com}}
\affil{Yau Mathematical Sciences Center, Tsinghua University, China}

\date{}
\maketitle
\vspace{-1cm}
\begin{center}
\begin{minipage}{0.8\textwidth}
\abstract{This article pertains to the classification of pairs of simple random curves with conformal Markov property and symmetry. We give the complete classification of such curves: conformal Markov property and symmetry single out a two-parameter family of random curves---Hypergeometric SLE---denoted by $\hSLE_{\kappa}(\nu)$ for $\kappa\in (0,4]$ and $\nu<\kappa-6$. The proof relies crucially on Dub\'{e}dat's commutation relation \cite{DubedatCommutationSLE} and a uniqueness result proved in \cite{MillerSheffieldIG2}.
The classification indicates that hypergeometric SLE is the only possible scaling limit of the interfaces in critical lattice models (conjectured or proved to be conformal invariant) in topological rectangles with alternating boundary conditions. 

We also prove various properties of $\hSLE$: continuity, reversibility, target-independence, and conditional law characterization. As by-products, we give two applications of these properties. The first one is about the critical Ising interfaces. We prove the convergence of the Ising interface in rectangles with alternating boundary conditions. This result was first proved by Izyurov\cite{IzyurovObservableFree}, but our proof is new which is based on the properties of $\hSLE$. The second application is the existence of the so-called pure partition functions of multiple SLEs. Such existence was proved for 
$\kappa\in (0,8)\setminus \mathbb{Q}$ in \cite{KytolaPeltolaPurePartitionFunctions}, and it was later proved for $\kappa\in (0,4]$ in \cite{PeltolaWuGlobalMultipleSLEs}. We give a new proof of the existence for $\kappa\in (0,6]$ using the properties of $\hSLE$. 
\smallbreak
\noindent\textbf{Keywords}: Hypergeometric SLE, conformal Markov property, critical planar Ising interface, pure partition functions of multiple SLEs.}
\end{minipage}
\end{center}

\newpage
\tableofcontents
\newpage
\newcommand{\eps}{\epsilon}
\newcommand{\ov}{\overline}
\newcommand{\U}{\mathbb{U}}
\newcommand{\T}{\mathbb{T}}
\newcommand{\HH}{\mathbb{H}}
\newcommand{\LA}{\mathcal{A}}
\newcommand{\LB}{\mathcal{B}}
\newcommand{\LC}{\mathcal{C}}
\newcommand{\LD}{\mathcal{D}}
\newcommand{\LF}{\mathcal{F}}
\newcommand{\LK}{\mathcal{K}}
\newcommand{\LE}{\mathcal{E}}
\newcommand{\LG}{\mathcal{G}}
\newcommand{\LL}{\mathcal{L}}
\newcommand{\LM}{\mathcal{M}}
\newcommand{\LQ}{\mathcal{Q}}
\newcommand{\CP}{\mathcal{P}}
\newcommand{\LT}{\mathcal{T}}
\newcommand{\LS}{\mathcal{S}}
\newcommand{\LU}{\mathcal{U}}
\newcommand{\LV}{\mathcal{V}}
\newcommand{\PartF}{\mathcal{Z}}
\newcommand{\LH}{\mathcal{H}}
\newcommand{\R}{\mathbb{R}}
\newcommand{\C}{\mathbb{C}}
\newcommand{\N}{\mathbb{N}}
\newcommand{\Z}{\mathbb{Z}}
\newcommand{\E}{\mathbb{E}}
\newcommand{\PP}{\mathbb{P}}
\newcommand{\QQ}{\mathbb{Q}}
\newcommand{\A}{\mathbb{A}}
\newcommand{\one}{\mathbb{1}}
\newcommand{\bn}{\mathbf{n}}
\newcommand{\MR}{MR}
\newcommand{\cond}{\,|\,}
\newcommand{\la}{\langle}
\newcommand{\ra}{\rangle}
\newcommand{\tree}{\Upsilon}
\global\long\def\chamber{\mathfrak{X}}

%%%
\section{Introduction}
Conformal invariance and critical phenomena in two-dimensional lattice models play the central role in mathematical physics in the last few decades. We take Ising model as an example (see details in Section~\ref{sec::ising}). Suppose $\Omega$ is a simply connected domain and $x,y$ are distinct boundary points. When one considers the critical Ising model in $\Omega\cap\Z^2$ with Dobrushin boundary conditions: $\oplus$ along the boundary arc $(xy)$ and $\ominus$ along the boundary arc $(yx)$, an interface from $x$ to $y$ appears naturally which separates $\oplus$-spin from $\ominus$-spin. The scaling limit of the interface, if exists, is believed to satisfy conformal invariance and domain Markov property. We call the combination of the two as \textit{conformal Markov property}. 
Thus, to understand the scaling limit of interfaces in critical lattice model, one needs to understand random curves with conformal Markov property. 

In \cite{SchrammFirstSLE}, O. Schramm introduced $\SLE$ which is a random growth process in simply connected domain starting from one boundary point to another boundary point. This is a one-parameter family of random curves, denoted by $\SLE_{\kappa}$ with $\kappa\ge 0$. This family is the only one with conformal Markov property, and is conjectured to be the scaling limits of interface in critical models. Since its introduction, this conjecture has been rigorously proved for several models: percolation~\cite{SmirnovPercolationConformalInvariance, CamiaNewmanPercolation},
loop-erased random walk and uniform spanning tree 
\cite{LawlerSchrammWernerLERWUST}, 
level lines of the discrete Gaussian free 
field~\cite{SchrammSheffieldDiscreteGFF, SchrammSheffieldContinuumGFF}, and
the critical Ising and FK-Ising models~\cite{ChelkakSmirnovIsing, CDCHKSConvergenceIsingSLE}. 

$\SLE$ process corresponds to the scaling limit of interface in critical model with Dobrushin boundary condition. It is natural to consider critical model with more complicate boundary conditions. In this article, we focus on the alternating boundary condition in topological rectangles (\textit{quads} for short). We take Ising model as an example again. Suppose $\Omega$ is a simply connected domain and $x^R, y^R, y^L, x^L$ are four distinct boundary points in counterclockwise order. Consider critical Ising model in $\Omega\cap\Z^2$ with alternating boundary condition: $\oplus$ along the boundary arcs $(x^Ry^R)$ and $(y^Lx^L)$, and $\ominus$ along the arcs $(x^Lx^R)$ and $(y^Ry^L)$. With this boundary condition, a pair of interfaces appears naturally. This pair of interfaces connects between the four points $x^R, y^R, y^L, x^L$ and the two interfaces can not cross, see Figure~\ref{fig::ising_pair}. The scaling limit of the pair of interfaces, if exists, should satisfy conformal Markov property (see Definition~\ref{def::CMP_slepair}). 
This article concerns probability measures on pairs of simple curves with conformal Markov property, and they should describe scaling limits of pairs of interfaces in critical lattice model with alternating boundary condition in quads. 

\begin{figure}[ht!]
\begin{center}
\includegraphics[width=0.7\textwidth]{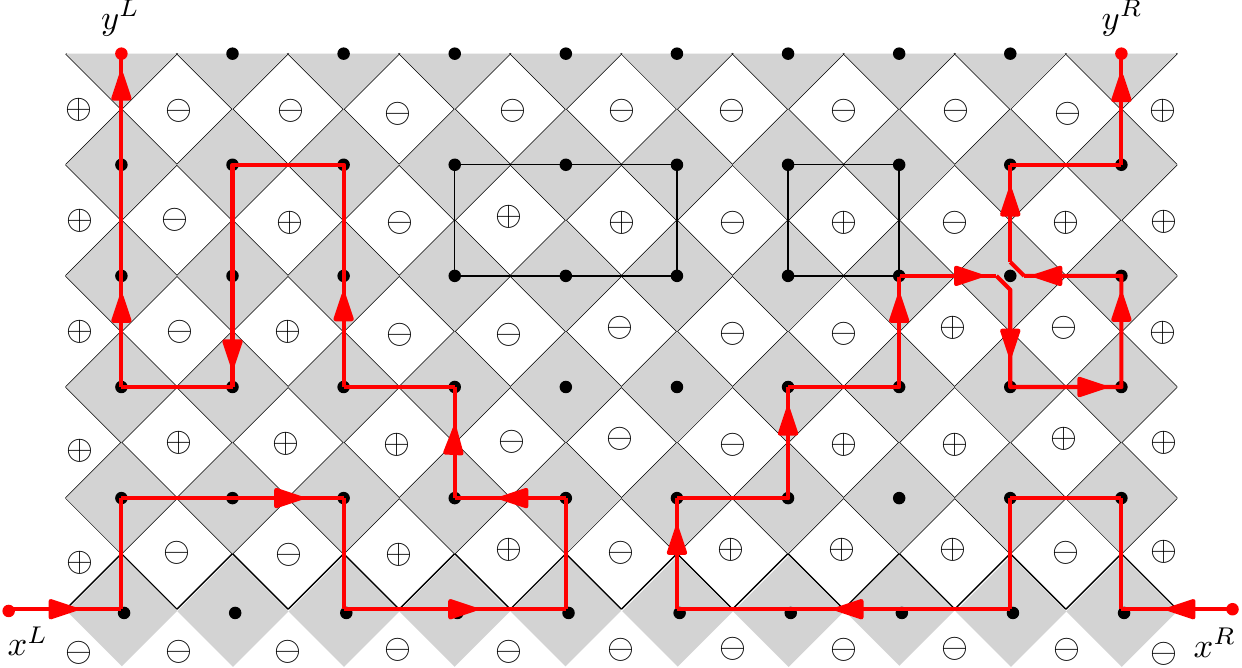}
\end{center}
\caption{\label{fig::ising_pair} The Ising interface with alternating boundary condition. }
\end{figure}

In the case of Dobrushin boundary condition, there are two boundary points, and conformal Markov property determines the one-parameter family of random curves $\SLE_{\kappa}$. However, in the case of alternating boundary conditions in quads, there are four boundary points, and conformal Markov property is not sufficient to naturally single out random processes. We go back to the critical Ising model. As described before, there is a pair of interfaces when the boundary condition is alternating. The scaling limit of such pair should satisfy conformal Markov property; at the same time, it is clear that the pair of curves also satisfy a particular symmetry (see Definition~\ref{def::sym}). To understand the scaling limit of such pair, it is then natural to require the symmetry as well as conformal Markov property.

It turns out that the combination of conformal Markov property and symmetry determines a two-parameter family of pairs of curves. These curves are \textit{hypergeometric SLE}s. 

\subsection{Hypergeometric SLE}

Hypergeometric SLEs is a two-parameter family of random curves in quad. The two parameters are $\kappa\in (0,8)$ and $\nu\in\R$, and we denote it by $\hSLE_{\kappa}(\nu)$. It comes naturally as the scaling limit of interfaces in critical planar models with alternating boundary condition in quad, and it is conformally invariant.  
We will give definition of this process in Section~\ref{sec::hypersle}, and the main theorem of Section~\ref{sec::hypersle} is continuity and reversibility of hypergeometric SLEs. 

\begin{theorem} \label{thm::hyperSLE_reversibility}
Fix $\kappa\in (0,8), \nu>(-4)\vee(\kappa/2-6)$, and $x_1<x_2<x_3<x_4$. Let $\eta$ be the $\hSLE_{\kappa}(\nu)$ in $\HH$ from $x_1$ to $x_4$ with marked points $(x_2, x_3)$. The process $\eta$ is almost surely generated by a continuous transient curve. Moreover, the process $\eta$ enjoys reversibility for $\nu\ge\kappa/2-4$: the time reversal of $\eta$ is the $\hSLE_{\kappa}(\nu)$ in $\HH$ from $x_4$ to $x_1$ with marked points $(x_3, x_2)$.  
\end{theorem}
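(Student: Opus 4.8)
The plan is to prove continuity and transience first, then deduce reversibility by identifying the time-reversal as another hypergeometric SLE.
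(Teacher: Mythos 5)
Your proposal is not a proof but a restatement of the theorem as a to-do list: ``prove continuity and transience, then deduce reversibility by identifying the time-reversal.'' Neither step is supplied with an argument, and both steps are where the entire difficulty lies. For continuity, the issue is that the driving SDE of $\hSLE_{\kappa}(\nu)$ a priori degenerates at the swallowing time $T_y$ of the marked point $x_3$ (in the chordal normalization, of $y$): one needs to control the process up to and including $T_y$ and then past it. The paper does this by a Girsanov comparison with $\SLE_{\kappa}(\nu+2,\kappa-6-\nu)$ on suitable stopped intervals, a change of coordinates (capacity seen from $y$) reducing the behavior near $T_y$ to that of an $\SLE_{\kappa}(\kappa-6;\nu+2)$, and finally the observation that for $\kappa\in(4,8)$ the hypergeometric drift vanishes as $Z_t\to 1$, so the process continues as a standard $\SLE_{\kappa}$ after $T_y$. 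None of this (or any substitute) appears in your plan.

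For reversibility, ``identifying the time-reversal as another hypergeometric SLE'' is precisely the statement to be proved, and a direct identification (say, by computing the law of the reversed driving function) is out of reach by elementary means --- time-reversal results for SLE variants are deep theorems (Zhan, Miller--Sheffield). The paper circumvents a direct computation: it shows (Proposition~\ref{prop::hypersle_mart}) that for $\nu\ge\kappa/2-4$ the local martingale $M_t$ relating $\hSLE_{\kappa}(\nu)$ to chordal $\SLE_{\kappa}$ is uniformly integrable, with terminal value
\[
M_{\infty}=\bigl(H_D(x,y)\bigr)^b\,\one_{\{\eta\cap(x,y)=\emptyset\}},
\]
a conformally invariant functional of the whole trace that is symmetric under reversing the curve. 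Reversibility of $\hSLE_{\kappa}(\nu)$ then follows from the known reversibility of $\SLE_{\kappa}$ together with this symmetry of the Radon--Nikodym weight. Your plan gives no mechanism playing this role, nor does it explain why the hypothesis $\nu\ge\kappa/2-4$ should enter (it is exactly what guarantees the curve avoids $[x,y]$, hence the uniform integrability; below this threshold the reversibility is left open in the paper). As it stands, the proposal has no mathematical content to evaluate beyond the theorem statement itself.
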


Here we briefly summarize the relation between $\hSLE$ and $\SLE_{\kappa}$ (or  $\SLE_{\kappa}(\rho)$) process. Fix $x_1=0<x_2<x_3<x_4=\infty$. 
Suppose $\eta$ is $\hSLE_{\kappa}(\nu)$ in $\HH$ from $0$ to $\infty$ with marked points $(x_2, x_3)$. 
\begin{itemize}
\item When $\nu=-2$, the law of $\eta$ equals $\SLE_{\kappa}$.
\item When $\kappa=4$, the law of $\eta$ equals $\SLE_4(\nu+2, -\nu-2)$ with force points $(x_2, x_3)$.
\item When $x_3\to \infty$, the law of $\eta$ converges weakly to the law of $\SLE_{\kappa}(\nu+2)$ with force point $x_2$. See Lemma~\ref{lem::hSLE_degenerate}. 
\item When $\kappa\in (4,8)$ and $\nu=\kappa-6$, the law of $\eta$ equals the law of $\SLE_{\kappa}$ conditioned to avoid the interval $(x_2, x_3)$. See Proposition~\ref{prop::hSLE_conditioned}. 
\end{itemize} 
From these relations, we see that $\hSLE_{\kappa}(\nu)$ is a generalization of $\SLE_{\kappa}(\rho)$ process. In general, the driving function of $\hSLE$ has a drift term which involves a hypergeometric function. When $\kappa=4$, the hypergeometric term becomes zero, and the process coincides with $\SLE_4(\nu+2, -\nu-2)$ process.

\subsection{Conformal Markov Characterization}
We denote by $\LQ$ the collection of all quads, and for each quad $q=(\Omega; x^R, y^R, y^L, x^L)$, we denote by $X_0(\Omega; x^R, y^R, y^L, x^L)$ the collection of pairs of disjoint simple curves $(\eta^L;\eta^R)$ such that $\eta^R$ connects $x^R$ and $y^R$ and $\eta^L$ connects $x^L$ and $y^L$. 
The following definitions concern confomral Markov property and symmetry for pairs of simple curves. See also Figure~\ref{fig::def_CMP} for an illustration. 

\begin{definition}\label{def::CMP_slepair}
Suppose $(\PP_q, q\in\LQ)$ is a family of probability measures on pairs of disjoint simple curves $(\eta^L; \eta^R)\in X_0(\Omega; x^R, y^R, y^L, x^L)$. We say that $(\PP_q, q\in\LQ)$ satisfies conformal Markov property (CMP) if it satisfies the following two properties.
\begin{itemize}
\item Conformal invariance. Suppose $q=(\Omega; x^R, y^R, y^L, x^L), \tilde{q}=(\tilde{\Omega}; \tilde{x}^R, \tilde{y}^R, \tilde{y}^L, \tilde{x}^L)\in\LQ$, and $\psi: \Omega\to \tilde{\Omega}$ is a conformal map with $\psi(x^R)=\tilde{x}^R, \psi(y^R)=\tilde{y}^R, \psi(y^L)=\tilde{y}^L, \psi(x^L)=\tilde{x}^L$. Then for $(\eta^L; \eta^R)\sim \PP_q$, we have $(\psi(\eta^L); \psi(\eta^R))\sim \PP_{\tilde{q}}$. 
\item Domain Markov property. Suppose $(\eta^L;\eta^R)\sim\PP_q$, then for every $\eta^L$-stopping time $\tau^L$ and $\eta^R$-stopping time $\tau^R$, the conditional law of $(\eta^L|_{t\ge \tau^L}; \eta^R|_{t\ge \tau^R})$ given $\eta^L[0,\tau^L]$ and $\eta^R[0,\tau^R]$ is the same as $\PP_{q_{\tau^L, \tau^R}}$ where \[q_{\tau^L, \tau^R}=(\Omega\setminus (\eta^L[0,\tau^L]\cup\eta^R[0,\tau^R]); \eta^R(\tau^R), y^R, y^L, \eta^L(\tau^L)).\] 
\end{itemize}
\end{definition}

\begin{figure}[ht!]
\begin{center}
\includegraphics[width=0.8\textwidth]{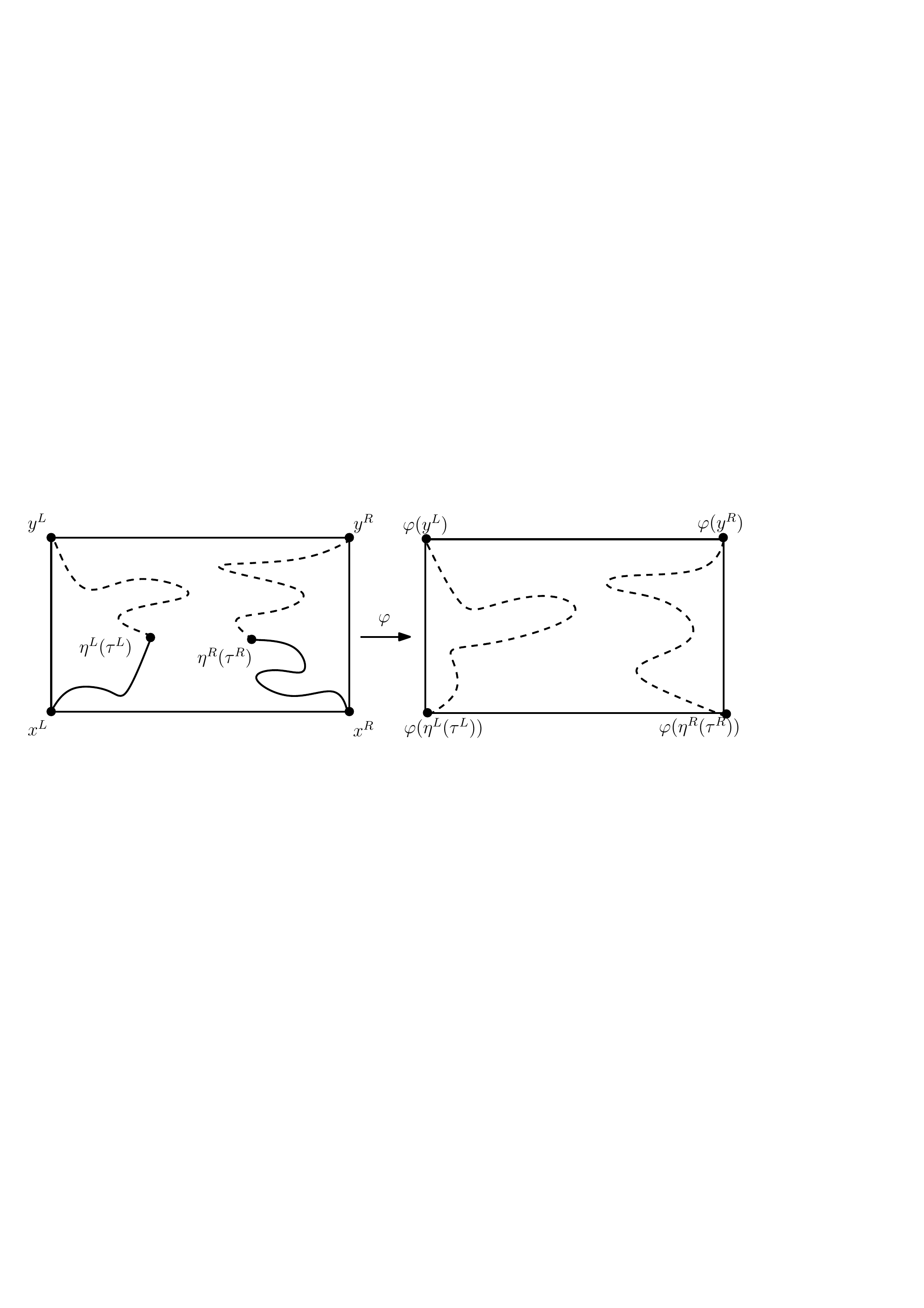}
\end{center}
\caption{\label{fig::def_CMP} Suppose the pair $(\eta^L;\eta^R)$ satisfies CMP. For any $\eta^L$-stopping time $\tau^L$ and any $\eta^R$-stopping time $\tau^R$, let $\varphi$ be a conformal map from $\Omega\setminus(\eta^L[0,\tau^L]\cap\eta^R[0,\tau^R])$ onto a quad $\tilde{q}=(\tilde{\Omega}; \tilde{x}^R, \tilde{y}^R, \tilde{y}^L, \tilde{x}^L)$ such that $\varphi(\eta^R(\tau^R))=\tilde{x}^R, \varphi(y^R)=\tilde{y}^R, \varphi(y^L)=\tilde{y}^L, \varphi(\eta^L(\tau^L))=\tilde{x}^L$. Then the conditional law of $(\varphi(\eta^L); \varphi(\eta^R))$ given $\eta^L[0,\tau^L]\cup\eta^R[0,\tau^R]$ is the same as $\PP_{\tilde{q}}$. }
\end{figure}

\begin{definition}\label{def::sym}
Suppose $(\PP_q, q\in\LQ)$ is a family of probability measures on pairs of disjoint simple curves $(\eta^L; \eta^R)\in X_0(\Omega; x^R, y^R, y^L, x^L)$. We say that $(\PP_q, q\in\LQ)$ satisfies symmetry if for all $q=(\Omega; x^R, y^R, y^L, x^L)\in\LQ$ the following is true. Suppose $(\eta^L;\eta^R)\sim\PP_q$, and $\psi: \Omega\to \Omega$ is the anti-conformal map which swaps $x^L, y^L$ and $x^R, y^R$, then $(\psi(\eta^R); \psi(\eta^L))\sim\PP_q$. 
\end{definition}

It turns out that the combination of CMP and the symmetry determines a two-parameter family of pairs of curves---$\hSLE_{\kappa}(\nu)$. In Theorem~\ref{thm::SLEpair_CMP_SYM}, we consider pairs of random curves with CMP and the symmetry, and we also require ``Condition C1". This is a technical requirement concerning certain regularity of the curves and its definition is in Section~\ref{subsec::pre_cvg_curves}.

\begin{theorem}\label{thm::SLEpair_CMP_SYM}
Suppose $(\PP_q, q\in\LQ)$ satisfies CMP in Definition~\ref{def::CMP_slepair}, the symmetry in Definition~\ref{def::sym} and Condition C1. Then there exist $\kappa\in (0,4]$ and $\nu<\kappa-6$ such that, for $q=(\Omega; x^R, y^R, y^L, x^L)\in\LQ$ and $(\eta^L;\eta^R)\sim\PP_q$, the marginal law of $\eta^R$ (up to the first hitting time of $[y^Ry^L]$) equals $\hSLE_{\kappa}(\nu)$ in $\Omega$ from $x^R$ to $x^L$ with marked points $(y^R, y^L)$ conditioned to hit $[y^Ry^L]$ (up to the first hitting time of $[y^Ry^L]$). 
\end{theorem}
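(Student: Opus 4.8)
The plan is to reduce the problem of characterizing the pair $(\eta^L;\eta^R)$ to a statement about a single driving function system, and then to identify that system with the one defining $\hSLE_\kappa(\nu)$. First I would set up coordinates: map the quad to $(\HH; x_1, x_2, x_3, x_4)$ with $x_1 = x^R$, $x_4 = x^L$ (and $x_2 = y^R$, $x_3 = y^L$), and parametrize each of the two curves by half-plane capacity, encoding the configuration at a pair of stopping times $(\tau^L,\tau^R)$ by the images of the four marked points under the uniformizing map of the complement of the two grown hulls. The key structural input is the domain Markov property of Definition~\ref{def::CMP_slepair}, which is stated for \emph{independent} stopping times of the two curves; this is precisely the hypothesis that lets one invoke Dub\'edat's commutation relation. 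Following \cite{DubedatCommutationSLE}, the requirement that the two curves can be grown in either order with consistent conditional laws forces each marginal to be an $\SLE_\kappa(\underline\rho)$-type process whose driving function solves a system of SDEs, with a common $\kappa$ for both curves and a partition function $\PartF$ governing the drift through $\kappa\,\partial\log\PartF$.

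Next I would extract the admissible drifts. Commutation together with CMP yields that the Loewner drift of $\eta^R$ is $\kappa\,\partial_1\log\PartF(x_1,x_2,x_3,x_4)$ where $\PartF$ satisfies a second-order BPZ/null-vector PDE in each variable together with the three translation/scaling/special-conformal covariance (Ward) relations coming from conformal invariance. The symmetry hypothesis of Definition~\ref{def::sym} imposes that $\PartF$ is invariant under the swap $x_1\leftrightarrow x_4$, $x_2\leftrightarrow x_3$ that exchanges the roles of $\eta^L$ and $\eta^R$; this extra symmetry is what cuts the a priori larger solution space of the commutation system down to a two-parameter family. Reducing the PDE system to the cross-ratio $z=\frac{(x_2-x_1)(x_4-x_3)}{(x_3-x_1)(x_4-x_2)}$ turns the null-vector equation into a hypergeometric ODE; its solutions are spanned by ${}_2F_1$ functions, and the symmetry plus the requirement that $\eta^R$ be a simple curve hitting $[y^Ry^L]$ (so that the relevant boundary exponents and the sign of the drift are admissible) selects the particular solution that defines $\hSLE_\kappa(\nu)$. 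Matching the indicial exponents of the hypergeometric ODE to the force-point weights yields the constraints $\kappa\in(0,4]$ and $\nu<\kappa-6$.

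The hard part will be the \emph{uniqueness} step: commutation together with Dub\'edat's analysis only tells us that each marginal is \emph{some} $\SLE_\kappa(\underline\rho)$ process with the prescribed null-vector/covariance drift, but one must rule out the possibility of additional randomness or of a degenerate/non-simple behavior, and one must show that the marginal law of $\eta^R$ alone is determined (rather than merely the joint structure). This is where I would invoke the uniqueness result of \cite{MillerSheffieldIG2}: once the driving function of $\eta^R$ is shown to be a semimartingale with the correct quadratic variation (giving $\kappa$) and the correct drift (from $\log\PartF$), the imaginary-geometry uniqueness theorem identifies the curve pathwise and pins down its law, with no extra degrees of freedom. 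The remaining work is bookkeeping: verifying that $\kappa\le 4$ is forced by the two curves being simple and disjoint (for $\kappa>4$ self- or mutual touching would occur), checking that the admissible range of $\nu$ is exactly $\nu<\kappa-6$ from the integrability/boundary-exponent constraints, and confirming that the stated law is conditioned on $\eta^R$ actually hitting $[y^Ry^L]$ by comparing with the degenerate and conditioned special cases recorded after Theorem~\ref{thm::hyperSLE_reversibility}.

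Finally I would assemble these pieces in order: (i) coordinate normalization and capacity parametrization; (ii) application of the commutation relation under the independent-stopping-time DMP to obtain the coupled SDE/PDE system with a single $\kappa$ and partition function $\PartF$; (iii) imposition of conformal covariance (the Ward relations) and the swap symmetry to reduce $\PartF$ to a one-variable hypergeometric ODE and extract the two-parameter family; (iv) solving the indicial equation to obtain the parameter constraints $\kappa\in(0,4]$, $\nu<\kappa-6$; and (v) invoking \cite{MillerSheffieldIG2} to upgrade the driving-function characterization of the marginal of $\eta^R$ to an identification of its law with $\hSLE_\kappa(\nu)$ conditioned to hit $[y^Ry^L]$, thereby completing the proof.
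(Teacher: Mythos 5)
There is a genuine gap at the foundation of your argument: you invoke Dub\'edat's commutation relation directly from the domain Markov property of the pair, but the commutation relation does not apply at that level of generality. Commutation is a statement about \emph{local} SLEs, and one of the defining axioms of a local SLE (see the ``absolute continuity of the marginals'' axiom in Section~4.3 of the paper) is that each curve is a Loewner chain whose driving function is already of the form $\sqrt{\kappa}B_t+\text{(smooth drift in the marked points)}$. The hypotheses of the theorem --- CMP, symmetry, Condition C1 --- give no semimartingale structure whatsoever; producing it is the whole difficulty. So steps (ii)--(iii) of your outline assume precisely what must be proved, and the later appeal to \cite{MillerSheffieldIG2} cannot repair this, because you use it only to ``upgrade'' a driving-function description that you never legitimately obtained. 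You have also miscast which results of \cite{MillerSheffieldIG2} are relevant: the input is not a pathwise identification from a known driving function, but (a) the conformal Markov characterization of $\SLE_\kappa(\rho)$ (Theorem~2.6 in the paper, Theorem~1.4 of \cite{MillerSheffieldIG2}), which converts an abstract CMP hypothesis into an SLE law, and (b) the resampling/Markov-chain uniqueness argument (after Theorem~4.1 of \cite{MillerSheffieldIG2}) for a pair of curves with prescribed conditional laws.

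The paper's route avoids your gap by applying the characterization theorem to the \emph{conditional} law rather than attempting commutation on the pair: CMP of the pair implies that the conditional law of $\eta^L$ given $\eta^R$ satisfies the three-point CMP of Definition~2.5 in $\Omega^R$ with marked point $w^R$, and Condition C1 passes to this conditional law; Theorem~2.6 then forces it to be $\SLE_{\kappa}(\rho)$, with $\kappa\in(0,4]$ and $\rho>-2$ coming from simpleness --- no PDE analysis or indicial equations needed. The symmetry hypothesis is used only to conclude that both conditional laws carry the same $(\kappa,\rho)$, not to cut down a solution space of partition functions. This identifies $\PP_q$ with the measure $\QQ_q(\kappa,\rho)$ of Proposition~4.2, whose uniqueness is the resampling argument and whose existence is where commutation (Corollary~4.10) legitimately enters: there one \emph{constructs} the pair from $\hSLE$ curves, which are local SLEs by fiat, so the axioms of commutation hold. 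The identification of the marginal of $\eta^R$ as $\hSLE_\kappa(\nu)$ from $x^R$ to $x^L$ with $\nu=\kappa-8-\rho<\kappa-6$, \emph{conditioned} to hit $[y^Ry^L]$, is built into that construction; it is not something recoverable by ``comparing with degenerate special cases,'' and your outline leaves both the value of $\nu$ and the conditioning unexplained.
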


Definition~\ref{def::sym} describes the left-right symmetry. It is also natural to think about top-bottom symmetry, which we call reversibility in Definition~\ref{def::rev} to distinguish from the symmetry in Definition~\ref{def::sym}. The combination of CMP, the symmetry and the reversibility singles out a one-parameter family of pairs of curves, see Corollary~\ref{cor::SLEpair_CMP_REV}.

\begin{definition}\label{def::rev}
Suppose $(\PP_q, q\in\LQ)$ is a family of probability measures on pairs of disjoint simple curves $(\eta^L; \eta^R)\in X_0(\Omega; x^R, y^R, y^L, x^L)$. We say that $(\PP_q, q\in\LQ)$ satisfies reversibility if for all $q=(\Omega; x^R, y^R, y^L, x^L)\in\LQ$ the following is true. Suppose $(\eta^L;\eta^R)\sim\PP_q$, and $\psi: \Omega\to \Omega$ is the anti-conformal map which swaps $x^L, x^R$ and $y^L, y^R$, then $(\psi(\eta^R); \psi(\eta^L))\sim\PP_q$.
\end{definition}

\begin{corollary}\label{cor::SLEpair_CMP_REV}
Suppose $(\PP_q, q\in\LQ)$ satisfies CMP in Definition~\ref{def::CMP_slepair}, the symmetry in Definition~\ref{def::sym}, the reversibility in Definition~\ref{def::rev} and Condition C1. Then there exists $\kappa\in (0,4]$ such that, for any $q=(\Omega; x^R, y^R, y^L, x^L)\in\LQ$ and $(\eta^L;\eta^R)\sim\PP_q$, the marginal law of $\eta^R$ equals $\hSLE_{\kappa}$ in $\Omega$ from $x^R$ to $y^R$ with marked points $(x^L, y^L)$.
\end{corollary}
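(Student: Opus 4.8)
The plan is to deduce the corollary from Theorem~\ref{thm::SLEpair_CMP_SYM}, using the extra reversibility hypothesis only to pin down the parameter $\nu$ and to upgrade the description of $\eta^R$ to its whole trace. Since $(\PP_q,q\in\LQ)$ still satisfies CMP, the symmetry of Definition~\ref{def::sym} and Condition C1, Theorem~\ref{thm::SLEpair_CMP_SYM} applies and yields $\kappa\in(0,4]$ and $\nu<\kappa-6$ such that, for $(\eta^L;\eta^R)\sim\PP_q$, the marginal of $\eta^R$ run until it first hits $[y^Ry^L]$ is that of an $\hSLE_\kappa(\nu)$ from $x^R$ to $x^L$ with marked points $(y^R,y^L)$, conditioned to hit $[y^Ry^L]$. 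The remaining task is to eliminate $\nu$ and to re-express this law as an $\hSLE_\kappa$ aimed at the true endpoint $y^R$.

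First I would extract a reversibility statement for the single curve $\eta^R$. The symmetry of Definition~\ref{def::sym} and the reversibility of Definition~\ref{def::rev} are two distinct anti-conformal involutions of the quad, and together with the identity they generate the Klein four-group of automorphisms fixing $\{x^R,y^R,y^L,x^L\}$ as a set. Exactly one nontrivial element of this group fixes the unordered endpoint pair of each curve: the involution $\rho$ that interchanges $x^R\leftrightarrow y^R$ and $x^L\leftrightarrow y^L$. Writing out the invariance of $\PP_q$ under $\rho$ and reading it on the right marginal gives $\rho(\eta^R)\overset{d}{=}\eta^R$; since $\rho$ maps $\eta^R$ to itself with reversed orientation, this says precisely that the marginal law of $\eta^R$ is reversible, with the two marked points $x^L,y^L$ interchanged by $\rho$.

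Next I would combine this self-reversibility with the analytic properties of $\hSLE$. Using target-independence together with the conditional-law characterization of $\hSLE$, the initial segment produced by Theorem~\ref{thm::SLEpair_CMP_SYM} --- described as targeting $x^L$ and conditioned to hit $[y^Ry^L]$ --- can be re-expressed as an $\hSLE_\kappa(\nu)$ aimed directly at its terminal point $y^R$, with $x^L,y^L$ now playing the role of marked points lying beyond the target; this simultaneously removes the conditioning and describes $\eta^R$ on its whole lifespan. In this target-$y^R$ form the reversibility of $\hSLE$ from Theorem~\ref{thm::hyperSLE_reversibility} identifies the time-reversal of $\eta^R$ with an $\hSLE_\kappa(\nu)$ from $y^R$ to $x^R$ with the marked points transported by $\rho$. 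Imposing the equality $\rho(\eta^R)\overset{d}{=}\eta^R$ from the previous step then becomes a single algebraic constraint --- the invariance of the hypergeometric partition function under the interchange of the starting point and the target --- which I expect to have a unique admissible solution $\nu=\nu^\ast(\kappa)$. Declaring $\hSLE_\kappa$ to be the corresponding member, aimed at $y^R$ with marked points $(x^L,y^L)$, gives the one-parameter family in the statement.

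The main obstacle is the third step. Carrying out the target change for $\hSLE$ rigorously is delicate: one must convert the conditioning on hitting $[y^Ry^L]$ into the marked-point data of a curve aimed at $y^R$, and verify that the resulting process is genuinely an unconditioned $\hSLE_\kappa(\nu')$ that is continuous up to and including $y^R$, so that the identification is an equality of laws of full curves and not merely of initial segments. A further delicate point is that the admissible range $\nu<\kappa-6$ of Theorem~\ref{thm::SLEpair_CMP_SYM} and the range $\nu\ge\kappa/2-4$ in which Theorem~\ref{thm::hyperSLE_reversibility} grants reversibility are essentially disjoint for $\kappa<4$; this forces the reversibility argument to be applied after the target change, in the configuration aimed at $y^R$, and it is the consistency of the two descriptions of $\eta^R$ (aimed at $x^L$ and at $y^R$) that ultimately selects the unique $\nu^\ast(\kappa)$.
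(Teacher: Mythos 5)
Your first two steps are sound: Theorem~\ref{thm::SLEpair_CMP_SYM} applies verbatim, and Definition~\ref{def::rev} does give that the marginal law of $\eta^R$ is invariant under the anti-conformal involution exchanging $x^R\leftrightarrow y^R$ and $x^L\leftrightarrow y^L$. The gap is in your third step, and it is twofold. First, the target-change you invoke does not exist at the generality you need: Proposition~\ref{prop::hsle_change_target} converts a \emph{conditioned} $\hSLE_{\kappa}(\nu)$ aimed at $x^L$ into an unconditioned $\hSLE_{\kappa}$ aimed at $y^R$ only for the single value $\nu=\kappa-8$ (i.e.\ $\rho=0$), and only as an identity of initial segments up to the swallowing time of $y^R$. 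For a general $\nu<\kappa-6$ produced by Theorem~\ref{thm::SLEpair_CMP_SYM}, the full curve $\eta^R$ is not known (and should not be expected) to be any $\hSLE$ aimed at $y^R$: by Proposition~\ref{prop::slepair_cmp_sym}, given $\eta^L$ the whole of $\eta^R$ is an $\SLE_{\kappa}(\rho)$ with a \emph{random} force point $w^L$ determined by $\eta^L$, and for $\rho\neq 0$ nothing identifies the resulting marginal with a member of the hypergeometric family, so there is no parameter $\nu'$ to solve for. Second, even granting such a representation $\hSLE_{\kappa}(\nu')$, your ``single algebraic constraint with a unique admissible solution'' cannot work: the constraint $\psi(\eta^R)\overset{d}{=}\eta^R$ says exactly that $\hSLE_{\kappa}(\nu')$ is reversible, and by Theorem~\ref{thm::hyperSLE_reversibility} reversibility holds for the whole range $\nu'\ge\kappa/2-4$ (and is conjectured for all $\nu'>(-4)\vee(\kappa/2-6)$). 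Reversibility of the \emph{marginal} law is therefore far too weak to single out $\nu'=0$.

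The paper's proof avoids both problems by applying the reversibility hypothesis to the \emph{conditional} law rather than the marginal. From the proof of Theorem~\ref{thm::SLEpair_CMP_SYM}, the conditional law of $\eta^L$ given $\eta^R$ is $\SLE_{\kappa}(\rho)$ from $x^L$ to $y^L$ with force point $w^R$ lying strictly between the endpoints; Definition~\ref{def::rev} forces this conditional law to be reversible; and Lemma~\ref{lem::sle_rev} --- whose whole point is that $\SLE_{\kappa}(\rho)$ with a strictly interior force point is reversible if and only if $\rho=0$, in sharp contrast with the force point at $x_+$, where reversibility holds for all $\rho$ --- yields $\rho=0$. Then both conditional laws are plain $\SLE_{\kappa}$, and Proposition~\ref{prop::slepair_rev} (uniqueness plus identification, with $\rho^L=\rho^R=0$) identifies the marginal of $\eta^R$ as $\hSLE_{\kappa}$ in $\Omega$ from $x^R$ to $y^R$ with marked points $(x^L,y^L)$. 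If you want to repair your argument, this is the substitution to make: the discriminating use of reversibility must happen at the level of conditional laws with interior force points, where Lemma~\ref{lem::sle_rev} applies, not at the level of hypergeometric marginals, where reversibility fails to discriminate.
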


\subsection{Convergence of Critical Planar Ising Interfaces}

Let us go back to the critical Ising model. We take Ising model as an example to explain the interest in pairs of random curves and the
motivation for the definition of conformal Markov property and symmetries. We find that the combination of conformal Markov property and symmetries singles out hypergeometric SLEs. In this section, we point out that hypergeometric SLE DOES correspond to the scaling limit of critical Ising model with alternating boundary conditions. 

\begin{proposition}\label{prop::ising_hypersle}
Let discrete quads $(\Omega_{\delta}; x^R_{\delta}, y^R_{\delta}, y^L_{\delta}, x^L_{\delta})$ on $\delta\Z^2$ approximate some quad $q=(\Omega; x^R, y^R, y^L, x^L)$ as $\delta\to 0$. Consider the critical Ising in $\Omega_{\delta}$ with the following boundary condition: 
\[\ominus\text{ along }(x^L_{\delta}x^R_{\delta}), \quad \oplus\text{ along }(x^R_{\delta}y^R_{\delta})\cup (y^L_{\delta}x^L_{\delta}), \quad \xi\in \{\oplus, \free\} \text{ along } (y^R_{\delta}y^L_{\delta}).\]
Denote by $\LC_v^{\ominus}(q)$ the event that the quad is vertically crossed by $\ominus$ and by $\LC_h^{\oplus}(q)$ the event that the quad is horizontally crossed by $\oplus$. 
\begin{itemize}
\item Suppose $\xi=\oplus$. On the event $\LC_v^{\ominus}(q)$, let $\eta_{\delta}$ be the interface connecting $x^R_{\delta}$ and $y^R_{\delta}$. Then the law of $\eta_{\delta}$ converges weakly to $\hSLE_{3}$ in $\Omega$ from $x^R$ to $y^R$ with marked points $(x^L, y^L)$.
\item Suppose $\xi=\free$. On the event $\LC_v^{\ominus}(q)$, let $\eta_{\delta}$ be the interface connecting $x^R_{\delta}$ and $y^R_{\delta}$. Then the law of $\eta_{\delta}$ (up to the first hitting time of $[y^R_{\delta}y^L_{\delta}]$) converges weakly to $\hSLE_3(-7/2)$ in $\Omega$ from $x^R$ to $x^L$ conditioned to hit $[y^Ry^L]$ (up to the first hitting time of $[y^Ry^L]$).
\item Suppose $\xi=\free$. On the event $\LC_h^{\oplus}(q)$, let $\eta_{\delta}$ be the interface connecting $x^R_{\delta}$ and $x^L_{\delta}$. Then the law of $\eta_{\delta}$ converges weakly to $\hSLE_3(-3/2)$ in $\Omega$ from $x^R$ to $x^L$ with marked points $(y^R, y^L)$.
\end{itemize}
\end{proposition}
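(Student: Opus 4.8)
The plan is to use the standard two-step scheme for the convergence of lattice interfaces---first precompactness, then identification of the subsequential limits, and finally uniqueness---but to carry out the identification by reducing to the already-known convergence of \emph{single} Ising interfaces and invoking the conditional-law characterization of $\hSLE$ established earlier, rather than constructing a new observable from scratch in each configuration. Throughout, $\kappa=3$ is fixed, and by Theorem~\ref{thm::hyperSLE_reversibility} every target curve (note $-7/2,-3/2>-4$) is a genuine continuous transient curve, so it suffices to identify laws.

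First I would establish tightness. The three boundary configurations differ only along the top arc $(y^R_\delta y^L_\delta)$, and in each the curve of interest is an interface separating $\oplus$ from $\ominus$. The uniform crossing (RSW-type) estimates available for the critical Ising model \cite{CDCHKSConvergenceIsingSLE} show that the laws of $\eta_\delta$ form a precompact family in the curve topology modulo reparametrisation and that any subsequential limit is almost surely a continuous curve regular enough to be encoded by a Loewner evolution (this is exactly Condition C1). These estimates are insensitive to the boundary data near the top arc and to the conditioning on $\LC_v^{\ominus}(q)$ or $\LC_h^{\oplus}(q)$, so precompactness holds in all three cases, and it remains to show that every subsequential limit has the asserted law.

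For the identification I would use the conditional-law characterization of $\hSLE$. In the symmetric case $\xi=\oplus$ one works with the pair $(\eta^L_\delta;\eta^R_\delta)$: conditioning on one curve and applying the Ising domain Markov property, the law of the other in the slit quad is that of an interface with Dobrushin-type boundary conditions, which converges to $\SLE_3(\rho)$ through the Chelkak--Smirnov fermionic observable \cite{ChelkakSmirnovIsing,CDCHKSConvergenceIsingSLE}. This is precisely the conditional law characterizing the $\hSLE_3$ marginal of the symmetric, reversible pair of Corollary~\ref{cor::SLEpair_CMP_REV}, which identifies the limit of $\eta^R_\delta$ as $\hSLE_3$ from $x^R$ to $y^R$ with marked points $(x^L,y^L)$. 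For the free-arc cases $\xi=\free$ the same exploration-and-reduction scheme applies, now with the free arc $(y^R_\delta y^L_\delta)$ furnishing the extra marked boundary data; the scaling limit of the reduced single interface is the relevant $\SLE_3(\rho)$ via Izyurov's free-arc observable \cite{IzyurovObservableFree}, and matching with the conditional law characterizing $\hSLE_3(\nu)$ forces the driving function to solve the $\hSLE_3(\nu)$ equation. The value of $\nu$ is read off from the boundary exponent at the free arc: one obtains $\nu=-7/2$ for the exploration towards $x^L$ (note $-7/2<\kappa/2-4$, consistent with the absence of reversibility and with the conditioning to hit $[y^Ry^L]$) and $\nu=-3/2$ for the horizontal crossing (note $-3/2>\kappa/2-4$, hence reversible). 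The degeneration $\hSLE_3(\nu)\to\SLE_3(\nu+2)$ of Lemma~\ref{lem::hSLE_degenerate} together with Proposition~\ref{prop::hSLE_conditioned} supply the consistency checks that fix the marked-point data and the conditioning in each item.

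The hard part will be the free boundary arc and the conditioning. Whereas the fixed-spin reductions rest on the classical fermionic observable, the $\xi=\free$ cases require the spinor observable with a free arc, and one must verify both that the discrete observable is a true martingale under the exploration and that its scaling limit is the conformally covariant function whose logarithmic derivative produces precisely the hypergeometric drift of $\hSLE_3(\nu)$. A second difficulty is that $\LC_v^{\ominus}(q)$, $\LC_h^{\oplus}(q)$ and the event of hitting $[y^Ry^L]$ must be shown to have limiting probabilities bounded away from $0$ and $1$, and that conditioning on them commutes with the weak limit, so that the conditioned discrete interface converges to the correspondingly conditioned $\hSLE_3(\nu)$. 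Once the observable and the conditioning are controlled, uniqueness of the $\hSLE$ law---via its driving-function SDE---upgrades subsequential convergence to the asserted weak convergence.
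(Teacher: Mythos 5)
Your tightness step and your treatment of the $\xi=\oplus$ case are essentially the paper's argument: precompactness from RSW-type bounds, convergence of the conditional law of one curve given the other to $\SLE_3$ via the known Dobrushin result, and identification of the limit through the uniqueness of the pair with these conditional laws. One small correction there: the right tool is Proposition~\ref{prop::slepair_rev} (existence, uniqueness via the Miller--Sheffield resampling argument, and identification of the marginal as $\hSLE_3$), not Corollary~\ref{cor::SLEpair_CMP_REV}; the corollary presupposes a family of measures over \emph{all} quads satisfying CMP, symmetry, reversibility and Condition C1, and verifying CMP for a subsequential limit is not automatic, whereas the resampling uniqueness applies directly to a single limiting measure once its two conditional laws are known.

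The genuine gap is in the $\xi=\free$ cases. You propose to handle them by constructing and analyzing a four-point spinor observable with a free arc, ``whose logarithmic derivative produces precisely the hypergeometric drift of $\hSLE_3(\nu)$,'' and you correctly flag this as the hard part---but this is exactly Izyurov's original proof, so your argument is either circular (if you cite his theorem, which \emph{is} the proposition being proved) or leaves the entire difficulty unverified. The point you miss is that no four-point observable is needed at all: in the pair decomposition, the conditional law of one interface given the other (run up to its hitting time of the free arc) is an Ising interface in a domain with only \emph{three} marked boundary points---boundary conditions $\ominus$, $\oplus$, $\free$---and its convergence to $\SLE_3(-3/2)$ with a single force point at the hitting point $w^R$ (resp. $w^L$) is Theorem~\ref{thm::ising_cvg_minusfree}, which follows from the already-known Hongler--Kyt\"ol\"a/Benoist--Duminil-Copin--Hongler analysis; no hypergeometric drift appears at this stage. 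The four-point hypergeometric structure is then produced purely in the continuum: Proposition~\ref{prop::slepair_cmp_sym} (whose proof rests on Dub\'edat's commutation relation and the resampling uniqueness, not on discrete observables) says the unique measure with these conditional laws has $\eta^R$-marginal equal to $\hSLE_3(\kappa-8-\rho)=\hSLE_3(-7/2)$ from $x^R$ to $x^L$ conditioned to hit $[y^Ry^L]$, and Proposition~\ref{prop::slepair_rev} gives the $\hSLE_3(-3/2)$ marginal in the horizontal-crossing case. In particular the value $\nu=-7/2$ is \emph{not} ``read off from the boundary exponent at the free arc''; it comes from the relation $\nu=\kappa-8-\rho$ with $\rho=-3/2$ in that identification, together with the conditioning to hit. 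Your remaining concern---that the crossing events have non-degenerate probabilities and that conditioning commutes with the weak limit---is real but is disposed of by the RSW/FKG argument of Lemma~\ref{lem::ising_distance_tight}; it does not require any observable either.
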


The conclusions in Proposition~\ref{prop::ising_hypersle} are not new. They were proved by K.~Izyurov \cite{IzyurovObservableFree}, and we will give a new proof. There are three features on the method developed in Section~\ref{sec::ising}. 
\begin{itemize}
\item First, no need to construct new observable. Constructing holomorphic observable is the usual way to prove the convergence of interfaces in the critical lattice model (as in \cite{IzyurovObservableFree}); however, with our method, there is no need to construct new observable. The only input we need is the convergence of the interface with Dobrushin boundary condition.
\item Second, the result is ``global". There are many works on multiple SLEs trying to study the scaling limit of interfaces in critical lattice model with alternating boundary conditions, see \cite{DubedatCommutationSLE, BauerBernardKytolaMultipleSLE, KytolaPeltolaPurePartitionFunctions, IzyurovObservableFree}, and their works study the local growth of these interfaces. Whereas, our result is ``global": we prove the convergence of the entire interface. 
\item Third, easy to generalize. Our method can be generalized to more complicated boundary conditions, and the method also works for other critical lattice models including loop-erased random walk, FK-Ising model and percolation, see \cite{BeffaraPeltolaWuUniqueness}.
\end{itemize}

\subsection{Pure Partition Functions of Multiple SLEs}
In Theorem~\ref{thm::SLEpair_CMP_SYM}, we have shown the classification of the interfaces in critical lattice model in quad with alternating boundary conditions. It is natural to consider the interfaces in general polygon. We call $(\Omega; x_1, \ldots, x_{2N})$ a polygon if $\Omega\subsetneq\C$ is simply connected and $x_1, \ldots, x_{2N}$ are $2N$ boundary points in counterclockwise order. We take Ising model as an example again. Suppose $(\Omega^{\delta}; x_1^{\delta}, \ldots, x_{2N}^{\delta})$ are discrete domains on $\delta\Z$ that approximate some polygon $(\Omega; x_1, \ldots, x_{2N})$. Consider the critical Ising model in $\Omega^{\delta}$ with alternating boundary conditions: 
\[\oplus \text{ on }(x_{2j-1}^{\delta}, x_{2j}^{\delta}), \quad \text{for }j\in\{1, \ldots, N\};\quad \ominus \text{ on }(x_{2j}^{\delta}, x_{2j+1}^{\delta}),\quad \text{for }j\in\{0, 1, \ldots, N\},\]
with the convention that $x_{0}=x_{2N}$ and $x_{2N+1}=x_1$. Then $N$ interfaces $(\eta_1^{\delta}, \ldots, \eta_N^{\delta})$ arise in the model and they connect the $2N$ boundary points $x_1^{\delta}, \ldots, x_{2N}^{\delta}$, forming a planar connectivity. We describe the connectivities by planar pair partitions $\alpha=\{\{a_1, b_1\}, \ldots, \{a_N, b_N\}\}$ where $\{a_1, b_1, \ldots, a_N, b_N\}=\{1,2,\ldots, 2N\}$. We call such $\alpha$ link patterns and we denote the set of them by $\LP_N$. We denote $\LP=\sqcup_{N\ge 0}\LP_N$.  Given a link pattern $\alpha\in\LP_N$ and $\{a,b\}\in\alpha$, we denote by $\alpha\removal\{a,b\}$ the link pattern in $\LP_{N-1}$ obtained by removing $\{a,b\}$ from $\alpha$ and then relabelling the remaining indices so that they are the first $2(N-1)$ integers. 

It turns out that the scaling limit of the collection $(\eta_1^{\delta}, \ldots, \eta_N^{\delta})$ are the Loewner chains associated to the pure partition functions whose definition is given below. See \cite{BauerBernardKytolaMultipleSLE, DubedatCommutationSLE, KytolaPeltolaPurePartitionFunctions, PeltolaWuGlobalMultipleSLEs, BeffaraPeltolaWuUniqueness} for the background. Fix $\kappa\in (0,8)$, multiple SLE \textit{pure partition functions} is a collection of positive smooth functions 
\[\PartF_{\alpha}: \chamber_{2N}:=\{(x_1, \ldots, x_{2N}): x_1<\cdots<x_{2N}\}\to \R_+,\quad \alpha\in\LP_N\]
satisfying the following three properties:
\begin{itemize}
\item PDE system (PDE): 
\begin{align}\label{eqn::purepartition_PDE}
\left[ \frac{\kappa}{2}\partial^2_i + \sum_{j\neq i}\left(\frac{2}{x_{j}-x_{i}}\partial_j - 
\frac{2h}{(x_{j}-x_{i})^{2}}\right) \right]
\PartF(x_1,\ldots,x_{2N}) =  0, \quad \text{for all } i \in \{1,\ldots,2N\} .
\end{align}
\item Conformal covariance (COV): for all M\"obius maps 
$\varphi$ of $\HH$ 
such that $\varphi(x_{1}) < \cdots < \varphi(x_{2N})$, 
\begin{align}\label{eqn::purepartition_COV}
\PartF(x_{1},\ldots,x_{2N}) = 
\prod_{i=1}^{2N} \varphi'(x_{i})^{h} 
\times \PartF(\varphi(x_{1}),\ldots,\varphi(x_{2N})), \quad \text{where } h = \frac{6-\kappa}{2\kappa}.
\end{align}
\item Asymptotics (ASY): 
for all $\alpha \in \LP_N$ and for all $j \in \{1, \ldots, 2N-1 \}$ and $\xi \in (x_{j-1}, x_{j+2})$, 
\begin{align}\label{eqn::purepartition_ASY}
\lim_{x_j , x_{j+1} \to \xi} 
\frac{\PartF_\alpha(x_1 , \ldots , x_{2N})}{(x_{j+1} - x_j)^{-2h}} 
=\begin{cases}
0 \quad &
    \text{if } \{j,j+1\} \notin \alpha \\
\PartF_{\hat{\alpha}}(x_{1},\ldots,x_{j-1},x_{j+2},\ldots,x_{2N}) &
    \text{if } \{j,j+1\} \in \alpha
\end{cases}
\end{align}
where
$\hat{\alpha} = \alpha \removal\{j,j+1\} \in \LP_{N-1}$.
\end{itemize}

Although the scaling limit of the interfaces in the critical lattice model in polygon leads to the pure partition functions, it is far from clear why such functions exist, and we will discuss the existence of such functions in the following theorem.

\begin{theorem}\label{thm::purepartition}
Let $\kappa \in (0,6]$. There exists a unique collection $\{\PartF_{\alpha}: \alpha\in \LP\}$ of smooth
functions $\PartF_\alpha : \chamber_{2N} \to \R_+$, for $\alpha \in \LP_N$,
satisfying the normalization
$\PartF_\emptyset = 1$ and PDE~\eqref{eqn::purepartition_PDE}, COV~\eqref{eqn::purepartition_COV}, ASY~\eqref{eqn::purepartition_ASY}
and, for all $\alpha =\{ \{a_1, b_1\}, \ldots, \{a_N, b_N\} \} \in \LP_N$, the power law bound
\begin{align}\label{eqn::purepartition_PLB}
0<\PartF_{\alpha}(x_1, \ldots, x_{2N}) 
\le \prod_{j=1}^N |x_{b_j}-x_{a_j}|^{-2h} .
\end{align}
\end{theorem}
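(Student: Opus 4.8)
The plan is to establish existence and uniqueness together by induction on $N$, extracting $\PartF_\alpha$ from the one-curve marginal of the multiple $\SLE$ and invoking the properties of $\hSLE_\kappa(\nu)$ proved above. For the base cases, $\PartF_\emptyset=1$ and $\PartF_{\{1,2\}}(x_1,x_2)=(x_2-x_1)^{-2h}$ satisfy PDE~\eqref{eqn::purepartition_PDE} and COV~\eqref{eqn::purepartition_COV} by a one-line computation, while ASY~\eqref{eqn::purepartition_ASY} is vacuous and PLB~\eqref{eqn::purepartition_PLB} holds with equality. Assume $\{\PartF_\beta:\beta\in\LP_{N-1}\}$ has been constructed and shown unique. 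Fix $\alpha\in\LP_N$, let $\{1,2k\}\in\alpha$ be the link through the index $1$, and set $\hat\alpha=\alpha\removal\{1,2k\}\in\LP_{N-1}$.

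For existence I would build $\PartF_\alpha$ as the total mass of a measure on the single curve $\gamma$ issuing from $x_1$ and landing at $x_{2k}$: take the chordal $\SLE_\kappa$ aimed from $x_1$ towards $x_{2k}$ and reweight it, through its Loewner maps $(g_t)$ with driving function $W_t$ and marked-point images $V_t^i=g_t(x_i)$, by $\PartF_{\hat\alpha}$ evaluated at $(V_t^i)_{i\neq 1,2k}$ together with the conformal covariance factors $\prod_{i\neq 1}g_t'(x_i)^h$. By target-independence the resulting curve measure does not depend on the auxiliary target and is consistent across the links of $\alpha$. Writing $\PartF_\alpha$ for the resulting total mass, the Radon--Nikodym density
\begin{align*}
M_t=\prod_{i=2}^{2N}g_t'(x_i)^{h}\,\PartF_\alpha\bigl(W_t,V_t^2,\ldots,V_t^{2N}\bigr)
\end{align*}
is by construction a local martingale under chordal $\SLE_\kappa$ from $x_1$; the standard equivalence between this local-martingale property and PDE~\eqref{eqn::purepartition_PDE} then yields the PDE for $\PartF_\alpha$ at $i=1$, with the drift generated by $\PartF_{\hat\alpha}$ organised by its own PDE~\eqref{eqn::purepartition_PDE} and COV~\eqref{eqn::purepartition_COV}. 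Target-independence, which lets any other marked point play the role of the root, gives the remaining equations $i\neq 1$, and COV~\eqref{eqn::purepartition_COV} is inherited from the M\"obius invariance of $\SLE_\kappa$ and the inductive covariance of $\PartF_{\hat\alpha}$. In the case $N=2$ this first-curve marginal is exactly an $\hSLE_\kappa(\nu)$, and the explicit hypergeometric factor attached to the two distinguished points $x_1,x_{2k}$ is its partition function; its continuity and transience from Theorem~\ref{thm::hyperSLE_reversibility} ensure that $\gamma$ is a genuine curve joining $x_1$ to $x_{2k}$, so that the construction is well posed.

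The asymptotics ASY~\eqref{eqn::purepartition_ASY} I would read off from the collision behaviour of $\hSLE$. When $x_j,x_{j+1}\to\xi$ with $\{j,j+1\}\in\alpha$, the interface linking them degenerates; Lemma~\ref{lem::hSLE_degenerate}, which sends $\hSLE_\kappa(\nu)$ to $\SLE_\kappa(\nu+2)$ as two marked points separate, together with the conditioned-to-avoid description of Proposition~\ref{prop::hSLE_conditioned}, identify the limiting weight and produce the prefactor $(x_{j+1}-x_j)^{-2h}$ multiplying the residual function $\PartF_{\hat\alpha}$; when $\{j,j+1\}\notin\alpha$ no curve connects the two points and the normalised limit vanishes. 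For PLB~\eqref{eqn::purepartition_PLB} I would insert the inductive bound on $\PartF_{\hat\alpha}$ into the total-mass expression and estimate the remaining one-curve integral, again using continuity and transience of $\hSLE$; positivity $\PartF_\alpha>0$ is clear since the integrand is positive. The hypothesis $\kappa\le 6$, equivalently $h\ge 0$, enters precisely here: it keeps the relevant boundary exponents nonnegative, so that the total mass is finite and the bound holds uniformly on $\chamber_{2N}$.

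For uniqueness, let $\tilde\PartF_\alpha$ be any solution of PDE$+$COV$+$ASY$+$PLB and let $\PartF_\alpha$ be the function just constructed. Weighting chordal $\SLE_\kappa$ from $x_1$ by $M_t$ defines a probability measure under which the ratio $R_t=\tilde\PartF_\alpha(W_t,V_t^2,\ldots,V_t^{2N})/\PartF_\alpha(W_t,V_t^2,\ldots,V_t^{2N})$ is a local martingale, since the covariance factors cancel and both numerator and denominator solve the same PDE. The bound PLB~\eqref{eqn::purepartition_PLB}, together with the matching lower bound on the constructed $\PartF_\alpha$, makes $R_t$ bounded, hence a true martingale. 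As $t$ increases to the completion of $\gamma$, ASY~\eqref{eqn::purepartition_ASY} identifies the limit of $R_t$ with a ratio of $(N-1)$-point functions, which equals $1$ by the inductive uniqueness; a bounded martingale with constant terminal value is constant, so $R\equiv 1$ and $\tilde\PartF_\alpha=\PartF_\alpha$. I expect the main obstacle to be exactly PLB~\eqref{eqn::purepartition_PLB} and the uniform integrability it underlies---upgrading the local martingales above to genuine martingales and preventing mass from escaping to the boundary as marked points coalesce---in the extended range $\kappa\in(4,6]$ where the curves are no longer simple. This is the step where the delicate regularity of $\hSLE$, namely continuity, transience and reversibility from Theorem~\ref{thm::hyperSLE_reversibility} together with the explicit degenerations of Lemma~\ref{lem::hSLE_degenerate} and Proposition~\ref{prop::hSLE_conditioned}, is indispensable.
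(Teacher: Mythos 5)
Your existence scheme is essentially the paper's: construct $\PartF_{\alpha}$ by induction through a one-curve cascade rooted at a link of $\alpha$ (this is Proposition~\ref{prop::purepartition_existence}, with the cascade~\eqref{eqn::purepartition_cascade_def} playing the role of your ``total mass''). The first genuine gap is the consistency step. Your construction is rooted at the link $\{1,2k\}$, and the martingale argument yields PDE~\eqref{eqn::purepartition_PDE} only for $i=1$ and $i=2k$, and only in the distributional sense (smoothness requires the hypoellipticity input \cite{DubedatSLEVirasoroLocalization} used in Lemma~\ref{lem::purepartition_cascade_PDE_first}, which you do not address). To get the remaining $2N-2$ equations you must know that the function rooted at $\{1,2k\}$ coincides with the one rooted at any other link, and you attribute this to ``target-independence''. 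That is not the right tool: Proposition~\ref{prop::hsle_change_target} compares a single $\hSLE$ aimed at two different endpoints, whereas what is needed is that the two iterated cascades---draw $\eta_k$ first and then $\eta_n$ inside a complementary component, or in the opposite order---produce the same number. This exchangeability is Lemma~\ref{lem::purepartition_cascade_SYM}, the heart of the paper's argument: one uses Proposition~\ref{prop::hypersle_mart} to recognize the law of the first curve, reweighted by the Poisson-kernel factor $H_{D_k^R}(x_{a_n},x_{b_n})^h$, as an $\hSLE_{\kappa}$, and then invokes the uniqueness and symmetry of the global $2$-$\SLE_{\kappa}$ (Proposition~\ref{prop::hsle_symmetry}), which for $\kappa\in(4,6]$ rests on Dub\'edat's commutation relations \cite{DubedatCommutationSLE} and the classification in \cite{KytolaPeltolaPurePartitionFunctions}. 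Without this lemma the PDE system, and with it the whole induction, is incomplete.

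The second gap is uniqueness. The paper does not reprove it: uniqueness is quoted from \cite{FloresKlebanPDE1}, where it is a deep result valid for all $\kappa\in(0,8)$. Your ratio-martingale argument breaks at the boundedness step. PLB~\eqref{eqn::purepartition_PLB} bounds $\tilde{\PartF}_{\alpha}$ above by $\LB_{\alpha}$, so boundedness of $R_t$ would require a lower bound $\PartF_{\alpha}\ge c\,\LB_{\alpha}$, and no such ``matching lower bound'' exists. Already for $N=2$ and $\alpha=\{\{1,2\},\{3,4\}\}$ one has $\PartF_{\alpha}(x_1,\ldots,x_4)=(x_2-x_1)^{-2h}(x_4-x_3)^{-2h}(1-z)^{2/\kappa}F(1-z)$, so that $\PartF_{\alpha}/\LB_{\alpha}=(1-z)^{2/\kappa}F(1-z)\to 0$ as $x_2\to x_3$ (i.e.\ $z\to 1$); the true functions vanish relative to their power-law bound when non-linked points collide, as ASY~\eqref{eqn::purepartition_ASY} forces. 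Hence $R_t$ is unbounded, optional stopping cannot be applied, and $R\equiv 1$ does not follow. Moreover, identifying the terminal value of $R_t$ as the curve completes requires uniform control of both functions as the domain degenerates, which pointwise ASY does not provide. As written, uniqueness remains unproved; you should either cite \cite{FloresKlebanPDE1} as the paper does, or supply genuinely new estimates. (A smaller remark: the hypothesis $\kappa\le 6$ enters the paper not to make the mass finite but through the monotonicity~\eqref{eqn::balpha_mono} of the Poisson-kernel bound, which drives the bounded-convergence arguments for ASY and PLB.)
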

The uniqueness is a deep result and it was proved in \cite{FloresKlebanPDE1} for all $\kappa\in (0,8)$. 
The existence part was proved for $\kappa\in (0,8)\setminus \mathbb{Q}$ in \cite{KytolaPeltolaPurePartitionFunctions} using Coulomb gas techniques. The difficulty with the Coulomb gas techniques is that the authors could not show the positivity of the constructed functions, neither the optimal bound~\eqref{eqn::purepartition_PLB}. The existence was later proved for $\kappa\in (0,4]$ in \cite{PeltolaWuGlobalMultipleSLEs} using the construction of global multiple SLEs. In this method, the positivity and the power law bound~\eqref{eqn::purepartition_PLB} is clear from the construction, but showing the smoothness requires certain work. 
In this paper, we will give a new proof of the existence for $\kappa\in (0,6]$ using properties of hypergeometric SLE. We will construct the pure partition by cascade relation and then show that they satisfies all the requirements. The main obstacle in this construction is checking the PDE, and this is obtained using properties of hypergeometric SLEs.  

\smallbreak
\noindent\textbf{Outline and relation to previous works.}
We will give preliminaries on SLEs in Section~\ref{sec::pre}. 
Hypergeometric SLEs were previously introduced by D.~Zhan \cite{ZhanReversibility} and W.~Qian \cite{QianConformalRestrictionTrichordal} with different motivations and different definitions. 
We will introduce hypergeometric SLE in Section~\ref{sec::hypersle} with our motivation, and the definition is different from theirs. 
We will prove Theorem~\ref{thm::hyperSLE_reversibility} in Section~\ref{sec::hypersle} and many other interesting properties of $\hSLE$. 
We prove Theorem~\ref{thm::SLEpair_CMP_SYM} in Section~\ref{sec::slepairs}. 
We introduce Ising model in Section~\ref{sec::ising} and prove Proposition~\ref{prop::ising_hypersle}. 
We complete the proof of Theorem~\ref{thm::purepartition} in Section~\ref{sec::purepartitionfunctions}. 
\smallbreak
\noindent\textbf{Acknowledgment.}
The author thanks D. Chelkak, K.Izyurov, and S. Smirnov for helpful discussion on critical Ising interfaces. 
The author thanks V. Beffara and E. Peltola for helpful discussion on multiple SLEs.

%%%
\section{Preliminaries}
\label{sec::pre}
%%%
\subsection{Space of Curves}
A planar curve is a continuous mapping from $[0,1]$ to $\C$ modulo reparameterization. Let $X$ be the set of planar curves. The metric $d$ on $X$ is defined by 
\[d(\eta_1,\eta_2)=\inf_{\varphi_1,\varphi_2}\sup_{t\in[0,1]}|\eta_1(\varphi_1(t))-\eta_2(\varphi_2(t))|,\]
where the inf is over increasing homeomorphisms $\varphi_1,\varphi_2:[0,1]\to [0,1]$. The metric space $(X, d)$ is complete and separable. A simple curve is a continuous injective mapping from $[0,1]$ to $\C$ modulo reparameterization. 
Let $X_{\simple}$ be the subspace of simple curves and denote by $X_0$ its closure. The curves in $X_0$ may have multiple points but they do not have self-crossings. 

We call $(\Omega; x,y)$ a \textit{Dobrushin domain} if $\Omega$ is a non-empty simply connected proper subset of $\C$ and $x,y$ are two distinct boundary points. Denote by $(xy)$ the arc of $\partial\Omega$ from $x$ to $y$ counterclockwise. We say that a sequence of Dobrushin domains $(\Omega_{\delta}; x_{\delta}, y_{\delta})$ converges to a Dobrushin domain $(\Omega; x, y)$ in the \textit{Carath\'eodory sense} if $f_{\delta}\to f$ uniformly on any compact subset of $\HH$ where $f_{\delta}$ (resp. $f$) is the unique conformal map from $\HH$ to $\Omega_{\delta}$ (resp. $\Omega$) satisfying $f_{\delta}(0)=x_{\delta}, f_{\delta}(\infty)=y_{\delta}$ and $f_{\delta}'(\infty)=1$ (resp. $f(0)=x, f(\infty)=y, f'(\infty)=1$). 

Given a Dobrushin domain $(\Omega; x,y)$, let $X_{\simple}(\Omega; x,y)$ be the space of simple curves $\eta$ such that \[\eta(0)=x, \quad\eta(1)=y,\quad\eta(0,1)\subset\Omega.\]
Denote by $X_0(\Omega; x,y)$ the closure of $X_{\simple}(\Omega; x,y)$. 

We call $(\Omega; x, w, y)$ a \textit{(topological) triangle} if $\Omega$ is a non-empty simply connected proper subset of $\C$ and $x, w, y$ are three boundary points along $\partial\Omega$ in counterclockwise order such that $x\neq y$. Let $\LT$ be the collection of all such triangles. We say that a sequence of triangles $(\Omega_{\delta}; x_{\delta}, w_{\delta}, y_{\delta})$ converges to a triangle $(\Omega; x, w, y)$ in the \textit{Carath\'eodory sense} if $f_{\delta}\to f$ uniformly on any compact subset of $\HH$ where $f_{\delta}$ (resp. $f$) is the unique conformal map from $\HH$ to $\Omega_{\delta}$ (resp. $\Omega$) satisfying $f_{\delta}(0)=x_{\delta}, f_{\delta}(1)=w_{\delta}$, and $f_{\delta}(\infty)=y_{\delta}$.(resp. $f(0)=x, f(1)=w, f(\infty)=y$).

We call $(\Omega; a, b, c, d)$ a \textit{quad (or topological rectangle)} if $\Omega$ a non-empty simply connected proper subset of $\C$ and $a, b, c, d$ are four boundary points in counterclockwise order such that $a\neq d$. Given a quad $(\Omega; a, b, c, d)$, we denote by $d_{\Omega}((ab), (cd))$ the extremal distance between $(ab)$ and $(cd)$ in $\Omega$. We say a sequence of quads $(\Omega_{\delta}; a_{\delta}, b_{\delta}, c_{\delta}, d_{\delta})$ converges to a quad $(\Omega; a, b, c, d)$ in the \textit{Carath\'{e}odory sense} if $f_{\delta}\to f$ uniformly on any compact subset of $\HH$ and  $\lim_{\delta}f_{\delta}^{-1}(b_{\delta})=f^{-1}(b)$ and $\lim_{\delta}f_{\delta}^{-1}(c_{\delta})=f^{-1}(c)$ 
where $f_{\delta}$ (resp. $f$) is the unique conformal map from $\HH$ to $\Omega_{\delta}$ (resp. $\Omega$) satisfying $f_{\delta}(0)=a_{\delta}, f_{\delta}(\infty)=d_{\delta}$ and $f_{\delta}'(\infty)=1$ (resp. $f(0)=a, f(\infty)=d, f'(\infty)=1$). 

Given a quad $(\Omega; x^L, x^R, y^R, y^L)$, let $X_{\simple}(\Omega; x^L, x^R, y^R, y^L)$ be the collection of pairs of simple curves $(\eta^L;\eta^R)$ such that $\eta^L\in X_{\simple}(\Omega; x^L, y^L)$ and $\eta^R\in X_{\simple}(\Omega; x^R, y^R)$ and that $\eta^L\cap\eta^R=\emptyset$. The definition of $X_0(\Omega; x^L, x^R, y^R, y^L)$ is a little bit complicate. 
Given a quad $(\Omega; x^L, x^R, y^R, y^L)$ and $\eps>0$, let $X_0^{\eps}(\Omega; x^L, x^R, y^R, y^L)$ be the set of pairs of curves $(\eta^L; \eta^R)$ such that 
\begin{itemize}
\item $\eta^L\in X_0(\Omega; x^L, y^L)$ and $\eta^R\in X_0(\Omega; x^R, y^R)$;
\item $d_{\Omega^L}(\eta^L, (x^Ry^R))\ge \eps$ where $\Omega^L$ the connected component of $\Omega\setminus \eta^L$ with $(x^Ry^R)$ on the boundary, 
and $\eta^R$ is contained in the closure of $\Omega^L$ ;
\item $d_{\Omega^R}(\eta^R, (y^Lx^L))\ge \eps$ where $\Omega^R$ the connected component of  $\Omega\setminus \eta^R$ with $(y^Lx^L)$ on the boundary, and $\eta^L$ is contained in the closure of $\Omega^R$.
\end{itemize}
Define the metric on $X_0^{\eps}(\Omega; x^L, x^R, y^R, y^L)$ by 
\[\LD((\eta^L_1, \eta^R_1), (\eta^L_2,\eta^R_2))=\max\{d(\eta^L_1,\eta^L_2), d(\eta^R_1,\eta^R_2)\}. \]
One can check $\LD$ is a metric and the space $X_0^{\eps}(\Omega; x^L, x^R, y^R, y^L)$ with $\LD$ is complete and separable. 
Finally, set 
\[X_0(\Omega; x^L, x^R, y^R, y^L)=\bigcup_{\eps>0}X_0^{\eps}(\Omega; x^L, x^R, y^R, y^L).\]
Note that $X_0(\Omega; x^L, x^R, y^R, y^L)$ is no longer complete. 
\begin{comment}
We call $(\Omega; a, b, c)$ a \textit{degenerate quad} if $\Omega$ is a simply connected proper subset of $\C$ and $a, b, c$ are three distinct points in counterclockwise order. Given a degenerate quad $(\Omega; x^L, x^R, y)$, let $X_0(\Omega; x^L, x^R, y)$ be the set of pairs of curves $(\eta^L;\eta^R)$ such that 
\begin{itemize}
\item $\eta^L\in X_0(\Omega; x^L, y)$ and $\eta^R\in X_0(\Omega; x^R, y)$;
\item $\eta^L\cap (x^Ry)=\emptyset$ and $\eta^R$ is contained in the closure of the connected component of $\Omega\setminus\eta^L$ with $(x^Ry)$ on the boundary;
\item $\eta^R\cap (yx^L)=\emptyset$ and 
$\eta^L$ is contained in the closure of the connected component of $\Omega\setminus \eta^R$ with $(yx^L)$ on the boundary.
\end{itemize}
\end{comment}
\smallbreak
Suppose $E$ is a metric space and $\LB_E$ is the Borel $\sigma$-field. Let $\CP$ be the space of probability measures on $(E, \LB_E)$. The Prohorov metric $d_{\CP}$ on $\CP$ is defined by  
\[d_{\CP}(\PP_1, \PP_2)=\inf\left\{\eps>0: \PP_1[A]\le \PP_2[A^{\eps}]+\eps, \PP_2[A]\le \PP_1[A^{\eps}]+\eps, \forall A\in\LB_E\right\}.\]
When $E$ is complete and separable, the space $\CP$ is complete and separable (\cite[Theorem 6.8]{BillingsleyConvergenceProbabilityMeasures}); moreover, a sequence $\PP_n$ in $\CP$ converges weakly to $\PP$ if and only if $d_{\CP}(\PP_n, \PP)\to 0$. 

Let $\Sigma$ be a family of probability measures on $(E,\LB_E)$. We call $\Sigma$ \textit{relatively compact} if every sequence of elements in $\Sigma$ contains a weakly convergent subsequence. We call $\Sigma$ \textit{tight} if, for every $\eps>0$, there exists a compact set $K_{\eps}$ such that $\PP[K_{\eps}]\ge 1-\eps$ for all $\PP\in\Sigma$. By Prohorov's Theorem (\cite[Theorem 5.2]{BillingsleyConvergenceProbabilityMeasures}), when $E$ is complete and separable, relative compactness is equivalent to tightness. 

%%%
\subsection{Loewner Chain and $\SLE$}
We call a compact subset $K$ of $\overline{\HH}$ an \textit{$\HH$-hull} if $\HH\setminus K$ is simply connected. Riemann's Mapping Theorem asserts that there exists a unique conformal map $g_K$ from $\HH\setminus K$ onto $\HH$ such that
$\lim_{z\to\infty}|g_K(z)-z|=0$.
We call such $g_K$ the conformal map from $\HH\setminus K$ onto $\HH$ normalized at $\infty$ and we call $a(K):=\lim_{z\to \infty} z(g_t(z)-z)$ the \textit{half-plane capacity} of $K$. 

\textit{Loewner chain} is a collection of $\HH$-hulls $(K_{t}, t\ge 0)$ associated with the family of conformal maps $(g_{t}, t\ge 0)$ obtained by solving the Loewner equation: for each $z\in\mathbb{H}$,
\[\partial_{t}{g}_{t}(z)=\frac{2}{g_{t}(z)-W_{t}}, \quad g_{0}(z)=z,\]
where $(W_t, t\ge 0)$ is a one-dimensional continuous function which we call the driving function. Let $T_z$ be the \textit{swallowing time} of $z$ defined as $\sup\{t\ge 0: \min_{s\in[0,t]}|g_{s}(z)-W_{s}|>0\}$.
Let $K_{t}:=\overline{\{z\in\mathbb{H}: T_{z}\le t\}}$. Then $g_{t}$ is the unique conformal map from $H_{t}:=\mathbb{H}\backslash K_{t}$ onto $\mathbb{H}$ normalized at $\infty$. Since the half-plane capacity of $K_t$ is $2t$ for all $t\ge 0$, we say that the process $(K_t, t\ge 0)$ is parameterized by the half-plane capacity. We say that $(K_t, t\ge 0)$ can be generated by the continuous curve $(\eta(t), t\ge 0)$ if for any $t$, the unbounded connected component of $\HH\setminus\eta[0,t]$ coincides with $H_t=\HH\setminus K_t$. 

Indeed, continuous simple curve under mild constraints does solve the Loewner equation with continuous driving function. 
Suppose $T\in (0,\infty]$ and $\eta: [0,T)\to \overline{\HH}$ is a continuous simple curve with $\eta(0)=0$. Assume $\eta$ satisfies the following: for every $t\in (0,T)$,
\begin{itemize}
\item $\eta(t, T)$ is contained in the closure of the unbounded connected component of $\HH\setminus\eta[0,t]$ and
\item $\eta^{-1}(\eta[0,t]\cup\R)$ has empty interior in $(t, T)$.
\end{itemize}
For each $t>0$, let $g_t$ be the conformal map which maps the unbounded connected component of $\HH\setminus\eta[0,t]$ onto $\HH$ normalized at $\infty$. After reparameterization, $(g_t, t\ge 0)$ solves the above Loewner equation with continuous driving function \cite[Section 4.1]{LawlerConformallyInvariantProcesses}. 

Here we discuss the evolution of a point $y\in\R$ under $g_t$. We assume $y\ge 0$. There are two possibilities: if $y$ is not swallowed by $K_t$, then we define $Y_t=g_t(y)$; if $y$ is swallowed by $K_t$, then we define $Y_t$ to be the image of the rightmost of point of $K_t\cap\R$ under $g_t$. Suppose that $(K_t, t\ge 0)$ is generated by a continuous path $(\eta(t), t\ge 0)$ and that the Lebesgue measure of $\eta[0,\infty]\cap\R$ is zero. Then the process $Y_t$ is uniquely characterized by the following equation: 
\[Y_t=y+\int_0^t \frac{2ds}{Y_s-W_s},\quad Y_t\ge W_t,\quad \forall t\ge 0.\] 
In this paper, we may write $g_t(y)$ for the process $Y_t$. 

\smallbreak
\textit{Schramm Loewner Evolution} $\SLE_{\kappa}$ is the random Loewner chain $(K_{t}, t\ge 0)$ driven by $W_t=\sqrt{\kappa}B_t$ where $(B_t, t\ge 0)$ is a standard one-dimensional Brownian motion.
In \cite{RohdeSchrammSLEBasicProperty}, the authors prove that $(K_{t}, t\ge 0)$ is almost surely generated by a continuous transient curve, i.e. there almost surely exists a continuous curve $\eta$ such that for each $t\ge 0$, $H_{t}$ is the unbounded connected component of $\mathbb{H}\backslash\eta[0,t]$ and that $\lim_{t\to\infty}|\eta(t)|=\infty$. There are phase transitions at $\kappa=4$ and $\kappa=8$: $\SLE_{\kappa}$ are simple curves when $\kappa\in (0,4]$; they have self-touching when $\kappa\in (4,8)$; and they are space-filling when $\kappa\ge 8$. 

It is clear that $\SLE_{\kappa}$ is scaling invariant, thus we can define $\SLE_{\kappa}$ in any Dobrushin domain $(\Omega; x, y)$ via conformal image: let $\phi$ be a conformal map from $\HH$ onto $\Omega$ that sends $0$ to $x$ and $\infty$ to $y$, then define $\phi(\eta)$ to be $\SLE_{\kappa}$ in $\Omega$ from $x$ to $y$. 
For $\kappa\in (0,8)$, the curves $\SLE_{\kappa}$ enjoys \textit{reversibility}: let $\eta$ be an $\SLE_{\kappa}$ in $\Omega$ from $x$ to $y$, then the time-reversal of $\eta$ has the same law as $\SLE_{\kappa}$ in $\Omega$ from $y$ to $x$. The reversibility for $\kappa\in (0,4]$ was proved in \cite{ZhanReversibility}, and it was proved for $\kappa\in (4,8)$ in \cite{MillerSheffieldIG3}. 

%%%
\subsection{Convergence of Curves}
\label{subsec::pre_cvg_curves}
In this section, we first recall the main result of \cite{KemppainenSmirnovRandomCurves} and then show a similar result for pairs of curves.  Suppose $(Q; a, b, c, d)$ is a quad. 
We say that a curve $\eta$ \textit{crosses} $Q$ if there exists a subinterval $[s,t]$ such that $\eta(s,t)\subset Q$ and $\eta[s,t]$ intersects both $(ab)$ and $(cd)$. 
Fix a Dobrushin domain $(\Omega; x, y)$, 
for any curve $\eta$ in $X_0(\Omega; x,y)$ and any time $\tau$,  define $\Omega_{\tau}$ to be the connected component of $\Omega\setminus\eta[0,\tau]$ with $y$ on the boundary. 
Consider a quad $(Q; a, b, c, d)$ in $\Omega_{\tau}$ such that $(bc)$ and $(da)$ are contained in $\partial \Omega_{\tau}$. We say that $Q$ is \textit{avoidable} if it does not disconnect $\eta(\tau)$ from $y$ in $\Omega_{\tau}$.    

\begin{definition}
A family $\Sigma$ of probability measures on curves in $X_{\simple}(\Omega; x, y)$ is said to satisfy \textbf{Condition C2} if, for any $\eps>0$, there exists a constant $c(\eps)>0$ such that for any $\PP\in\Sigma$, any stopping time $\tau$, and any avoidable quad $(Q; a, b, c, d)$ in $\Omega_{\tau}$ such that $d_{Q}((ab), (cd))\ge c(\eps)$, we have 
\[\PP[\eta[\tau,1]\text{ crosses }Q\cond \eta[0,\tau]]\le 1-\eps.\]
If the above property holds for $\tau=0$, we say that the family satisfies \textbf{Condition C1}. 
\end{definition}
It is clear that the combination of Condition~C1 and
CMP implies Condition~C2. 
\begin{theorem}\label{thm::cvg_curves_chordal}
\cite[Corollary 1.7, Proposition 2.6]{KemppainenSmirnovRandomCurves}. 
Fix a Dobrushin domain $(\Omega; x, y)$. 
Suppose that $\{\eta_n\}_{n\in\N}$ is a sequence of curves in $X_{\simple}(\Omega; x, y)$ satisfying Condition C2. Denote by  $(W_n(t), t\ge 0)$ the driving process of $\eta_n$. Then 
\begin{itemize}
\item the family $\{W_n\}_{n\in\N}$ is tight in the metrisable space of continuous functions on $[0,\infty)$ with the topology of uniform convergence on compact subsets of $[0,\infty)$;
\item the family $\{\eta_n\}_{n\in\N}$ is tight in the space of curves $X$;
\item the family $\{\eta_n\}_{n\in\N}$, when each curve is parameterized by the half-plane capacity, is tight in the metrisable space of continuous functions on $[0,\infty)$ with the topology of uniform convergence on compact subsets of $[0,\infty)$.
\end{itemize}
 Moreover, if the sequence converges in any of the topologies above it also converges in the two other topologies and
 the limits agree in the sense that the limiting random curve is driven by the limiting driving function.
\end{theorem}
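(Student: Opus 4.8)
The crux of the statement is that Condition~C2 is a conformally natural bound on ``unforced'' crossings of thin quads, and the plan is to upgrade it into a uniform-in-$\Sigma$ modulus-of-continuity estimate for the curve in the capacity parameterization; once this is in hand, all three tightness assertions, together with the equivalence of the three topologies, follow by soft arguments. First I would record the basic iteration: since Condition~C2 bounds the conditional crossing probability by $1-\eps$ given $\eta[0,\tau]$ for \emph{every} stopping time $\tau$, one can condition successively on the completion of nested crossings and conclude that the probability of crossing a fixed avoidable quad $Q$ with $d_Q((ab),(cd))\ge c(\eps)$ at least $k$ times is at most $(1-\eps)^k$, uniformly over $\PP\in\Sigma$. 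This geometric decay of multi-crossing probabilities is the only probabilistic input; everything afterwards is deterministic conformal geometry carried out on the high-probability event that no thin annulus is crossed too often.

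Next I would convert the crossing bound into a spatial estimate. Fixing a curve on the good event, I would surround a point $\eta(s)$ by nested avoidable quads of large extremal distance (equivalently, conformal annuli of large modulus) and use that a subsequent piece of curve whose diameter is large relative to its distance from the already-drawn hull is forced to cross such an annulus many times. Combining this with standard distortion and extremal-length estimates for the maps $g_t$ --- in particular the comparison between half-plane capacity increments and the conformal radius seen from the tip --- yields that, on the good event, a curve segment carrying a small capacity increment $t-s$ has small diameter after mapping by $g_s$, with a bound depending only on $\eps$ and $t-s$. This is exactly a uniform modulus of continuity for the capacity parameterization, and an Arzel\`a--Ascoli argument then gives tightness of $\{\eta_n\}$ both in the metric $d$ and in the capacity parameterization.

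Tightness of the driving functions $\{W_n\}$ I would deduce from the same modulus: writing $W_t=g_t(\eta(t))$ and using that the image of the hull increment $K_t\setminus K_s$ under $g_s$ has diameter controlled by the segment's diameter, the displacement $|W_t-W_s|$ is bounded in terms of this image diameter together with the capacity increment, again uniformly on the good event. The equivalence of the three topologies and the consistency statement then reduce to continuity of the Loewner transform and its inverse on the set of curves obeying a fixed modulus: on such a set both the map $\eta\mapsto(\text{capacity time},\,W)$ and the solution map of Loewner's equation are continuous, so weak convergence in one topology forces it in the others, and the limiting random curve is driven by the limiting driving function, with no mass escaping to hulls not generated by a curve.

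The main obstacle I expect is precisely the middle step --- translating the combinatorial crossing bound into the analytic modulus of continuity for the \emph{capacity} parameterization and for the \emph{driving function}. Both functionals are nonlocal in the curve, so one must control the conformal maps $g_t$ uniformly: rule out that the curve pinches off large regions or approaches $\R$ in an uncontrolled way, and make the comparison between capacity, extremal distance, and Euclidean diameter quantitative and uniform over $\Sigma$. This is where the conformally invariant formulation of Condition~C2 is essential, since it is invariant under exactly the maps $g_t$ that appear in the iteration.
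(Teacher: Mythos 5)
Your proposal is correct in outline and follows essentially the same route as the proof this theorem rests on: the paper gives no argument of its own but quotes the result from \cite{KemppainenSmirnovRandomCurves}, whose proof proceeds exactly as you describe --- iterating Condition C2 at stopping times to get geometric decay of unforced-crossing probabilities, upgrading this to uniform moduli of continuity for the curve in the capacity parameterization and for the driving process (together with control of the visibility of the tip), and then obtaining the three tightness claims and the equivalence of the topologies via Arzel\`a--Ascoli and continuity of the Loewner correspondence on equicontinuous sets. The only caveat is that the hard middle step you flag (crossing bounds imply the capacity-parameterization modulus) is precisely the content of \cite[Proposition 3.1]{KemppainenSmirnovRandomCurves} and is substantially more involved than the sketch suggests, but your plan is consistent with it.
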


Next, we will explain a similar result for pairs of curves. Fix a quad $(\Omega; x^L, x^R, y^R, y^L)$. 

\begin{definition}
A family $\Sigma$ of probability measures on pairs of curves in $X_{\simple}(\Omega; x^L, x^R, y^R, y^L)$ is said to satisfy \textbf{Condition C2} if, for any $\eps>0$, there exists a constant $c(\eps)>0$ such that for any $\PP\in\Sigma$, the following holds.
Given any $\eta^L$-stopping time $\tau^L$ and any $\eta^R$-stopping time $\tau^R$,  and any avoidable quad $(Q^R; a^R, b^R, c^R, d^R)$ for $\eta^R$ in $\Omega\setminus (\eta^L[0,\tau^L]\cup\eta^R[0,\tau^R])$ such that $d_{Q^R}((a^Rb^R), (c^Rd^R))\ge c(\eps)$, and any avoidable quad $(Q^L; a^L, b^L, c^L, d^L)$ for $\eta^L$ in $\Omega\setminus (\eta^L[0,\tau^L]\cup\eta^R[0,\tau^R])$ such that $d_{Q^L}((a^Lb^L), (c^Ld^L))\ge c(\eps)$, we have 
\[\PP\left[\eta^R[\tau^R, 1]\text{ crosses }Q^R\cond \eta^L[0,\tau^L], \eta^R[0,\tau^R]\right]\le 1-\eps,\]
\[\PP\left[\eta^L[\tau^L, 1]\text{ crosses }Q^L\cond \eta^L[0,\tau^L], \eta^R[0,\tau^R]\right]\le 1-\eps.\]
If the above property holds for $\tau^L=\tau^R=0$, we say that the family satisfies \textbf{Condition C1}. 
\end{definition}

\begin{theorem}\label{thm::cvg_pairs}
Suppose that $\{(\eta^L_n;\eta^R_n)\}_{n\in\N}$ is a sequence of pairs of curves in $X_{\simple}(\Omega; x^L, x^R, y^R, y^L)$ and denote their laws by $\{\PP_n\}_{n\in\N}$. Let $\Omega_n^L$ be the connected component of $\Omega\setminus\eta_n^L$ with $(x^Ry^R)$ on the boundary and $\Omega_n^R$ be the connected component of $\Omega\setminus\eta_n^R$ with $(y^Lx^L)$ on the boundary. 
Define, for each $n$, 
\[\LD^L_n=d_{\Omega_n^L}(\eta^L_n, (x^Ry^R)),\quad \LD^R_n=d_{\Omega_n^R}(\eta^R_n, (y^Lx^L)).\]
Assume that the family $\{(\eta^L_n; \eta^R_n)\}_{n\in\N}$ satisfies Condition C2 and that the sequence of random variables $\{(\LD^L_n; \LD^R_n)\}_{n\in\N}$ is tight in the following sense: for any $u>0$, there exists $\eps>0$ such that 
\[\PP_n\left[\LD^{L}_n\ge\eps, \LD^R_n\ge\eps\right]\ge 1-u,\quad \forall n.\]
Then the sequence $\{(\eta^L_n;\eta^R_n)\}_{n\in\N}$ is relatively compact in $X_0(\Omega; x^L, x^R, y^R, y^L)$. 
\end{theorem}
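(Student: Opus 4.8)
The plan is to reduce the joint statement to tightness of the two marginals, supplemented by the extremal-distance hypothesis which prevents the limiting pair from degenerating. First I would show that each coordinate sequence $\{\eta^R_n\}$ and $\{\eta^L_n\}$ is tight in $X$ via Theorem~\ref{thm::cvg_curves_chordal}; then I would upgrade this to joint tightness in $X\times X$, extract a weakly convergent subsequence, and finally use the extremal-distance bound to pin the subsequential limit inside $X_0$.

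To obtain marginal tightness, the point is that Condition~C2 for pairs, taken at $\tau^L=0$, reduces to the single-curve Condition~C2 for the law of $\eta^R$. Setting $\tau^L=0$ replaces $\eta^L[0,\tau^L]$ by the boundary point $x^L$, which changes neither the open domain $\Omega\setminus\eta^R[0,\tau^R]$ nor the relevant extremal distances; conditioning on $\eta^R[0,\tau^R]$ alone and integrating out $\eta^L$ by the tower property leaves $\PP_n[\eta^R[\tau^R,1]\text{ crosses }Q^R\cond\eta^R[0,\tau^R]]\le 1-\eps$. Thus the marginal law of $\eta^R_n$ satisfies the single-curve Condition~C2, and Theorem~\ref{thm::cvg_curves_chordal} yields tightness of $\{\eta^R_n\}$ in $X$; the symmetric choice $\tau^R=0$ handles $\{\eta^L_n\}$.

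Marginal tightness in the Polish space $X$ makes $\{(\eta^L_n;\eta^R_n)\}$ tight in $X\times X$: for $\delta>0$ one chooses compacts $K^L,K^R\subset X$ of mass $\ge 1-\delta/2$, so that $K^L\times K^R$ has mass $\ge 1-\delta$. Prohorov's Theorem then yields a subsequence along which $\PP_n$ converges weakly in $(X\times X,\LD)$ to some $\PP$, and since the argument applies to any subsequence, relative compactness follows once we check $\PP[X_0]=1$. For the latter I would use that $X_0^{\eps}(\Omega; x^L,x^R,y^R,y^L)$ is complete in $\LD$, hence closed in the complete space $(X\times X,\LD)$. On the event $\{\LD^L_n\ge\eps,\LD^R_n\ge\eps\}$ the pair lies in $X_0^{\eps}$: the distance bounds hold by definition, the inclusion $X_{\simple}\subset X_0$ supplies the endpoint requirements, and the nesting $\eta^R_n\subset\overline{\Omega^L_n}$, $\eta^L_n\subset\overline{\Omega^R_n}$ is automatic for disjoint simple curves with the prescribed endpoints. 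Hence, given $u>0$, the hypothesis furnishes $\eps>0$ with $\PP_n[X_0^{\eps}]\ge 1-u$ for all $n$, and the Portmanteau inequality for the closed set $X_0^{\eps}$ gives $\PP[X_0^{\eps}]\ge\limsup_n\PP_n[X_0^{\eps}]\ge 1-u$. Since $X_0^{\eps}\subset X_0$ and $u$ is arbitrary, $\PP[X_0]=1$.

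The step that needs the most care is the descent of Condition~C2 from pairs to single curves: one must confirm that the notion of an avoidable quad for $\eta^R$ in $\Omega\setminus(\eta^L[0,\tau^L]\cup\eta^R[0,\tau^R])$ specializes, at $\tau^L=0$, to the avoidable quad of the single-curve setting of Theorem~\ref{thm::cvg_curves_chordal}, so that the Kemppainen--Smirnov machinery applies verbatim to each marginal. The other delicate point is purely topological, namely that weak convergence in $X\times X$ with all measures carried by the $F_\sigma$-set $X_0$ descends to convergence within $X_0$; this is handled by the observation that the limiting mass concentrates, up to $u$, on the closed pieces $X_0^{\eps}$, on which the subspace topology of $X_0$ agrees with that of $X\times X$.
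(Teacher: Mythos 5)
Your proof is correct and follows essentially the same route as the paper: marginal tightness for each curve via Theorem~\ref{thm::cvg_curves_chordal} (after noting that Condition C2 for the pair descends, at $\tau^L=0$ resp.\ $\tau^R=0$, to the single-curve Condition C2), followed by subsequence extraction, with the extremal-distance hypothesis forcing the subsequential limit to lie in $X_0^{\epsilon}\subset X_0$. The only difference is the finishing tool---you use joint tightness, Prohorov's Theorem and the Portmanteau inequality applied to the closed (because complete) sets $X_0^{\epsilon}$, whereas the paper applies the Skorohod Representation Theorem and argues pathwise on the event $K_{\epsilon}$---and your version is in fact slightly more explicit about the points the paper glosses over, namely the descent of Condition C2 to the marginals and the passage from marginal to joint subsequential convergence.
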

\begin{proof}
By Theorem~\ref{thm::cvg_curves_chordal}, we know that there is subsequence $n_k\to\infty$ such that $\eta^L_{n_k}$ (resp. $\eta^R_{n_k}$) converges weakly in all three topologies in Theorem~\ref{thm::cvg_curves_chordal}. By Skorohod Represnetation Theorem, we could couple all $(\eta^L_{n_k};\eta^R_{n_k})$ in a common space so that $\eta^L_{n_k}\to\eta^L$ and $\eta^R_{n_k}\to\eta^R$ almost surely. 
For $\eps>0$, define 
\[K_{\eps}=\left\{(\eta^L; \eta^R)\in X_{\simple}(\Omega; x^L, x^R, y^R, y^L): d_{\Omega^L}(\eta^L, (x^Ry^R))\ge\eps, d_{\Omega^R}(\eta^R, (y^Lx^L))\ge\eps\right\}.\]
From the assumption, we know that, for any $u>0$, there exists $\eps>0$ such that $\inf_n\PP_n[K_{\eps}]\ge 1-u$. 
Therefore, with probability at least $1-u$, the sequence $(\eta^L_{n_k}; \eta^R_{n_k})$ converges to $(\eta^L; \eta^R)$ in $X^{\eps}_0(\Omega; x^L, x^R, y^R, y^L)\subset X_0(\Omega; x^L, x^R, y^R, y^L)$. This is true for any $u>0$, thus we have $(\eta^L_{n_k};\eta^R_{n_k})$ converges to $(\eta^L;\eta^R)$ in $X_0(\Omega; x^L, x^R, y^R, y^L)$ almost surely. 
\end{proof}

%%%
\subsection{Conformal Markov Characterization of $\SLE_{\kappa}(\rho)$}
\textit{$\SLE_{\kappa}(\rho)$ processes} are variants of $\SLE_{\kappa}$ where one keeps track of one extra point on the boundary. $\SLE_{\kappa}(\rho)$ process with force point $w\in\R$ is the Loewner evolution driven by $W_t$ which is the solution to the system of integrated SDEs: 
\begin{equation*}
W_t=\sqrt{\kappa}B_t+\int_0^t\frac{\rho ds}{W_s-V_s},\quad V_t=w+\int_0^t\frac{2ds}{V_s-W_s},
\end{equation*}
where $B_t$ is one-dimensional Brownian motion. 
For $\rho\in\R$, the process is well-defined up to the first time that $w$ is swallowed.  
When $\rho>-2$, the process is well-defined for all time and it is generated by continuous transient curve. Assume $w\ge 0$, when $\rho\ge\kappa/2-2$, the curve never hits the interval $[w,\infty)$; when $\rho <\kappa/2-2$, the curve hits the interval $[w,\infty)$ at finite time; and when $\rho\le \kappa/2-4$, the curve accumulates at the point $w$ almost surely. Since the the process is conformal invariant, we can define $\SLE_{\kappa}(\rho)$ in any triangle via conformal image.

\begin{lemma}\label{lem::sle_c1}
Fix $\kappa\in (0,8)$ and $\rho>(-2)\vee(\kappa/2-4)$. Then $\SLE_{\kappa}(\rho)$ satisfies Condition C1. 
\end{lemma}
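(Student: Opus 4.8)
The plan is to establish a uniform crossing bound in $\HH$, transfer the analogous statement for ordinary $\SLE_\kappa$ through absolute continuity, and treat the neighbourhood of the force point by hand. First I would reduce to the half-plane. Since $\rho>-2$, the process is generated by a continuous transient curve, so crossing events are well defined; moreover both the extremal distance $d_Q((ab),(cd))$ and the event $\{\eta\text{ crosses }Q\}$ are conformally invariant, and $\SLE_\kappa(\rho)$ is conformally invariant, so it suffices to treat $\eta\sim\SLE_\kappa(\rho)$ in $\HH$ from $0$ to $\infty$ with force point $w\ge 0$. Concretely, the goal becomes: for every $\eps>0$ there is $c(\eps)>0$ such that every avoidable quad $(Q;a,b,c,d)$ in $\HH$ with $(bc),(da)\subset\R$ and $d_Q((ab),(cd))\ge c(\eps)$ is crossed with probability at most $1-\eps$; equivalently, that the probability of \emph{not} crossing is at least $\eps$, uniformly over such high-modulus quads.

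The main input is that ordinary $\SLE_\kappa$ already satisfies this bound: the probability that $\SLE_\kappa$ crosses an avoidable quad tends to $0$ as its extremal distance tends to infinity, which is the classical crossing estimate underlying Theorem~\ref{thm::cvg_curves_chordal}. To import it, I would use that up to any stopping time at which the curve stays at distance at least $\delta$ from $w$, the law of $\SLE_\kappa(\rho)$ is absolutely continuous with respect to that of $\SLE_\kappa$, with Radon--Nikodym derivative the Girsanov martingale $M_t=g_t'(w)^{\beta}\,O_t^{\rho/\kappa}$, where $O_t=g_t(w)-W_t$ and $\beta$ is the standard exponent. On the part of the crossing event realized by a piece of the curve staying at distance at least $\delta$ from $w$ (with $\delta$ fixed in terms of the location of $Q$), the quantities $O_t$ and $g_t'(w)$ are bounded above and below, so $M_t$ is comparable to a constant and the $\SLE_\kappa$ estimate transfers, yielding crossing probability $\le\eps/2$ once $c(\eps)$ is large.

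The remaining and genuinely delicate case is a crossing realized near $w$, and this is exactly where the hypothesis $\rho>(-2)\vee(\kappa/2-4)$ enters. Here $O_t=g_t(w)-W_t$ satisfies $dO_t=\frac{\rho+2}{O_t}\,dt-\sqrt{\kappa}\,dB_t$, so after a time change it is a Bessel process of dimension $d=1+2(\rho+2)/\kappa$; the two thresholds $\rho>-2$ and $\rho>\kappa/2-4$ correspond respectively to $d>1$ (well-posedness and transience of the curve) and to non-accumulation of the curve at $w$, as recorded in the discussion preceding the lemma. Using this Bessel description together with standard Loewner distortion estimates, I would show that the curve spends comparatively little conformal time in any small neighbourhood of $w$, so that a high-modulus quad located near $w$ is still avoided with probability bounded below; combining this with the previous case and enlarging $c(\eps)$ yields Condition~C1.

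The main obstacle is obtaining all of these estimates \emph{uniformly} in the force point $w$ and in the position of $Q$. The absolute-continuity comparison with $\SLE_\kappa$ degenerates precisely as $Q$ approaches $w$, since there $M_t$ ceases to be bounded, and controlling the crossing probability in that regime is the crux of the argument. This is exactly where the lower bound $\rho>(-2)\vee(\kappa/2-4)$, forbidding the curve from being trapped at the force point, is essential, and I expect most of the work to be the near-$w$ Bessel estimate sketched above.
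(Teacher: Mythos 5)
Your plan diverges from the paper's proof, and as written it has two genuine gaps. The paper's argument is short because it outsources everything to two quoted facts: by \cite[Eq.~(12) in the proof of Theorem 1.10]{KemppainenSmirnovRandomCurves}, crossing an avoidable quad $Q$ with $d_Q((ab),(cd))$ large forces the curve to hit a ball $B(u,r)$ with $u\in\R$ and $r/|u|$ small; and by \cite[Lemma 6.4]{WuAlternatingArmIsing} there is a function $p(\delta)\to 0$, depending only on $\kappa,\rho$ and uniform over the force point $w$, bounding the probability of hitting such a ball, as in \eqref{eqn::sle_c1_aux}. What you propose amounts to reproving that cited hitting estimate, which is legitimate in principle, but your first step fails as stated: it is \emph{not} true that on the event that the curve stays at distance $\ge\delta$ from $w$ the quantities $O_t=g_t(w)-W_t$ and $g_t'(w)$ are bounded above and below. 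Whenever the curve escapes to infinity without approaching $w$ one has $g_t'(w)\to 0$ and $O_t\to\infty$; for instance for the vertical slit $K_t=[0,i\sqrt{4t}]$ (so $g_t(z)=\sqrt{z^2+4t}$, $W_t\equiv 0$, and the hull stays at distance exactly $w$ from $w$) one computes $M_t=g_t'(w)^{\beta}O_t^{\rho/\kappa}=w^{\beta}(w^2+4t)^{\rho(\kappa-\rho)/(8\kappa)}$ with $\beta=\rho(\rho+4-\kappa)/(4\kappa)$, which is unbounded in $t$ whenever $\rho(\kappa-\rho)\neq 0$. So the Radon--Nikodym derivative between $\SLE_{\kappa}(\rho)$ and $\SLE_{\kappa}$ is not comparable to a constant on the crossing event even far from the force point; the correct statement is that $M_t$ converges to a conformally covariant (Poisson-kernel type) limit, cf.\ Proposition~\ref{prop::hypersle_mart}, and controlling that limit is exactly the nontrivial content that cannot be waved through.

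The second gap is the near-$w$ case, which you correctly identify as the crux but do not resolve. The assertion that the curve ``spends comparatively little conformal time in any small neighbourhood of $w$'' is both unproved and insufficient: crossing a quad requires no prescribed amount of capacity time, so a time-occupation bound cannot bound a crossing probability. What is needed is a quantitative non-accumulation statement, namely that uniformly over $w$ and over $u\in\R$ (including $u$ at or near $w$) the probability that the curve hits $B(u,r)$ tends to $0$ as $r/|u|\to 0$; this is where the hypothesis $\rho>(-2)\vee(\kappa/2-4)$ enters, and it is proved by an explicit martingale and optional-stopping computation as in \cite[Lemma 6.4]{WuAlternatingArmIsing}, not by a statement about time spent near $w$. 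In addition, you never invoke the reduction from quad-crossing to ball-hitting from \cite{KemppainenSmirnovRandomCurves}, which is what makes the uniformity over all avoidable quads tractable in the first place. Until the uniform hitting estimate is proved or cited and that reduction is put in place, the argument does not yield Condition C1.
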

\begin{proof}
Suppose $\eta$ is an $\SLE_{\kappa}(\rho)$ in $\HH$ from 0 to $\infty$ with force point $w\in\R$. Then there exists a function $p(\delta)\to 0$ as $\delta\to 0$ such that 
\begin{equation}\label{eqn::sle_c1_aux}
\PP[\eta\text{ hits }B(1,\delta)]\le p(\delta),
\end{equation}
and that $p$ depends only on $\kappa, \rho$ and is uniform over $w$, see for instance \cite[Lemma 6.4]{WuAlternatingArmIsing}. 

Suppose $(Q; a, b, c, d)$ is an avoidable quad for $\eta$. It is explained in \cite[Eq.(12) in the proof of Theorem 1.10]{KemppainenSmirnovRandomCurves} that $\{\eta \text{ crosses }Q\}$ implies $\{\eta\text{ hits }B(u,r)\}$ for some $u\in\R, r>0$ such that 
\[\frac{r}{|u|}=\left(\frac{\exp(\pi d_Q((ab), (cd)))}{16}-1\right)^{-1}.\] 
Combining with \eqref{eqn::sle_c1_aux}, we see that $\eta$ satisfies Condition C1. 
\end{proof}
\begin{lemma}\label{lem::sle_kapparho_targetchanging}
\cite[Theorem 3]{SchrammWilsonSLECoordinatechanges}.
Fix $\kappa>0$ and $\rho\in\R$. Fix a triangle $(\Omega; x, w, y)$, let $\eta$ be an $\SLE_{\kappa}(\rho)$ in $\Omega$ from $x$ to $y$ with force point $w$. Then $\eta$ has the same law as $\SLE_{\kappa}(\kappa-6-\rho)$ in $\Omega$ from $x$ to $w$ with force point $y$, up to the first time that the curve disconnects $w$ from $y$. 
\end{lemma}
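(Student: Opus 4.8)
The plan is to reduce to the upper half-plane and then carry out an explicit time-dependent M\"obius conjugation that re-aims the Loewner chain from the old target at the force point. By the conformal invariance built into the definition of $\SLE_\kappa(\rho)$, it suffices to treat the standard triangle $(\HH; 0, w, \infty)$: mapping $(\Omega; x, w, y)$ conformally to $\HH$ with $x\mapsto 0$, $y\mapsto\infty$, $w\mapsto w\in\R$ turns the left-hand process into the $\SLE_\kappa(\rho)$ in $\HH$ from $0$ to $\infty$ with force point $w$, and turns the assertion into the claim that this process agrees in law, up to the first time $\tau$ at which the curve disconnects $w$ from $\infty$, with the $\SLE_\kappa(\kappa-6-\rho)$ in $\HH$ from $0$ to $w$ with force point $\infty$. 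Since the driving function determines the curve (as recalled in the Loewner subsection) and the target-changed SDE has a unique solution up to the swallowing time, it is enough to match the two driving-function SDEs on $[0,\tau)$.

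Concretely, I would let $(W_t,V_t)$ solve the given system $dW_t=\sqrt\kappa\,dB_t+\rho(W_t-V_t)^{-1}dt$, $dV_t=2(V_t-W_t)^{-1}dt$, with $g_t$ the associated Loewner maps. Let $\psi_t$ be the M\"obius transformation of $\HH$ sending $V_t=g_t(w)$ to $\infty$, normalized so that $\tilde g_t:=\psi_t\circ g_t$ is a Loewner map for the chain seen from $w$, and set $\hat W_t=\psi_t(W_t)$ (the candidate new driving function) and $\hat O_t=\psi_t(\infty)$ (the image of the old target, the candidate new force point). I would then apply It\^o's formula to $\hat W_t=\psi_t(W_t)$, noting that $\psi_t$ depends on time only through $(W_t,V_t)$ and that $\psi_t'$, $g_t'(w)$ evolve by the Loewner flow, and finally reparametrize by the half-plane capacity measured from $w$. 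The expected outcome is that, after the time change, $\hat W$ solves an $\SLE_\kappa(\kappa-6-\rho)$ driving SDE with force point $\hat O$, i.e.\ a Brownian term $\sqrt\kappa\,d\hat B$ (identified via L\'evy's characterization once the quadratic variation is checked) plus drift $(\kappa-6-\rho)(\hat W-\hat O)^{-1}$, with $\hat O$ tracking the image of $\infty$ exactly as $g_t(y)$ for the target-changed process.

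The main obstacle is the It\^o bookkeeping of the conjugation. The conjugation contributes a martingale part that must be reorganized, through the time change and L\'evy's theorem, into a clean $\sqrt\kappa$-Brownian motion, together with a deterministic part coming both from the motion of $\psi_t$ and from the second-order (Schwarzian-type) terms of the nonlinear change of variables. It is precisely these second-order terms, carrying the boundary weight $h=(6-\kappa)/(2\kappa)$ through the Jacobian factors $g_t'(w)$, that produce the shift by $\kappa-6$ and the sign reversal $\rho\mapsto\kappa-6-\rho$; organizing this computation so that the drift coefficient comes out to be exactly $\kappa-6-\rho$, and verifying that the quadratic variation of $\hat W$ is $\kappa$ in the new time, is where the real work lies. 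An alternative, equivalent route is to write both processes as $\SLE_\kappa$ reweighted by a local martingale built from a two-boundary-point function $Z=(V^w-W)^{\rho/\kappa}(V^y-W)^{(\kappa-6)/\kappa}(V^y-V^w)^\mu$ and to observe that swapping the roles of the target exponent and the force-point exponent reproduces the same $\partial_W\log Z$ only under $\rho\mapsto\kappa-6-\rho$; I expect the M\"obius-conjugation computation to be the cleaner of the two to present.
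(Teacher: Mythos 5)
Your proposal is correct, and it is essentially the paper's own approach: the paper offers no argument for this lemma beyond the citation to \cite[Theorem 3]{SchrammWilsonSLECoordinatechanges}, and the proof of that cited theorem is exactly your time-dependent M\"obius conjugation (re-aiming the chain at the force point, It\^o bookkeeping, capacity time change, with the second-order terms producing the $\kappa-6$ shift and the swap $\rho\mapsto\kappa-6-\rho$). Indeed the paper carries out this very computation explicitly elsewhere, in the proof of Lemma~\ref{lem::hyperSLE_aroundTy}, where the map $f_t$, the expansion of $\tilde g_s=f_t\circ g_t\circ f^{-1}$ at $\infty$, and the time change $\dot s(t)=f_t'(W_t)^2$ appear in precisely the form you describe.
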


Next, we explain the conformal Markov characterization of $\SLE_{\kappa}(\rho)$ derived in \cite{MillerSheffieldIG2}. Recall that $\LT$ is the collection of all triangles. 
\begin{definition}\label{def::CMP_threepoints}
Suppose $(\PP_c, c\in\LT)$ is a family of probability measures on continuous curves from $x$ to $y$ in $\Omega$. We say that $(\PP_c, c\in\LT)$ satisfies conformal Markov property (CMP) if it satisfies the following two properties.
\begin{itemize}
\item Conformal invariance. Suppose that $c=(\Omega; x, w, y), \tilde{c}=(\tilde{\Omega}; \tilde{x}, \tilde{w}, \tilde{y})\in\LT$, and $\psi:\Omega\to \tilde{\Omega}$ is the conformal map with $\psi(x)=\tilde{x}, \psi(w)=\tilde{w}, \psi(y)=\tilde{y}$. Then for $\eta\sim\PP_c$, we have $\psi(\eta)\sim\PP_{\tilde{c}}$.
\item Domain Markov property. Suppose $\eta\sim\PP_c$, then for every $\eta$-stopping time $\tau$, the conditional law of $(\eta|_{t\ge\tau})$ given $\eta[0,\tau]$ is the same as $\PP_{c_{\tau}}$ where $c_{\tau}=(\Omega_{\tau}; \eta(\tau), w_{\tau}, y)$. Here $\Omega_{\tau}$ is the connected component of $\Omega\setminus\eta[0,\tau]$ with $y$ on the boundary, and $w_{\tau}=w$ if $w$ is not swallowed by $\eta[0,\tau]$ and $w_{\tau}$ is the last point of $\eta[0,\tau]\cap (xy)$ if $w$ is swallowed by $\eta[0,\tau]$.  
\end{itemize}
\end{definition}
\begin{theorem}\label{thm::CMP_threepoints}
\cite[Theorem 1.4]{MillerSheffieldIG2}. Suppose $(\PP_c, c\in\LT)$ satisfies CMP in Definition~\ref{def::CMP_threepoints} and Condition C1, then there exist $\kappa\in (0,8)$ and $\rho>(-2)\vee(\kappa/2-4)$ such that, for each $c=(\Omega; x, w, y)\in\LT$, $\PP_c$ is the law of $\SLE_{\kappa}(\rho)$ in $\Omega$ from $x$ to $y$ with force point $w$.
\end{theorem}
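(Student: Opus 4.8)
The plan is to follow Schramm's original derivation of $\SLE$, adapted to keep track of the extra marked point. First I would invoke conformal invariance to transport everything to the upper half-plane: by the first bullet of Definition~\ref{def::CMP_threepoints} it suffices to describe $\PP_c$ for the configuration $c=(\HH;0,w,\infty)$ with $w>0$, since every triangle is conformally equivalent to such a configuration. Next, since we assume Condition C1 and CMP, the remark preceding Theorem~\ref{thm::cvg_curves_chordal} yields Condition C2, so Theorem~\ref{thm::cvg_curves_chordal} (Kemppainen--Smirnov) applies: under $\PP_c$ the curve $\eta$ is almost surely generated by a continuous Loewner chain with a continuous driving function $W_t$. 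Writing $V_t=g_t(w)$ for the image of the force point, the pair $(W_t,V_t)$ becomes the relevant state variable, and the single displacement $O_t:=W_t-V_t$ will carry all the geometric information.

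The second step is to extract the Markov and scaling structure. The domain Markov property states that, conditioned on $\eta[0,\tau]$, the image of the remaining curve under $g_\tau(\cdot)-W_\tau$ is distributed as $\PP_{c'}$ with $c'=(\HH;0,V_\tau-W_\tau,\infty)$; combined with conformal invariance this shows that $(O_t)$ is, after the natural Loewner time change, a time-homogeneous Markov process, and that the conditional law of the increments of $W_t$ depends on the current state only through $O_t$. Conformal invariance under the scaling $z\mapsto\lambda z$ (with the accompanying reparameterization $t\mapsto\lambda^2 t$) shows moreover that the law with force point $\lambda w$ is the $\lambda$-rescaling of the law with force point $w$; hence the dynamics of $O_t$ are scale covariant and governed by a single one-dimensional mechanism.

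The third and decisive step is to promote the continuous Markov process $W_t$ to a semimartingale and identify its characteristics. Once $W_t$ is known to be a semimartingale, write $W_t=M_t+A_t$ with $M_t$ a continuous local martingale and $A_t$ of finite variation. The Markov property forces the instantaneous diffusion rate of $M$ and the drift rate of $A$ to be deterministic functions of the single state variable $O_t$; scale covariance then pins the diffusion rate down to a constant $\kappa>0$ and the drift rate to $\rho/(W_t-V_t)$ for a constant $\rho$, since these are the only scale-covariant choices of the correct dimension. Thus $W_t$ solves exactly the defining SDE of $\SLE_\kappa(\rho)$; since $\PP_c$ is determined by its driving function it coincides with $\SLE_\kappa(\rho)$ in $\HH$ from $0$ to $\infty$ with force point $w$, and conformal invariance extends this to every triangle. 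Finally, the admissible ranges are dictated by the regularity we assumed: Condition C1 excludes the degenerate regimes, ruling out space-filling behaviour (so $\kappa\in(0,8)$) and forcing $\rho>-2$ (the curve is continuous and transient) together with $\rho>\kappa/2-4$ (the curve does not accumulate at $w$), whence $\rho>(-2)\vee(\kappa/2-4)$.

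I expect the main obstacle to be the third step: showing that the continuous driving function is genuinely a semimartingale and separating the Brownian part from the force-point drift. Conformal invariance and the domain Markov property only yield a continuous Markov process a priori, and a continuous Markov process need not be a semimartingale; upgrading it to an It\^o diffusion with the claimed generator, without circularly assuming the SDE, is the technical core of \cite{MillerSheffieldIG2} and relies essentially on the uniform regularity estimates furnished by Condition C2 and Theorem~\ref{thm::cvg_curves_chordal}.
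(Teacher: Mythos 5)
First, some context your blind attempt could not have had: the paper does not prove Theorem~\ref{thm::CMP_threepoints} at all. It is imported verbatim from \cite[Theorem 1.4]{MillerSheffieldIG2}, and the only argument the paper supplies is the remark after the statement, namely that Condition C1 implies the hypothesis actually used in \cite{MillerSheffieldIG2} (that the Lebesgue measure of $\eta\cap\partial\Omega$ vanishes almost surely when $\partial\Omega$ is smooth), so that the quoted result applies. Your proposal instead attempts a proof from scratch, and must be judged as such. Your first two steps are sound and match the standard reduction: conformal invariance reduces to $(\HH;0,w,\infty)$; Condition C1 together with CMP gives Condition C2 and hence a curve described by a Loewner chain with continuous driving function $W_t$ (though note that Theorem~\ref{thm::cvg_curves_chordal} is a tightness statement for sequences; for a single simple curve the continuous driving function comes from the deterministic discussion in Section~\ref{sec::pre}); and the domain Markov property plus scaling show that the law of the future depends on the past only through the recentered force point $V_t-W_t$, with Brownian scale covariance.

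The genuine gap is exactly where you place it, and it is not a removable technicality: it is the entire content of the theorem. Nothing in your first two steps makes $W_t$ a semimartingale --- a continuous strong Markov process need not be one, and Condition C2 furnishes crossing estimates, not It\^o characteristics; moreover, even granting the semimartingale property, the assertion that the Markov property ``forces'' the drift and quadratic variation to be time-absolutely-continuous with coefficients that are deterministic functions of $V_t-W_t$ is itself something to prove, not a consequence of the definitions. You acknowledge this and then outsource the step to ``the technical core of \cite{MillerSheffieldIG2}'' --- but that is the source of the theorem being proved, so as a standalone proof the proposal is circular: its decisive step is the citation. For the record, the way this gap is actually closed is not by upgrading $W_t$ directly. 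One studies the single real process $X_t=V_t-W_t\ge 0$, which by CMP and scaling is a continuous strong Markov process on $[0,\infty)$ with Brownian scaling; classical one-dimensional diffusion theory (a regular diffusion is determined by its scale function and speed measure, with no semimartingale hypothesis) combined with scale invariance forces $X$ to be $\sqrt{\kappa}$ times a Bessel process of some dimension. Since $V_t=w+\int_0^t 2\,ds/X_s$ is an explicit finite-variation process, $W=V-X$ is then automatically a semimartingale solving the $\SLE_{\kappa}(\rho)$ equation, and the constraints $\kappa\in(0,8)$ and $\rho>(-2)\vee(\kappa/2-4)$ follow, much as you indicate, from requiring $\PP_c$ to be supported on continuous curves running from $x$ all the way to $y$.
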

In \cite[Theorem 1.4]{MillerSheffieldIG2}, the authors did not require Condition C1; instead, they required the assumption that, when $\partial\Omega$ is smooth, the Lebesgue measure of $\eta\cap\partial\Omega$ is zero almost surely. Note that Condition C1 implies this latter assumption, and we find Condition C1 is more natural, since it is the continuum counterpart of RSW for critical lattice model.

%%%
\subsection{SLE with Multiple Force Points}
\textit{$\SLE_{\kappa}(\underline{\rho})$ processes}
are variants of $\SLE_{\kappa}$ where one keeps 
track of multiple points on the boundary. Suppose $\underline{y}=(0\le y_1<y_2<\cdots<y_n)$ and $\underline{\rho}=(\rho_1, \ldots, \rho_n)$ with $\rho_i\in\R$. 
An $\SLE_{\kappa}(\underline{\rho})$ process with force points 
$\underline{y}$ is the Loewner evolution driven by $W_t$ 
which is the solution to the following system of integrated SDEs: 
\begin{equation*}
W_t = \sqrt{\kappa} B_t +
\sum_{i=1}^n 
\int_0^t\frac{\rho_i ds}{W_s-V_s^{i}} ,\quad 
V^{i}_t = y_i + \int_0^t\frac{2ds}{V^{i}_s-W_s} , 
\quad \text{for }1\le i\le n,
\end{equation*}
where $B_t$ is an one-dimensional Brownian motion. Note that the process $V_t^{i}$ is 
the time evolution of the point $y_i$, and we may write $g_t(y_i)$ for $V_t^{i}$. 
We define the \textit{continuation threshold} 
of the $\SLE_{\kappa}(\underline{\rho})$ 
to be the infimum of the time $t$ for which 
\[ \sum_{i : V^{i}_t=W_t} \rho_i\le -2 . \]
By~\cite{MillerSheffieldIG1}, 
the $\SLE_{\kappa}(\underline{\rho})$
process is well-defined up to the continuation threshold, and it is almost surely
generated by a continuous curve up to and including the continuation threshold. The Radon-Nikodym derivative between $\SLE_{\kappa}(\underline{\rho})$ and $\SLE_{\kappa}$ is given by the following lemma. 

\begin{lemma}\label{lem::sle_kapparho_mart}
\cite{SchrammWilsonSLECoordinatechanges}.
The process $\SLE_{\kappa}(\underline{\rho})$ with force points $\underline{y}$ is the same as $\SLE_{\kappa}$ process weighted by the following local martingale, up to the first time that $y_1$ is swallowed:
\[M_t=\prod_{1\le i\le n} \left(g_t'(y_i)^{\rho_i(\rho_i+4-\kappa)/(4\kappa)}(g_t(y_i)-W_t)^{\rho_i/\kappa}\right)\times \prod_{1\le i<j\le n}(g_t(y_j)-g_t(y_i))^{\rho_i\rho_j/(2\kappa)}.\]
\end{lemma}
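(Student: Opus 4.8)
The plan is to recognize this as a standard Girsanov tilting computation: one starts from $\SLE_{\kappa}$, whose driving function satisfies $dW_t=\sqrt{\kappa}\,dB_t$, verifies directly that the proposed $M_t$ is a local martingale under the $\SLE_{\kappa}$ law, and then reads off from Girsanov's theorem the drift that the tilting introduces into $W_t$, checking that it coincides with the defining $\SLE_{\kappa}(\underline{\rho})$ equation. All computations are carried out up to the first time $y_1$ is swallowed, so that every factor $(g_t(y_i)-W_t)$ and every difference $g_t(y_j)-g_t(y_i)$ stays strictly positive and $M_t$ is a smooth positive semimartingale.

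First I would introduce the shorthand $V_t^i=g_t(y_i)$ and $X_t^i=V_t^i-W_t$, and recall the Loewner flow identities $dV_t^i=2/(V_t^i-W_t)\,dt$ (so that each $V^i$, and hence each difference $V^j-V^i$, has no martingale part) together with $\partial_t\log g_t'(y_i)=-2/(X_t^i)^2$. Writing $\log M_t$ as a sum of the three families of logarithmic terms and applying It\^o's formula, the only source of a stochastic differential is $d\log X_t^i$, whose martingale part is $-(\sqrt{\kappa}/X_t^i)\,dB_t$; weighting by the exponent $\rho_i/\kappa$ and summing gives the martingale part $-\kappa^{-1/2}\sum_i(\rho_i/X_t^i)\,dB_t$ of $d\log M_t$.

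The key step is to check that the drift of $dM_t/M_t=d\log M_t+\tfrac12 d\langle\log M\rangle_t$ vanishes. Collecting all the $1/(X_t^i)^2$ terms and all the cross terms $1/(X_t^iX_t^j)$, and using $d\langle\log M\rangle_t=\kappa^{-1}(\sum_i\rho_i/X_t^i)^2\,dt$, the cancellation reduces to the two algebraic identities
\begin{equation*}
-2a_i+b_i\Bigl(2-\frac{\kappa}{2}\Bigr)+\frac{\rho_i^2}{2\kappa}=0,\qquad -2c_{ij}+\frac{\rho_i\rho_j}{\kappa}=0,
\end{equation*}
where $a_i=\rho_i(\rho_i+4-\kappa)/(4\kappa)$, $b_i=\rho_i/\kappa$, $c_{ij}=\rho_i\rho_j/(2\kappa)$ are exactly the exponents appearing in the statement. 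Substituting confirms both identities, so $M_t$ is a local martingale. This coefficient matching is the one genuinely computational point, but it is purely algebraic once the It\^o expansion is organized by the type of singular factor.

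Finally I would apply Girsanov's theorem. Under the law weighted by $M_t$, the process $\tilde B_t=B_t+\kappa^{-1/2}\sum_i\int_0^t(\rho_i/X_s^i)\,ds$ is a Brownian motion, and substituting back yields
\begin{equation*}
dW_t=\sqrt{\kappa}\,d\tilde B_t+\sum_i\frac{\rho_i}{W_t-V_t^i}\,dt,
\end{equation*}
since $-1/X_t^i=1/(W_t-V_t^i)$. This is precisely the integrated SDE defining $\SLE_{\kappa}(\underline{\rho})$ with force points $\underline{y}$, valid up to the first swallowing time of $y_1$, which completes the argument. The main obstacle is bookkeeping rather than conceptual: one must keep track of the fact that the $V_t^i$ are of finite variation, so that only the $\log X_t^i$ terms contribute to quadratic variation, and organize the singular factors so that the drift cancellation is transparent.
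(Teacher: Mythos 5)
Your proof is correct: the It\^o expansion, the two exponent identities, and the Girsanov step all check out, and the drift you recover is exactly the integrated SDE defining $\SLE_{\kappa}(\underline{\rho})$, valid before $y_1$ is swallowed where all factors $g_t(y_i)-W_t$ and $g_t(y_j)-g_t(y_i)$ stay positive. The paper gives no proof of this lemma—it quotes it from \cite{SchrammWilsonSLECoordinatechanges}—and your computation is precisely the standard argument underlying that citation, so the approaches coincide.
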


%%%
\section{Hypergeometric SLE: Continuity and Reversibility}
\label{sec::hypersle}
%%%
\subsection{Definition of $\hSLE$}
Fix $\kappa\in (0,8)$ and $\nu\in\R$, and four boundary points $x_1<x_2<x_3<x_4$.  
We first define function $F$ which is a solution to the following Euler's hypergeometric differential equation
\begin{equation}\label{eqn::euler_ode}
z(1-z)F''(z)+\left(\frac{2\nu+8}{\kappa}-\frac{2\nu+2\kappa}{\kappa}z\right)F'(z)-\frac{2(\nu+2)(\kappa-4)}{\kappa^2}F(z)=0. 
\end{equation}
When $\kappa\in (0,8)$ and $\nu>(-4)\vee(\kappa/2-6)$, define $F$ to be the hypergeometric function (see Appendix~\ref{sec::appendix}):
\begin{equation}\label{eqn::hSLE_hyperF}
F(z):=\hF\left(\frac{2\nu+4}{\kappa}, 1-\frac{4}{\kappa}, \frac{2\nu+8}{\kappa}; z\right).
\end{equation}

\begin{lemma}\label{lem::hyperF_bound}
When $\kappa\in (0,8)$ and $\nu>(-4)\vee(\kappa/2-6)$, the function $F$ defined in~\eqref{eqn::hSLE_hyperF} is monotone for $z\in [0,1]$. In particular, for all $z\in [0,1]$, 
\[0<1\wedge F(1)\le F(z)\le 1\vee F(1)<\infty. \]
\end{lemma}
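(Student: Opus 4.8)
The plan is to read off from Euler's equation \eqref{eqn::euler_ode} the standard hypergeometric parameters
\[
a=\frac{2\nu+4}{\kappa},\qquad b=1-\frac{4}{\kappa},\qquad c=\frac{2\nu+8}{\kappa},
\]
so that $F=\hF(a,b,c;\cdot)$ with $F(0)=1$, $a+b+1=(2\nu+2\kappa)/\kappa$ and $ab=2(\nu+2)(\kappa-4)/\kappa^{2}$. First I would record that, under $\kappa\in(0,8)$ and $\nu>(-4)\vee(\kappa/2-6)$, the four combinations
\[
c=\frac{2\nu+8}{\kappa},\quad c-a-b=\frac{8-\kappa}{\kappa},\quad c-a=\frac{4}{\kappa},\quad c-b=\frac{2\nu+12-\kappa}{\kappa}
\]
are all strictly positive: the first from $\nu>-4$, the second from $\kappa<8$, the third always, and the fourth from $\nu>\kappa/2-6$. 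Since $c-a-b>0$, Gauss's evaluation gives $F(1)=\Gamma(c)\Gamma(c-a-b)/(\Gamma(c-a)\Gamma(c-b))$; this is finite, and since all four gamma arguments are positive it is strictly positive. This pins down the endpoint values $F(0)=1$ and $0<F(1)<\infty$.

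Next I would prove $F>0$ on all of $[0,1)$, which is what makes the monotonicity argument work. The clean route is Euler's transformation
\[
F(z)=(1-z)^{(8-\kappa)/\kappa}\,\hF\Big(\tfrac{4}{\kappa},\ \tfrac{2\nu+12-\kappa}{\kappa},\ \tfrac{2\nu+8}{\kappa};\ z\Big),
\]
whose three parameters $c-a$, $c-b$ and $c$ are exactly the positive quantities above. Consequently the transformed series $\sum_{n\ge0}\frac{(c-a)_n(c-b)_n}{(c)_n\,n!}z^n$ has only positive coefficients and is $\ge 1$ on $[0,1)$, while $(1-z)^{(8-\kappa)/\kappa}>0$. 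Hence $F(z)>0$ for every $z\in[0,1)$, and together with $F(1)>0$ we obtain $F>0$ on $[0,1]$.

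Finally, monotonicity follows from a maximum-principle reading of \eqref{eqn::euler_ode}. If $ab=0$, i.e. $\nu=-2$ or $\kappa=4$, then $F\equiv 1$ and the claim is trivial, so I assume $ab\neq0$; then $F'(0)=ab/c$ has the sign of $ab$. At any interior critical point $z_0\in(0,1)$ the equation collapses to $z_0(1-z_0)F''(z_0)=ab\,F(z_0)$, and since $F(z_0)>0$ and $z_0(1-z_0)>0$ the sign of $F''(z_0)$ equals the sign of $ab$. Thus every interior critical point is a strict local minimum when $ab>0$ and a strict local maximum when $ab<0$; either conclusion contradicts the fact that just to the left of the first critical point $F'$ still carries the sign of $F'(0)$. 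Therefore $F'$ never vanishes on $(0,1)$ and $F$ is strictly monotone, increasing if $ab>0$ and decreasing if $ab<0$. The bound $1\wedge F(1)\le F(z)\le 1\vee F(1)$ is then immediate from monotonicity and the endpoint values.

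The main obstacle is precisely the positivity of $F$ in the interior: one needs it to fix the sign of $F''$ at a would-be critical point, and in the decreasing regime $ab<0$ no soft argument rules out $F$ dipping through zero. Euler's transformation to the all-positive parameter triple $(c-a,\,c-b,\,c)$ is what removes this difficulty. I would avoid basing the argument on the derivative identity $F'=(ab/c)\,\hF(a+1,b+1,c+1;\cdot)$, since the shifted parameters can leave the range where the Euler integral representation is available, whereas the direct ODE analysis above is uniform over the whole admissible range of $(\kappa,\nu)$; checking that the four displayed combinations are positive in exactly this range is the remaining point requiring care.
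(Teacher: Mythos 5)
Your proof is correct, and in the decisive case it takes a genuinely different route from the paper's. Where the two overlap: for $ab>0$ the paper's appendix Lemma~\ref{lem::hyperF_increasing} is essentially your critical-point argument, except that there the needed positivity $F(z_0)>0$ comes for free, since $F$ has increased from $F(0)=1$ up to the first critical point. Where they diverge is exactly the decreasing case $ab<0$, which you correctly identified as the crux: the paper never establishes interior positivity of $F$ at all, and instead invokes the derivative identity $F^{(n)}(z)=\frac{(A)_n(B)_n}{(C)_n}\,\hF(A+n,B+n,C+n;z)$, chooses $n$ so that all shifted parameters are nonnegative (hence the shifted series is $\ge 1$), reads off the sign of $F^{(n)}$, and then runs a backward induction on the order of the derivative, evaluating each $F^{(j)}(1)$ by Gauss's formula~\eqref{eqn::hyperF_one}; this is carried out in two separate sub-cases, $\kappa<4$, $\nu>-2$ and $\kappa\in(4,8)$, $-2>\nu>\kappa/2-6$. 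Your substitute---Euler's transformation $F(z)=(1-z)^{(8-\kappa)/\kappa}\,\hF\left(c-a,c-b,c;z\right)$ with the triple $(c-a,c-b,c)$ strictly positive, giving $F>0$ on $[0,1)$ and thereby fixing the sign of $F''$ at any interior critical point---collapses both signs of $ab$ into a single maximum-principle argument. This buys brevity and uniformity over the whole admissible range of $(\kappa,\nu)$ and eliminates the case split and induction; the paper's longer route yields, as a by-product, the alternating monotonicity of all lower-order derivatives, which the lemma does not need. It is worth noting that the derivative identity you chose to avoid is precisely the engine of the paper's proof, though the paper exploits positivity of series coefficients after shifting rather than any integral representation, so the parameter-range concern you raise does not arise there. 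One point to make explicit in a final write-up: extending the two-sided bound from $[0,1)$ to the closed interval uses continuity of $F$ at $z=1$, which holds because the defining series converges absolutely on $[0,1]$ when $C>A+B$.
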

\begin{proof}
Denote by 
\[A=\frac{2\nu+4}{\kappa},\quad B=1-\frac{4}{\kappa},\quad C=\frac{2\nu+8}{\kappa}.\]
Since $C>0, C>A, C>B$ and $C>A+B$, we know that $F(1)\in (0, \infty)$ by~\eqref{eqn::hyperF_one}. Thus we only need to show that $F$ is monotone. 
If $AB>0$, then $F$ is increasing by Lemma~\ref{lem::hyperF_increasing}. If $AB=0$, we have $F\equiv 1$. 

In the following, we assume $AB<0$. First assume $B<0<A$, i.e. $\kappa<4$ and $\nu>-2$. There exists $n\in\{1, 2,\ldots\}$ such that $1>B+n\ge 0$. In other words, $4/\kappa>n\ge 4/\kappa-1$. 
By \cite[Eq. (15.2.2)]{AbramowitzHandbook}, we have
\[F^{(n)}(z)=\frac{(A)_n(B)_n}{(C)_n}\hF(A+n, B+n, C+n; z).\]
Since $A+n\ge 0, B+n\ge 0, C+n\ge 0$, we have $\hF(A+n, B+n, C+n; z)\ge 1$. Moreover 
\[C=A+B+\frac{8}{\kappa}-1>A+B+n-1.\]
and thus $\hF(A+j, B+j, C+j; 1)\in (0,\infty)$ for $0\le j\le n-1$. 
If $n$ is even, we have $F^{(n)}(z)\ge 0$. Thus $F^{(n-1)}(\cdot)$ is increasing. In particular,
\[F^{(n-1)}(z)\le F^{(n-1)}(1)=\frac{(A)_{n-1}(B)_{n-1}}{(C)_{n-1}}\hF(A+n-1, B+n-1, C+n-1; 1)\le 0. \]
Thus $F^{(n-2)}(\cdot)$ is decreasing and $F^{(n-2)}(z)\ge F^{(n-2)}(1)\ge 0$. In this way, we could argue that $F^{(n-j)}(\cdot)$ is decreasing for even $j$ and it is increasing for odd $j$. In particular, $F$ is decreasing. 

If $n$ is odd, we have $F^{(n)}(z)\le 0$. Thus $F^{(n-1)}(\cdot)$ is decreasing, and thus
\[F^{(n-1)}(z)\ge F^{(n-1)}(1)=\frac{(A)_{n-1}(B)_{n-1}}{(C)_{n-1}}\hF(A+n-1, B+n-1, C+n-1; 1)\ge 0.\]
Thus $F^{(n-2)}(\cdot)$ is increasing, and in particular $F^{(n-2)}(z)\ge 1$. In this way, we could argue that $F^{(n-j)}(\cdot)$ is increasing for even $j$ and it is decreasing for odd $j$. In particular, $F$ is decreasing. 

Next assume $A<0<B$, i.e. $\kappa\in (4,8)$ and $-2>\nu>\kappa/2-6$. There exists $n\in\{1,2,\ldots\}$ such that 
$1>A+n\ge 0$. In other words, $1-(2\nu+4)/\kappa>n\ge (2\nu+4)/\kappa$.
Since $A+n\ge 0, B+n\ge 0, C+n\ge 0$, we have $\hF(A+n, B+n, C+n; z)\ge 1$. Moreover, since $\nu>\kappa/2-6$, we have
\[C=A+B+\frac{8}{\kappa}-1>A+B+n-1,\]
and thus $\hF(A+j, B+j, C+j; 1)\in (0, \infty)$ for $0\le j\le n-1$. We can repeat the same argument as above, and we see that $F$ is decreasing.
\end{proof}

When $\kappa\in (0,8)$ and $\nu\le (-4)\vee(\kappa/2-6)$, define $F$ to be the following: 
\begin{equation}\label{eqn::hSLE_hyperF_reflect}
F(z):=(1-z)^{8/\kappa-1}G(1-z),\quad \text{where}\quad G(z)=\hF\left(\frac{2\nu+12-\kappa}{\kappa}, \frac{4}{\kappa}, \frac{8}{\kappa}; z\right).
\end{equation}
Note that the function $F$ defined in both~\eqref{eqn::hSLE_hyperF} and~\eqref{eqn::hSLE_hyperF_reflect} are solutions to~\eqref{eqn::euler_ode}. By a similar argument as in Lemma~\ref{lem::hyperF_bound}, the function $G(z)$ is uniformly bounded for $z\in [0,1]$ by $G(0)=1$ and $G(1)\in (0,\infty)$. 

Set
\begin{equation}\label{eqn::hSLE_constants}
h=\frac{6-\kappa}{2\kappa}, \quad a=\frac{\nu+2}{\kappa},\quad b=\frac{(\nu+2)(\nu+6-\kappa)}{4\kappa}.
\end{equation}
Define partition function
\begin{equation}\label{eqn::hSLE_partition}
\PartF_{\kappa,\nu}(x_1, x_2, x_3, x_4)=(x_4-x_1)^{-2h}(x_3-x_2)^{-2b}z^aF(z),\quad \text{where}\quad z=\frac{(x_2-x_1)(x_4-x_3)}{(x_3-x_1)(x_4-x_2)}.
\end{equation}
Since $\PartF_{\kappa,\nu}$ is conformal covariant under M\"{o}bius maps of $\HH$, we could define partition function for any quad $q=(\Omega; x_1, x_2, x_3, x_4)$ via conformal image: 
\begin{equation}\label{eqn::hSLE_partition_general}
\PartF_{\kappa, \nu}(\Omega; x_1, x_2, x_3, x_4)=\varphi'(x_1)^h\varphi'(x_2)^b\varphi'(x_3)^b\varphi'(x_4)^h\PartF_{\kappa,\nu}(\varphi(x_1), \varphi(x_2), \varphi(x_3), \varphi(x_4)),
\end{equation}
where $\varphi$ is any conformal map from $\Omega$ onto $\HH$ such that $\varphi(x_1)<\varphi(x_2)<\varphi(x_3)<\varphi(x_4)$. 

The process $\hSLE_{\kappa}(\nu)$ in $\HH$ from $x_1$ to $x_4$ with marked points $(x_2, x_3)$ is the Loewner chain driven by $W_t$ which is the solution to the following SDEs: 
\begin{align*}
dW_t=\sqrt{\kappa}dB_t+\kappa(\partial_{x_1}\log\PartF_{\kappa, \nu})(W_t, g_t(x_2), g_t(x_3), g_t(x_4))dt,\quad \partial_t g_t(x_i)=\frac{2}{g_t(x_i)-W_t},\quad\text{for }i=2,3,4. 
\end{align*}
In particular, this implies that the law of $\eta$ is the same as $\SLE_{\kappa}$ in $\HH$ from $x_1$ to $\infty$ weighted by the following local martingale:
\begin{equation}\label{eqn::hypersle_mart}
M_t=g_t'(x_2)^{b}g_t'(x_3)^bg_t'(x_4)^{h}\PartF_{\kappa, \nu}(W_t, g_t(x_2), g_t(x_3), g_t(x_4)).
\end{equation}

It is clear that the process $\hSLE$ is well-defined up to the swallowing time of $x_2$. We will show that the process is well-defined for all time when $\nu>(-4)\vee(\kappa/2-6)$ in Section~\ref{subsec::hypersle_continuity}, and that the process is well-defined up to the swallowing time of $x_2$ when $\nu\le (-4)\vee(\kappa/2-6)$ in Section~\ref{subsec::hypersle_continuity_lownu}.
In the following lemma, we explain the relation between $\hSLE_{\kappa}(\nu)$ and $\SLE_{\kappa}(\rho)$. 

\begin{lemma}\label{lem::hSLE_degenerate}
Fix $\kappa\in (0,8), \nu\in\R$ and assume $x_1<x_2<x_3<x_4$. When $x_3\to x_4$, the process $\hSLE_{\kappa}(\nu)$ in $\HH$ from $x_1$ to $x_4$ with marked points $(x_2, x_3)$ converges weakly to $\SLE_{\kappa}(\nu+2)$ in $\HH$ from $x_1$ to $x_4$ with force point $x_2$. 
\end{lemma}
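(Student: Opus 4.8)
The plan is to realize both processes as the chordal $\SLE_\kappa$ from $x_1$ to $\infty$ reweighted by explicit positive martingales, and then to show that the Radon--Nikodym derivative between them tends to $1$ as $x_3\to x_4$. By the conformal covariance~\eqref{eqn::hSLE_partition_general} I may assume $x_4=\infty$. Then, reading off~\eqref{eqn::hypersle_mart} with the $x_4$-factors absorbed into the normalization, the $\hSLE_\kappa(\nu)$ is $\SLE_\kappa$ from $x_1$ to $\infty$ weighted by
\[
M_t=g_t'(x_2)^b\,g_t'(x_3)^b\,\PartF_{\kappa,\nu}(W_t,g_t(x_2),g_t(x_3),\infty),\qquad \PartF_{\kappa,\nu}(w,p,q,\infty)=(q-p)^{-2b}z^aF(z),\ z=\tfrac{p-w}{q-w}.
\]
By Lemma~\ref{lem::sle_kapparho_mart} (single force point $x_2$, weight $\rho=\nu+2$) the target process $\SLE_\kappa(\nu+2)$ with force point $x_2$ is $\SLE_\kappa$ weighted by $\tilde M_t=g_t'(x_2)^b(g_t(x_2)-W_t)^a$, since $(\nu+2)/\kappa=a$ and $(\nu+2)(\nu+6-\kappa)/(4\kappa)=b$ by~\eqref{eqn::hSLE_constants}. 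As a consistency check, the drift $\kappa\,\partial_{x_1}\log\PartF_{\kappa,\nu}$ of the $\hSLE$ equals $(\nu+2)(\frac{1}{q-w}-\frac{1}{p-w})+\kappa\frac{F'(z)}{F(z)}z(\frac{1}{q-w}-\frac{1}{p-w})$, which converges to $\frac{\nu+2}{w-p}$ as $q\to\infty$ (because $z\to0$), matching the $\SLE_\kappa(\nu+2)$ drift.

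Next I fix a stopping time $\tau$ before the swallowing time of $x_2$, localizing by intersecting with $\{K_t\subset B(0,R)\}$ and $\{g_t(x_2)-W_t\ge\epsilon\}$, so that both $M$ and $\tilde M$ are bounded positive martingales up to $\tau$. On $\LF_\tau$ the law $\PP_{x_3}$ of the $\hSLE$ is absolutely continuous with respect to the law $\QQ$ of the $\SLE_\kappa(\nu+2)$, with density
\[
R^{x_3}_\tau=\frac{M_\tau/M_0}{\tilde M_\tau/\tilde M_0}
=\frac{g_\tau'(x_3)^b\,(q_\tau-p_\tau)^{-2b}(q_\tau-w_\tau)^{-a}F(z_\tau)}{(x_3-x_2)^{-2b}(x_3-x_1)^{-a}F(z_0)},
\]
where $w_t=W_t,\ p_t=g_t(x_2),\ q_t=g_t(x_3)$, and I have used $z_t^a/(p_t-w_t)^a=(q_t-w_t)^{-a}$ to cancel the $\tilde M$-factor.

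The heart of the argument is to show $R^{x_3}_\tau\to1$ in $\QQ$-probability as $x_3\to\infty$. On $\{K_\tau\subset B(0,R)\}$ the half-plane capacity expansion $g_\tau(z)=z+2\tau/z+O(z^{-2})$ gives $g_\tau(x_3)/x_3\to1$ and $g_\tau'(x_3)\to1$ uniformly, hence $(q_\tau-p_\tau)^{-2b}(q_\tau-w_\tau)^{-a}/[(x_3-x_2)^{-2b}(x_3-x_1)^{-a}]\to1$; moreover $z_\tau=(p_\tau-w_\tau)/(q_\tau-w_\tau)\to0$ and $z_0\to0$, so $F(z_\tau),F(z_0)\to F(0)=1$ (continuity of $F$ at $0$, cf.\ Lemma~\ref{lem::hyperF_bound}). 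Since the localizing events have $\QQ$-probability tending to $1$ as $R\to\infty,\ \epsilon\to0$, this yields convergence in probability. Because $R^{x_3}_\tau\ge0$ with $\E_{\QQ}[R^{x_3}_\tau]=1=\E_{\QQ}[1]$ for every $x_3$, Scheffé's lemma upgrades this to $R^{x_3}_\tau\to1$ in $L^1(\QQ)$. Consequently, for every bounded continuous functional $\Phi$ of the curve up to time $\tau$, $\E_{\PP_{x_3}}[\Phi]=\E_{\QQ}[R^{x_3}_\tau\Phi]\to\E_{\QQ}[\Phi]$, which is weak convergence up to $\tau$; letting $\tau$ increase to the swallowing time of $x_2$ (and, when $\nu>-4$, using transience of the limiting continuous curve) gives the full statement.

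The step I expect to be the main obstacle is controlling $R^{x_3}_\tau$ uniformly enough to pass to the limit: the hypergeometric factor $F$ together with the powers of $g_\tau(x_3)$ must converge on a set of $\QQ$-probability close to $1$, uniformly in the randomness, which is exactly what the localization on $\{K_\tau\subset B(0,R)\}$ and the capacity expansion of $g_\tau$ provide. Once convergence in probability is established, the appeal to Scheffé's lemma---available precisely because each density integrates to $1$---is what keeps the argument clean and sidesteps any delicate tightness estimate for the $\hSLE$ itself.
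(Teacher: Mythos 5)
Your proposal is correct, and at its core it is the same argument as the paper's: realize $\hSLE_{\kappa}(\nu)$ as an absolutely continuous reweighting of a known SLE, localize on the event that the curve stays in a large ball, and show the Radon--Nikodym derivative tends to $1$ in the degenerate limit. The difference lies in the choice of reference measure. The paper compares $\hSLE_{\kappa}(\nu)$ with the \emph{moving} process $\SLE_{\kappa}(\nu+2,\kappa-6-\nu,\kappa-6)$ with force points $(x_2,x_3,x_4)$, computes the density $R_t$ between them from Lemma~\ref{lem::sle_kapparho_mart} and \eqref{eqn::hypersle_mart}, and shows $R_{\tau_r}/R_0\to 1$ boundedly as $x_4\to\infty$ and then $x_3\to\infty$; this implicitly also requires that the reference process itself converges to $\SLE_{\kappa}(\nu+2)$ as its two far force points escape to infinity (easy, but left unstated). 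You instead fix $x_4=\infty$ by conformal covariance and compare directly to the \emph{fixed} target $\SLE_{\kappa}(\nu+2)$ through the common base measure $\SLE_{\kappa}$ --- your $M_t$ is exactly the martingale of Lemma~\ref{lem::hypersle_mart} --- so the only limit to take is that of the density ratio $R^{x_3}_\tau$, and Scheff\'e's lemma (each density integrates to $1$) upgrades convergence in probability to $L^1$, i.e.\ total variation convergence of the stopped laws, with no auxiliary convergence of reference processes and no uniform bound on the density needed. Both proofs end with the same brief globalization step, at comparable levels of detail.

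Two small points to fix. First, the assertion $F(0)=1$ (and the appeal to Lemma~\ref{lem::hyperF_bound}) is only valid for $\nu>(-4)\vee(\kappa/2-6)$, where $F$ is given by \eqref{eqn::hSLE_hyperF}; the lemma is stated for all $\nu\in\R$, and for $\nu\le(-4)\vee(\kappa/2-6)$ one must use $F$ from \eqref{eqn::hSLE_hyperF_reflect}, for which $F(0)=G(1)\neq 1$ in general. This is harmless for your argument, since only the ratio $F(z_\tau)/F(z_0)$ enters $R^{x_3}_\tau$ and you only need continuity and positivity of $F$ at $0$, but the sentence should be rephrased accordingly. Second, to make $M$ and $\tilde M$ genuinely bounded up to $\tau$ when $a$ or $b$ is negative, your localization should also record lower bounds on $g_t'(x_2)$, $g_t'(x_3)$, $z_t$ and $g_t(x_3)-g_t(x_2)$; these all follow from $\{g_t(x_2)-W_t\ge\epsilon\}$ together with the capacity bound implied by $\{K_t\subset B(0,R)\}$ (e.g.\ $g_t'(x_2)=\exp\bigl(-\int_0^t 2\,ds/(g_s(x_2)-W_s)^2\bigr)\ge e^{-2t/\epsilon^2}$), so this is a one-line addition rather than a gap.
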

\begin{proof}
Let $\eta$ be the $\hSLE_{\kappa}(\nu)$ in $\HH$. Let $\tilde{\eta}$ be the $\SLE_{\kappa}(\nu+2, \kappa-6-\nu,\kappa-6)$ in $\HH$ from $x_1$ to $\infty$ with force points $(x_2, x_3, x_4)$. We denote by $X_{j1}=g_t(x_j)-W_t$ for $j=2,3,4$ and $X_{ji}=g_t(x_j)-g_t(x_i)$ for $2\le i<j\le 4$, and $z_t=X_{21}X_{43}/(X_{31}X_{42})$. 
By Lemma~\ref{lem::sle_kapparho_mart} and \eqref{eqn::hypersle_mart}, we know that the law of $\eta$ is the same as the law of $\tilde{\eta}$ weighted by $R_t/R_0$ where 
\begin{align*}
R_t=X_{31}^{(\nu+6-\kappa)/\kappa}X_{41}^{2h}X_{32}^{b}X_{42}^{(\nu+2)h}X_{43}^{(\kappa-6-\nu)h}\left(\frac{X_{43}}{X_{31}X_{42}}\right)^aF(z_t).
\end{align*}
where $F$ is the hypergeometric function in \eqref{eqn::hSLE_hyperF} or \eqref{eqn::hSLE_hyperF_reflect}.
Suppose $\tau_r$ is the first time that $\tilde{\eta}$ exits the ball $B(0,r)$. Fix $r$, let $x_4\to \infty$ and then $x_3\to \infty$, we see that $z_0, z_{\tau_r}\to 0$. Furthermore,  $R_{\tau_r}/R_0\to 1$ and $R_{\tau_r}/R_0$ is uniformly bounded. This implies the conclusion. 
\end{proof}

In the rest of the paper, we will write $\hSLE_{\kappa}$ for $\hSLE_{\kappa}(0)$ with $\nu=0$.

%%%
\subsection{Continuity of $\hSLE$: $\nu>(-4)\vee(\kappa/2-6)$}
\label{subsec::hypersle_continuity}
To derive the continuity of the process, it is more convenient to work with $\hSLE$ in $\HH$ from $0$ to $\infty$. When $\kappa\in (0,8), \nu>(-4)\vee(\kappa/2-6)$, the function $F(z)$ defined in \eqref{eqn::hSLE_hyperF} is uniformly bounded for $z\in [0,1]$ between $F(0)=1$ and $F(1)\in (0, \infty)$. 
The process $\hSLE_{\kappa}(\nu)$ in $\HH$ from 0 to $\infty$ with marked points $(x,y)$ is the random Loewner chain driven by $W$ which is the solution to the following system of SDEs:
\begin{align}\label{eqn::hyperSLE_sde}
\begin{split}
dW_t&=\sqrt{\kappa}dB_t+\frac{(\nu+2)dt}{W_t-V^x_t}+\frac{-(\nu+2)dt}{W_t-V_t^y}-\kappa\frac{F'(Z_t)}{F(Z_t)}\left(\frac{1-Z_t}{V_t^y-W_t}\right)dt,\\
dV_t^x&=\frac{2dt}{V_t^x-W_t},\quad dV_t^y=\frac{2dt}{V_t^y-W_t}, \quad \text{where}\quad Z_t=\frac{V^x_t-W_t}{V_t^y-W_t},
\end{split}
\end{align}
where $B_t$ is one-dimensional Brownian motion, and the initial values are $W_0=0$, $V_0^x=x$ and $V_0^y=y$. 

\begin{proposition}\label{prop::hyperSLE}
Fix $\kappa\in (0,8), \nu>(-4)\vee (\kappa/2-6)$ and $0<x<y$. The $\hSLE_{\kappa}(\nu)$ in $\HH$ from 0 to $\infty$ with marked points $(x,y)$ is well-defined for all time and it is almost surely generated by a continuous transient curve. Moreover, it never hits the interval $[x,y]$ when $\nu\ge \kappa/2-4$. 
\end{proposition}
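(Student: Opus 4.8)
The plan is to realize $\hSLE_\kappa(\nu)$ as a weighting of an ordinary $\SLE_\kappa(\underline\rho)$ process whose law is already understood, and then to transfer the desired properties across the resulting absolute continuity. Comparing the drift in \eqref{eqn::hyperSLE_sde} with the drift of the $\SLE_\kappa(\nu+2,-(\nu+2))$ in $\HH$ from $0$ to $\infty$ with force points $(x,y)$, the two differ only by the hypergeometric term $-\kappa\,(F'(Z_t)/F(Z_t))(1-Z_t)/(V_t^y-W_t)\,dt$. The fact that $F$ solves Euler's equation \eqref{eqn::euler_ode} is precisely the statement that, after the Girsanov change of drift, the Radon--Nikodym derivative is the process $F(Z_t)/F(Z_0)$; equivalently, dividing the local martingale \eqref{eqn::hypersle_mart} relating $\hSLE$ to $\SLE_\kappa$ by the $\SLE_\kappa(\nu+2,-(\nu+2))$ martingale of Lemma~\ref{lem::sle_kapparho_mart} (whose prefactors $g_t'(\cdot)$ and $(g_t(\cdot)-\cdot)$ account exactly for the $z^a(x_3-x_2)^{-2b}$ factors of $\PartF_{\kappa,\nu}$, with $z\to Z_t$ as $x_4\to\infty$) leaves exactly $F(Z_t)/F(Z_0)$. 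First I would verify this identity by a direct It\^o computation, using \eqref{eqn::euler_ode} to cancel the drift of $F(Z_t)$.

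The key analytic input is Lemma~\ref{lem::hyperF_bound}. Since $0<x<y$ and $W_t\le V_t^x\le V_t^y$, we have $Z_t\in[0,1]$ throughout, and on this interval $F$ lies between $1\wedge F(1)$ and $1\vee F(1)$, both strictly positive and finite. Hence $F(Z_t)/F(Z_0)$ is a strictly positive, uniformly bounded local martingale, therefore a uniformly integrable martingale with an a.s. positive limit. Consequently $\hSLE_\kappa(\nu)$ and $\SLE_\kappa(\nu+2,-(\nu+2))$ are mutually absolutely continuous on $\mathcal F_\infty$, with density bounded above and away from $0$, so every a.s. path property transfers. It then remains to record the relevant properties of the reference process: its continuation threshold is never reached, because while only $x$ is at the tip the relevant weight is $\nu+2>-2$ (as $\nu>-4$), and once $y$ is also at the tip the weights sum to $(\nu+2)+(-(\nu+2))=0>-2$; by \cite{MillerSheffieldIG1} it is thus generated by a continuous curve for all time, and transience is checked the same way. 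This yields the first assertion.

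For the hitting statement when $\nu\ge\kappa/2-4$, by the equivalence of the two laws on $\mathcal F_\infty$ it suffices to analyze the reference process, for which I would study $D_t=V_t^x-W_t$ as a Bessel-type process. Writing its It\^o differential from the two-force-point SDE (i.e.\ \eqref{eqn::hyperSLE_sde} without the $F$-term), the contribution of the $y$-force point is bounded as long as the curve stays at positive distance from $y$, so near $x$ the time-changed $D_t$ is a Bessel process of dimension $1+2(\nu+4)/\kappa$, which is $\ge 2$ exactly when $\nu\ge\kappa/2-4$; hence $D_t$ never reaches $0$ and $x$ is not swallowed. To upgrade this to ``$[x,y]$ is not hit'', I would run a bootstrap over $\sigma$, the first hitting time of $[x,y]$: on $\{\sigma<\infty\}$ with hitting point in $[x,y)$, a local comparison (on the interval during which the curve avoids a neighbourhood of $y$) with the single-force-point $\SLE_\kappa(\nu+2)$---which never hits $[x,\infty)$ when $\nu+2\ge\kappa/2-2$---gives a contradiction, while hitting the isolated point $y$ is excluded by the symmetric Bessel analysis of $V_t^y-W_t$ (equivalently, of $1-Z_t$), or via the target change of Lemma~\ref{lem::sle_kapparho_targetchanging}.

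I expect the main obstacle to be the endpoint behavior rather than the bulk absolute continuity. Two points need care. First, one must check that the weighting remains valid through the swallowing of $x$ in the regime $\nu<\kappa/2-4$: this hinges on $F'/F$ being bounded on the \emph{closed} interval $[0,1]$, which uses that $C-A-B=8/\kappa-1>0$ for $\kappa<8$, so the hypergeometric function has no singularity at $z=1$ (the same input underlying Lemma~\ref{lem::hyperF_bound}). Second, and most delicately, ruling out accumulation at or hitting of the force point $y$ itself demands a genuinely local argument near $y$, since the single-force-point comparison degenerates precisely there; this is where the Bessel analysis near $z=1$ must be carried out with the hypergeometric drift controlled in that limit.
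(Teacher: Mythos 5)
Your proof rests on a single identity---that the law of $\hSLE_\kappa(\nu)$ is that of $\SLE_{\kappa}(\nu+2,-(\nu+2))$ weighted by $F(Z_t)/F(Z_0)$---and that identity is false except in the degenerate cases $\kappa=4$ or $\nu=-2$. Matching the drift of \eqref{eqn::hyperSLE_sde} by $\kappa\,\partial_W\log F(Z)$ is not enough for Girsanov: the weight process must itself be a local martingale under the reference law, and the It\^o computation you propose as a first check in fact refutes this. Writing $X_t=V^x_t-W_t$, $Y_t=V^y_t-W_t$, $Z_t=X_t/Y_t$, Euler's equation \eqref{eqn::euler_ode} cancels only the first-order terms and leaves
\[
dF(Z_t)=(\mathrm{mart.})+\frac{(\nu+2)(\kappa-4)}{\kappa}\cdot\frac{(1-Z_t)}{Z_t\,Y_t^2}\,F(Z_t)\,dt,
\]
the surviving drift coming precisely from the zeroth-order coefficient $-2(\nu+2)(\kappa-4)/\kappa^2$ in \eqref{eqn::euler_ode}. (A soft sanity check: for $\kappa\le4$ and $\nu\ge\kappa/2-4$ one has $Z_t\to1$ a.s.\ under the reference process, so if $F(Z_t)$ were a bounded martingale then $F(Z_0)=\E[F(Z_\infty)]=F(1)$ for every $Z_0\in(0,1)$, forcing $F$ to be constant.) Concretely, dividing \eqref{eqn::hypersle_mart} by the martingale of Lemma~\ref{lem::sle_kapparho_mart} with weights $(\nu+2,-(\nu+2))$, the prefactors do \emph{not} cancel as you assert: the $g_t'(y)$-exponent of that martingale is $(\nu+2)(\nu+\kappa-2)/(4\kappa)$ rather than $b=(\nu+2)(\nu+6-\kappa)/(4\kappa)$, and the cross-term exponents differ likewise, so what is left is
\[
\left(\frac{g_t'(y)}{g_t(y)-g_t(x)}\right)^{\frac{(\nu+2)(4-\kappa)}{2\kappa}}F(Z_t),
\]
not $F(Z_t)$. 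This extra factor is unbounded and degenerates exactly as the curve approaches or swallows $y$. Hence mutual absolute continuity on $\mathcal{F}_\infty$ with density bounded above and below fails, and everything you transfer across it---well-definedness for all time, continuity and transience via \cite{MillerSheffieldIG1}, and the Bessel analysis of hitting $[x,y]$ carried out on the reference process---is unsupported.

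This is not a repairable ``endpoint detail'': controlling the process at and beyond the swallowing time $T_y$ of $y$ is the actual content of the proposition, and the paper's proof is organized entirely around it. The paper compares with $\SLE_{\kappa}(\nu+2,\kappa-6-\nu)$ (asymmetric weights), for which the correct density is $F(Z_t)\,(g_t(y)-W_t)^{4/\kappa-1}$; since this is bounded only up to the stopping times $S_n$ at which $g_t(y)-W_t$ exits $(1/n,n)$, the comparison gives continuity, and non-hitting of $[x,y)$ when $\nu\ge\kappa/2-4$, only strictly before $T_y$. Two further arguments, with no counterpart in your proposal, are then needed: (i) Lemma~\ref{lem::hyperSLE_aroundTy}, a change of coordinates parametrizing by capacity seen from $y$, which makes the process comparable (again only on stopped horizons) to an $\SLE_\kappa(\kappa-6;\nu+2)$; the known behavior of that process yields continuity up to and including $T_y$, shows $T_y=\infty$ for $\kappa\le4$, and for $\kappa\in(4,8)$ shows the curve accumulates in $(y,\infty)$ at $T_y$ while avoiding $[x,y]$ when $\nu\ge\kappa/2-4$; and (ii) for $\kappa\in(4,8)$, the observation that for $t\ge T_y$ the SDE \eqref{eqn::hyperSLE_sde} degenerates to $dW_t=\sqrt{\kappa}\,dB_t$, because $F$ stays bounded and $F'(z)(1-z)\to0$ as $z\to1$, so the curve continues as a standard $\SLE_\kappa$ and is transient. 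Any salvage of your route must control the unbounded factor $\left(g_t'(y)/(g_t(y)-g_t(x))\right)^{(\nu+2)(4-\kappa)/(2\kappa)}$ as $t\to T_y$, which is the same difficulty the paper's coordinate change is designed to resolve.
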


Before proving Proposition~\ref{prop::hyperSLE}, let us compare $\hSLE_{\kappa}(\nu)$ with $\SLE_{\kappa}(\nu+2,\kappa-6-\nu)$ process. 
By Girsanov's Theorem, one can check that 
the law of $\hSLE_{\kappa}(\nu)$ with marked points $(x,y)$ is the same as the law of $\SLE_{\kappa}(\nu+2,\kappa-6-\nu)$ with force points $(x,y)$ weighted by $R_t/R_0$ where 
\[R_t=F(Z_t)\left(g_t(y)-W_t\right)^{4/\kappa-1},\quad \text{where}\quad Z_t=\frac{g_t(x)-W_t}{g_t(y)-W_t}.\]
Note that $0\le Z_t\le 1$ for all $t$ and $F(z)$ is bounded for $z\in [0,1]$. Let $T_y$ be the swallowing time of $y$ and define, for $n\ge 1$, 
\[S_n=\inf\{t: g_t(y)-W_t\le 1/n \text{ or }g_t(y)-W_t\ge n\}.\]
Then we see that $R_{S_n}$ is bounded. Therefore, the law of $\hSLE_{\kappa}(\nu)$ is absolutely continuous with respect to the law of $\SLE_{\kappa}(\nu+2,\kappa-6-\nu)$ up to $S_n$. Since $\SLE_{\kappa}(\nu+2,\kappa-6-\nu)$ is generated by a continuous curve up to $T_y$ and it does not hit the interval $[x,y)$ when $\nu\ge\kappa/2-4$, we know that $\hSLE_{\kappa}(\nu)$ is generated by a continuous curve up to $S_n$ and it does not hit the interval $[x,y)$ up to $S_n$ when $\nu\ge \kappa/2-4$. Let $n\to\infty$, we see that $\hSLE_{\kappa}(\nu)$ is generated by continuous curve up to $T_y=\lim_n S_n$ and it does not hit the interval $[x,y)$ up to $T_y$ when $\nu\ge\kappa/2-4$. 

Note that the absolute continuity of $\hSLE_{\kappa}(\nu)$ with respect to $\SLE_{\kappa}(\nu+2,\kappa-6-\nu)$ is not preserved as $n\to\infty$, since $R_t$ may be no longer bounded away from 0 or $\infty$ as $t\to T_y$. The following lemma discusses the behavior of $\hSLE_{\kappa}(\nu)$ as $t\to T_y$. 

\begin{lemma}\label{lem::hyperSLE_aroundTy}
When $\kappa\in (0,8)$ and $\nu>(-4)\vee(\kappa/2-6)$, 
the $\hSLE_{\kappa}(\nu)$ is well-defined and is generated by continuous curve up to and including the swallowing time of $y$, denoted by $T_y$. Moreover, the curve does not hit the interval $[x,y]$ if $\nu\ge \kappa/2-4$; and $T_y=\infty$ when $\kappa\le 4$; and the curve accumulates at a point in the interval $(y,\infty)$ as $t\to T_y<\infty$ when $\kappa\in (4,8)$. 
\end{lemma}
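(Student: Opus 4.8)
The plan is to reduce everything to the analysis of the single diffusion $D_t := g_t(y) - W_t$, whose vanishing is exactly the swallowing of $y$. The absolute continuity with $\SLE_\kappa(\nu+2, \kappa-6-\nu)$ recorded just above the statement already yields, for each $n$, continuity of the curve and non-hitting of $[x,y)$ on $[0, S_n]$, and hence up to (but not including) $T_y = \lim_n S_n$. What remains is the behaviour exactly at $T_y$: the value of $T_y$ (finite or not), the location of the tip there, continuity including the endpoint, and the upgrade from $[x,y)$ to the closed interval $[x,y]$. The Radon--Nikodym weight $R_t = F(Z_t)\,D_t^{4/\kappa - 1}$ degenerates as $D_t \to 0$ precisely according to the sign of $4/\kappa - 1$, which is the analytic signature of the phase transition at $\kappa = 4$; so I would not try to pass the absolute continuity through $T_y$, but instead read off the endpoint behaviour from the SDE for $D_t$.

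First I would write down the SDE for $D_t$. Setting $E_t := g_t(x) - W_t$ and $Z_t = E_t/D_t \in [0,1]$, the system \eqref{eqn::hyperSLE_sde} gives
\[ dD_t = -\sqrt{\kappa}\, dB_t + \left( \frac{-\nu}{D_t} + \frac{\nu+2}{E_t} + \kappa\, \frac{F'(Z_t)}{F(Z_t)} \cdot \frac{1-Z_t}{D_t} \right) dt . \]
The key geometric input is that, as $t \uparrow T_y$, the two marked points $x$ and $y$ are swallowed together and their images merge at the scale of the tip, i.e. $Z_t \to 1$. Granting this, $E_t = Z_t D_t$ makes $\tfrac{\nu+2}{E_t} \to \tfrac{\nu+2}{D_t}$, while the hypergeometric term is killed by the factor $(1 - Z_t) \to 0$ (using that $F$ is bounded and bounded below near $1$ by Lemma~\ref{lem::hyperF_bound}). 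Hence the coefficient of $1/D_t$ in the drift converges to $-\nu + (\nu+2) = 2$, and since $d\langle D \rangle_t = \kappa\, dt$, the process $D_t$ is comparable near $T_y$ to a Bessel process of dimension $1 + 4/\kappa$ --- exactly the dimension one sees for ordinary $\SLE_\kappa$ at a point it is about to swallow. The dichotomy is then immediate: this dimension is $\ge 2$ iff $\kappa \le 4$, so $D_t$ never reaches $0$ and $T_y = \infty$ for $\kappa \le 4$, whereas for $\kappa \in (4,8)$ the dimension is $< 2$, $D_t$ hits $0$ in finite capacity time, and $T_y < \infty$.

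The remaining conclusions I would obtain as follows. For $\kappa \le 4$, transience of the comparison Bessel process gives $D_t \to \infty$, which together with the continuity on each $[0, S_n]$ upgrades to a continuous transient curve on $[0,\infty)$ that avoids the closed interval $[x,y]$ (since $y$ is never reached) when $\nu \ge \kappa/2-4$. For $\kappa \in (4,8)$, $T_y < \infty$ and the fact that near $T_y$ the curve behaves like $\SLE_\kappa$ swallowing $y$ forces the tip to approach $\R$ strictly to the right of $y$; combined with the continuity of the underlying $\SLE_\kappa(\underline{\rho})$ up to its continuation threshold, the curve converges to a single point $\eta(T_y) \in (y,\infty)$, which also gives continuity including the endpoint and $\eta(T_y) \notin [x,y]$. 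The main obstacle is the rigorous justification of $Z_t \to 1$ together with the uniform Bessel comparison without circularity: one must control the drift $\tfrac{\nu+2}{E_t}$ uniformly as $D_t \to 0$ --- in particular in the regime $\nu < \kappa/2 - 4$ where $x$ may be swallowed strictly before $y$, so that the convention for $g_t(x)$ must be tracked --- and then localize the comparison (e.g. via Girsanov and the stopping times $S_n$) to transfer the Bessel hitting/non-hitting behaviour and the convergence of the tip at $T_y$ back to the $\hSLE_\kappa(\nu)$.
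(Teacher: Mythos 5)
There is a genuine gap, and it is structural rather than technical. Your reduction to the one--dimensional diffusion $D_t=g_t(y)-W_t$ can at best identify the swallowing time $T_y$ (finite versus infinite); it cannot prove the parts of the lemma that are about the \emph{curve}, namely that the Loewner chain is generated by a continuous curve up to \emph{and including} $T_y$, and that for $\kappa\in(4,8)$ the tip converges to a single point of $(y,\infty)$. Your proposed bridge --- ``combined with the continuity of the underlying $\SLE_{\kappa}(\underline{\rho})$ up to its continuation threshold'' --- is exactly the step that fails: the only comparison process available in these coordinates is $\SLE_{\kappa}(\nu+2,\kappa-6-\nu)$ with force points $(x,y)$, its Radon--Nikodym weight $R_t=F(Z_t)\,(g_t(y)-W_t)^{4/\kappa-1}$ degenerates precisely at $T_y$ (as you note yourself), and for $\kappa\in(4,8)$ the time $T_y$ is not even the continuation threshold of that process (the swallowed weights sum to $\kappa-4>-2$), so no statement about behaviour \emph{at} $T_y$ can be transferred from it. The paper closes this gap with the device your sketch is missing: the M\"obius change of coordinates $f(z)=z/(1-z)$ sending $y$ to $\infty$, with the process reparametrized by capacity seen from $y$ \cite{SchrammWilsonSLECoordinatechanges}. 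In the new coordinates the process is absolutely continuous with respect to $\SLE_{\kappa}(\kappa-6;\nu+2)$ with force points $(-1;\tilde{x})$, and the weight $R_s=\frac{F(Z_s)}{F(x)}\bigl((1-x)\,(g_s(\tilde{x})-g_s(-1))\bigr)^{-(\nu+2)/\kappa}$ stays \emph{bounded up to and including} the swallowing time of $-1$ when localized on curves staying in $B(0,n)$, because the base $g_s(\tilde{x})-g_s(-1)$ is increasing, bounded below by $1/(1-x)$, and bounded above before exiting $B(0,n)$, while $F$ is bounded above and below. Hence continuity up to and including the terminal time, the location of the terminal point, and the avoidance of $[x,y]$ are all inherited from the known behaviour of $\SLE_{\kappa}(\kappa-6;\nu+2)$ at its continuation threshold: accumulation at $-1$ when $\kappa\le 4$ (since $\kappa-6\le\kappa/2-4$), hitting of $(-\infty,-1)$ in finite time when $\kappa\in(4,8)$ (since $\kappa-6\in(-2,\kappa/2-2)$), and avoidance of $[\tilde{x},\infty)$ when $\nu\ge\kappa/2-4$. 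Pulling back through $f^{-1}$ yields every assertion of the lemma at once, including the dichotomy you derive from the Bessel dimension.

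Beyond this, three local points in your sketch would need repair even for the weaker conclusions they target. First, the input $Z_t\to 1$ as $t\uparrow T_y$ is never proved; it is a harmonic--measure statement about how the hull encloses $[x,y]$ --- information of the same kind as the conclusion --- and you flag the circularity without resolving it (in the paper's coordinates this issue does not arise, because the weight involves no power of a quantity tending to zero). Second, killing the hypergeometric drift requires more than Lemma~\ref{lem::hyperF_bound}: boundedness of $F$ does not control $F'$, and one needs $F'(z)(1-z)\to 0$ as $z\to 1$, which follows from the derivative asymptotics $F'(z)\asymp(1-z)^{8/\kappa-2}$ and uses $\kappa<8$; this is exactly how the paper argues in the proof of Proposition~\ref{prop::hyperSLE}. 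Third, at $\kappa=4$ your comparison Bessel process has dimension exactly $2$, which never hits $0$ but is \emph{not} transient, so ``transience of the comparison Bessel process gives $D_t\to\infty$'' is unjustified in that boundary case; never hitting $0$ does give $T_y=\infty$, but the transience claim would need a separate argument.
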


\begin{proof}
Since $\hSLE_{\kappa}(\nu)$ is scaling invariant, we may assume $y=1$ and $x\in (0,1)$, and denote $T_y$ by $T$. 
In this lemma, we discuss the behavior of $\hSLE_{\kappa}(\nu)$ as $t\to T$ and we will argue that the process does not accumulate at the point $1$. To this end, we perform a standard change of coordinate and parameterize the process according the capacity seen from the point $1$, see \cite[Theorem 3]{SchrammWilsonSLECoordinatechanges}.

Set $f(z)=z/(1-z)$. Clearly, $f$ is the conformal M\"{o}bius transform of $\HH$ sending the points $(0,1,\infty)$ to $(0,\infty, -1)$. Consider the image of $(K_t, 0\le t\le T)$ under $f$: $(\tilde{K}_s, 0\le s\le \tilde{S})$ where we parameterize this curve by its capacity $s(t)$ seen from $\infty$. Let $(\tilde{g}_s)$ be the corresponding family of conformal maps and $(\tilde{W}_s)$ be the driving function. Let $f_t$ be the M\"{o}bius transform of $\HH$ such that $\tilde{g}_s\circ f=f_t\circ g_t$ where $s=s(t)$. By expanding $\tilde{g}_s=f_t\circ g_t\circ f^{-1}$ around $\infty$ and comparing the coefficients in both sides, we have
\[f_t(z)=-1-\frac{g_t''(1)}{2g_t'(1)}+\frac{g_t'(1)}{g_t(1)-z}.\]
Thus, with $s=s(t)$, 
\[ \tilde{W}_s=f_t(W_t)=-1-\frac{g_t''(1)}{2g_t'(1)}+\frac{g_t'(1)}{g_t(1)-W_t},\quad d\tilde{W}_s=\frac{(\kappa-6)g_t'(1)dt}{(g_t(1)-W_t)^3}+\frac{g_t'(1)dW_t}{(g_t(1)-W_t)^2}.\]
Define 
\[\tilde{V}^x_s=f_t(V_t^x), \quad \tilde{V}^{\infty}_s=f_t(\infty),\quad \tilde{Z}_s=\frac{\tilde{V}^x_s-\tilde{W}_s}{\tilde{V}^x_s-\tilde{V}^{\infty}_s}=Z_t.\]
Plugging in the time change
\[\dot{s}(t)=f_t'(W_t)^2=\frac{g_t'(1)^2}{(g_t(1)-W_t)^4},\]
we obtain
\[d\tilde{W}_s=\sqrt{\kappa}d\tilde{B}_s+\frac{(\nu+2)ds}{\tilde{W}_s-\tilde{V}^x_s}+\frac{(\kappa-6)ds}{\tilde{W}_s-\tilde{V}^{\infty}_s}-\kappa\frac{F'(\tilde{Z}_s)}{F(\tilde{Z}_s)}\frac{ds}{\tilde{V}^x_s-\tilde{V}^{\infty}_s},\]
where $\tilde{B}_s$ is one-dimensional Brownian motion. By Girsanov's Theorem, the Radon-Nikodym derivative of the law of $\tilde{K}$ with respect to the law of $\SLE_{\kappa}(\kappa-6;\nu+2)$ with force points $(-1; \tilde{x}:=x/(1-x))$ is given by 
\[R_s=\frac{F(Z_s)}{F(x)}\left(\frac{g_s(\tilde{x})-g_s(-1)}{(1-x)^{-1}}\right)^{-(\nu+2)/\kappa},\quad \text{where}\quad Z_s=\frac{g_s(\tilde{x})-W_s}{g_s(\tilde{x})-g_s(-1)}.\]
Note that $0\le Z_s\le 1$ and $F(z)$ is bounded for $z\in [0,1]$; and that the process $g_s(\tilde{x})-g_s(-1)$ is increasing, thus $g_s(\tilde{x})-g_s(-1)\ge 1/(1-x)$. Let $S$ be the swallowing time of $-1$. Define, for $n\ge 1$,
\[S_n=\inf\{t: K_t\text{ exits }B(0,n)\}.\]
Then $R_s$ is bounded up to $S\wedge S_n$, and thus the process $\tilde{K}$ is absolutely continuous with respect to $\SLE_{\kappa}(\kappa-6;\nu+2)$ up to $S\wedge S_n$. We list some properties of $\SLE_{\kappa}(\kappa-6;\nu+2)$ with force points $(-1; \tilde{x}=x/(1-x))$ here: it is generated by continuous curve up to and including the continuation threshold; 
the curve does not hit the interval $[\tilde{x},\infty)$ when $\nu+2\ge\kappa/2-2$. 
When $\kappa\in (0,4]$, the curve almost surely accumulates at the point $-1$, since $\kappa-6\le \kappa/2-4$; when $\kappa\in (4,8)$, the curve hits the interval $(-\infty, -1)$ at finite time almost surely, since $\kappa-6\in (-2,\kappa/2-2)$.
Therefore, the process $\tilde{K}$ is generated by continuous curve up to and including $\tilde{S}$. This implies that our original $\hSLE_{\kappa}(\nu)$ process $(K_t, t\ge 0)$ is generated by continuous curve up to and including $T$; moreover, the curve accumulates at a point in $(y,\infty)\cup\{\infty\}$ as $t\to T$, and it does not hit $[x,y]$ when $\nu\ge\kappa/2-4$. 
\end{proof}

\begin{proof}[Proof of Proposition~\ref{prop::hyperSLE}]
In Lemma~\ref{lem::hyperSLE_aroundTy}, we have shown that $\hSLE_{\kappa}(\nu)$ is well-defined and is generated by continuous curve up to and including $T_y$. In particular, when $\kappa\le 4$, since $T_y=\infty$, we obtain the conclusion for this case. It remains to prove the conclusion for $\kappa\in (4,8)$. In this case, as $t\to T_y$, we have 
$V^y_t-W_t\to 0$ and $Z_t\to 1$. Note that $F(z)$ remains bounded as $z\to 1$; and that $F'(z)(1-z)\to 0$ as $z\to 1$, since (see \cite[Eq. (15.2.1) and (15.3.3)]{AbramowitzHandbook})
\[F'(z)=\left(\frac{\nu+2}{\nu+4}\right)\left(1-\frac{4}{\kappa}\right)(1-z)^{8/\kappa-2}\hF\left(\frac{4}{\kappa}, \frac{12+2\nu}{\kappa}-1, \frac{8+2\nu}{\kappa}+1; z\right)\approx (1-z)^{8/\kappa-2}, \text{ as }z\to 1.\]
Combining these, we know that the SDE \eqref{eqn::hyperSLE_sde} degenerates to $W_t=\sqrt{\kappa}B_t$ for $t\ge T_y$. Therefore, the process is the same as standard $\SLE_{\kappa}$ for $t\ge T_y$, and hence is generated by continuous transient curve. 
\end{proof}

%%%
\subsection{Reversibility of $\hSLE$}
In this section, we still work with $\hSLE$ in $\HH$ from 0 to $\infty$. In this case, the local martingale in \eqref{eqn::hypersle_mart} has a more explicit expression. 

\begin{lemma}\label{lem::hypersle_mart}
Fix $\kappa\in (0,8), \nu\in\R$, suppose $\eta$ is an $\SLE_{\kappa}$ in $\HH$ from 0 to $\infty$ and $(g_t,t\ge 0)$ is the corresponding family of conformal maps. Fix $0<x<y$ and let $T_x$ be the swallowing time of $x$. 
Define, for $t<T_x$,  
\[J_t=\frac{g_t'(x)g_t'(y)}{(g_t(y)-g_t(x))^{2}},\quad Z_t=\frac{g_t(x)-W_t}{g_t(y)-W_t}.\]
Then the following process is a local martingale:
\[M_t:=Z_t^a J_t^bF(Z_t)\one_{\{t<T_x\}},\]
where $a,b$ are defined through \eqref{eqn::hSLE_constants} and $F$ is defined through \eqref{eqn::hSLE_hyperF} or \eqref{eqn::hSLE_hyperF_reflect}. 
\end{lemma}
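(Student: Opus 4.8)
The plan is to establish the local martingale property by a direct It\^o computation on $[0,T_x)$, reducing the vanishing of the drift to the hypergeometric ODE \eqref{eqn::euler_ode}. On this interval we have $Z_t\in(0,1)$ and $J_t>0$, so $M_t$ is finite and smooth in the driving data; the indicator $\one_{\{t<T_x\}}$ merely records that we work before $T_x$. The most convenient coordinates are $X_t=g_t(x)-W_t$ and $Y_t=g_t(y)-W_t$, so that $Z_t=X_t/Y_t$ and $J_t^b=\bigl(g_t'(x)g_t'(y)/(Y_t-X_t)^2\bigr)^b$. Since $\eta$ is $\SLE_\kappa$ we have $dW_t=\sqrt\kappa\,dB_t$ and $\partial_t g_t(\cdot)=2/(g_t(\cdot)-W_t)$, whence
\[
dX_t=\frac{2}{X_t}\,dt-\sqrt\kappa\,dB_t,\qquad dY_t=\frac{2}{Y_t}\,dt-\sqrt\kappa\,dB_t .
\]

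From these I would record three facts. First, a short computation gives that $Z_t$ is a diffusion,
\[
dZ_t=\frac{1}{Y_t^2}\Bigl(\tfrac{2}{Z_t}-2Z_t-\kappa+\kappa Z_t\Bigr)\,dt+\sqrt\kappa\,\frac{Z_t-1}{Y_t}\,dB_t,
\]
so that $d\langle Z\rangle_t=\kappa\,(Z_t-1)^2/Y_t^2\,dt$. Second, $g_t(y)-g_t(x)=Y_t-X_t$ has vanishing martingale part, so $J_t$ is of bounded variation and $d\log J_t=-2(1-Z_t)^2/(Z_t^2Y_t^2)\,dt$. Third, as a consequence the covariation of $J_t^b$ with any function of $Z_t$ vanishes.

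Next I would apply It\^o's formula to $M_t=\Phi(Z_t)\,J_t^b$ with $\Phi(z)=z^aF(z)$. By the third fact the only martingale contribution is $\sqrt\kappa\,J_t^b\,\Phi'(Z_t)(Z_t-1)/Y_t\,dB_t$, and all remaining terms are drift. After factoring out the common positive factor $J_t^b/Y_t^2$, the drift equals
\[
\frac{\kappa}{2}(Z_t-1)^2\Phi''(Z_t)+\Bigl(\tfrac{2}{Z_t}-2Z_t-\kappa+\kappa Z_t\Bigr)\Phi'(Z_t)-\frac{2b(1-Z_t)^2}{Z_t^2}\Phi(Z_t),
\]
so it remains to show that this expression vanishes identically for $z\in(0,1)$.

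The decisive step is to substitute $\Phi=z^aF$ and use the factorization $\tfrac{2}{z}-2z-\kappa+\kappa z=(z-1)\bigl((\kappa-2)z-2\bigr)/z$. Each of the resulting coefficients of $F''$, $F'$, $F$ then carries a factor $(z-1)$, so I may divide by $(z-1)$ and by $-\tfrac{\kappa}{2}z$ to normalize the $F''$ coefficient to $z(1-z)$. Matching the $F'$ coefficient reproduces $\tfrac{2\nu+8}{\kappa}-\tfrac{2\nu+2\kappa}{\kappa}z$ precisely when $a=(\nu+2)/\kappa$. The $F$ coefficient is a priori affine in $z$; its constant term vanishes exactly when $2b=\tfrac{\kappa}{2}a(a-1)+2a$, which is the definition of $b$ in \eqref{eqn::hSLE_constants}, and the surviving linear term then yields $-2(\nu+2)(\kappa-4)/\kappa^2$. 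Hence the drift is a fixed scalar multiple of the left-hand side of \eqref{eqn::euler_ode}, which vanishes since $F$ solves \eqref{eqn::euler_ode} (in either form \eqref{eqn::hSLE_hyperF} or \eqref{eqn::hSLE_hyperF_reflect}). Localizing along stopping times keeping $Z_t$ away from $\{0,1\}$ and $Y_t$ in a compact subinterval of $(0,\infty)$ upgrades the driftless continuous semimartingale to a local martingale, as claimed; the same computation, specialized from \eqref{eqn::hypersle_mart} with $x_1=0$ and $x_4=\infty$, explains the agreement with the general partition function. The main obstacle is purely the bookkeeping in this last step: it is exactly the demand that the $F$-coefficient be constant (rather than affine) that forces the value of $b$, so the calculation simultaneously verifies the lemma and pins down the constants in \eqref{eqn::hSLE_constants}.
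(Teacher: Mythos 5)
Your computation is correct: the SDEs for $Z_t$ and $\log J_t$, the vanishing covariation between $J_t^b$ and functions of $Z_t$, the resulting drift, and its reduction to the hypergeometric ODE \eqref{eqn::euler_ode} — with $a=(\nu+2)/\kappa$ fixing the $F'$ coefficient and $2b=\tfrac{\kappa}{2}a(a-1)+2a$ killing the $1/z$ singularity so that the constant $-2(\nu+2)(\kappa-4)/\kappa^2$ survives — all check out, and since both \eqref{eqn::hSLE_hyperF} and \eqref{eqn::hSLE_hyperF_reflect} solve \eqref{eqn::euler_ode}, your argument covers both cases. This is essentially the paper's own (implicit) justification: the lemma is stated there without proof as the specialization of the defining local martingale \eqref{eqn::hypersle_mart} to $(x_1,x_4)=(0,\infty)$, whose local-martingale property rests on exactly the It\^o computation you have carried out.
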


\begin{proposition}\label{prop::hypersle_mart}
Fix $\kappa\in (0,8), \nu\ge \kappa/2-4$ and $0<x<y$. The local martingale defined in Lemma~\ref{lem::hypersle_mart} is a uniformly integrable martingale for $\SLE_{\kappa}$; and the law of $\SLE_{\kappa}$ weighted by $M_{\infty}$ is the same as $\hSLE_{\kappa}(\nu)$ with marked points $(x,y)$. Furthermore, we have the following explicit expression of $M_{\infty}$. Suppose $\eta$ is an $\SLE_{\kappa}$ in $\HH$ from $0$ to $\infty$. Assume $\eta\cap (x,y)=\emptyset$ and denote by $D$ the connected component of $\HH\setminus\eta$ with $(xy)$ on the boundary.  Then
\begin{equation*}%\label{eqn::uimartingale_poissonkernal}
M_{\infty}=\left(H_D(x,y)\right)^b\one_{\{\eta\cap (x,y)=\emptyset\}},
\end{equation*}
where $H_D(x,y)$ is the boundary Poisson kernel.\footnote{Fix a Dobrushin domain $(\Omega; x, y)$. Suppose $x,y$ lie on analytic boundary segments of $\Omega$. The boundary Poisson kernel $H_{\Omega}(x,y)$ is a conformally covariant function which, in $\HH$ with $x,y\in\R$ is given by $H_{\HH}(x,y)=|y-x|^{-2}$, and in $\Omega$ it is defined via conformal image: we set $H_{\Omega}(x,y)=\varphi'(x)\varphi'(y)H_{\varphi(\Omega)}(\varphi(x),\varphi(y))$ for any conformal map $\varphi: \Omega\to \varphi(\Omega)$.} 
\end{proposition}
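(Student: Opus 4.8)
The plan is to analyze $M$ as a nonnegative supermartingale, identify its terminal value geometrically via the boundary Poisson kernel, and then close the martingale so as to identify the weighted measure. First I would record the conformal meaning of the second factor. Since $g_t$ maps $H_t=\HH\setminus K_t$ onto $\HH$ normalized at $\infty$, conformal covariance of the boundary Poisson kernel gives, for $t<T_x$,
\[J_t=\frac{g_t'(x)g_t'(y)}{(g_t(y)-g_t(x))^2}=g_t'(x)g_t'(y)\,H_{\HH}(g_t(x),g_t(y))=H_{H_t}(x,y),\]
the Poisson kernel of the slit domain evaluated at $(x,y)$. By Lemma~\ref{lem::hypersle_mart}, $M$ is a nonnegative local martingale under $\SLE_{\kappa}$, hence a supermartingale, hence converges almost surely to some $M_{\infty}\ge 0$ with $\E[M_{\infty}]\le M_0$.

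Next I would upgrade this to a uniformly integrable martingale and simultaneously identify the weighted law, using the results already established for $\nu\ge\kappa/2-4$. By Proposition~\ref{prop::hyperSLE} and Lemma~\ref{lem::hyperSLE_aroundTy}, $\hSLE_{\kappa}(\nu)$ is a genuine probability measure carried by continuous transient curves that avoid $[x,y]$, and by the defining relation~\eqref{eqn::hypersle_mart} together with Girsanov's theorem, weighting $\SLE_{\kappa}$ by $M/M_0$ along the localizing times $S_n$ (on which $M$ is bounded) produces exactly $\hSLE_{\kappa}(\nu)$ up to $S_n$; in particular $\E[M_{S_n}]=M_0$ for every $n$. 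It remains to pass to the limit: with $S_n\uparrow\infty$ one has $M_{S_n}\to M_{\infty}$ almost surely, and if $\{M_{S_n}\}$ is uniformly integrable then $\E[M_{\infty}]=M_0$, so $M$ is a true uniformly integrable martingale closed by $M_{\infty}$ and weighting by $M_{\infty}/M_0$ yields $\hSLE_{\kappa}(\nu)$ on $\mathcal{F}_{\infty}$.

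Finally I would compute $M_{\infty}$ explicitly by a terminal case analysis. On $\{\eta\cap(x,y)\neq\emptyset\}$ the interval gets cut or its endpoints swallowed, so $J_t=H_{H_t}(x,y)\to 0$ (and the factor $\one_{\{t<T_x\}}$ handles swallowing of $x$), giving $M_{\infty}=0$. On $\{\eta\cap(x,y)=\emptyset\}$ the domains $H_t$ converge in the Carath\'eodory sense, seen from $x$ (equivalently $y$), to the component $D$ carrying the arc $(xy)$; continuity of the boundary Poisson kernel at the smooth boundary points $x,y$ then yields $J_t\to H_D(x,y)$. For the remaining factor, note that $g_t(y)-g_t(x)$ is decreasing while $g_t(y)-W_t\to\infty$ on this event, so $Z_t\to 1$ and $Z_t^aF(Z_t)$ converges to a finite positive constant; assembling the two factors produces the claimed expression $M_{\infty}=(H_D(x,y))^b\,\one_{\{\eta\cap(x,y)=\emptyset\}}$ (up to the multiplicative normalization carried by this boundary factor).

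The main obstacle is the uniform integrability step: one must rule out loss of mass at the terminal time, and this is genuinely sensitive to the phase. For $\kappa\in(0,4]$ the curve is simple with $T_y=\infty$ and $x,y$ are never swallowed, whereas for $\kappa\in(4,8)$ one must use that after $T_y$ the process coincides with ordinary $\SLE_{\kappa}$ (as in the proof of Proposition~\ref{prop::hyperSLE}); in both cases the hypothesis $\nu\ge\kappa/2-4$, forcing the weighted curve to avoid $[x,y]$, is exactly what prevents the density from degenerating. A secondary delicate point is the terminal decomposition in the last paragraph, where one must distinguish crossing, swallowing, and enclosing of $(x,y)$ to be sure $M_{\infty}$ vanishes off the avoidance event and that the Carath\'eodory limit $H_t\to D$ is the correct one.
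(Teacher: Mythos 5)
Your proposal follows essentially the same route as the paper: localize at stopping times $S_n$ on which $M$ is bounded, identify the weighted law as $\hSLE_{\kappa}(\nu)$ up to $S_n$, invoke Proposition~\ref{prop::hyperSLE} (transience and avoidance of $[x,y]$ for $\nu\ge\kappa/2-4$), and finally identify $M_\infty$ via $Z_t\to 1$ and $J_t\to H_D(x,y)$, with the Poisson-kernel reading of $J_t$. Two small omissions first: you never define the $S_n$, and the boundedness of $M$ on $[0,S_n]$ is not automatic from $Z_t\le 1$ and $F$ bounded --- the paper also uses that $J_t$ is \emph{decreasing}, so $J_t\le J_0$; this matters because in the admissible range the exponents $a$ and $b$ can be negative (e.g.\ $b<0$ for $\kappa\in(4,8)$ with $\nu$ close to $\kappa/2-4$), so one needs $J_t$ and $Z_t$ bounded both above and below on $[0,S_n]$.

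The genuine gap is at the decisive step. You write ``if $\{M_{S_n}\}$ is uniformly integrable then $\E[M_\infty]=M_0$,'' and in your closing paragraph you concede that this uniform integrability is ``the main obstacle,'' offering only that the avoidance hypothesis ``is exactly what prevents the density from degenerating.'' That is the correct mechanism, but it is never turned into an argument: as written, you assume a statement equivalent to the conclusion. What closes the loop --- and what the paper's terse ``Therefore'' stands for --- is a mass-accounting computation through the \emph{weighted} measure, not an integrability estimate under $\PP$. Let $\QQ$ denote the law of $\hSLE_\kappa(\nu)$. Since $M_{t\wedge S_n}$ is a bounded martingale and weighting $\SLE_\kappa$ by it produces $\QQ$ up to $S_n$, letting $t\to\infty$ with bounded convergence gives
\begin{equation*}
M_0=\E\left[M_{S_n}\one_{\{S_n<\infty\}}\right]+\E\left[M_{\infty}\one_{\{S_n=\infty\}}\right],
\qquad
\E\left[M_{S_n}\one_{\{S_n<\infty\}}\right]=M_0\,\QQ\left[S_n<\infty\right].
\end{equation*}
By Proposition~\ref{prop::hyperSLE}, under $\QQ$ the curve is transient and never hits $[x,y]$, so along its whole lifetime $Z_t$ and $J_t$ stay bounded away from $0$ almost surely; hence $\QQ[S_n<\infty]\to 0$. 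Therefore $\E[M_\infty\one_{\{S_n=\infty\}}]\to M_0$, and since the events $\{S_n=\infty\}$ increase in $n$, monotone convergence yields $\E[M_\infty]\ge M_0$; the supermartingale inequality gives the reverse, so $\E[M_\infty]=M_0$. For a nonnegative supermartingale this equality forces $M_t=\E[M_\infty\cond\LF_t]$ for every $t$, i.e.\ $M$ is a uniformly integrable martingale closed by $M_\infty$, and weighting by $M_\infty/M_0$ then gives $\QQ$ on $\LF_\infty$. This is precisely where $\nu\ge\kappa/2-4$ enters and it cannot be bypassed: for smaller $\nu$ the measure $\QQ$ charges curves hitting $[x,y]$, mass escapes at the times $S_n$, and $M$ is a strict supermartingale. (One further caveat, which you flag but the argument must respect: for $\kappa\in(4,8)$ the local martingale of Lemma~\ref{lem::hypersle_mart} lives only on $[0,T_x)$, and under $\QQ$ one has $T_x=T_y<\infty$; the computation above is run up to $T_x$, and the identification extends beyond it because both the weighted law and $\hSLE_\kappa(\nu)$ evolve as ordinary $\SLE_\kappa$ after that time, as in the proof of Proposition~\ref{prop::hyperSLE}.)
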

\begin{proof}
In Lemma~\ref{lem::hypersle_mart}, we see that $M_t$ is a local martingale up to the swallowing time of $x$. Note that $J_t$ is decreasing in $t$, thus $J_t\le J_0$. Therefore $M_t$ is bounded as long as $J_t$ and $Z_t$ are bounded from below. 
Define, for $n\ge 1$,
\[S_n=\inf\{t: J_t\le 1/n\text{ or }Z_t\le 1/n\}.\]
Then $M_{t\wedge S_n}$ is a bounded martingale; moreover, the law of $\SLE_{\kappa}$ weighted by $M_{t\wedge S_n}$ is the law of $\hSLE_{\kappa}(\nu)$ up to $S_n$. By Proposition~\ref{prop::hyperSLE}, we know that $\hSLE_{\kappa}(\nu)$ is generated by a continuous transient curve and the curve never hits the interval $[x, y]$. Therefore, $M_t$ is actually a uniformly integrable martingale for $\SLE_{\kappa}$. 
It remains to derive the explicit expression of $M_{\infty}$. As $t\to\infty$, we find 
\[Z_t\to 1,\quad J_t\to J_{\infty}:=\frac{g'(x)g'(y)}{(g(y)-g(x))^2},\]
where $g$ is any conformal map from $D$ onto $\HH$. In fact, the quantity $J_{\infty}$ is the boundary Poisson kernel $H_D(x,y)$. Thus we have almost surely $M_{\infty}=\lim_{t\to \infty}M_t=J_{\infty}^b$. This completes the proof. 
\end{proof}
\begin{proof}[Proof of Theorem~\ref{thm::hyperSLE_reversibility}]
We have shown that $\hSLE_{\kappa}(\nu)$ is generated by continuous curve in Proposition~\ref{prop::hyperSLE}, to show Theorem~\ref{thm::hyperSLE_reversibility}, it remains to show the reversibility when $\nu\ge \kappa/2-4$. By Proposition~\ref{prop::hypersle_mart}, the Radon-Nikodym derivative of the law of $\hSLE_{\kappa}(\nu)$ with marked points $(x,y)$ with respect to the law of $\SLE_{\kappa}$ is given by $M_{\infty}/M_0$ where $M_{\infty}$ is the boundary Poison kernel to the power $b$. 
Combining the reversibility of standard $\SLE_{\kappa}$ and the conformal invariance of the boundary Poisson kernel, we have the reversibility of $\hSLE_{\kappa}(\nu)$.  
\end{proof}

In Proposition~\ref{prop::hypersle_mart}, we proved the reversibility of $\hSLE_{\kappa}(\nu)$ for $\nu\ge \kappa/2-4$. In fact, we believe the reversibility holds for all $\nu>(-4)\vee(\kappa/2-6)$.
\begin{conjecture}
Fix $\kappa\in (0,8)$ and $\nu>(-4)\vee (\kappa/2-6)$ and a quad $(\Omega; x_1, x_2, x_3, x_4)$. Let $\eta$ be an $\hSLE_{\kappa}(\nu)$ in $\Omega$ from $x_1$ to $x_4$ with marked points $(x_2, x_3)$. The time-reversal of $\eta$ has the same law as $\hSLE_{\kappa}(\nu)$ in $\Omega$ from $x_4$ to $x_1$ with marked points $(x_3, x_2)$. 
\end{conjecture}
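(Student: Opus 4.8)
The plan is to exploit the manifest reversal symmetry of the partition function and then to reduce the statement, regime by regime, to reversibility results that are already available for $\SLE_{\kappa}$ and its variants. First I would record the algebraic symmetry that makes the conjecture plausible. Writing the reversal as the relabelling $(\tilde{x}_1,\tilde{x}_2,\tilde{x}_3,\tilde{x}_4)=(-x_4,-x_3,-x_2,-x_1)$ induced by the reflection $x\mapsto -x$, one checks directly that the cross ratio $z$, the gap $x_4-x_1$, and the gap $x_3-x_2$ are all preserved, so that
\[
\PartF_{\kappa,\nu}(\tilde{x}_1,\tilde{x}_2,\tilde{x}_3,\tilde{x}_4)=\PartF_{\kappa,\nu}(x_1,x_2,x_3,x_4).
\]
Thus the forward chain from $x_1$ to $x_4$ with marked points $(x_2,x_3)$ and the backward chain from $x_4$ to $x_1$ with marked points $(x_3,x_2)$ are driven by the \emph{same} partition function; this is the algebraic shadow of reversibility, and the entire difficulty lies in turning it into a statement about laws of curves.

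Second, for $\nu\ge\kappa/2-4$ the conjecture is already contained in the proof of Theorem~\ref{thm::hyperSLE_reversibility}. By Proposition~\ref{prop::hypersle_mart} the law of $\hSLE_{\kappa}(\nu)$ is that of $\SLE_{\kappa}$ weighted by $M_\infty=(H_D(x,y))^b\,\one_{\{\eta\cap(x,y)=\emptyset\}}$, and since the boundary Poisson kernel $H_D(x,y)$ and the avoidance event are both invariant under time reversal, the reversibility of $\SLE_{\kappa}$ transfers verbatim. Hence the genuine content of the conjecture is the complementary range $(-4)\vee(\kappa/2-6)<\nu<\kappa/2-4$, in which the curve \emph{does} hit the interval $[x,y]$ and this clean absolute continuity with respect to $\SLE_{\kappa}$ is lost.

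Third, to treat that range I would pass to the reference process $\SLE_{\kappa}(\nu+2,\kappa-6-\nu)$ with force points $(x,y)$, with respect to which $\hSLE_{\kappa}(\nu)$ is absolutely continuous with Radon--Nikodym derivative $R_t=F(Z_t)\,(g_t(y)-W_t)^{4/\kappa-1}$, as recorded before Proposition~\ref{prop::hyperSLE}. The plan then rests on two ingredients: (i) establish reversibility of the reference $\SLE_{\kappa}(\nu+2,\kappa-6-\nu)$, identifying its time reversal as the corresponding process from $x_4$ to $x_1$ with the roles of the two force points exchanged; and (ii) show that the terminal weight $R_\infty$ is covariant under this reversal, so that the reweighting is itself symmetric. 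Ingredient (i) is exactly the kind of statement proved by imaginary-geometry methods: I would realise the curve as a flow line of a Gaussian free field whose boundary data changes at $x_2$ and $x_3$, and invoke reversibility of flow lines, using \cite{MillerSheffieldIG2} for $\kappa\in(0,4]$ and \cite{MillerSheffieldIG3}, together with the analysis of boundary-touching flow lines, for $\kappa\in(4,8)$. The reflection symmetry of the free field under the reversal map should supply ingredient (ii), since $R_\infty$ ought to be a measurable function of the terminal configuration (through quantities like $F$ evaluated at the limiting cross ratio and a power of a boundary Poisson kernel) that is preserved by the symmetry.

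Finally, the main obstacle is precisely the boundary interaction in the hitting regime. When $\nu<\kappa/2-4$ the flow line touches the arc $[x_2,x_3]$, and several things that are automatic in the non-hitting case now require real work: the martingale $M$ is only a local martingale up to the swallowing time of $x$ and ceases to pin down the law past the first hitting of $[x,y]$; the hypergeometric drift is not a standard $\SLE_{\kappa}(\underline{\rho})$ drift, so identifying the reweighted law with a bona fide flow line (rather than a flow line conditioned on a boundary event) must be justified carefully; and reversibility of boundary-touching flow lines for $\kappa\in(4,8)$ is itself the most delicate input. I expect the crux to be showing that $R_\infty$ is well defined and reversal-invariant \emph{across} the hitting event---equivalently, that the conditional-law characterisation of $\hSLE$ can be glued through the times at which the curve meets $[x_2,x_3]$---because it is exactly at those times that absolute continuity with the simpler reference processes degenerates.
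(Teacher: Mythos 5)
You should first be clear about the status of this statement: the paper does not prove it. It appears there explicitly as a conjecture, and the paper establishes reversibility of $\hSLE_{\kappa}(\nu)$ only in the regime $\nu\ge\kappa/2-4$ (Theorem~\ref{thm::hyperSLE_reversibility}, proved via the uniformly integrable martingale of Proposition~\ref{prop::hypersle_mart}). Your first step (the symmetry of $\mathcal{Z}_{\kappa,\nu}$ under the reversal relabelling) and your second step (the Poisson-kernel reweighting of $\SLE_{\kappa}$ for $\nu\ge\kappa/2-4$) are both correct, but the second is exactly the paper's own argument, so the entire content of your proposal lies in the third step for the range $(-4)\vee(\kappa/2-6)<\nu<\kappa/2-4$ --- and that step contains a genuine error, not merely an unfinished computation.

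Your ingredient (i) is false. The Miller--Sheffield reversibility theorems \cite{MillerSheffieldIG2,MillerSheffieldIG3} concern $\SLE_{\kappa}(\rho_1;\rho_2)$ with force points immediately adjacent to the seed ($x_1^-$ and $x_1^+$); they say nothing about the reference process $\SLE_{\kappa}(\nu+2,\kappa-6-\nu)$ whose force points $(x_2,x_3)$ lie at macroscopic distance from $x_1$, and the identification you propose --- that its time reversal is ``the corresponding process from $x_4$ with the roles of the two force points exchanged'' --- is contradicted by the paper itself. Lemma~\ref{lem::sle_rev} shows that the time reversal of $\SLE_{\kappa}(\rho)$ with a force point strictly between the endpoints is an $\SLE_{\kappa}(\tilde{\rho})$ process if and only if $\rho=\tilde{\rho}=0$, and its proof identifies the reversal, in the non-boundary-hitting regime, as a \emph{hypergeometric} SLE. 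The GFF realization does not rescue this: when the boundary data jumps at interior points $x_2,x_3$, the time reversal of the flow line from $x_1$ is not a flow line of the reversed field from $x_4$ with piecewise-constant data of $\SLE_{\kappa}(\underline{\rho})$ type --- this is precisely why such reversals acquire hypergeometric drifts in the first place. Your plan is therefore circular: identifying the time reversal of the reference process is equivalent to (indeed contains) the conjecture you are trying to prove. Ingredient (ii) inherits the same defect, as you yourself flag: in the hitting regime the factor $R_t=F(Z_t)\bigl(g_t(y)-W_t\bigr)^{4/\kappa-1}$ is not bounded away from $0$ and $\infty$ as $t$ approaches the swallowing time of $y$, so there is no well-defined terminal weight $R_\infty$ whose reversal-covariance could even be formulated. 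As it stands, your argument proves only the regime already covered by the paper, and the conjecture remains open.
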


%%%
\subsection{Continuity of $\hSLE$: $\nu\le (-4)\vee(\kappa/2-6)$}
\label{subsec::hypersle_continuity_lownu}
\begin{proposition}\label{prop::hsle_continuity_lownu}
Fix $\kappa\in (0,8)$ and $\nu\le (-4)\vee(\kappa/2-6)$ and $0<x<y$. The $\hSLE_{\kappa}(\nu)$ in $\HH$ from $0$ to $\infty$ with marked points $(x, y)$ is well-defined up to the swallowing time of $x$, denoted by $T$. The process is almost surely continuous up to and including $T$, and it accumulates at a point in $[x, y)$ as $t\to T$.  
\end{proposition}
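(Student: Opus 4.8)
The plan is to mirror the strategy of the high-$\nu$ case (Lemma~\ref{lem::hyperSLE_aroundTy} and Proposition~\ref{prop::hyperSLE}), but now the roles of $x$ and $y$ are reversed: instead of studying the process up to the swallowing time of $y$, I will study it up to the swallowing time $T$ of the \emph{nearer} point $x$. The key structural fact I would exploit is that the local martingale $M_t$ of Lemma~\ref{lem::hypersle_mart}, which reweights $\SLE_\kappa$ to produce $\hSLE_\kappa(\nu)$, is defined precisely up to $T_x$; so it is natural that the process should live up to $T$. As in the proof of Proposition~\ref{prop::hyperSLE}, I would first recall that by Girsanov the law of $\hSLE_\kappa(\nu)$ equals that of $\SLE_\kappa(\nu+2,\kappa-6-\nu)$ with force points $(x,y)$ reweighted by $R_t/R_0$, where $R_t=F(Z_t)(g_t(y)-W_t)^{4/\kappa-1}$ and $Z_t=(g_t(x)-W_t)/(g_t(y)-W_t)\in[0,1]$. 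The first step is to record the relevant boundedness of $F$: in the regime $\nu\le(-4)\vee(\kappa/2-6)$ the function $F$ is given by~\eqref{eqn::hSLE_hyperF_reflect}, and the remark following that equation already guarantees $G$ is bounded on $[0,1]$ between $G(0)=1$ and $G(1)\in(0,\infty)$.

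The core of the argument is a localization-and-absolute-continuity scheme. I would introduce stopping times that keep both $Z_t$ and the relevant conformal derivative controlled away from the degenerate values, for instance
\[
S_n=\inf\{t:\ Z_t\le 1/n\ \text{ or }\ g_t(y)-W_t\le 1/n\ \text{ or }\ g_t(y)-W_t\ge n\},
\]
so that on $[0,S_n]$ the reweighting factor $R_t$ is bounded above and below. On each such interval $\hSLE_\kappa(\nu)$ is mutually absolutely continuous with respect to $\SLE_\kappa(\nu+2,\kappa-6-\nu)$, which by the general theory of~\cite{MillerSheffieldIG1} is generated by a continuous curve up to its continuation threshold. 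Since the relevant force weight at $x$ is $\nu+2\le(\kappa/2-4)$ in this regime, the auxiliary $\SLE_\kappa(\underline\rho)$ process is exactly in the range where the curve is swallowed by / accumulates at $x$ in finite time, so $T=\lim_n S_n$ is the swallowing time of $x$ and is finite. Transporting continuity through the absolute continuity on $[0,S_n]$ and letting $n\to\infty$ yields that $\hSLE_\kappa(\nu)$ is generated by a continuous curve up to $T$.

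The delicate point, and the step I expect to be the main obstacle, is continuity \emph{up to and including} $T$ together with identifying the accumulation point as lying in $[x,y)$. Near $t\to T$ one has $Z_t\to 0$ and $g_t(y)-W_t$ need not be controlled, so $R_t$ is no longer bounded and the naive absolute continuity breaks down—exactly the phenomenon flagged in the text before Lemma~\ref{lem::hyperSLE_aroundTy}. To handle this I would perform a M\"obius change of coordinates sending the target of the relevant accumulation to $\infty$, analogous to the map $f(z)=z/(1-z)$ used in Lemma~\ref{lem::hyperSLE_aroundTy}, and reparameterize by capacity seen from that point; the transformed driving SDE should again be an absolutely continuous perturbation of a standard $\SLE_\kappa(\underline\rho)$ whose continuity up to and including the continuation threshold is known. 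The asymptotics of $F$ and $F'$ from~\eqref{eqn::hSLE_hyperF_reflect} (via the hypergeometric connection formulas in \cite{AbramowitzHandbook}) must be inserted to verify that the drift terms in the SDE remain integrable as $Z_t\to 0$, so that the reweighting survives the limit and the curve is forced to accumulate at a boundary point of $[x,y)$ rather than escaping to $y$ or beyond. This boundary asymptotic analysis of $F$ near the singular endpoint is the technical heart of the proof.
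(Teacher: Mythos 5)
Your overall scheme (Girsanov comparison with an auxiliary $\SLE_{\kappa}(\underline\rho)$, localization by stopping times, then passing to the limit) is the same as the paper's, but you chose the wrong auxiliary process, and this creates a genuine gap. The paper does \emph{not} compare with $\SLE_{\kappa}(\nu+2,\kappa-6-\nu)$ in this regime; it compares with $\SLE_{\kappa}(\nu+2,\nu+2)$ with force points $(x,y)$. The reason is visible in your own parameter claim, which is false: you assert that ``the relevant force weight at $x$ is $\nu+2\le \kappa/2-4$ in this regime,'' but for $\kappa<4$ the hypothesis is only $\nu\le -4$, so $\nu+2\le -2$ while $\kappa/2-4<-2$ (e.g.\ $\kappa=2$, $\nu=-4$ gives $\nu+2=-2>\kappa/2-4=-3$). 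Consequently you cannot conclude that your auxiliary curve ``is swallowed by / accumulates at $x$.'' Worse, for your choice the total weight seen beyond $y$ is $(\nu+2)+(\kappa-6-\nu)=\kappa-4$, which for $\kappa<4$ is $<\kappa/2-2$, so the auxiliary curve swallows $x$ by hitting $(y,\infty)$ with positive probability. On that event $g_t(y)-W_t\to 0$, the Radon--Nikodym factor $R_t=F(Z_t)(g_t(y)-W_t)^{4/\kappa-1}$ degenerates, and the conclusion ``accumulates at a point in $[x,y)$'' does not transfer by absolute continuity on $[0,S_n]$ alone: one must show the reweighting kills this event, which requires a uniform integrability argument you do not supply. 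Your proposed repair (a M\"obius change of coordinates plus asymptotics of $F$, $F'$ near the singular endpoint) is only a sketch of hoped-for steps, and it is precisely the machinery the paper avoids.

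The paper's choice $\SLE_{\kappa}(\nu+2,\nu+2)$ is tailored so that none of this arises: the total weight is $2\nu+4\le\kappa/2-4$, so the auxiliary curve almost surely terminates in $[x,y)$ (indeed at $x$ when $\kappa\in(4,8)$ and $\nu\le\kappa/2-6$, by \cite[Lemma 15]{DubedatSLEDuality}) and is continuous up to and including its continuation threshold by \cite{MillerSheffieldIG1}. Since $Z_t\to 0$ at that time while $g_t(y)-W_t$ stays bounded away from $0$ and $\infty$, the Radon--Nikodym derivative (which there takes the form of a bounded power of $g_t(y)-W_t$ and of $1-Z_t$ times $G(1-Z_t)$, with $G$ bounded and $G(1)\in(0,\infty)$) converges almost surely to a positive finite limit $R_T$, so continuity up to and including $T$ and the location of the accumulation point transfer directly, with no coordinate change and no hypergeometric asymptotics. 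Two further small points: your stopping times $S_n$ bound $R_t$ from above but not from below, since $F(Z)=(1-Z)^{8/\kappa-1}G(1-Z)\to 0$ as $Z\to 1$, so you only get one-sided absolute continuity on $[0,S_n]$ (sufficient for transferring continuity, but ``mutually absolutely continuous'' as stated is not justified); and your identification $T=\lim_n S_n$ under the target (weighted) law presupposes that $g_t(y)-W_t$ stays bounded below up to $T$, which is part of what is being proved. For $\kappa\in(4,8)$, $\nu\le\kappa/2-6$, your parameter claim does hold and your plan is essentially workable; the gap is in the case $\kappa\in(0,4]$, $\nu\le-4$.
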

\begin{proof}
Suppose $\tilde{\eta}$ is an $\SLE_{\kappa}(\nu+2, \nu+2)$ in $\HH$ from $0$ to $\infty$ with force points $(x, y)$. The law of $\eta$ is the same as the law of $\tilde{\eta}$ weighted by $R_t/R_0$ where 
\[R_t=(g_t(y)-W_t)^{-(\nu+2)(\nu+6)/(2\kappa)}(1-Z_t)^{8/\kappa-1-(\nu+2)^2/(2\kappa)}G(1-Z_t), \quad\text{where}\quad Z_t=\frac{g_t(x)-W_t}{g_t(y)-W_t}.\]
Here $G$ is defined in~\eqref{eqn::hSLE_hyperF_reflect}. For $n\ge 1$, define $S_n$ to be the min of 
\[\inf\{t: \tilde{\eta}(t)\text{ exits }B(0,n)\},\quad \text{and}\quad \inf\{t: g_t(y)-g_t(x)\ge 1/n\}.\] Then $R_{T\wedge S_n}$ is bounded. Thus $\eta$ is continuous up to $T\wedge S_n$. 

First, we assume $\kappa\in (4,8)$ and $\nu\le \kappa/2-6$. 
Since $\nu+2\le \kappa/2-4$ and $2\nu+4\le \kappa/2-4$, we know that $\tilde{\eta}$ accumulates at the point $x$ as $t\to T$ (see \cite[Lemma 15]{DubedatSLEDuality}), and it is continuous up to and including $T$. This implies that $R_{T\wedge S_n}\to R_T\in (0,\infty)$ as $n\to\infty$. Therefore, $\eta$ is continuous up to and including $T$ and it accumulates at the point $x$ as $t\to T$.   

Next, we assume $\kappa\in (0,4]$ and $\nu\le -4$. 
Since $\nu+2< \kappa/2-2$ and $2\nu+4\le \kappa/2-4$, we know that $\tilde{\eta}$ accumulates at a point in $[x, y)$ as $t\to T$, and it is continuous up to and including $T$. Therefore, $\eta$ is continuous up to and including $T$ and it accumulates at a point in $[x, y)$.  This completes the proof. 
\end{proof}

\subsubsection*{The special case: $\kappa=4$}
When $\kappa=4$ and $\nu\le -4$, suppose $\eta\sim\SLE_4(\nu+2, -\nu-2)$ in $\HH$ from $x_1$ to $x_4$ with force points $(x_2, x_3)$.  In this case, the process $\eta$ can be viewed as the level line of $\GFF$ with the following boundary data ($\lambda=\pi/2$): 
\[-\lambda\text{ on }(-\infty, x_1), \quad \lambda\text{ on }(x_1, x_2),\quad \lambda(\nu+3)\text{ on }(x_2, x_3), \quad \lambda\text{ on }(x_3, x_4),\quad -\lambda\text{ on }(x_4, \infty).\]
In particular, the process $\eta$ is continuous up to and including the continuation threshold, denoted by $T$, see \cite{SchrammSheffieldContinuumGFF, WangWuLevellinesGFFI}. When $\nu+3\le -1$, the curve $\eta$ may terminate at either $x_2$ or $x_4$. Furthermore, we can calculate the probabilties of these two events.
\begin{lemma}\label{lem::levellines_crossingproba}
Fix $\nu\le -4$ and set $\alpha=-(\nu+2)/2\ge 1$. Suppose $\eta$ is an $\SLE_4(\nu+2, -\nu-2)$ in $\HH$ from $x_1$ to $x_4$ with force points $(x_2, x_3)$. Let $T$ be its continuation threshold, then we have 
\[\PP[\eta(T)=x_2]=1-z^{\alpha},\quad\text{and}\quad \PP[\eta(T)=x_4]=z^{\alpha},\quad{where}\quad z=\frac{(x_2-x_1)(x_4-x_3)}{(x_3-x_1)(x_4-x_2)}.\]
\end{lemma}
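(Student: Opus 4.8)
The plan is to reduce to the half-plane with target $\infty$, exhibit an explicit bounded martingale built from a power of the conformally invariant cross-ratio of the four points, and then read off the two probabilities by optional stopping.

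First I would apply a M\"obius transformation $\phi$ of $\HH$ with $\phi(x_1)=0$ and $\phi(x_4)=\infty$, for instance $\phi(w)=(w-x_1)/(x_4-w)$. By conformal invariance of $\SLE_\kappa(\underline{\rho})$, the image $\eta'=\phi(\eta)$ is an $\SLE_4(\nu+2,-\nu-2)$ in $\HH$ from $0$ to $\infty$ with force points $p:=\phi(x_2)<q:=\phi(x_3)$, and a direct computation gives $\phi(x_2)/\phi(x_3)=z$. The two target events transform into $\{\eta(T)=x_2\}=\{\eta'\text{ terminates at }p\}$ and $\{\eta(T)=x_4\}=\{\eta'\text{ is transient to }\infty\}$; that these are the only two possibilities, and that $\eta'$ is continuous up to and including its continuation threshold, is exactly the level-line input recalled above (from \cite{SchrammSheffieldContinuumGFF, WangWuLevellinesGFFI}), since the pushed-forward boundary data is $-\lambda$ on $(-\infty,0)$, $\lambda$ on $(0,p)$, $\lambda(\nu+3)$ on $(p,q)$, and $\lambda$ on $(q,\infty)$, and $\lambda(\nu+3)\le-\lambda$ when $\nu\le-4$.

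Next I would introduce $R_t=(g_t(p)-W_t)/(g_t(q)-W_t)\in(0,1)$, which is precisely the cross-ratio of the four evolving points, so $R_0=z$. Writing the defining SDE for the driving function $W_t$ of $\SLE_4(\nu+2,-\nu-2)$ and applying It\^o's formula to $u_t=g_t(p)-W_t$ and $v_t=g_t(q)-W_t$, I would compute the drift and quadratic variation of $R_t=u_t/v_t$, and then seek $\alpha$ so that $R_t^\alpha$ is a local martingale. Setting the drift of $R_t^\alpha$ to zero produces a quadratic in $R$ whose coefficients of $R^0,R^1,R^2$ all vanish precisely when $\alpha=-(\nu+2)/2$; this is the routine but essential computation, and the three conditions are in fact consistent only for this value of $\alpha$. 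Since $\alpha\ge 1$ and $R_t\in(0,1)$, the process $R_t^\alpha$ is bounded by $1$, hence a genuine uniformly integrable martingale.

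Finally, by the martingale convergence theorem $R_t^\alpha$ converges almost surely and in $L^1$ as $t\to T$, and optional stopping gives $z^\alpha=R_0^\alpha=\E[R_T^\alpha]$. It then remains to identify the terminal value: on $\{\eta(T)=x_2\}$ the curve hits $p$ while $q$ stays unswallowed, so $u_t\to 0$ and $R_t\to 0$; on $\{\eta(T)=x_4\}$ the curve is transient, the force points recede so that $u_t,v_t\to\infty$ with $u_t/v_t\to 1$, and $R_t\to 1$. Hence $\E[R_T^\alpha]=\PP[\eta(T)=x_4]$, which yields $\PP[\eta(T)=x_4]=z^\alpha$ and $\PP[\eta(T)=x_2]=1-z^\alpha$. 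The main obstacle I anticipate is not the algebra but the rigorous justification of the terminal behavior of $R_t$---in particular that on the transient event $R_t\to 1$ and that on the hitting event $q$ is not swallowed---which I would ground in the continuity of $\eta'$ up to and including the continuation threshold together with the known boundary behavior of $\SLE_\kappa(\rho)$ at a force point.
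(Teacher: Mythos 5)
Your proof is correct and is essentially the paper's own argument: the paper likewise takes the cross-ratio of the four evolving points raised to the power $\alpha=-(\nu+2)/2$ (its martingale $M_t=z_t^{\alpha}$), checks via It\^{o}'s formula that it is a local martingale bounded between $0$ and $1$, and concludes by the optional stopping theorem with the same identification of the terminal values ($M_T\to 0$ when $\eta$ terminates at $x_2$, $M_T\to 1$ when it terminates at $x_4$). Your preliminary M\"obius reduction sending $x_4\to\infty$ is only a cosmetic normalization---the paper works directly with the curve from $x_1$ to $x_4$, so its SDE carries the extra drift term from $x_4$---and does not change the substance of the argument.
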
 
\begin{proof} 
Recall that the driving function of $\eta$ satisfies the following:
\[dW_t=2dB_t+\frac{-(\nu+2)dt}{g_t(x_2)-W_t}+\frac{(\nu+2)dt}{g_t(x_3)-W_t}+\frac{2dt}{g_t(x_4)-W_t}.\]
Define 
\begin{equation}\label{eqn::hSLE4_SLE4_mart}
M_t=z_t^{\alpha},\quad \text{where}\quad z_t=\frac{(g_t(x_2)-W_t)(g_t(x_4)-g_t(x_3))}{(g_t(x_3)-W_t)(g_t(x_4)-g_t(x_2))}.
\end{equation}
By It\^{o}'s Formula, one can check that $M_t$ is a local martingale for $\eta$. We see that, as $t\to T$, 
\[M_t\to 0,\quad \text{as }\eta(t)\to x_2;\quad\text{and}\quad M_t\to 1,\quad \text{as }\eta(t)\to x_4. \] 
Note that $0\le M_t\le 1$. Thus Optional Stopping Theorem implies that 
\[\PP[\eta(T)=x_4]=\E[M_T]=M_0=z^{\alpha}. \]
This gives the conclusion. 
\end{proof}
Note that the process $\hSLE_4(\nu)$ for $\nu\le -4$ defined through~\eqref{eqn::hSLE_hyperF_reflect} is nolonger the same as $\eta\sim \SLE_4(\nu+2, -\nu-2)$. In fact, it is the same as $\eta$ conditioned on $\{\eta(T)=x_2\}$. Or equivalently, $\hSLE_4(\nu)$ is the same as $\eta$ weighted by the martingale $1-M_t$ where $M_t$ is defined in~\eqref{eqn::hSLE4_SLE4_mart}. 

When $\kappa=4$, the hypergeometric $\SLE$ process degenerates: the hypergeometric term in the driving function becomes zero. The degeneracy indicates that the solution to~\eqref{eqn::euler_ode} is algebraic. This observation also holds for more general boundary conditions and we can derive the crossing probabilities---generalization of Lemma~\ref{lem::levellines_crossingproba}---for the general alternating boundary conditions, see \cite[Sections~5 and~6]{PeltolaWuGlobalMultipleSLEs}.

%%%
\subsection{Relation between Different $\hSLE$'s}
By conformal invariance, the $\hSLE_\kappa(\nu)$ can be defined in any quad $(\Omega; x_1, x_2, x_3, x_4)$ via conformal image. 
Denote by $\PP_{\kappa,\nu}(\Omega; x_1, x_2, x_3, x_4)$ the law of $\hSLE_{\kappa}(\nu)$ in $\Omega$ from $x_1$ to $x_4$ with marked points $(x_2, x_3)$. Proposition~\ref{prop::hSLE_conditioned} derives the relation between $\hSLE$s with different $\nu$'s. Proposition~\ref{prop::boundary_perturbation} derives the relation between $\hSLE$s in different domains. Proposition~\ref{prop::hsle_change_target} derives the relation between $\hSLE$s with different target points. 

\begin{proposition}\label{prop::hSLE_conditioned}
Fix $\kappa\in (0,8), \nu\in\R$ and quad $(\Omega; x_1, x_2, x_3, x_4)$. 
When $\nu\ge \kappa/2-4$, we have $\eta\cap (x_2x_3)=\emptyset$ almost surely. When $(-4)\vee(\kappa/2-6)<\nu<\kappa/2-4$, the event $\{\eta\cap (x_2x_3)=\emptyset\}$ has positive chance which is given by
\begin{equation}\label{eqn::hSLE_proba_avoid}
\frac{\PartF_{\kappa, \kappa-8-\nu}(\Omega; x_1, x_2, x_3, x_4)\Gamma((2\nu+8)/\kappa)\Gamma((\kappa-4-2\nu)/\kappa)}{\PartF_{\kappa, \nu}(\Omega; x_1, x_2, x_3, x_4)\Gamma((2\nu+12-\kappa)/\kappa)\Gamma((2\kappa-8-2\nu)/\kappa)}.   
\end{equation}
Moreover, for $(-4)\vee(\kappa/2-6)<\nu<\kappa/2-4$, we have
\[\PP_{\kappa,\nu}(\Omega; x_1, x_2, x_3, x_4)[\cdot\cond \eta\cap(x_2x_3)=\emptyset]=\PP_{\kappa,\kappa-8-\nu}(\Omega; x_1, x_2, x_3, x_4)[\cdot].\]
In particular, when $\kappa\in (4,8)$, the law of $\SLE_{\kappa}$ from $x_1$ to $x_4$ conditioned to avoid $(x_2x_3)$ is the same as $\hSLE_{\kappa}(\kappa-6)$ from $x_1$ to $x_4$ with marked points $(x_2, x_3)$. 
\end{proposition}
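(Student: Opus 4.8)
The plan is to realize the conditioned measure as a Radon--Nikodym derivative that is itself a ratio of two $\hSLE$ partition functions, and then to extract the two multiplicative constants from the boundary behaviour of the cross-ratio diffusion. Throughout I work in $\HH$ from $0$ to $\infty$ with marked points $(x,y)$, $0<x<y$ (so $(x_2,x_3)=(x,y)$); the general quad follows by conformal invariance. The first assertion, that $\eta\cap(x_2x_3)=\emptyset$ almost surely when $\nu\ge\kappa/2-4$, is exactly the non-hitting statement of Proposition~\ref{prop::hyperSLE}. For the main range set $\nu'=\kappa-8-\nu$; a direct check shows $\nu'\ge\kappa/2-4$ whenever $(-4)\vee(\kappa/2-6)<\nu<\kappa/2-4$, so $\hSLE_\kappa(\nu')$ is the transient, non-hitting process of Proposition~\ref{prop::hyperSLE}, and its hypergeometric function $F_{\nu'}$ is given by the direct formula \eqref{eqn::hSLE_hyperF}.

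First I would compare the two martingales of Lemma~\ref{lem::hypersle_mart}. With $A,B,C$ as in the proof of Lemma~\ref{lem::hyperF_bound}, writing $M^\nu_t=Z_t^{a}J_t^{b}F_\nu(Z_t)$ and $M^{\nu'}_t=Z_t^{a'}J_t^{b'}F_{\nu'}(Z_t)$, the elementary identities $h'=h$, $b'=b$ and $a'-a=1-C=1-\tfrac{2\nu+8}{\kappa}>0$ show that the ratio
\[
N_t:=\frac{M^{\nu'}_t}{M^\nu_t}=Z_t^{a'-a}\,\frac{F_{\nu'}(Z_t)}{F_\nu(Z_t)}
\]
is free of $J_t$ and is a bounded function of $Z_t\in[0,1]$, since both $F_\nu$ and $F_{\nu'}$ are bounded and bounded away from $0$ by Lemma~\ref{lem::hyperF_bound}. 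Because $M^\nu$ and $M^{\nu'}$ are local martingales for $\SLE_\kappa$ and $\hSLE_\kappa(\nu)$ is $\SLE_\kappa$ weighted by $M^\nu$, the process $N_t$ is a bounded, hence uniformly integrable, martingale under $\PP_{\kappa,\nu}$, and it is precisely the density of $\hSLE_\kappa(\nu')$ relative to $\hSLE_\kappa(\nu)$.

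The heart of the argument is the boundary behaviour of $Z_t$ under $\PP_{\kappa,\nu}$, for which I would invoke the continuity and accumulation results of Lemma~\ref{lem::hyperSLE_aroundTy}. Let $A=\{\eta\cap(x,y)=\emptyset\}$. On $A$ the curve reaches $T_y$ (or runs to $\infty$ when $\kappa\le 4$) with $Z_t\to 1$, so $N_t\to F_{\nu'}(1)/F_\nu(1)$; on $A^c$ the curve hits $(x,y)$, swallowing $x$ at a finite time with $g_t(x)-W_t\to 0$ while $g_t(y)-W_t$ stays bounded below, so $Z_t\to 0$ and, since $a'>a$, $N_t\to 0$. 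Martingale convergence then gives $N_\infty=\tfrac{F_{\nu'}(1)}{F_\nu(1)}\one_A$, and optional stopping for the bounded martingale yields $\PP_{\kappa,\nu}[A]=\tfrac{F_\nu(1)}{F_{\nu'}(1)}N_0$. It remains to identify the two constants. By conformal covariance \eqref{eqn::hSLE_partition_general} the prefactors cancel ($h'=h$, $b'=b$), so $N_0=\PartF_{\kappa,\nu'}/\PartF_{\kappa,\nu}$ in the given quad; and evaluating $F_\nu(1),F_{\nu'}(1)$ by Gauss's formula \eqref{eqn::hyperF_one}, using $C-A=4/\kappa$, $C-B=(2\nu+12-\kappa)/\kappa$, $C-A-B=(8-\kappa)/\kappa$ and the $\nu'$-analogues $C'-A'=4/\kappa$, $C'-B=(\kappa-4-2\nu)/\kappa$, produces exactly the quotient of Gamma factors in \eqref{eqn::hSLE_proba_avoid}.

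Finally, the conditional law is immediate from the density: $N_\infty/N_0=\one_A/\PP_{\kappa,\nu}[A]$ is the Radon--Nikodym derivative of $\PP_{\kappa,\nu}[\,\cdot\mid A]$, whence $\PP_{\kappa,\nu}[\,\cdot\mid A]=\PP_{\kappa,\nu'}=\PP_{\kappa,\kappa-8-\nu}$; here one notes that past the swallowing of $x,y$ both processes degenerate to plain $\SLE_\kappa$ (Proposition~\ref{prop::hyperSLE}), so $N$ captures all the difference and no information is lost beyond $\mathcal{F}_{T_y}$. The stated special case is $\nu=-2$: then $\hSLE_\kappa(-2)=\SLE_\kappa$ and $\nu'=\kappa-6$, giving that $\SLE_\kappa$ conditioned to avoid $(x_2x_3)$ equals $\hSLE_\kappa(\kappa-6)$. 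I expect the main obstacle to be the rigorous justification of the limits $Z_t\to 0,1$ and of the hit/avoid dichotomy together with the uniform integrability needed for optional stopping; this is exactly where the continuity and accumulation statements of Lemma~\ref{lem::hyperSLE_aroundTy} and Proposition~\ref{prop::hyperSLE} do the real work, the hypergeometric computations being otherwise routine.
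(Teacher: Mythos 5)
Your proposal is correct and follows essentially the same route as the paper's proof: both hinge on the ratio local martingale $N_t=Z_t^{\hat a-a}\hat F(Z_t)/F(Z_t)$ (the paper writes it down directly, you derive it by dividing the two martingales of Lemma~\ref{lem::hypersle_mart} using $b'=b$, $h'=h$), both use Proposition~\ref{prop::hyperSLE} for the non-hitting of $\hSLE_{\kappa}(\kappa-8-\nu)$, and both conclude via the dichotomy $Z_t\to 1$ on the avoid event versus $Z_t\to 0$ on the hit event, optional stopping, and Gauss's formula~\eqref{eqn::hyperF_one} for the Gamma factors. Your justification of uniform integrability by boundedness of $N_t$ (via Lemma~\ref{lem::hyperF_bound}) and your explicit remark that both processes continue as standard $\SLE_{\kappa}$ past $T_y$ are slightly more careful renderings of steps the paper treats tersely, not a different argument.
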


\begin{proof}
We may assume $\Omega=\HH$ and $x_1=0<x_2=x<x_3=y<x_4=\infty$. 
Let $\eta$ be an $\hSLE_{\kappa}(\nu)$ from $0$ to $\infty$ with marked points $(x, y)$. The fact that $\eta\cap (x, y)=\emptyset$ when $\nu\ge\kappa/2-4$ is proved in Proposition~\ref{prop::hyperSLE}. In the following, we assume $(-4)\vee(\kappa/2-6)<\nu<\kappa/2-4$. 

Set $\hat{\nu}=\kappa-8-\nu$ and $\hat{a}=(\hat{\nu}+2)/\kappa$, and let $\hat{\eta}$ be an $\hSLE_{\kappa}(\hat{\nu})$ from $0$ to $\infty$ with marked points $(x, y)$. The following process is a local martingale for $\eta$: 
\[M_t=z_t^{\hat{a}-a}\hat{F}(z_t)/F(z_t),\quad \text{where}\quad z_t=\frac{g_t(x)-W_t}{g_t(y)-W_t},\]
and $F$ is defined through \eqref{eqn::hSLE_hyperF} and 
\[\hat{F}(z)=\hF\left(\frac{2(\kappa-6-\nu)}{\kappa}, 1-\frac{4}{\kappa}, \frac{2(\kappa-4-\nu)}{\kappa}; z\right).\]
Moreover, the law of $\eta$ weighted by $M$ is the same as $\hat{\eta}$ up to the first time that $x$ is swallowed. Since $\hat{\nu}\ge \kappa/2-4$, we know that $\hat{\eta}$ does not hit the interval $(x, y)$. Thus the swallowing time of $x$ coincides with the swallowing time of $y$, denoted by $T_y$. Therefore, $M$ is a uniform integrable martingale for $\eta$.  As $t\to T_y$, we have $z_t\to 1$. Thus the law of $\hat{\eta}$ is the same as $\eta$ weighted by $\one_{\{\eta\cap (x, y)=\emptyset\}}$. 
In particular, we have 
\[\PP[\eta\cap (x, y)=\emptyset]=z^{\hat{a}-a}\frac{\hat{F}(z)/\hat{F}(1)}{F(z)/F(1)}, \quad\text{where}\quad z=x/y.\]
This gives \eqref{eqn::hSLE_proba_avoid}. 
\end{proof}

Next, we derive the boundary perturbation property of the $\hSLE_\kappa(\nu)$, this is a generalization of the boundary perturbation property of $\SLE_{\kappa}$ derived in \cite[Section 5]{LawlerSchrammWernerConformalRestriction}. 
Suppose $\hat{\Omega}\subset\Omega$ such that $\hat{\Omega}$ is simply connected and agrees with $\Omega$ in a neighborhood of $(x_1x_4)$. 
In the following proposition, we will derive the relation between $\PP_{\kappa,\nu}(\hat{\Omega}; x_1, x_2, x_3, x_4)$ and $\PP_{\kappa,\nu}(\Omega; x_1, x_2, x_3, x_4)$. To this end, we need to introduce Brownian loop measure. 

The \textit{Brownian loop measure} is 
a conformally invariant measure on unrooted Brownian loops 
in the plane. In the present article, we will not need the precise definition of this 
measure, so we content ourselves with referring to the literature for the definition: 
see, e.g.,~\cite{LawlerNoteBrownianLoop} or~\cite[Sections~3~and~4]{LawlerWernerBrownianLoopsoup}. 
Given a non-empty simply connected domain $\Omega\subsetneq\C$ and two disjoint subsets $V_1, V_2 \subset \Omega$, we
denote by $\mu(\Omega; V_1, V_2)$ the Brownian loop measure of loops in $\Omega$ that intersect both 
$V_1$ and $V_2$. This quantity is conformally invariant: 
$\mu(f(\Omega); f(V_1), f(V_2)) = \mu(\Omega; V_1, V_2)$ for any conformal transformation 
$f \colon \Omega \to f(\Omega)$.
In general, the Brownian loop measure is an infinite measure. By~\cite[Corollary~4.6]{LawlerNoteBrownianLoop}, we have $0 \leq \mu(\Omega; V_1, V_2) < \infty$ 
when both of $V_1, V_2$ are closed, one of them is compact, and $\dist(V_1, V_2) > 0$.
 
\begin{comment}
The \textit{boundary Poisson kernel $H_{\Omega}(x,y)$} is defined for any Dobrushin domain $(\Omega; x, y)$ and it is 
uniquely characterized by the following
two properties~\eqref{eqn::poisson_cov}~and~\eqref{eqn::poisson_upperhalfplane}. 
First, it is conformally covariant: for any conformal map
$\varphi \colon \Omega \to \varphi(\Omega)$, we have  
\begin{align}\label{eqn::poisson_cov}
\varphi'(x)\varphi'(y) H_{\varphi(\Omega)}(\varphi(x), \varphi(y)) = H_{\Omega}(x,y).
\end{align}
Second, for the upper-half plane 
with $x,y \in \R$, we have the explicit formula
\begin{align}\label{eqn::poisson_upperhalfplane}
H_{\HH}(x,y) = |y-x|^{-2}. 
\end{align}
\end{comment}

\begin{proposition}\label{prop::boundary_perturbation}
Fix $\kappa\in (0,4]$ and $\nu>-4$. Let $(\Omega; x_1, x_2, x_3, x_4)$ be a quad and assume that $\hat{\Omega}\subset\Omega$ is simply connected and it agrees with $\Omega$ in a neighbourhood of the arc $(x_1x_4)$. Then the $\hSLE_{\kappa}(\nu)$ in $\hat{\Omega}$ is absolutely continuous with respect to $\hSLE_{\kappa}(\nu)$ in $\Omega$, and the Radon-Nikodym derivative is given by 
\[\frac{d\PP_{\kappa,\nu}(\hat{\Omega}; x_1, x_2, x_3, x_4)}{d\PP_{\kappa,\nu}(\Omega; x_1, x_2, x_3, x_4)}=\frac{\PartF_{\kappa,\nu}(\Omega; x_1, x_2, x_3, x_4)}{\PartF_{\kappa,\nu}(\hat{\Omega}; x_1, x_2, x_3, x_4)}\one_{\{\eta\subset\hat{\Omega}\}}\exp(c\mu(\Omega; \eta, \Omega\setminus\hat{\Omega})),\]
where $c=(3\kappa-8)(6-\kappa)/(2\kappa)$ and $\PartF_{\kappa,\nu}$ is defined through \eqref{eqn::hSLE_partition} and \eqref{eqn::hSLE_partition_general}.
\end{proposition}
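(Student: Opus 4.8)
The plan is to realize $\hSLE_\kappa(\nu)$ as $\SLE_\kappa$ reweighted by the martingale \eqref{eqn::hypersle_mart} and to combine this with the classical boundary perturbation of $\SLE_\kappa$ ($\kappa\le 4$) from \cite[Section~5]{LawlerSchrammWernerConformalRestriction}. The loop term will come entirely from the latter, while every conformal factor will reorganize, via the covariance \eqref{eqn::hSLE_partition_general} of $\PartF_{\kappa,\nu}$, into the stated ratio of partition functions. Since $\hSLE$, the Brownian loop measure and $\PartF_{\kappa,\nu}$ are all conformally (co)variant, I may assume $\Omega=\HH$ with $0=x_1<x_2<x_3<\infty=x_4$ and write $\underline x=(x_1,x_2,x_3,x_4)$, so that $\hat\Omega=\HH\setminus A$ where $A=\Omega\setminus\hat\Omega$ is a hull attached to the arc $(x_4x_1)=(-\infty,0)$, away from $\{x_1,x_2,x_3\}$ and from $\infty$. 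Write $\Phi=g_A\colon\hat\Omega\to\HH$ for the conformal map normalized at $\infty$ (so $\Phi(\infty)=\infty$, $\Phi(z)=z+o(1)$, and $\Phi'(\infty)=1$); it is smooth near each $x_i$, and $\hSLE_\kappa(\nu)$ in $\hat\Omega$ is by definition the $\Phi^{-1}$-image of $\hSLE_\kappa(\nu)$ in $\HH$ with marked points $\Phi(x_2),\Phi(x_3)$ and target $\infty$.

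Two inputs drive the argument. First, by \cite[Section~5]{LawlerSchrammWernerConformalRestriction}, on the event $\{\eta\subset\hat\Omega\}$ the law of $\SLE_\kappa$ in $\hat\Omega$ from $0$ to $\infty$ is absolutely continuous with respect to $\SLE_\kappa$ in $\HH$ from $0$ to $\infty$, with Radon--Nikodym derivative $\one_{\{\eta\subset\hat\Omega\}}\,\Phi'(0)^{-h}\exp(c\,\mu(\HH;\eta,A))$, where $c=(3\kappa-8)(6-\kappa)/(2\kappa)$ is the central charge and the endpoint factor at $\infty$ is trivial because $\Phi$ is normalized there (a check at $\kappa=8/3$, where $c=0$ and restriction holds, fixes the exponent $-h$). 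Second, write $M^\Omega,M^{\hat\Omega}$ for the reweighting martingale \eqref{eqn::hypersle_mart} in the two domains (both in the $\HH$-picture): then $\hSLE$ equals $\SLE_\kappa$ (from $0$ to $\infty$) weighted by $M_\infty/M_0$, with $M_0^\Omega=\PartF_{\kappa,\nu}(\Omega;\underline x)$ and, for $\nu\ge\kappa/2-4$, Proposition~\ref{prop::hypersle_mart} gives $M_\infty^\Omega=H_D(x_2,x_3)^b\,\one_{\{\eta\cap(x_2x_3)=\emptyset\}}$ with $D$ the component of $\HH\setminus\eta$ having $(x_2x_3)$ on its boundary. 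The geometric point I would stress is that $A$ lies in the component of $\HH\setminus\eta$ bounded by $(x_4x_1)$, hence not in $D$; therefore the corresponding component for $\hat\Omega\setminus\eta$ coincides with $D$, and the terminal value in $\hat\Omega$ is $H_{\Phi(D)}(\Phi(x_2),\Phi(x_3))^b=\Phi'(x_2)^{-b}\Phi'(x_3)^{-b}H_D(x_2,x_3)^b$ by conformal covariance of the boundary Poisson kernel.

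Combining these by the chain rule, the weight ratio between the two reweightings equals, on $\{\eta\subset\hat\Omega\}$,
\[\frac{M_\infty^{\hat\Omega}/M_0^{\hat\Omega}}{M_\infty^{\Omega}/M_0^{\Omega}}=\Phi'(x_2)^{-b}\Phi'(x_3)^{-b}\cdot\frac{\PartF_{\kappa,\nu}(\Omega;\underline x)}{\PartF_{\kappa,\nu}(\hat\Omega;\underline x)}\,\Phi'(0)^{h}\Phi'(x_2)^{b}\Phi'(x_3)^{b},\]
where I used \eqref{eqn::hSLE_partition_general} to write $M_0^{\hat\Omega}=\PartF_{\kappa,\nu}(\hat\Omega;\underline x)/\bigl(\Phi'(0)^{h}\Phi'(x_2)^{b}\Phi'(x_3)^{b}\bigr)$ (the weight at $x_4=\infty$ being $\Phi'(\infty)^h=1$). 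The factors $\Phi'(x_2)^{\pm b}$ and $\Phi'(x_3)^{\pm b}$ cancel, leaving $\frac{\PartF_{\kappa,\nu}(\Omega;\underline x)}{\PartF_{\kappa,\nu}(\hat\Omega;\underline x)}\Phi'(0)^{h}$; multiplying by the $\SLE_\kappa$ derivative above, the remaining $\Phi'(0)^{h}$ cancels against $\Phi'(0)^{-h}$, and I obtain exactly $\frac{\PartF_{\kappa,\nu}(\Omega;\underline x)}{\PartF_{\kappa,\nu}(\hat\Omega;\underline x)}\one_{\{\eta\subset\hat\Omega\}}\exp(c\,\mu(\Omega;\eta,\Omega\setminus\hat\Omega))$, which is the claim.

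The main obstacle is that the clean terminal value of $M_\infty$, and the uniform integrability underlying these manipulations, are guaranteed only for $\nu\ge\kappa/2-4$ (Proposition~\ref{prop::hypersle_mart}), whereas the range $-4<\nu<\kappa/2-4$ (nonempty since $\kappa/2-4\in(-4,-2]$ for $\kappa\le4$) allows $\eta$ to touch $[x_2,x_3]$, where $D$ degenerates and $M_\infty=H_D^b\one$ fails. To cover the whole range I would run the comparison up to the stopping times $S_n=\inf\{t:J_t\le1/n\text{ or }Z_t\le1/n\}$ of Proposition~\ref{prop::hypersle_mart}, where all densities are bounded and the identity is an equality of bounded reweightings of $\SLE_\kappa$, and then pass to the limit $n\to\infty$ using the continuity and transience of $\hSLE_\kappa(\nu)$ from Proposition~\ref{prop::hyperSLE} (valid for all $\nu>-4$ when $\kappa\le4$). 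An alternative that avoids the explicit $M_\infty$ altogether is a direct It\^o argument: one checks that $\frac{\PartF_{\kappa,\nu}(\Omega;\cdot)}{\PartF_{\kappa,\nu}(\hat\Omega;\cdot)}\exp(c\,\mu(\HH;\eta[0,t],A))\one_{\{\eta[0,t]\cap A=\emptyset\}}$, with the partition functions evaluated at the tip and the flowed marked points, is a local martingale; there Lawler's formula for $\partial_t\mu(\HH;\eta[0,t],A)$ in terms of the Schwarzian of $\Phi_t$ at $W_t$ is combined with the SDE defining $\hSLE$, and the vanishing of the drift is precisely what pins the loop coefficient to the central charge $c$.
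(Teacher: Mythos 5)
For $\nu\ge\kappa/2-4$ your argument is correct and takes a genuinely different route from the paper's. The paper never factors through terminal values of known reweightings: it verifies directly (an It\^o computation left as ``one can check'') that a single process $M_t$ --- which, after applying the covariance rule \eqref{eqn::hSLE_partition_general}, is exactly the ratio $\PartF_{\kappa,\nu}(\hat\Omega\setminus\eta[0,t];\cdot)\,/\,\PartF_{\kappa,\nu}(\Omega\setminus\eta[0,t];\cdot)$ evaluated at the tip and the flowed marked points, multiplied by $\exp(c\mu(\HH;\eta[0,t],A))$ --- is a local martingale for $\hSLE_{\kappa}(\nu)$ in $\HH$, identifies the locally weighted law as $\hSLE_{\kappa}(\nu)$ in $\hat\Omega$, and then passes to the limit $t\to T$. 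You instead compose three global Radon--Nikodym identities: Proposition~\ref{prop::hypersle_mart} in each of the two domains, and the boundary perturbation of $\SLE_\kappa$ (the paper's Lemma~\ref{lem::sle_domain_mart} with $\rho=0$). Your bookkeeping is right: the normalization $M_0^{\hat\Omega}=\PartF_{\kappa,\nu}(\hat\Omega;\underline{x})/(\Phi'(0)^h\Phi'(x_2)^b\Phi'(x_3)^b)$, the geometric observation that on $\{\eta\subset\hat\Omega\}$ the component $D$ is the same for both domains (because $A$ lies on the other side of the curve), and the cancellation of all $\Phi'$ factors. Where it applies, your route is cleaner, since it requires no stochastic calculus; what the paper's route buys is precisely the range your argument cannot reach.

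That range is the gap. The proposition is claimed for all $\nu>-4$, and $(-4,\kappa/2-4)$ is nonempty for every $\kappa\in(0,4]$. There $\hSLE_\kappa(\nu)$ hits $[x_2,x_3]$ with positive probability (Proposition~\ref{prop::hSLE_conditioned}), so Proposition~\ref{prop::hypersle_mart} is unavailable --- you say this --- but neither of your patches closes the hole. For the first: on the hitting event $Z_t\to 0$ at the swallowing time $\tau$ of $x_2$, so $S_n\uparrow\tau<\infty$ there, and letting $n\to\infty$ yields the Radon--Nikodym identity only for the curves \emph{stopped at} $\tau$. The statement concerns laws of complete curves: the positive-mass continuation after $x_2$ is swallowed (during which $\eta$ can still enter $A$) is never compared between the two domains, and handling that continuation is exactly the missing content; transience and continuity of $\hSLE$ do not supply it. For the second patch, the ratio is upside down: under the $\hSLE$-in-$\Omega$ law (the only law under which the indicator $\one_{\{\eta[0,t]\cap A=\emptyset\}}$ is nondegenerate), the local martingale must start at $\PartF_{\kappa,\nu}(\hat\Omega;\underline{x})/\PartF_{\kappa,\nu}(\Omega;\underline{x})$ and tend to $\one_{\{\eta\subset\hat\Omega\}}\exp(c\mu)$, i.e.\ it carries $\hat\Omega$ in the numerator; the process you wrote is (up to the loop factor) the reciprocal of the correct one and is not a local martingale. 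Once inverted, this patch \emph{is} the paper's proof, and the real work --- the It\^o verification, its validity across the swallowing time of $x_2$, and the uniform integrability identifying the limit --- is still left unsupplied.
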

\begin{proof}
We may assume $\Omega=\HH$ and $x_1=0<x_2=x<x_3=y<x_4=\infty$. Let $\phi$ be the conformal map from $\hat{\Omega}$ onto $\HH$ such that $\phi(0)=0$ and 
$\lim_{z\to\infty}\phi(z)/z=1$. Suppose $\eta$ (resp. $\hat{\eta}$) is $\hSLE_{\kappa}(\nu)$ in $\HH$ (resp. in $\hat{\Omega}$) from 0 to $\infty$ with marked points $(x, y)$. Let $(g_t, t\ge 0)$ be the corresponding family of conformal maps, and $V_t^x, V_t^y$ are the evolution of $x, y$ respectively. Let $T$ be the first time when $\eta$ exits $\hat{\Omega}$. We will study the law of $\tilde{\eta}(t)=\phi(\eta(t))$ for $t<T$. 
Define $\tilde{g}_t$ to be the conformal map from $\HH\setminus\tilde{\eta}[0,t]$ onto $\HH$ normalized 
at $\infty$ and let $\varphi_t$ be the conformal map from $\HH\setminus g_t(K)$ onto $\HH$ such that 
$\varphi_t\circ g_t=\tilde{g}_t\circ\phi$. One can check that the following process is a local martingale for $\eta$:
\begin{align*}
M_t:=&\one_{\{t<T\}}\varphi_t'(W_t)^h\varphi_t'(V_t^x)^b\varphi_t'(V_t^y)^b\left(\frac{\varphi_t(V_t^y)-\varphi_t(V_t^x)}{V_t^y-V_t^x}\right)^{-2b}\exp(c\mu(\HH; \eta[0,t], \HH\setminus\hat{\Omega}))\\
&\times \left(\frac{\varphi_t(V_t^x)-\varphi_t(W_t)}{\varphi_t(V_t^y)-\varphi_t(W_t)}\Big{/}\frac{V_t^x-W_t}{V_t^y-W_t}\right)^{a}
\times F\left(\frac{\varphi_t(V_t^x)-\varphi_t(W_t)}{\varphi_t(V_t^y)-\varphi_t(W_t)}\right)\Big{/}F\left(\frac{V_t^x-W_t}{V_t^y-W_t}\right),
\end{align*}
where $a, b, h$ are defined through \eqref{eqn::hSLE_constants} and $F$ is defined through \eqref{eqn::hSLE_hyperF}. Moreover, the law of $\eta$ weighted by $M$ is the same as $\hat{\eta}$ up to $t<T$. Since $\kappa\le 4$, the process  $\hSLE_{\kappa}(\nu)$ in $\hat{\Omega}$ never exits $\hat{\Omega}$ and goes to $\infty$. Therefore, $M$ is a uniformly integrable martingale for $\eta$ and the law of $\eta$ weighted by $M_T/M_0$ is the same as $\hat{\eta}$ where 
\[M_T:=\lim_{t\to T}M_t=\one_{\{\eta\subset\hat{\Omega}\}}\exp(c\mu(\HH; \eta, \HH\setminus\hat{\Omega})).\]
This completes the proof. 
\end{proof}

\begin{proposition}\label{prop::hsle_change_target}
Fix $\kappa\in (0,8)$ and a quad $(\Omega; x_1, x_2, x_3, x_4)$. Let $\eta$ be an $\hSLE_{\kappa}(\kappa-8)$ in $\Omega$ from $x_1$ to $x_4$ with marked points $(x_2, x_3)$ conditioned to hit $[x_2,x_3]$. Let $\tilde{\eta}$ be an $\hSLE_{\kappa}$ in $\Omega$ from $x_1$ to $x_2$ with marked points $(x_4, x_3)$. 
Then $\tilde{\eta}$ has the same law as $\eta$ up to the swallowing time of $x_2$.
\end{proposition}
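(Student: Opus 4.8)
The plan is to realize both processes as the \emph{same} $\SLE_\kappa$ in $\HH$ from $x_1$ to $\infty$, reweighted by two partition--function local martingales of the form \eqref{eqn::hypersle_mart}, and to show that these martingales coincide up to a multiplicative constant until $x_2$ is swallowed. Normalize so that $\Omega=\HH$ and $x_1=0<x_2<x_3<x_4$. A convenient simplification is that for the two relevant parameters $\nu=\kappa-8$ and $\nu=0$ one has $b=h$ in \eqref{eqn::hSLE_constants}: indeed $b(\kappa-8)=(\kappa-6)(-2)/(4\kappa)=h$ and $b(0)=2(6-\kappa)/(4\kappa)=h$, so in both cases all four points carry the same weight $h$. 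Consequently it suffices to prove a static identity between the relevant functions $\PartF_{\kappa,\cdot}$ evaluated at $(W_t,g_t(x_2),g_t(x_3),g_t(x_4))$, since weights equal up to a constant produce identical Girsanov drifts and hence the same driving SDE.

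First I would describe the conditioned curve $\eta$. Since $\kappa-8-\nu=0$ when $\nu=\kappa-8$, Proposition~\ref{prop::hSLE_conditioned} says that $\hSLE_\kappa(\kappa-8)$ conditioned to \emph{avoid} $(x_2x_3)$ is exactly $\hSLE_\kappa(0)$ from $x_1$ to $x_4$. Writing the unconditioned law as the sum of its restrictions to the hitting and avoiding events and using \eqref{eqn::hSLE_proba_avoid}, the conditioned--to--hit law is the base $\SLE_\kappa$ reweighted by $M_t^{(\kappa-8)}-c\,M_t^{(0)}$, where $M^{(\nu)}$ is the martingale \eqref{eqn::hypersle_mart} for parameter $\nu$ and $c$ is the ratio of Gamma factors in \eqref{eqn::hSLE_proba_avoid}. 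This combination is a nonnegative martingale (the unnormalized mass of the hitting event) and the reweighting is valid precisely up to the swallowing time of $x_2$, where hitting versus avoiding is decided. For $\kappa\le4$ the process $\hSLE_\kappa(\kappa-8)$ has $\nu\le(-4)\vee(\kappa/2-6)$ and, by Proposition~\ref{prop::hsle_continuity_lownu}, accumulates in $[x_2,x_3)$ almost surely, so the conditioning is vacuous and $c=0$.

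Next I would compute the partition function $\PartF^{\tilde\eta}$ of $\tilde\eta$. Since $\tilde\eta$ is $\hSLE_\kappa(0)$ from $x_1$ to $x_2$ with marked points $(x_4,x_3)$, its endpoints $x_1,x_2$ and its marks $x_3,x_4$ all carry weight $h$; applying the conformal covariance \eqref{eqn::hSLE_partition_general} to bring the configuration to standard position (mapping $x_1\mapsto\infty$) shows that $\PartF^{\tilde\eta}$ is, up to explicit prefactors, the conformal block in the cross ratio $1-z$, proportional to $(1-z)^{a_0}F_0(1-z)$ with $a_0=2/\kappa$ and $F_0$ the function \eqref{eqn::hSLE_hyperF} at $\nu=0$, where $z=\tfrac{(x_2-x_1)(x_4-x_3)}{(x_3-x_1)(x_4-x_2)}$ is the cross ratio of the $x_4$--target process. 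The desired identity $M^{\tilde\eta}_t\propto M_t^{(\kappa-8)}-c\,M_t^{(0)}$ then reduces to expressing the solution $(1-z)^{a_0}F_0(1-z)$ near $z=1$ in the basis $\{z^{a_{\kappa-8}}F_{\kappa-8}(z),\,z^{a_0}F_0(z)\}$ of solutions near $z=0$. Conceptually this is a one--dimensional statement: because $\nu=\kappa-8$ and $\nu=0$ are partners under $\nu\mapsto\kappa-8-\nu$ and share all weights $h$, the functions $\PartF_{\kappa,\kappa-8}$ and $\PartF_{\kappa,0}$ span the two--dimensional solution space of a single Euler equation in the cross ratio, this space is preserved by $z\mapsto1-z$, and $\PartF^{\tilde\eta}$ lies in it; matching leading asymptotics in a single degenerate limit fixes the ratio of coefficients to be $-c$.

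I expect the main obstacle to be precisely this connection computation: verifying that the Gauss connection coefficients (\cite[15.3.6]{AbramowitzHandbook}) relating $\hF(\cdot\,;w)$ at $w$ and at $1-w$ reproduce \emph{exactly} the constant $c$ coming from the Gamma factors in \eqref{eqn::hSLE_proba_avoid}, and carefully reconciling the two sets of conformal prefactors (the $x_4$--target normalization $(x_4-x_1)^{-2h}(x_3-x_2)^{-2h}$ versus the $x_2$--target normalization), whose ratio must combine with the powers of $z$ and $1-z$ into a constant. The case $\kappa\le4$ is cleaner and serves as a useful check: there $F_{\kappa-8}$ is given by the reflected formula \eqref{eqn::hSLE_hyperF_reflect}, which is already a function of $1-z$, and one finds $F_{\kappa-8}(z)=(1-z)^{8/\kappa-1}F_0(1-z)$, so the identification with $\PartF^{\tilde\eta}$ (with $c=0$) is direct. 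Once the martingale identity is established, the two reweighted driving SDEs coincide up to the swallowing time of $x_2$, which yields the equality of laws asserted in the proposition.
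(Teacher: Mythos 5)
Your proposal is correct, but it is organized quite differently from the paper's proof, so a comparison is worthwhile. The paper never introduces the constant $c$ nor decomposes the conditioned law: it writes the \emph{unconditioned} $\hSLE_{\kappa}(\kappa-8)$ and the $\hSLE_{\kappa}$ targeted at $x_2$ as the same $\SLE_{\kappa}$ from $x_1$ to $\infty$ weighted by their partition-function martingales, uses exactly the prefactor identity you flag ($X_{21}^{-2h}X_{43}^{-2h}=X_{41}^{-2h}X_{32}^{-2h}z_t^{-2h}(1-z_t)^{2h}$) to collapse their ratio to the bounded local martingale $N_t=(1-z_t)^{8/\kappa-1}\tilde{F}(1-z_t)/F(z_t)$, and then simply evaluates $N$ at the swallowing time $T$ of $x_2$: on the hitting event $z_T=0$ and $N_T=\tilde{F}(1)$, while on the avoiding event (relevant for $\kappa\in(4,8)$, where $x_2,x_3$ are swallowed simultaneously) $z_T=1$ and the factor $(1-z_T)^{8/\kappa-1}$ forces $N_T=0$; since weighting by a constant multiple of an indicator is precisely conditioning, the statement follows with no connection formula at all. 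Your route instead presupposes the form of the conditioned law, writing it via Proposition~\ref{prop::hSLE_conditioned} as the $\SLE_{\kappa}$ weighting by $M_t^{(\kappa-8)}-c\,M_t^{(0)}$ (this step needs the conditional, domain-Markov version of \eqref{eqn::hSLE_proba_avoid}, valid for $t<T$), and then proves the static identity $M^{\tilde{\eta}}_t\propto M_t^{(\kappa-8)}-c\,M_t^{(0)}$. That identity is indeed true, and your anticipated ``main obstacle'' resolves itself: $(1-z)^{8/\kappa-1}\tilde{F}(1-z)$ solves the $\nu=\kappa-8$ Euler equation \eqref{eqn::euler_ode}, hence equals $\lambda F_{\kappa-8}(z)+\mu\, z^{8/\kappa-1}F_{0}(z)$ for some constants; letting $z\to1$ the left side vanishes, so $-\mu/\lambda=F_{\kappa-8}(1)/F_{0}(1)$, and by \eqref{eqn::hyperF_one} this ratio is exactly the Gamma factor in \eqref{eqn::hSLE_proba_avoid} at $\nu=\kappa-8$ --- the hitting-probability formula is itself derived from the same Gauss evaluation, so the match is structural rather than a numerical coincidence. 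The trade-off: the paper's argument is shorter, purely probabilistic, and \emph{derives} the conditioning from the terminal value of a bounded martingale rather than assuming its description; yours makes explicit the two-dimensional solution space attached to the partner parameters $\nu$ and $\kappa-8-\nu$, and in the $\kappa\le4$ case it collapses (as you note, via $F_{\kappa-8}(z)=(1-z)^{8/\kappa-1}F_{0}(1-z)$ and $c=0$) to exactly the paper's second case, where the two weights have identically constant ratio.
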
 
\begin{proof}
We may assume $\Omega=\HH$ and $x_1<x_2<x_3<x_4$. 
Let $T$ (resp. $\tilde{T}$) be the swallowing time of $x_2$ for $\eta$ (resp. for $\tilde{\eta}$). 
Denote $X_{j1}=g_t(x_j)-W_t$ for $2\le j\le 4$ and $X_{ij}=g_t(x_j)-g_t(x_i)$ for $2\le i<j\le 4$. 
When $\nu=\kappa-8$, we have 
\[a=\frac{\nu+2}{\kappa}=-2h,\quad b=\frac{(\nu+2)(\nu+6-\kappa)}{4\kappa}=h. \]

First, we assume $\kappa\in (4, 8)$. In this case, we have $\kappa-8>\kappa/2-6$. 
Define 
\[F(z):=\hF\left(2-\frac{12}{\kappa}, 1-\frac{4}{\kappa}, 2-\frac{8}{\kappa}; z\right),\quad \tilde{F}(z):=\hF\left(\frac{4}{\kappa}, 1-\frac{4}{\kappa}, \frac{8}{\kappa}; z\right).\]
When $\kappa\in (4,8)$, both $F$ and $\tilde{F}$ are bounded for $z\in [0,1]$.
The law of $\eta$ is the same as $\SLE_{\kappa}$ in $\HH$ from $x_1$ to $\infty$ weighted by the following local martingale up to $T$: 
\[M_t=g_t'(x_2)^hg_t'(x_3)^hg_t'(x_4)^hX_{41}^{-2h}X_{32}^{-2h}z_t^{-2h}F(z_t),\quad\text{where}\quad z_t=\frac{X_{21}X_{43}}{X_{31}X_{42}}.\]
The law of $\tilde{\eta}$ is the same as $\SLE_{\kappa}$ in $\HH$ from $x_1$ to $\infty$ weighted by the following local martingale up to $\tilde{T}$: 
\[\tilde{M}_t=g_t'(x_2)^{h}g_t'(x_3)^hg_t'(x_4)^hX_{21}^{-2h}X_{43}^{-2h}s_t^{2/\kappa}\tilde{F}(s_t), \quad\text{where}\quad s_t=\frac{X_{32}X_{41}}{X_{31}X_{42}}.\]
Comparing these two local martingales, we see that the law of $\tilde{\eta}$ is the same as $\eta$ weighted by the following local martingale up to $T$:
\[N_t=(1-z_t)^{8/\kappa-1}\tilde{F}(1-z_t)/F(z_t).\]
As $t\to T<\infty$, we have $z_t\to 0$. Thus $N_T=\one_{\{T<\infty\}}\tilde{F}(1)/F(0)$. Therefore, the law of $\tilde{\eta}$ is the same as $\eta$ up to the swallowing time of $x_2$. 

Next, we assume $\kappa\in (0,4]$. In this case, we have $\kappa-8\le -4$. Define 
\[G(z):=\hF\left(1-\frac{4}{\kappa}, \frac{4}{\kappa}, \frac{8}{\kappa}; z\right). \]
The law of $\eta$ is the same as $\SLE_{\kappa}$ in $\HH$ from $x_1$ to $\infty$ weighted by the following local martingale up to $T$: 
\[M_t=g_t'(x_2)^hg_t'(x_3)^hg_t'(x_4)^hX_{41}^{-2h}X_{32}^{-2h}z_t^{-2h}(1-z_t)^{8/\kappa-1}G(1-z_t). \]
Therefore, the law of $\tilde{\eta}$ is the same as $\eta$ weighted by the following local martingale up to $T$: 
\[N_t=\tilde{F}(1-z_t)/G(1-z_t).\]
As $t\to T<\infty$, we have $z_t\to 0$. Thus $N_T=\one_{\{T<\infty\}}\tilde{F}(1)/G(1)$. This gives the conclusion for $\kappa\le 4$ and completes the proof.  
\end{proof}

%%%
\section{Hypergeometric SLE: Conformal Markov Characterization}
\label{sec::slepairs}
\subsection{Definitions and Statements}
Fix a quad $q=(\Omega; x^R, y^R, y^L, x^L)$, consider disjoint continuous simple curves $(\eta^L;\eta^R)\in X_0(\Omega; x^R, y^R, y^L, x^L)$. Let $T^R$ be the first time that $\eta^R$ hits the closed arc $[y^Ry^L]$, and denote by $w^R$ the point $\eta^R(T^R)$, and by $\Omega^R$ the connected component of $\Omega\setminus\eta^R[0,T^R]$ with $(y^Lx^L)$ on the boundary. Note that $T^R, w^R, \Omega^R$ are deterministic functions of $\eta^R$. Let $T^L$ be the first time that $\eta^L$ hits the closed arc $[y^Ry^L]$, and denote by $w^L$ the point $\eta^L(T^L)$, and by $\Omega^L$ the connected component of $\Omega\setminus\eta^L[0,T^L]$ with $(x^Ry^R)$ on the boundary. See Figure~\ref{fig::def_forcepoint}. 

\begin{figure}[ht!]
\begin{subfigure}[b]{0.5\textwidth}
\begin{center}
\includegraphics[width=0.7\textwidth]{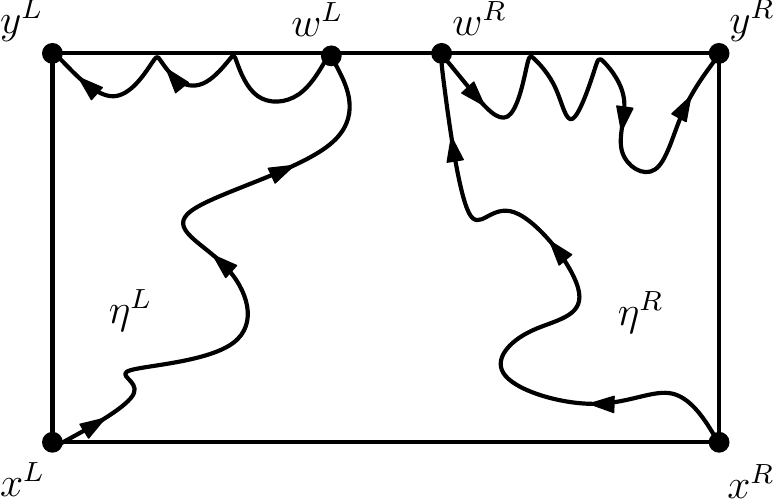}
\end{center}
\end{subfigure}
\begin{subfigure}[b]{0.5\textwidth}
\begin{center}\includegraphics[width=0.7\textwidth]{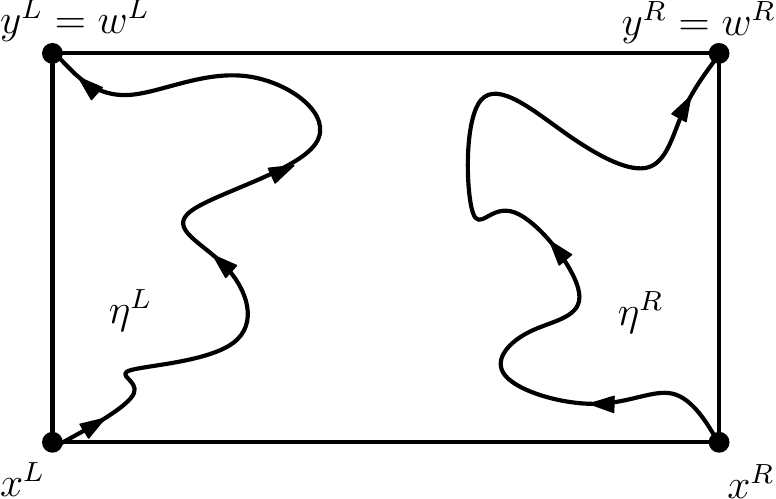}
\end{center}
\end{subfigure}
\caption{\label{fig::def_forcepoint} Let $T^R$ be the first time that $\eta^R$ hits the closed arc $[y^Ry^L]$, and denote by $w^R$ the point $\eta^R(T^R)$, and by $\Omega^R$ the connected component of $\Omega\setminus\eta^R[0,T^R]$ with $(y^Lx^L)$ on the boundary. Note that $T^R, w^R, \Omega^R$ are deterministic functions of $\eta^R$. We define $T^L, w^L, \Omega^L$ for $\eta^L$ similarly. }
\end{figure}

\begin{proposition}\label{prop::slepair_rev}
Assume the same notations as in Figure~\ref{fig::def_forcepoint}. 
Fix $\kappa\in (0,4]$ and $\rho^L>-2, \rho^R>-2$ and quad $q=(\Omega; x^R, y^R, y^L, x^L)$.
\begin{itemize}
\item (Existence and Uniqueness) 
There exists a unique probability measure on disjoint continuous simple curves $(\eta^L;\eta^R)\in X_0(\Omega; x^R, y^R, y^L, x^L)$ such that 
the conditional law of $\eta^R$ given $\eta^L$ is $\SLE_{\kappa}(\rho^R)$ in $\Omega^L$ from $x^R$ to $y^R$ with force point $x^R_+$;
and that the conditional law of $\eta^L$ given $\eta^R$ is $\SLE_{\kappa}(\rho^L)$ in $\Omega^R$ from $x^L$ to $y^L$ with force point $x^L_-$.
\item (Identification) Under this probability measure and fix $\rho^L=0$ and $\rho^R>-2$, the marginal law of $\eta^L$ is $\hSLE_{\kappa}(\rho^R)$ in $\Omega$ from $x^L$ to $y^L$ with marked points $(x^R, y^R)$.  
\end{itemize}
\end{proposition}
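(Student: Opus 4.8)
The plan is to realize the pair measure as a symmetric Brownian-loop reweighting of two independent $\SLE_\kappa(\rho)$'s, to read off both conditional laws from the conformal restriction property, to obtain uniqueness from a resampling/coupling argument, and finally—when $\rho^L=0$—to identify the $\eta^L$-marginal by integrating out $\eta^R$ and matching the resulting weight against the $\hSLE$ martingale of Proposition~\ref{prop::hypersle_mart}.

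\emph{Existence.} Map to $\HH$ with $x^L=0$, $y^L=\infty$, $x^R=x$, $y^R=y$ and $0<x<y<\infty$. Let $\nu^L$ (resp. $\nu^R$) be the law of $\SLE_\kappa(\rho^L)$ from $x^L$ to $y^L$ with force point $x^L_-$ (resp. of $\SLE_\kappa(\rho^R)$ from $x^R$ to $y^R$ with force point $x^R_+$); for $\kappa\in(0,4]$ and $\rho>-2$ these are continuous simple curves. I would define $\PP$ by
\[
\frac{d\PP}{d(\nu^L\otimes\nu^R)} = \frac{1}{\mathcal{N}}\,\one_{\{\eta^L\cap\eta^R=\emptyset\}}\exp\!\big(c\,\mu(\HH;\eta^L,\eta^R)\big),\qquad c = \frac{(3\kappa-8)(6-\kappa)}{2\kappa},
\]
with $\mathcal{N}$ finite by the loop-measure bounds recalled before Proposition~\ref{prop::boundary_perturbation}; disjointness and the $\eps$-separation required for membership in $X_0(\Omega;x^R,y^R,y^L,x^L)$ hold almost surely. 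The conditional law of $\eta^R$ given $\eta^L$ is then $\nu^R$ weighted by $\one\exp(c\mu(\HH;\eta^L,\eta^R))$. Since $\eta^L$ does not touch the arc $(x^Ry^R)$, the domain $\Omega^L$ agrees with $\HH$ there, so by the $\SLE_\kappa(\rho^R)$ conformal restriction property (the $\SLE_\kappa(\rho)$ variant of Proposition~\ref{prop::boundary_perturbation}; see also \cite{LawlerSchrammWernerConformalRestriction}) this reweighting changes the domain from $\HH$ to $\Omega^L$ up to a factor depending only on $\eta^L$, hence equals $\SLE_\kappa(\rho^R)$ in $\Omega^L$ from $x^R$ to $y^R$ with force point $x^R_+$. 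Because the weight is symmetric under $\eta^L\leftrightarrow\eta^R$, the reverse conditional is $\SLE_\kappa(\rho^L)$ in $\Omega^R$.

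\emph{Uniqueness and marginal.} Any measure with the two prescribed conditionals is invariant for the Markov chain that alternately resamples $\eta^R$ given $\eta^L$ and $\eta^L$ given $\eta^R$; uniqueness of the invariant measure follows by coupling two such chains so that the curves are brought together, the monotonicity of the $\SLE_\kappa(\rho)$ conditional laws in the conditioning curve (for $\kappa\le 4$) making the coupling contracting, cf. the uniqueness arguments for (global) multiple SLEs in \cite{PeltolaWuGlobalMultipleSLEs,BeffaraPeltolaWuUniqueness}. For the identification set $\rho^L=0$, so $\nu^L$ is plain $\SLE_\kappa$ from $0$ to $\infty$. The $\eta^L$-marginal then has density with respect to $\SLE_\kappa$ proportional to $\E_{\nu^R}[\one_{\{\eta^R\subset\Omega^L\}}\exp(c\mu(\HH;\eta^L,\eta^R))]$. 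Using the identity $\mu(\HH;\eta^L,\eta^R)=\mu(\HH;\eta^R,\HH\setminus\Omega^L)$ (any loop meeting $\eta^R\subset\Omega^L$ and the far component of $\HH\setminus\eta^L$ must cross $\eta^L$) and integrating the restriction Radon--Nikodym derivative to total mass one, this conditional expectation equals the ratio $\mathcal{Z}^R(\Omega^L)/\mathcal{Z}^R(\HH)$ of $\SLE_\kappa(\rho^R)$ partition functions. Thus the $\eta^L$-marginal is $\SLE_\kappa$ weighted by a martingale that, along the Loewner flow of $\eta^L$, is a function of $(W_t,g_t(x),g_t(y))$ only.

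\emph{Matching, and the main obstacle.} By Proposition~\ref{prop::hypersle_mart} (applicable since $\rho^R>-2\ge\kappa/2-4$ for $\kappa\le 4$), $\hSLE_\kappa(\rho^R)$ from $0$ to $\infty$ with marked points $(x,y)$ is exactly $\SLE_\kappa$ weighted by the martingale of Lemma~\ref{lem::hypersle_mart}, whose terminal value is $H_{\Omega^L}(x,y)^b$ with $b$ as in \eqref{eqn::hSLE_constants}. Hence identification reduces to showing, as functions of $\eta^L$, that $\mathcal{Z}^R(\Omega^L)$ is proportional to $H_{\Omega^L}(x,y)^b$, i.e.\ that these two $\SLE_\kappa$-martingales coincide. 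This matching is the crux of the argument: one checks, via the conformal covariance of the $\SLE_\kappa(\rho^R)$ partition function and the collision of its force point $x^R_+$ with the start $x^R$, that its boundary conformal weight at $x$ and at $y$ is precisely $b$, so that $\mathcal{Z}^R(\Omega^L)\propto H_{\Omega^L}(x,y)^b$; equivalently, one verifies by It\^o's formula that $\mathcal{Z}^R(\Omega^L_t)$ solves the hypergeometric equation \eqref{eqn::euler_ode} with asymptotics singling out $z^aF(z)$, forcing it to be proportional to $\PartF_{\kappa,\rho^R}$. I expect the bookkeeping of these conformal weights together with the loop-measure constant $c$ to be the main difficulty—it is exactly this matching that produces the hypergeometric partition function and draws on the structure developed in Section~\ref{sec::hypersle}. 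Once the two martingales agree, the $\eta^L$-marginal is $\hSLE_\kappa(\rho^R)$ in $\Omega$ from $x^L$ to $y^L$ with marked points $(x^R,y^R)$, as claimed.
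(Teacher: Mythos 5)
Your proposal is correct and follows essentially the same route as the paper: the same loop-measure-weighted product measure $\one_{\{\eta^L\cap\eta^R=\emptyset\}}\exp(c\,\mu(\HH;\eta^L,\eta^R))\,\PP_L\otimes\PP_R$, the same derivation of both conditional laws from the boundary perturbation (restriction) property, uniqueness by citation (the paper quotes the proof of \cite[Theorem 4.1]{MillerSheffieldIG2}, with \cite{BeffaraPeltolaWuUniqueness} as an alternative), and the same identification of the $\eta^L$-marginal via Proposition~\ref{prop::hypersle_mart}.

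The one point worth correcting is that the step you single out as ``the crux'' and ``the main difficulty''---showing $\mathcal{Z}^R(\Omega^L)\propto H_{\Omega^L}(x,y)^b$---costs nothing: it is contained in Lemma~\ref{lem::sle_domain_mart} (quoted from \cite{WernerWuCLEtoSLE}), the very lemma you already invoked to identify the conditional law of $\eta^R$. Since the Radon--Nikodym derivative appearing there is a probability density, integrating it to total mass one gives
\begin{equation*}
\E_R\left[\one_{\{\eta^L\cap\eta^R=\emptyset\}}\exp\left(c\,\mu(\HH;\eta^L,\eta^R)\right)\right]
=\left(\varphi'(x)\varphi'(y)\right)^{b}
\propto H_{\Omega^L}(x,y)^{b},
\qquad b=\frac{(\rho^R+2)(\rho^R+6-\kappa)}{4\kappa},
\end{equation*}
where $\varphi:\Omega^L\to\HH$ fixes $x$ and $y$, and the implicit constant $(y-x)^{2b}$ does not depend on $\eta^L$. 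This $b$ is exactly the exponent in \eqref{eqn::hSLE_constants} with $\nu=\rho^R$ (the condition $\rho^R>-2$ selects this root of $b(\nu)$ rather than $\kappa-8-\rho^R$, and $\rho^R>-2\ge\kappa/2-4$ makes Proposition~\ref{prop::hypersle_mart} applicable), so the marginal is $\hSLE_{\kappa}(\rho^R)$ with no It\^o computation and no re-derivation of the hypergeometric equation---all of that work is already packaged in Proposition~\ref{prop::hypersle_mart}. The only genuinely nontrivial input hiding in your ``bookkeeping'' is that the conformal weight of the $\SLE_{\kappa}(\rho^R)$ partition function is $b$ at \emph{both} endpoints (at the target $y$ as well as at the seed $x$ where the force point collides); that is precisely what the cited lemma asserts, so it should be quoted rather than re-verified.
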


\begin{proposition}\label{prop::slepair_cmp_sym}
Assume the same notations as in Figure~\ref{fig::def_forcepoint}. 
Fix $\kappa\in (0,4]$ and $\rho>-2$ and quad $q=(\Omega; x^R, y^R, y^L, x^L)$.
\begin{itemize}
\item (Existence and Uniqueness) 
There exists a unique probability measure on disjoint continuous simple curves $(\eta^L;\eta^R)\in X_0(\Omega; x^R, y^R, y^L, x^L)$ such that 
the conditional law of $\eta^R$ given $\eta^L$ is $\SLE_{\kappa}(\rho)$ in $\Omega^L$ from $x^R$ to $y^R$ with force point $w^L$; 
and that the conditional law of $\eta^L$ given $\eta^R$ is $\SLE_{\kappa}(\rho)$ in $\Omega^R$ from $x^L$ to $y^L$ with force point $w^R$. We denote this probability measure by $\QQ_q(\kappa,\rho)$.
\item (Identification) 
Under this probability measure, the marginal of $\eta^R$ stopped at the first hitting time of $[y^Ry^L]$ is the same as $\hSLE_{\kappa}(\kappa-8-\rho)$ in $\Omega$ from $x^R$ to $x^L$ with marked points $(y^R, y^L)$ conditioned to hit $[y^Ry^L]$, stopped at the swallowing time of $y^R$. 
\end{itemize} 
\end{proposition}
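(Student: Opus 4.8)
The plan is to build $\QQ_q(\kappa,\rho)$ explicitly, so that the Identification statement is essentially part of the construction, and then to establish the reciprocal conditional law and uniqueness separately. I work in $\HH$ with $x^R<y^R<y^L<x^L$ on $\R$, matching the $\hSLE$ convention $x_1=x^R,x_2=y^R,x_3=y^L,x_4=x^L$, and I write $\nu=\kappa-8-\rho$. Since $\rho>-2$ and $\kappa\le 4$ we have $\nu<\kappa/2-4$, so the probability of $\{\eta\cap(y^Ry^L)=\emptyset\}$ lies in $[0,1)$ (Proposition~\ref{prop::hSLE_conditioned} and Proposition~\ref{prop::hsle_continuity_lownu}), and conditioning an $\hSLE_\kappa(\nu)$ to hit $[y^Ry^L]$ is well defined. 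I would then take the marginal of $\eta^R$ up to $T^R$ to be the $\hSLE_\kappa(\nu)$ from $x^R$ to $x^L$ with marked points $(y^R,y^L)$, conditioned to hit $[y^Ry^L]$ and stopped at the swallowing time of $y^R$; by \eqref{eqn::hypersle_mart} the unconditioned version of this curve is $\SLE_\kappa$ from $x^R$ to $x^L$ reweighted by the $\PartF_{\kappa,\nu}$-martingale. Given $\eta^R$, I sample $\eta^L$ as $\SLE_\kappa(\rho)$ in $\Omega^R$ from $x^L$ to $y^L$ with force point $w^R$. This specifies a joint law whose $\eta^R$-marginal is, by definition, the one in the Identification statement.

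The remaining content of Existence is to verify that the reciprocal conditional law of $\eta^R$ given $\eta^L$ is the prescribed $\SLE_\kappa(\rho)$ in $\Omega^L$ from $x^R$ to $y^R$ with force point $w^L$. By Bayes' rule this conditional law is the $\eta^R$-marginal reweighted by the $\SLE_\kappa(\rho)$-density of $\eta^L$ (Lemma~\ref{lem::sle_kapparho_mart}); carrying out the It\^o/Girsanov computation, the drift of the driving function of $\eta^R$ combines the $\PartF_{\kappa,\nu}$-drift with the force-point drift contributed by $\eta^L$, and the two reorganize --- precisely because $\PartF_{\kappa,\nu}$ solves the hypergeometric ODE~\eqref{eqn::euler_ode} --- into the driving function of $\SLE_\kappa(\rho)$ aimed at $y^R$ with force point $w^L$. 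This is Dub\'edat's commutation relation for the null-vector equations attached to $x^R$ and $x^L$, and it is the analytic core of the argument. A clean consistency check on the target comes from Lemma~\ref{lem::sle_kapparho_targetchanging}: since $\eta^L$ blocks access to $x^L$, the $x^L$-aimed $\hSLE$ is forced toward the accessible tip $w^L$, and $\SLE_\kappa(\kappa-6-\rho)$ from $x^R$ to $w^L$ with force point $y^R$ is exactly $\SLE_\kappa(\rho)$ from $x^R$ to $y^R$ with force point $w^L$; the degenerate case $\rho=0$ reproduces Proposition~\ref{prop::hsle_change_target}. Continuity and disjointness of the pair, and hence membership in $X_0(\Omega;x^R,y^R,y^L,x^L)$, follow from the continuity of $\hSLE$ (Propositions~\ref{prop::hyperSLE} and~\ref{prop::hsle_continuity_lownu}) and of $\SLE_\kappa(\rho)$ for $\rho>-2$, together with the fact that neither curve crosses the separating arcs.

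For uniqueness, observe that since the conditional law of $\eta^R$ given $\eta^L$ is a fixed kernel, any admissible joint law is determined by the marginal of $\eta^L$, so it suffices to show that marginal is unique; equivalently, any admissible measure is stationary for the Markov chain that alternately resamples $\eta^R$ given $\eta^L$ and $\eta^L$ given $\eta^R$ from the prescribed $\SLE_\kappa(\rho)$ kernels. I would run two stationary copies under a synchronous coupling and show that they coalesce, using the local absolute continuity of $\SLE_\kappa(\rho)$ with respect to $\SLE_\kappa$ and the continuous dependence of the kernels on the conditioning curve. This resampling/coupling step is where I expect the main obstacle to lie: Existence and Identification reduce to It\^o computations resting on the already-established ODE~\eqref{eqn::euler_ode} and on Propositions~\ref{prop::hSLE_conditioned} and~\ref{prop::hsle_change_target}, whereas ruling out every other stationary measure requires the genuinely harder uniqueness mechanism of \cite{MillerSheffieldIG2} underlying Theorem~\ref{thm::CMP_threepoints}.
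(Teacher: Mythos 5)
Your proposal has the same skeleton as the paper's proof: build the pair from an $\hSLE_{\kappa}(\nu)$ with $\nu=\kappa-8-\rho$ conditioned to hit $[y^Ry^L]$, so that Identification holds by construction; recover the prescribed conditional laws through Dub\'edat's commutation relation together with the target change $\nu+2=\kappa-6-\rho$ of Lemma~\ref{lem::sle_kapparho_targetchanging}; and delegate uniqueness to the resampling argument of \cite{MillerSheffieldIG2} (the paper follows \cite[Theorem 4.1]{MillerSheffieldIG2}, not Theorem~\ref{thm::CMP_threepoints}, and replaces \cite[Lemma 4.2]{MillerSheffieldIG2} by Lemma~\ref{lem::positive_chance}; your ``synchronous coupling/coalescence'' phrasing is not quite that mechanism, which runs through Choquet decomposition, an absolutely-continuous-versus-singular dichotomy for extremal stationary measures, and a two-step coupling, but deferring to that reference is legitimate). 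The parameter bookkeeping and consistency checks are all correct. The genuine gap is the step you call the analytic core: verifying that under your asymmetrically constructed law the conditional law of $\eta^R$ given $\eta^L$ is $\SLE_{\kappa}(\rho)$ in $\Omega^L$ with force point $w^L$.

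The Bayes-rule reweighting you propose is not defined as stated: as $\eta^R$ varies, the kernel you prescribe for $\eta^L$ is an $\SLE_{\kappa}(\rho)$ in the varying domain $\Omega^R$ with varying force point $w^R$, and these laws are supported on curves in different domains, so there is no ``$\SLE_{\kappa}(\rho)$-density of $\eta^L$'' with respect to a fixed reference measure to insert into Bayes' rule; Lemma~\ref{lem::sle_kapparho_mart} only gives Radon--Nikodym derivatives between processes in the \emph{same} domain with the same seed, and only up to stopping times keeping the relevant quantities bounded. (This is exactly why the direct density construction works for Proposition~\ref{prop::slepair_rev}, whose force points sit at $x^R_+,x^L_-$ so Lemma~\ref{lem::sle_domain_mart} applies, but not here.) What an It\^o/Girsanov computation legitimately yields is Dub\'edat's \emph{local} commutation (Proposition~\ref{prop::localsle_existence}, Corollary~\ref{cor::hsle_commutation}): sampling order is irrelevant for initial segments confined to disjoint localization neighborhoods kept at positive distance from $y^R,y^L$. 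But the Proposition concerns the curves run all the way to $[y^Ry^L]$, where the curves and marked points come together, and the passage from local to global is the bulk of the paper's proof: both curves are run as conditioned $\hSLE_{\kappa}(\nu)$'s toward moving targets, order-independence is upgraded via Lemma~\ref{lem::localsle_compatible} and exhaustion of dyadic localization neighborhoods (using $\kappa\le 4$ so the curves are a.s. disjoint), first up to the $\eps$-neighborhood hitting times $T^R_{\eps},T^L_{\eps}$, and then in the limit $\eps\to 0$ using the continuity of $\hSLE_{\kappa}(\nu)$ up to and including the hitting time (Propositions~\ref{prop::hyperSLE} and~\ref{prop::hsle_continuity_lownu}). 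Without this, your claim that ``Existence and Identification reduce to It\^o computations resting on the ODE~\eqref{eqn::euler_ode}'' overstates the situation: that reduction is precisely what has to be proved.
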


The main result of this section is proving these two propositions. 
The uniqueness part in Propositions~\ref{prop::slepair_rev} and \ref{prop::slepair_cmp_sym} were proved in \cite[Theorem 4.1]{MillerSheffieldIG2}. The key point in the existence part of Proposition~\ref{prop::slepair_rev} is Proposition~\ref{prop::boundary_perturbation}. The key point in the existence part of Proposition~\ref{prop::slepair_cmp_sym} is ``Commutation Relation" derived by J. Dub\'{e}dat. 
Proposition~\ref{prop::slepair_cmp_sym} is the main ingredient in the proof of Theorem~\ref{thm::SLEpair_CMP_SYM}.

%%%
\subsection{Proof of Proposition~\ref{prop::slepair_rev}}
The uniqueness in Proposition~\ref{prop::slepair_rev} was proved in the proof of \cite[Theorem 4.1]{MillerSheffieldIG2}. 
See \cite[Section~3.2]{BeffaraPeltolaWuUniqueness} for another proof of the uniqueness. 
We only need to show the existence and the identification. 
To construct the pair $(\eta^L;\eta^R)$ in Proposition~\ref{prop::slepair_rev}, we need to introduce boundary perturbation property of $\SLE_{\kappa}(\rho)$ process. This is a particular case of Proposition~\ref{prop::boundary_perturbation}. 

\begin{lemma}\label{lem::sle_domain_mart}
\cite[Section 3]{WernerWuCLEtoSLE}. 
Fix $\kappa\in (0,4], \rho>-2$. Let $(\Omega; x, y)$ be a Doburshin domain such that $x, y$ lie on analytic segments of $\partial\Omega$. Assume that $\hat{\Omega}\subset\Omega$ is simply connected and it agrees with $\Omega$ in a neighborhood of the arc $(xy)$. Then $\SLE_{\kappa}(\rho)$ in $\hat{\Omega}$ from $x$ to $y$ with force point $x_+$ is absolutely continuous with respect to $\SLE_{\kappa}(\rho)$ in $\Omega$ from $x$ to $y$ with force point $x_+$, and the Radon-Nikodym derivative is given by 
\[\one_{\{\eta\subset\hat{\Omega}\}}(\varphi'(x)\varphi'(y))^{-b}\exp(c\mu(\Omega; \eta, \Omega\setminus\hat{\Omega})),\quad
\text{where } b=\frac{(\rho+2)(\rho+6-\kappa)}{4\kappa},\quad c=\frac{(3\kappa-8)(6-\kappa)}{2\kappa},\]
and $\mu$ is Brownian loop measure, and $\varphi$ is any conformal map from $\hat{\Omega}$ onto $\Omega$ fixing $x$ and $y$. 
\end{lemma}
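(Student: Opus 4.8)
The plan is to recognize this lemma as the $\SLE_{\kappa}(\rho)$ specialization of Proposition~\ref{prop::boundary_perturbation} and to run the same weighted local-martingale argument, now with the two interior marked points collapsed into the seed. Concretely, I would first use conformal invariance to reduce to the reference configuration $\Omega=\HH$, $x=0$, $y=\infty$, with force point $0_+$; then $\hat{\Omega}\subset\HH$ is simply connected and coincides with $\HH$ in a neighbourhood of the boundary arc $(xy)$ along which the curve travels, and I let $\varphi\colon\hat{\Omega}\to\HH$ be the conformal map fixing $0$ and normalized by $\varphi(z)/z\to1$ at $\infty$. The guiding principle is that the partition function of $\SLE_{\kappa}(\rho)$ with force point $x_+$ is $\PartF(x,y)=|y-x|^{-2b}$, carrying boundary weight $b$ at each endpoint; this is precisely why the claimed Radon--Nikodym derivative is symmetric in $x,y$ with exponent $-b$, since $\PartF_{\Omega}/\PartF_{\hat{\Omega}}=(\varphi'(x)\varphi'(y))^{-b}$ by conformal covariance.

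Let $\eta$ be an $\SLE_{\kappa}(\rho)$ in $\HH$ from $0$ to $\infty$ with force point $0_+$, with Loewner maps $(g_t)$, driving function $W_t$ and force-point image $O_t=g_t(0_+)$. Write $T$ for the first time $\eta$ exits $\hat{\Omega}$, set $\tilde{\eta}(t)=\varphi(\eta(t))$ for $t<T$, let $(\tilde{g}_t)$ be its Loewner maps, and define $\varphi_t$ by $\varphi_t\circ g_t=\tilde{g}_t\circ\varphi$, so that $\varphi_t$ maps $\HH\setminus g_t(\HH\setminus\hat{\Omega})$ onto $\HH$. Reading the exponents off Lemma~\ref{lem::sle_kapparho_mart} (seed weight $h=(6-\kappa)/(2\kappa)$, force-point weight $\rho(\rho+4-\kappa)/(4\kappa)$, and interaction exponent $\rho/\kappa$), my candidate local martingale for $\eta$ is
\[M_t=\one_{\{t<T\}}\,\varphi_t'(W_t)^{h}\,\varphi_t'(O_t)^{\frac{\rho(\rho+4-\kappa)}{4\kappa}}\left(\frac{\varphi_t(O_t)-\varphi_t(W_t)}{O_t-W_t}\right)^{\rho/\kappa}\exp\!\big(c\,\mu(\HH;\eta[0,t],\HH\setminus\hat{\Omega})\big).\]
A short computation shows the three conformal exponents add up to $b=(\rho+2)(\rho+6-\kappa)/(4\kappa)$, so that $M_0=\varphi'(0)^{b}$ and, once the factors at the tip become trivial, $M_\infty=\one_{\{\eta\subset\hat{\Omega}\}}\exp(c\,\mu(\HH;\eta,\HH\setminus\hat{\Omega}))$.

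The heart of the argument is to verify, via It\^o's formula together with the Loewner equation and the SDE for $W_t$, that $M_t$ is a local martingale, and then to identify the law of $\eta$ weighted by $M_t/M_0$ with the law of $\tilde{\eta}$, i.e.\ of $\SLE_{\kappa}(\rho)$ in $\hat{\Omega}$. The main obstacle is the $\exp(c\mu)$ factor: I would invoke Lawler's formula expressing $\partial_t\,\mu(\HH;\eta[0,t],\HH\setminus\hat{\Omega})$ as a constant multiple of the Schwarzian derivative $\mathcal{S}\varphi_t(W_t)$, and then check that the Schwarzian and lower-order drift terms generated by differentiating the $\varphi_t'$ and interaction factors cancel exactly when the loop-measure intensity equals the central charge $c=(3\kappa-8)(6-\kappa)/(2\kappa)$. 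This drift cancellation is the only genuinely computational step, and it is the single-force-point specialization of the computation underlying Proposition~\ref{prop::boundary_perturbation}.

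Finally, I would upgrade $M$ from a local to a uniformly integrable martingale. Here the hypotheses $\kappa\in(0,4]$ and $\rho>-2$ enter: by the facts recalled in Section~\ref{sec::pre}, $\SLE_{\kappa}(\rho)$ is then a continuous, transient, simple curve, and the target process $\hat{\eta}$ stays inside $\hat{\Omega}$ and converges to $y$; hence the fixed perturbation $\HH\setminus\hat{\Omega}$ is eventually far from the tip, $\varphi_t'(W_t)\to1$, and $M$ is bounded along an exhausting sequence of stopping times, which yields uniform integrability. Passing to the limit gives $d\hat{\PP}/d\PP=M_\infty/M_0=\varphi'(0)^{-b}\,\one_{\{\eta\subset\hat{\Omega}\}}\exp(c\mu)$; since $y=\infty$ the factor $\varphi'(y)^{-b}$ is absorbed into the normalization, and transporting to a general Dobrushin domain by conformal covariance restores the symmetric form $(\varphi'(x)\varphi'(y))^{-b}$. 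As a consistency check, the same statement may be obtained by degenerating Proposition~\ref{prop::boundary_perturbation}: letting $x_3\to x_4$ and then $x_2\to x_1$ turns $\hSLE_{\kappa}(\nu)$ into $\SLE_{\kappa}(\nu+2)$ with adjacent force point via Lemma~\ref{lem::hSLE_degenerate}, and the partition-function ratio converges to $(\varphi'(x)\varphi'(y))^{-b}$ with $\rho=\nu+2$.
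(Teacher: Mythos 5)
Your proposal is correct and follows essentially the same route as the paper: the paper treats this lemma as the single-force-point special case of Proposition~\ref{prop::boundary_perturbation} (citing \cite{WernerWuCLEtoSLE} for details), and its proof of that proposition is exactly your argument --- reduce to $(\HH;0,\infty)$, weight by the local martingale built from $\varphi_t'$-factors, the partition-function ratio and $\exp(c\mu)$, identify the weighted law via Girsanov/commutation of the maps $\varphi_t\circ g_t=\tilde g_t\circ\varphi$, and use $\kappa\le 4$ (so the perturbed curve never exits $\hat\Omega$ and is transient) to upgrade to uniform integrability and evaluate $M_\infty$. Your exponent bookkeeping is also right, since $h+\tfrac{\rho(\rho+4-\kappa)}{4\kappa}+\tfrac{\rho}{\kappa}=\tfrac{(\rho+2)(\rho+6-\kappa)}{4\kappa}=b$, which is what makes $M_0=\varphi'(0)^{b}$ and yields the stated Radon--Nikodym derivative.
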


\begin{proof}[Proof of Proposition~\ref{prop::slepair_rev}, Existence and Identification] 
\textit{First}, we will construct a probability measure on $(\eta^L;\eta^R)\in X_0(\Omega; x^L, x^R, y^R, y^L)$. 
Fix $\kappa\in (0,4], \rho^L>-2, \rho^R>-2$ and $0<x<y$. By conformal invariance, it is sufficient to give the construction for the quad $(\HH; 0,x,y,\infty)$. 
Denote by $\PP_L$ the law of $\SLE_{\kappa}(\rho^L)$ in $\HH$ from 0 to $\infty$ with force point $0_-$ and denote by $\PP_R$ the law of $\SLE_{\kappa}(\rho^R)$ in $\HH$ from $x$ to $y$ with force point $x_+$. Define measure $\mathcal{M}$ on $X_0(\HH; 0,x,y,\infty)$ by 
\[\mathcal{M}[d\eta^L, d\eta^R]=\one_{\{\eta^L\cap\eta^R=\emptyset\}}\exp\left(c \mu(\HH; \eta^L, \eta^R)\right)\PP_L\left[d\eta^L\right]\otimes\PP_R\left[d\eta^R\right].\] 
We argue that the total mass of $\mathcal{M}$, denoted by $|\mathcal{M}|$, is finite. Given $\eta^L\in X_0(\HH; 0,\infty)$, let $g$ be any conformal map from the connected component of $\HH\setminus \eta^L$ with $(xy)$ on the boundary onto $\HH$, then we have
\begin{align*}
|\mathcal{M}|&=\E_L\otimes\E_R\left[\one_{\{\eta^L\cap\eta^R=\emptyset\}}\exp\left(c \mu(\HH; \eta^L, \eta^R)\right)\right]\\
&=\E_L\left[\left(\frac{g'(x)g'(y)}{(g(x)-g(y))^2}\right)^b\right]\tag{By Lemma~\ref{lem::sle_domain_mart}}\\
&\le (y-x)^{-2b}. \tag{where $b=(\rho^R+2)(\rho^R+6-\kappa)/(4\kappa)$}
\end{align*}
This implies that $|\mathcal{M}|$ is positive finite. We define the probability measure $\mathcal{M}^{\sharp}$ to be $\mathcal{M}/|\mathcal{M}|$. 
\smallbreak
\textit{Second}, we show that, under $\mathcal{M}^{\sharp}$, the conditional law of $\eta^R$ given $\eta^L$ is $\SLE_{\kappa}(\rho^R)$. By the symmetry in the definition of $\mathcal{M}$, we know that the conditional law of $\eta^L$ given $\eta^R$ is $\SLE_{\kappa}(\rho^L)$. Given $\eta^L$, denote by $D$ the connected component of $\HH\setminus \eta^L$ with $(xy)$ on the boundary and let $g$ be any conformal map from $D$ onto $\HH$. Denote by $\PP_R$ the law of $\SLE_{\kappa}(\rho^R)$ in $\HH$ from $x$ to $y$ and by $\tilde{\PP}_R$ the law of $\SLE_{\kappa}(\rho^R)$ in $D$ from $x$ to $y$. By Lemma~\ref{lem::sle_domain_mart},
for any bounded continuous function $\LF$ on continuous curves, we have 
\begin{align*}
\mathcal{M}^{\sharp}\left[\LF(\eta^R)\cond \eta^L\right]
&=|\mathcal{M}|^{-1}\E_R\left[\one_{\{\eta^L\cap\eta^R=\emptyset\}}\exp\left(c\mu(\HH; \eta^L, \eta^R)\right)\LF(\eta^R)\right]\\
&=|\mathcal{M}|^{-1}\left(\frac{g'(x)g'(y)}{(g(x)-g(y))^2}\right)^b\tilde{\E}_R\left[\LF(\eta^R)\right].
\end{align*}
This implies that the conditional law of $\eta^R$ given $\eta^L$ is $\SLE_{\kappa}(\rho^R)$ in $D$. 
\smallbreak
\textit{Finally}, we show that, under $\mathcal{M}^{\sharp}$ and fixing $\rho^L=0$, the marginal law of $\eta^L$ is $\hSLE_{\kappa}(\rho^R)$. In fact, the above equation implies that the law of $\eta^L$ is the law of $\SLE_{\kappa}$ in $\HH$ from 0 to $\infty$ weighted by \[\left(\frac{g'(x)g'(y)}{(g(x)-g(y))^2}\right)^b.\]
By Proposition~\ref{prop::hypersle_mart}, we see that the law of $\eta^L$ coincides with $\hSLE_{\kappa}(\rho^R)$ as desired.
\end{proof}
%%%
\subsection{Commutation Relation}
In \cite{DubedatCommutationSLE} and \cite[Appendix A]{KytolaPeltolaPurePartitionFunctions}, the authors studied local multiple SLEs and classify them according to the so-called partition functions. Following the same idea, we will define a local $\SLE$ that describes two initial segments with two extra marked points. Fix a quad $q=(\Omega; x^R, y^R, y^L, x^L)$. We will study a local $\SLE$ in $\Omega$ that describes two initial segments $\gamma_1$ and $\gamma_4$ starting from $x_1$ and $x_4$ respectively, with two extra marked points $x_2$ and $x_3$, up to exiting some neighborhoods $U_1$ and $U_4$.  The localization neighborhoods $U_1$ and $U_4$ are assumed to be closed subsets of $\overline{\Omega}$ such that $\Omega\setminus U_j$ are simply connected for $j=1,4$ and that $U_1\cap U_4=\emptyset$ and that $\dist(\{x_2, x_3\}, U_1\cup U_4)>0$. Define 
\[\chamber_4=\{(x_1, x_2, x_3, x_4)\in\R^4: x_1<x_2<x_3<x_4\}.\]

The local $\SLE_{\kappa}$ in $\Omega$, started from $(x_1, x_4)$ and localized in $(U_1, U_4)$ with two marked points $(x_2, x_3)$, is a probability measure on two curves $(\gamma_1, \gamma_4)$ such that, for $j\in\{1,4\}$, the curve $\gamma_j: [0,1]\to U_j$ starts at $\gamma_j(0)=x_j$ and ends at $\gamma_j(1)\in\partial U_j$. The local $\SLE_{\kappa}$ is the indexed collection 
\[P=\left(P_{(q; U_1, U_4)}\right)_{q; U_1, U_4}.\] 
This collection of probability measures is required to satisfy the following three properties.
\begin{itemize}
\item Conformal invariance. Suppose that $q=(\Omega; x^R, y^R, y^L, x^L), \tilde{q}=(\tilde{\Omega}; \tilde{x}^R, \tilde{y}^R, \tilde{y}^L, \tilde{x}^L)\in\LQ$, and $\psi: \Omega\to \tilde{\Omega}$ is a conformal map with $\psi(x^R)=\tilde{x}^R, \psi(y^R)=\tilde{y}^R, \psi(y^L)=\tilde{y}^L, \psi(x^L)=\tilde{x}^L$. Then for $(\gamma_1, \gamma_4)\sim P_{(q; U_1, U_4)}$, we have $(\psi(\gamma_1), \psi(\gamma_4))\sim P_{(\tilde{q}; \psi(U_1), \psi(U_4))}$. 
\item Domain Markov property. Suppose that $\tau_1$ is a stopping time for $\gamma_1$ and $\tau_4$ is a stopping time for $\gamma_4$. The conditional law of $(\gamma_1|_{t\ge\tau_1}, \gamma_4|_{t\ge\tau_4})$, given the initial segments $\gamma_1[0,\tau_1]$ and $\gamma_4[0,\tau_4]$, is the same as $P_{(\tilde{q}; \tilde{U}_1, \tilde{U}_4)}$ where $\tilde{q}=(\tilde{\Omega}; \gamma_1(\tau_1), x_2, x_3, \gamma_4(\tau_4))$ and $\tilde{\Omega}$ is the connected component of $\Omega\setminus (\gamma_1[0,\tau_1]\cup\gamma_4[0,\tau_4])$ with $(x_2x_3)$ on the boundary, and $\tilde{U}_j=U_j\cap \tilde{\Omega}$ for $j\in\{1,4\}$. 
\item Absolute continuity of the marginals. There exist smooth functions $F_j: \chamber_4\to\R$, for $j\in\{1,4\}$, such that for the domain $\Omega=\HH$, boundary points $x_1<x_2<x_3<x_4$, and localization neighborhoods $U_1$ and $U_4$, the marginal law of $\gamma_j$ under $P_{(\HH; x_1,x_2,x_3,x_4; U_1, U_4)}$ is the Loewner chain driven by the solution to the following SDEs: 
\begin{align}\label{eqn::localsle_sde}
\begin{split}
\text{for }\gamma_1:&\quad dW_t=\sqrt{\kappa}dB_t+F_1(W_t, V_t^2, V_t^3, V_t^4)dt,\quad dV_t^i=\frac{2dt}{V_t^i-W_t},\quad \text{for }i=2,3,4;\\
\text{for }\gamma_4:&\quad d\tilde{W}_t=\sqrt{\kappa}d\tilde{B}_t+F_4(\tilde{V}^1_t, \tilde{V}_t^2, \tilde{V}_t^3, \tilde{W}_t)dt,\quad d\tilde{V}_t^i=\frac{2dt}{\tilde{V}_t^i-W_t},\quad \text{for }i=1,2,3;
\end{split}
\end{align}
where $W_0=x_1, V_0^2=x_2, V_0^3=x_3$ and $V_0^{4}=x_4$ and $\tilde{W}_0=x_4, \tilde{V}^1_0=x_1, \tilde{V}_0^2=x_2, \tilde{V}_0^3=x_3$. 
\end{itemize}

\begin{lemma}\label{lem::localsle_compatible}
Suppose both $(U_1, U_4)$ and $(V_1, V_4)$ are localization neighborhoods for quad $q=(\Omega; x_1, x_2, x_3, x_4)$ and that $V_j\subset U_j$ for $j\in\{1,4\}$. Suppose $(\gamma_1, \gamma_4)\sim P_{(q; U_1, U_4)}$ and let $\tau_j$ be $\gamma_j$'s first time to exit $V_j$ for $j\in\{1,4\}$. Then $(\gamma_1|_{[0,\tau_1]}, \gamma_4|_{[0,\tau_4]})\sim P_{(q; V_1, V_4)}$. 
\end{lemma}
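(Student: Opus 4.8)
The plan is to reduce the joint law of the stopped pair to a marginal law together with a single conditional law, and to match each of these across the two sets of localization data $(U_1,U_4)$ and $(V_1,V_4)$ using the three defining properties of the local $\SLE$. Throughout, write $\tau_j$ for the first exit time of $\gamma_j$ from $V_j$, $j\in\{1,4\}$.

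First I would match the marginal of $\gamma_1|_{[0,\tau_1]}$. By the absolute-continuity-of-marginals property, under $P_{(q;U_1,U_4)}$ the curve $\gamma_1$ is the Loewner chain driven by the solution of \eqref{eqn::localsle_sde} run until $\gamma_1$ exits $U_1$; crucially, the drift function $F_1$ is fixed data of $P$ and does not depend on the localization neighborhoods (nor on the localization of $\gamma_4$). Since $V_1\subset U_1$, the exit time $\tau_1$ from $V_1$ occurs no later than the exit from $U_1$, and up to $\tau_1$ the two driving processes coincide. Hence the law of $\gamma_1|_{[0,\tau_1]}$ under $P_{(q;U_1,U_4)}$ equals the law of the entire curve $\gamma_1$ under $P_{(q;V_1,V_4)}$, which is the same $F_1$-chain stopped at its exit from $V_1$. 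The same statement holds with the roles of $1$ and $4$ exchanged.

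Second I would identify the conditional law of $\gamma_4|_{[0,\tau_4]}$ given $\gamma_1|_{[0,\tau_1]}$. Under $P_{(q;U_1,U_4)}$, condition on $\gamma_1[0,\tau_1]$ and apply the domain Markov property with the pair of stopping times $(\tau_1,0)$: the conditional law of $(\gamma_1|_{t\ge\tau_1},\gamma_4)$ is $P_{(\tilde q;\tilde U_1,\tilde U_4)}$, where $\tilde q=(\tilde\Omega;\gamma_1(\tau_1),x_2,x_3,x_4)$, $\tilde\Omega$ is the component of $\Omega\setminus\gamma_1[0,\tau_1]$ with $(x_2x_3)$ on its boundary, and $\tilde U_j=U_j\cap\tilde\Omega$. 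Because $\gamma_1[0,\tau_1]\subset V_1\subset U_1$ with $U_1\cap U_4=\emptyset$ and $\dist(\{x_2,x_3\},U_1\cup U_4)>0$, one checks that $U_4\subset\tilde\Omega$, so $\tilde U_4=U_4$. Marginalizing out the continuation $\gamma_1|_{t\ge\tau_1}$ and applying the absolute-continuity property in $\tilde q$ (transported from $\HH$ by conformal invariance), the conditional marginal of $\gamma_4$ is the $F_4$-driven Loewner chain in $\tilde q$ run until it exits $U_4$; this depends on $\gamma_1[0,\tau_1]$ only through the quad $\tilde q$, and not on $\tilde U_1$. Stopping further at $\tau_4$, the exit from $V_4\subset U_4$, yields the $F_4$-chain in $\tilde q$ stopped at exit from $V_4$. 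Running the identical computation under $P_{(q;V_1,V_4)}$ (where now $\gamma_1|_{[0,\tau_1]}$ is the whole curve, so the continuation is trivial), domain Markov with $(\tau_1,0)$ produces the $F_4$-chain in the same quad $\tilde q$ stopped at exit from $V_4$. The two conditional laws coincide, and combined with the equality of the $\gamma_1$-marginals from the first step this gives equality of the full joint laws, proving the lemma.

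I expect the main obstacle to be the bookkeeping in the second step: verifying that the conditional law of $\gamma_4|_{[0,\tau_4]}$ given $\gamma_1[0,\tau_1]$ genuinely depends only on the slit quad $\tilde q$, i.e. that it is insensitive both to the discarded continuation of $\gamma_1$ and to the enveloping localization neighborhood $\tilde U_1$. This is precisely the point at which the domain Markov property and the marginal-SDE property must be used in tandem, together with the geometric verification that $\tilde U_4=U_4$.
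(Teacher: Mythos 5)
Your proposal is correct and is essentially the paper's argument: the paper disposes of this lemma in one line (``the restriction measures also satisfy all the required three properties''), leaving implicit exactly the determination step you spell out, namely that the marginal SDE for $\gamma_1$ (with drift $F_1$ independent of the localization) together with the domain-Markov-plus-marginal-SDE description of the conditional law of $\gamma_4$ pins down the joint law. Your two-step decomposition, including the geometric check that $\tilde{U}_4=U_4$ and the observation that the conditional marginal of $\gamma_4$ is insensitive to $\tilde{U}_1$, is just this implicit uniqueness argument made explicit.
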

\begin{proof}
It is clear that the restriction measures also satisfy all the required three properties.
\end{proof}

It turns out that the existence of local $\SLE$ with two extra points is related to positive functions which satisfy a certain PDE system and  conformal covariance: $h=(6-\kappa)/(2\kappa)$ and $b$ is a constant parameter,
\begin{itemize}
\item PDE system (PDE):
\begin{align}\label{eqn::hsle_partition_pde}
\begin{split}
&\frac{\kappa}{2}\partial_{x_1}^2\PartF+\sum_{2\le i\le 4}\frac{2\partial_{x_i}\PartF}{x_i-x_1}+\left(\frac{-2h}{(x_4-x_1)^2}+\frac{-2b}{(x_2-x_1)^2}+\frac{-2b}{(x_3-x_1)^2}\right)\PartF=0,\\
&\frac{\kappa}{2}\partial_{x_4}^2\PartF+\sum_{1\le i\le 3}\frac{2\partial_{x_i}\PartF}{x_i-x_4}+\left(\frac{-2h}{(x_1-x_4)^2}+\frac{-2b}{(x_2-x_4)^2}+\frac{-2b}{(x_3-x_4)^2}\right)\PartF=0,
\end{split}
\end{align}
\item Conformal covariance (COV): for all M\"{o}bius maps $\varphi$ of $\HH$ such that $\varphi(x_1)<\varphi(x_2)<\varphi(x_3)<\varphi(x_4)$,  
\begin{equation}\label{eqn::hsle_partition_cov}
\PartF(x_1, x_2, x_3, x_4)=\varphi'(x_1)^h\varphi'(x_2)^{b}\varphi'(x_3)^{b}\varphi'(x_4)^h\times \PartF(\varphi(x_1), \varphi(x_2), \varphi(x_3), \varphi(x_4)). 
\end{equation}
\end{itemize}
\begin{comment}
\begin{lemma} \label{lem::localsle_existence}
Suppose $\PartF:\chamber_4\to\R$ is a positive solution to PDE~\eqref{eqn::hsle_partition_pde} and COV~\eqref{eqn::hsle_partition_cov}. Then there exists a local $\SLE_{\kappa}$ with two extra marked points such that the drift terms in \eqref{eqn::localsle_sde} are given by $F_1=\kappa\partial_{x_1}\log\PartF$ and $F_4=\kappa\partial_{x_4}\log\PartF$. 
\end{lemma}
\end{comment}

\begin{proposition}\label{prop::localsle_existence}
We have the following correspondence between local $\SLE$ with two extra marked points and positive solutions to PDE~\eqref{eqn::hsle_partition_pde} and COV~\eqref{eqn::hsle_partition_cov}.
\begin{enumerate}
\item [(a)]
Suppose $\PartF:\chamber_4\to\R$ is a positive solution to PDE~\eqref{eqn::hsle_partition_pde} and COV~\eqref{eqn::hsle_partition_cov}. Then there exists a local $\SLE_{\kappa}$ with two extra marked points such that the drift terms in~\eqref{eqn::localsle_sde} are given by $F_1=\kappa\partial_{x_1}\log\PartF$ and $F_4=\kappa\partial_{x_4}\log\PartF$. 
\item [(b)] Suppose there exists a local $\SLE_{\kappa}$ with two extra marked points. Then there exists a positive solution $\PartF:\chamber_4\to\R$ to PDE~\eqref{eqn::hsle_partition_pde} and COV~\eqref{eqn::hsle_partition_cov} such that the drift terms in~\eqref{eqn::localsle_sde} are given by $F_1=\kappa\partial_{x_1}\log\PartF$ and $F_4=\kappa\partial_{x_4}\log\PartF$.
\end{enumerate}
\end{proposition}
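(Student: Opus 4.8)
The plan is to prove the two directions with the single It\^{o}/Girsanov dictionary underlying Dub\'{e}dat's commutation relations \cite{DubedatCommutationSLE}, translating between the drift terms in \eqref{eqn::localsle_sde} and the analytic conditions \eqref{eqn::hsle_partition_pde}--\eqref{eqn::hsle_partition_cov} on $\PartF$. By conformal invariance it suffices to work in the reference domain $\HH$ with $x_1<x_2<x_3<x_4$. For part (a), first I would define the marginal of $\gamma_1$. Let $(W_t, g_t)$ be the driving function and Loewner maps of $\SLE_{\kappa}$ from $x_1$. A direct It\^{o} computation shows that the $x_1$-equation of \eqref{eqn::hsle_partition_pde}, together with COV \eqref{eqn::hsle_partition_cov}, is precisely the statement that
\[
M_t = g_t'(x_2)^{b}\,g_t'(x_3)^{b}\,g_t'(x_4)^{h}\,\PartF(W_t, g_t(x_2), g_t(x_3), g_t(x_4))
\]
is a local martingale. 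Since $\dist(\{x_2,x_3\}, U_1)>0$ and $\PartF$ is positive and smooth on $\chamber_4$, the process $M_t$ is bounded up to the exit time $\tau_1$ of $U_1$, so weighting $\SLE_{\kappa}$ by $M_{\tau_1}/M_0$ defines a probability measure whose driving function, by Girsanov, solves \eqref{eqn::localsle_sde} with $F_1=\kappa\partial_{x_1}\log\PartF$. The $x_4$-equation gives the marginal of $\gamma_4$ with $F_4=\kappa\partial_{x_4}\log\PartF$ symmetrically.

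Next I would assemble the joint law by iterated growth and verify that the growth order is irrelevant. Sample $\gamma_1$ up to $\tau_1$ as above, then, conditionally on $\gamma_1[0,\tau_1]$, sample $\gamma_4$ up to $\tau_4$ from the analogously weighted measure in $\HH\setminus\gamma_1[0,\tau_1]$, now with marked points $g_{\tau_1}(x_2), g_{\tau_1}(x_3)$ and fourth point $W_{\tau_1}$. The conditional weight is again built from the same $\PartF$, and by COV \eqref{eqn::hsle_partition_cov} the two successive weights telescope: the joint Radon--Nikodym derivative with respect to the product of two independent $\SLE_{\kappa}$'s equals a single expression, namely a product of conformal derivative factors at all four points of the combined Loewner map times $\PartF(\text{final configuration})/\PartF(x_1,x_2,x_3,x_4)$, which depends only on the unordered pair $(\gamma_1[0,\tau_1], \gamma_4[0,\tau_4])$. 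This symmetry is exactly the commutation relation; it yields the domain Markov property in both coordinates, conformal invariance follows from that of $\SLE_{\kappa}$ and of $\PartF$, and consistency across localization neighborhoods is Lemma~\ref{lem::localsle_compatible}. Hence $P=(P_{(q;U_1,U_4)})$ is a local $\SLE_{\kappa}$ with the stated drifts.

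For part (b), the absolute-continuity property supplies smooth drifts $F_1, F_4$ in \eqref{eqn::localsle_sde}; the task is to recover $\PartF$. Running $\gamma_1$ then $\gamma_4$ must give the same joint law as the reverse order, since both are prescribed by the domain Markov property of $P$. Expanding the two generators $\LL_1,\LL_4$ associated to \eqref{eqn::localsle_sde} to second order in the respective Loewner times and matching, as in \cite{DubedatCommutationSLE}, the diagonal terms force $F_1$ to be a logarithmic gradient $F_1=\kappa\partial_{x_1}\log\PartF_1$ with $\PartF_1$ solving the $x_1$-equation of \eqref{eqn::hsle_partition_pde} and normalized to satisfy COV \eqref{eqn::hsle_partition_cov}, and likewise $F_4=\kappa\partial_{x_4}\log\PartF_4$; the mixed second-order terms produce the integrability condition $\partial_{x_4}F_1=\partial_{x_1}F_4$. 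This last identity forces $\log\PartF_1-\log\PartF_4$ to be constant, so $\PartF_1$ and $\PartF_4$ coincide up to scaling and give a single positive $\PartF$ satisfying both equations of \eqref{eqn::hsle_partition_pde} and COV \eqref{eqn::hsle_partition_cov}, with $F_1=\kappa\partial_{x_1}\log\PartF$ and $F_4=\kappa\partial_{x_4}\log\PartF$; positivity is automatic since $\PartF$ is the initial value of a positive weighting martingale.

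I expect the main obstacle to be the commutation step in part (b): rigorously passing from the measure-level equality of the two growth orders to the infinitesimal commutation identity, and then to the pair of PDEs together with the closed-form drift. The delicate points are handling the two independent time parametrizations and the stopping at the localization boundaries when comparing orders, and carrying out the linear-algebra argument that the two second-order null-vector equations plus the curl-free condition $\partial_{x_4}F_1=\partial_{x_1}F_4$ force one common normalized $\PartF$ rather than two unrelated functions. The part (a) analogue---checking that the telescoping is genuinely order-independent---is easier but still requires care in tracking the conformal-covariance exponents $h$ and $b$.
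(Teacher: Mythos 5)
Your part (a) follows the same skeleton as the paper's proof: weight $\SLE_{\kappa}$ from $x_1$ by the local martingale $M^{(1)}_t=g_t'(x_2)^{b}g_t'(x_3)^{b}g_t'(x_4)^{h}\PartF(W_t,g_t(x_2),g_t(x_3),g_t(x_4))$ (this is where the $x_1$-equation of \eqref{eqn::hsle_partition_pde} enters), grow the second curve in the slit domain, and argue that the two growth orders agree. But your key step---that the joint Radon--Nikodym derivative with respect to two \emph{independent} $\SLE_{\kappa}$'s is ``conformal derivative factors times $\PartF(\text{final})/\PartF(\text{initial})$,'' hence symmetric by COV alone---is wrong as stated. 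The conditional law of $\gamma_4$ given $\gamma_1$ is an $\SLE_{\kappa}$ in the slit domain $\HH\setminus\gamma_1[0,\tau_1]$, and its Radon--Nikodym derivative with respect to $\SLE_{\kappa}$ in $\HH$ is not a pure product of conformal factors: for $\kappa\neq 6$ it carries the central-charge correction $\exp(c\,\mu(\HH;\gamma_1,\gamma_4))$ with $c=(3\kappa-8)(6-\kappa)/(2\kappa)$ (compare the measure $\mathcal{M}$ in the proof of Proposition~\ref{prop::slepair_rev}, and Lemma~\ref{lem::sle_domain_mart}). So the telescoping does not close with COV alone; if it did, commutation would hold for every covariant $\PartF$, contradicting the fact that the null-vector PDEs are necessary. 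To repair it you must either track the loop term via a localized boundary-perturbation formula---which the paper only has for $\kappa\le 4$, while Proposition~\ref{prop::localsle_existence} is invoked for all $\kappa\in(0,8)$ in Corollary~\ref{cor::hsle_commutation}---or do what the paper does: observe that PDE~\eqref{eqn::hsle_partition_pde} plus COV~\eqref{eqn::hsle_partition_cov} are exactly the hypotheses of Dub\'edat's commutation theorem \cite[Theorem 7.1]{DubedatCommutationSLE}, which is cited as a black box for the equality of the two sampling orders.

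Part (b) has two genuine gaps. First, your integrability argument is too strong: $\partial_{x_4}F_1=\partial_{x_1}F_4$ gives $\partial_{x_1}\partial_{x_4}\log(\PartF_1/\PartF_4)=0$, which only forces $\log\PartF_1-\log\PartF_4=u(x_1,x_2,x_3)+v(x_2,x_3,x_4)$, not a constant; pinning down a single $\PartF$ satisfying both equations requires the full bracket analysis of the two generators, i.e.\ Dub\'edat's theorem itself, not a curl-free condition. Second, you never derive the covariance \eqref{eqn::hsle_partition_cov}: you ``normalize to satisfy COV,'' but covariance of the partition function is a conclusion, not a normalization. The paper extracts it from conformal invariance of the local $\SLE$ via Graham's argument \cite[Sections 4 and 5]{GrahamSLE}, and then still must reduce the resulting two-point function to a function $f$ of the cross-ratio, solve the ODE~\eqref{eqn::localsle_conformal_invariant_ode}, and reconstruct the four-point function $\PartF(x_1,x_2,x_3,x_4)=(x_4-x_1)^{-2h}(x_3-x_2)^{-\mu}f(z)$ with $b=\mu/2$ before verifying \eqref{eqn::hsle_partition_pde}--\eqref{eqn::hsle_partition_cov}. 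These steps, which constitute most of the paper's proof of part (b), are absent from your outline.
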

\begin{proof}[Proof of Proposition~\ref{prop::localsle_existence}---Part (a)]
There are two ways to sample $\gamma_1$ and $\gamma_4$: Method 1---sample $\gamma_1$ first, and  Method 2---sample $\gamma_4$ first. 

\textit{Method 1.} 
Since $\PartF$ satisfies PDE~\eqref{eqn::hsle_partition_pde}, the following process is a local martingale with respect to the law of $\SLE_{\kappa}$ in $\HH$ from $x_1$ to $\infty$: 
\[M_t^{(1)}=g_t'(x_2)^{b}g_t'(x_3)^{b}g_t'(x_4)^h\PartF(W_t, g_t(x_2), g_t(x_3), g_t(x_4)).\]
We sample $\gamma_1$ according to the law of $\SLE_{\kappa}$ in $\HH$ from $x_1$ to $\infty$ weighted by the local martingale $M_t^{(1)}$, up to the first time $\sigma_1$ that the process exits $U_1$. Let $G=g_{\sigma_1}$ and denote by \[\tilde{x}_1=G(\gamma_1(\sigma_1)),\quad \tilde{x}_2=G(x_2),\quad \tilde{x}_3=G(x_3),\quad \tilde{x}_4=G(x_4).\]
Since $\PartF$ satisfies PDE~\eqref{eqn::hsle_partition_pde}, the following process is a local martingale with respect to the law of $\SLE_{\kappa}$ in $\HH$ from $x_4$ to $\infty$: 
\[\tilde{M}_s^{(4)}=\tilde{g}_s'(\tilde{x}_1)^h\tilde{g}_s'(\tilde{x}_2)^b\tilde{g}_s'(\tilde{x}_3)^b\PartF(\tilde{g}_s(\tilde{x}_1), \tilde{g}_s(\tilde{x}_2), \tilde{g}_s(\tilde{x}_3), \tilde{W}_s).\]
We sample $\tilde{\gamma}_4$ according to the the law of $\SLE_{\kappa}$ in $\HH$ from $\tilde{x}_4$ to $\infty$ weighted by the local martingale $\tilde{M}_s^{(4)}$, up to the first time $\tilde{\sigma}_4$ that the process exits $G(U_4)$. Finally, set $\gamma_4=G^{-1}(\tilde{\gamma}_4)$.  

\textit{Method 2.} This is defined in the same way as in Method 1 except we switch the roles of $\gamma_1$  and $\gamma_4$. 

According to the local commutation relation in \cite[Theorem 7.1]{DubedatCommutationSLE}, these two methods give the same law on pairs $(\gamma_1, \gamma_4)$. The probability measure defined by the sampling procedure clearly satisfies the domain Markov property and the absolute continuity of the marginals. By COV~\eqref{eqn::hsle_partition_cov}, we could define the law on $(\gamma_1, \gamma_4)$ in any simple connected domain via conformal image. This implies the conformal invariance. 
\end{proof}

\begin{proof}[Proposition~\ref{prop::localsle_existence}---Part (b)]
Since the local $\SLE$ with extra two points is conformal invariant, we could assume $x_2=\infty, x_3=0, x_4=x, x_1=y$ for $0<x<y$. By \cite[Theorem 7.1]{DubedatCommutationSLE}, the existence of local $\SLE_{\kappa}$ in neighborhoods of $x$ and $y$ with two extra marked points $0$ and $\infty$ implies that there exists a positive function $\psi: \chamber_2\to\R$ that solves the following PDE system: 
\begin{align*}
&\frac{\kappa}{2}\partial_{x}^2\psi+\frac{2}{x}\partial_x\psi+\left(\frac{2}{x}+\frac{2}{y-x}\right)\partial_y\psi+\left(\frac{-2h}{(y-x)^2}+\frac{-\mu}{x^2}\right)\psi=0,\\
&\frac{\kappa}{2}\partial_{y}^2\psi+\frac{2}{y}\partial_{y}\psi+\left(\frac{2}{y}+\frac{2}{x-y}\right)\partial_{x}\psi+\left(\frac{-2h}{(x-y)^2}+\frac{-\mu}{y^2}\right)\psi=0,
\end{align*}
where $\mu$ is a constant parameter. Moreover, the marginal laws of $\gamma_1, \gamma_4$ is the Loewner chain driven by the solution to the following SDEs:
\begin{align*}
\text{for }\gamma_1: dW_t&=\sqrt{\kappa}dB_t+\kappa(\partial_y\log\psi)(V_t^4-V_t^3, W_t-V_t^3)dt,\quad dV_t^i=\frac{2dt}{V_t^i-W_t}, \quad i=3,4;\\
\text{for }\gamma_4: d\tilde{W}_t&=\sqrt{\kappa}dB_t+\kappa(\partial_x\log\psi)(\tilde{W}_t-\tilde{V}^3_t, \tilde{V}^1_t-\tilde{V}^3_t)dt,\quad d\tilde{V}_t^i=\frac{2dt}{\tilde{V}_t^i-\tilde{W}_t},\quad i=1,3.
\end{align*}

By the argument in \cite[Sections 4 and 5]{GrahamSLE}, we see that $\psi$ needs to satisfy the following conformal covariance: for any M\"{o}bius maps $\varphi$ of $\HH$ with $\varphi(x)<\varphi(y)$, 
\begin{equation*}
\psi(x,y)=\varphi'(x)^h\varphi'(y)^h \psi(\varphi(x), \varphi(y)). 
\end{equation*}

We write $\phi(x,y)=(y-x)^{2h}\psi(x,y)$. Then $\phi$ satisfies the following PDE system:
\begin{align*}
\frac{\kappa}{2}\partial_x^2\phi+\left(\frac{2}{x}+\frac{6-\kappa}{y-x}\right)\partial_x\phi+\left(\frac{2}{x}+\frac{2}{y-x}\right)\partial_y\phi+\frac{-\mu}{x^2}\phi=0,\\
\frac{\kappa}{2}\partial_y^2\phi+\left(\frac{2}{y}+\frac{6-\kappa}{x-y}\right)\partial_y\phi+\left(\frac{2}{y}+\frac{2}{x-y}\right)\partial_x\phi+\frac{-\mu}{y^2}\phi=0.
\end{align*}
Moreover, $\phi$ is conformal invariant: for any M\"{o}bius maps $\varphi$ of $\HH$ with $\varphi(x)<\varphi(y)$, we have $\phi(x,y)=\phi(\varphi(x), \varphi(y))$. Since $\phi$ is conformal invariant, it only depends on the ratio $x/y\in (0,1)$. Thus we may write $\phi(x,y)=f(x/y)$. Then $f$ satisfies the following ODE:
\begin{equation}\label{eqn::localsle_conformal_invariant_ode}
\frac{\kappa}{2}z^2f''(z)+\frac{z(2+(2-\kappa)z)}{1-z}f'(z)-\mu f(z)=0.
\end{equation}
Define, for $x_1<x_2<x_3<x_4$, 
\[\PartF(x_1, x_2, x_3, x_4):=(x_4-x_1)^{-2h}(x_3-x_2)^{-\mu}f(z),\quad \text{where }z=\frac{(x_2-x_1)(x_4-x_3)}{(x_3-x_1)(x_4-x_2)}.\]
Since $f$ satisfies ODE~\eqref{eqn::localsle_conformal_invariant_ode} and is conformal invariant, we could check that $\PartF$ satisfies PDE~\eqref{eqn::hsle_partition_pde} and COV~\eqref{eqn::hsle_partition_cov} with $b=\mu/2$. 
\end{proof}

\begin{corollary}\label{cor::hsle_commutation}
For any $\kappa\in (0,8)$ and $\nu\in\R$, 
there exists a local $\SLE_{\kappa}$ with two extra marked points such that the drift term in~\eqref{eqn::localsle_sde} are give by 
$F_1=\kappa\partial_{x_1}\log\PartF_{\kappa,\nu}$ and $F_4=\kappa\partial_{x_4}\log\PartF_{\kappa,\nu}$ where $\PartF_{\kappa,\nu}$ is defined in~\eqref{eqn::hSLE_partition}. In particular, the marginal law of $\gamma_1$ is $\hSLE_{\kappa}(\nu)$ in $\HH$ from $x_1$ to $x_4$ with marked points $(x_2, x_3)$ stopped at the first exiting time of $U_1$, and the marginal law of $\gamma_4$ is $\hSLE_{\kappa}(\nu)$ in $\HH$ from $x_4$ to $x_1$ with marked points $(x_3, x_2)$ stopped at the first exiting time of $U_4$. 
\end{corollary}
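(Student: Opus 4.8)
The plan is to deduce the corollary directly from Proposition~\ref{prop::localsle_existence}(a). By that proposition, it suffices to check that the hypergeometric partition function $\PartF_{\kappa,\nu}$ of~\eqref{eqn::hSLE_partition} is a positive (smooth) solution of the PDE system~\eqref{eqn::hsle_partition_pde} and of the conformal covariance~\eqref{eqn::hsle_partition_cov}, with the constant $b$ taken as in~\eqref{eqn::hSLE_constants}. Once this is established, Proposition~\ref{prop::localsle_existence}(a) manufactures a local $\SLE_\kappa$ with two extra marked points whose drift terms are exactly $F_1=\kappa\partial_{x_1}\log\PartF_{\kappa,\nu}$ and $F_4=\kappa\partial_{x_4}\log\PartF_{\kappa,\nu}$, and the two marginal identifications will then fall out of the definition of $\hSLE$ together with a reflection symmetry.

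First I would dispatch the easy requirements. COV~\eqref{eqn::hsle_partition_cov} holds by construction, since $\PartF_{\kappa,\nu}$ was extended to a general quad through the conformal-image formula~\eqref{eqn::hSLE_partition_general} with weights $(h,b,b,h)$. Positivity is also immediate: when $\nu>(-4)\vee(\kappa/2-6)$ we have $F>0$ on $[0,1]$ by Lemma~\ref{lem::hyperF_bound}, while when $\nu\le(-4)\vee(\kappa/2-6)$ the representation~\eqref{eqn::hSLE_hyperF_reflect} gives $F(z)=(1-z)^{8/\kappa-1}G(1-z)$ with $G>0$; in both cases the prefactor $(x_4-x_1)^{-2h}(x_3-x_2)^{-2b}z^a$ is a product of positive quantities raised to real powers, so $\PartF_{\kappa,\nu}>0$ on $\chamber_4$, and it is smooth there because $z\in(0,1)$ on $\chamber_4$ and $F$ is analytic on $(0,1)$.

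The heart of the matter, and the step I expect to be the main obstacle, is the PDE system~\eqref{eqn::hsle_partition_pde}. For the first equation I would use Lemma~\ref{lem::hypersle_mart}, which asserts that $M_t=Z_t^aJ_t^bF(Z_t)$ is a local martingale for $\SLE_\kappa$ from $0$ to $\infty$; this is precisely the martingale~\eqref{eqn::hypersle_mart} in the degenerate slice $x_4=\infty$, and the vanishing of its Itô drift is equivalent to $\PartF_{\kappa,\nu}$ solving the first line of~\eqref{eqn::hsle_partition_pde} at configurations with $x_4=\infty$. The analytic input behind this is exactly that $F$ solves the Euler hypergeometric equation~\eqref{eqn::euler_ode}, which is built into the definitions~\eqref{eqn::hSLE_hyperF} and~\eqref{eqn::hSLE_hyperF_reflect}. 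Since $\chamber_4$ modulo the Möbius group is one-dimensional (parametrized by the cross-ratio $z$) and every configuration can be sent to one with $x_4=\infty$, conformal covariance~\eqref{eqn::hsle_partition_cov} then propagates the identity to all of $\chamber_4$. For the second line of~\eqref{eqn::hsle_partition_pde} I would avoid a second computation by invoking the reflection $x\mapsto -x$ with relabeling $(x_1,x_2,x_3,x_4)\mapsto(-x_4,-x_3,-x_2,-x_1)$: this swaps the two endpoints (both of weight $h$) and the two marked points (both of weight $b$) while fixing $z$, $x_4-x_1$ and $x_3-x_2$, so $\PartF_{\kappa,\nu}$ is invariant and the second PDE is simply the image of the first.

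With $\PartF_{\kappa,\nu}$ thus verified, Proposition~\ref{prop::localsle_existence}(a) yields the desired local $\SLE_\kappa$ with drift terms $F_1=\kappa\partial_{x_1}\log\PartF_{\kappa,\nu}$ and $F_4=\kappa\partial_{x_4}\log\PartF_{\kappa,\nu}$. The marginal of $\gamma_1$ is then driven by $W$ with drift $\kappa\partial_{x_1}\log\PartF_{\kappa,\nu}$, which is by definition $\hSLE_{\kappa}(\nu)$ in $\HH$ from $x_1$ to $x_4$ with marked points $(x_2,x_3)$, stopped at the first exit of $U_1$. Applying the same reflection symmetry once more, the drift $\kappa\partial_{x_4}\log\PartF_{\kappa,\nu}$ governing $\gamma_4$ coincides with the driving drift of $\hSLE_{\kappa}(\nu)$ started from $x_4$ toward $x_1$ with marked points $(x_3,x_2)$, which gives the second marginal claim and completes the proof. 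Everything outside the third paragraph is bookkeeping built on top of Proposition~\ref{prop::localsle_existence}.
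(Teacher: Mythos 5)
Your proposal is correct and follows essentially the same route as the paper: the paper's proof simply asserts that $\PartF_{\kappa,\nu}$ satisfies PDE~\eqref{eqn::hsle_partition_pde} and COV~\eqref{eqn::hsle_partition_cov} with $b=(\nu+2)(\nu+6-\kappa)/(4\kappa)$ and then invokes Proposition~\ref{prop::localsle_existence}, Part (a). Your additional details (positivity via Lemma~\ref{lem::hyperF_bound}, the null-vector equation at $x_1$ via the local martingale of Lemma~\ref{lem::hypersle_mart} plus Möbius covariance, and the reflection $(x_1,x_2,x_3,x_4)\mapsto(-x_4,-x_3,-x_2,-x_1)$ for the equation at $x_4$) are a valid filling-in of the verification the paper leaves implicit.
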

\begin{proof}
The function $\PartF_{\kappa,\nu}$ defined in~\eqref{eqn::hSLE_partition} satisfies PDE~\eqref{eqn::hsle_partition_pde} and COV~\eqref{eqn::hsle_partition_cov} for 
\[b=(\nu+2)(\nu+6-\kappa)/(4\kappa).\] Combining with Proposition~\ref{prop::localsle_existence}, Part (a), we obtain the conclusion. \end{proof}

%%%
\subsection{Proof of Proposition~\ref{prop::slepair_cmp_sym}}
\begin{proof}[Proof of Proposition~\ref{prop::slepair_cmp_sym}, Existence]
Note that, given $\eta^L[0,T^L]$, the conditional law of the remaining part of $\eta^L$ is $\SLE_{\kappa}(\rho)$ from $w^L$ to $y^L$ with force point $w^L$; given $\eta^R[0,T^R]$, the conditional law of the remaining part of $\eta^R$ is $\SLE_{\kappa}(\rho)$ from $w^R$ to $y^R$ with force point $w^R$. Thus, to show the existence of the pair $(\eta^L;\eta^R)$ in Proposition~\ref{prop::slepair_cmp_sym}, it is sufficient to show the existence of the pair $(\eta^L|_{[0,T^L]}; \eta^R|_{[0,T^R]})$. 

Set
$\nu:=\kappa-8-\rho<\kappa/2-4$. Let $\eta^R$ be $\hSLE_{\kappa}(\nu)$ in $\Omega$ from $x^R$ to $x^L$ with marked points $(y^R, y^L)$ conditioned to hit $(y^Ry^L)$ (since $\nu<\kappa/2-4$, this event has positive chance). For $\eps>0$, let $T^R_{\eps}$ be the first time that $\eta^R$ hits the $\eps$-neighborhood of $(y^Ry^L)$. Given $\eta^R[0,T^R_{\eps}]$, let $\eta^L$ be $\hSLE_{\kappa}(\nu)$ in $\Omega\setminus \eta^R[0,T^R_{\eps}]$ from $x^L$ to $\eta^R(T^R_{\eps})$ with marked points $(y^L, y^R)$ conditioned to hit $(y^Ry^L)$. Let $T^L_{\eps}$ be the first time that $\eta^L$ hits the $\eps$-neighborhood of $(y^Ry^L)$. Here we obtain a pair of continuous simple curves $(\eta^L|_{[0,T^L_{\eps}]}; \eta^R|_{[0,T^R_{\eps}]})$. We could also sample the pair by first sampling $\eta^R$ and then sampling $\eta^L$ conditional on $\eta^R$. Corollary~\ref{cor::hsle_commutation} guarantees that the law on the pair $(\eta^L|_{[0,T^L_{\eps}]}; \eta^R|_{[0,T^R_{\eps}]})$ does not depend on the sampling order. In order to apply Corollary~\ref{cor::hsle_commutation}, it is important that $\kappa\le 4$ and the curves do not hit each other almost surely. In this case, we could exhaust all dyadic localization neighborhoods, combining with Lemma~\ref{lem::localsle_compatible}, we could conclude that the law on the pair $(\eta^L|_{[0,T^L_{\eps}]}; \eta^R|_{[0,T^R_{\eps}]})$ does not depend on the sampling order. This is true for all $\eps>0$. 

Let $\eps\to 0$, combining with the continuity of $\hSLE_{\kappa}(\nu)$ up to and including the first hitting time of $[y^Ry^L]$ in Propositions~\ref{prop::hyperSLE} and \ref{prop::hsle_continuity_lownu}, we could conclude that the law on the pair $(\eta^L|_{[0,T^L]}; \eta^R|_{[0,T^R]})$ does not depend on the sampling order. Consider the pair $(\eta^L|_{[0,T^L]}; \eta^R|_{[0,T^R]})$, we see that the conditional law of $\eta^L$ given $\eta^R[0,T^R]$ is $\SLE_{\kappa}(\nu+2)$ in $\Omega^R$ from $x^L$ to $w^R$ with force point $y^L$ up to the first hitting time of $(y^Ry^L)$. By Lemma~\ref{lem::sle_kapparho_targetchanging}, we know that the conditional law of $\eta^L$ given $\eta^R[0,T^R]$ is $\SLE_{\kappa}(\rho)$ in $\Omega^R$ from $x^L$ to $y^L$ with force point $w^R$ up to the first hitting time of $(y^Ry^L)$. Similarly, the conditional law of $\eta^R$ given $\eta^L[0,T^L]$ is $\SLE_{\kappa}(\rho)$ in $\Omega^L$ from $x^R$ to $y^R$ with force point $w^L$ up to the first hitting time of $(y^Ry^L)$. This implies the existence part of Proposition~\ref{prop::slepair_cmp_sym}.
\end{proof}

\begin{proof}[Proof of Proposition~\ref{prop::slepair_cmp_sym}, Uniqueness]
The uniqueness part could be proved similarly as the proof of \cite[Theorem 4.1]{MillerSheffieldIG2}. The setting there is slightly different from ours and the same proof works. We will briefly summarize the proof and point out the different places. 
We construct a Markov chain on configurations in $X_0(\Omega; x^R, y^R, y^L, x^L)$: one transitions from one configuration $(\eta^L;\eta^R)$ by picking $i\in\{L, R\}$ uniformly and then resampling $\eta^i$ according to the conditional law given the other one. 
The uniqueness of $\QQ_q(\kappa, \rho)$ will follow from the uniqueness of the stationary measure of this Markov chain. 
The $\eps$-Markov chain is defined similarly except in each step we resample  the paths conditioned on  them staying in $X^{\eps}_0(\Omega; x^R, y^R, y^L, x^L)$. Denote by $P_{\eps}$ the transition kernel for the $\eps$-Markov chain. It suffices to show that there is a unique stationary distribution for the $\eps$-Markov chain. Sending $\eps\to 0$ implies that the original chain has a unique stationary distribution. 

It is proved in \cite{MillerSheffieldIG2} that the transition kernel for $\eps$-Markov chain is continuous. In this part, the requirements are that the conditional law---$\SLE_{\kappa}(\rho)$--- can be sampled as flow lines of GFF, and that the two curves do not hit each other almost surely. The conditional law is $\SLE_{\kappa}(\rho)$ with force point $w^L\in (y^Ry^L)$ or $w^R\in (y^Ry^L)$, thus the two curves do not hit for all $\rho>-2$ as long as $\kappa\le 4$. So our setting satisfies the two requirements. Let $\mu$ be any stationary distribution of the Markov chain, and let $\mu_{\eps}$ be $\mu$ conditioned on $X_0^{\eps}(\Omega; x^R, y^R, y^L, x^L)$, then $\mu_{\eps}$ is stationary for the $\eps$-Markov chain. Let $\LS_{\eps}$ be the set of all such stationary probability measures. Then $\LS_{\eps}$ is convex and compact by the continuity of the transition kernel of the $\eps$-Markov chain. By Choquet's Theorem, the measure $\mu_{\eps}$ can be uniquely expressed as a superposition of extremal elements of $\LS_{\eps}$. To show that $\LS_{\eps}$ consists of a single element, it suffices to show that there is only one extremal in $\LS_{\eps}$. Suppose that $\nu, \tilde{\nu}$ are two extremal elements in $\LS_{\eps}$. By Lebesgue decomposition theorem, one can uniquely write $\nu=\nu_0+\nu_1$ such that $\nu_0$ is absolutely continuous and $\nu_1$ is singular with respect to $\tilde{\nu}$. If $\nu_0$ and $\nu_1$ are both nonzero, since $\nu=\nu_0 P_{\eps}+\nu_1 P_{\eps}$, by the uniqueness of the Lebesgue decomposition, we see that $\nu_0$ and $\nu_1$ are both stationary and thus can be normalized as stationary distributions for the $\eps$-Markov chain. This contradicts that $\nu$ is an extremal measure. This implies that either $\nu$ is absolutely continuous with respect to $\tilde{\nu}$ or singular. 

Next, it is proved in \cite{MillerSheffieldIG2} that it is impossible for $\nu$ to be absolutely continuous with respect to $\tilde{\nu}$. The same proof for this part also works here. The last part is showing that $\nu$ can not be singular with respect to $\tilde{\nu}$. Suppose $(\eta^L_0;\eta^R_0)\sim \nu$ and $(\tilde{\eta}^L;\tilde{\eta}^R)\sim\tilde{\nu}$ are the initial state for the $\eps$-Markov chain. Then they argued that it is possible to couple $(\eta^L_2;\eta^R_2)$ and $(\tilde{\eta}^L_2;\tilde{\eta}^R_2)$ such that the event $(\eta^L_2;\eta^R_2)=(\tilde{\eta}^L_2;\tilde{\eta}^R_2)$ has positive chance. This implies that $\nu$ and $\tilde{\nu}$ can not be singular. The key ingredient in this part is \cite[Lemma 4.2]{MillerSheffieldIG2} which needs to be replaced by Lemma~\ref{lem::positive_chance} in our setting. 
\end{proof}

\begin{lemma}\label{lem::positive_chance}
Fix $\kappa\in (0,8)$ and $\rho>(-2)\vee(\kappa/2-4)$. 
Suppose $(\Omega; x^R, y^R, y^L, x^L)$ is a quad, $w^R\in (y^Ly^R)$ is a boundary point, and $\tilde{\Omega}\subset\Omega$ is such that $\tilde{\Omega}$ agrees with $\Omega$ in a neighborhood of $(w^Rx^L)$. Let $\eta$ be an $\SLE_{\kappa}(\rho)$ in $\Omega$ from $x^L$ to $y^L$ with force point $y^R$ and let $\tilde{\eta}$ be an $\SLE_{\kappa}(\rho)$ in $\tilde{\Omega}$ from $x^L$ to $y^L$ with force point $w^R$. Then there exists a coupling between $\eta$ and $\tilde{\eta}$ such that the event $\{\eta=\tilde{\eta}\}$ has positive chance.
\end{lemma}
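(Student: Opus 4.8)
The plan is to reduce everything to a single absolute-continuity statement. I would prove that the laws of $\eta$ and $\tilde\eta$, viewed as probability measures on the Polish space of curves in $\overline{\Omega}$ from $x^L$ to $y^L$, are \emph{not mutually singular}. Granting this, the maximal coupling of two non-mutually-singular probability measures realizes the event $\{\eta=\tilde\eta\}$ with probability $1-d_{\mathrm{TV}}>0$, which is exactly the claim; so the task becomes exhibiting one event of positive probability (for both laws) on which the two laws are equivalent. To set this up I would first map $\Omega$ conformally onto $\HH$ sending $x^L\mapsto 0$ and $y^L\mapsto\infty$. Then $\eta$ becomes $\SLE_\kappa(\rho)$ in $\HH$ from $0$ to $\infty$ with force point $p_1\in(0,\infty)$, while $\tilde\eta$ becomes $\SLE_\kappa(\rho)$ in the subdomain $D\subseteq\HH$ from $0$ to $\infty$ with force point $p_2\in(0,\infty)$, where $D$ agrees with $\HH$ in a neighborhood of the image of $(w^Rx^L)$; in particular $D=\HH$ near $0$, near $\infty$, and on neighborhoods of $(p_2,\infty)$ and $(-\infty,0)$, so that $\HH\setminus D$ is a compact subset of $\overline{\HH}$ lying at positive distance from both $0$ and $\infty$.

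Next I would construct a favorable tube. Since $D$ is simply connected with $0,\infty$ on its boundary and coincides with $\HH$ near both points, there is a smooth simple curve $\beta$ from $0$ to $\infty$ with $\beta(0,1)\subset D$ that stays to the left of $p_1<p_2$ and at positive distance from $\{p_1,p_2\}$ and from $\HH\setminus D$. Let $U$ be an open neighborhood of $\beta$ with $U\cap\HH=U\cap D$, $\dist(U,\{p_1,p_2\})>0$ and $\dist(U,\HH\setminus D)>0$, and set $E=\{\eta\subset U\}$. On $E$ the curve never swallows $p_1$ or $p_2$ and never comes near the boundary modification $\HH\setminus D$. Both $\SLE_\kappa(\rho)$ laws assign positive mass to $E$, because an $\SLE_\kappa(\rho)$ has positive probability to remain in any neighborhood of a curve joining its two endpoints.

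It then remains to verify that, restricted to $E$, the two laws are mutually absolutely continuous with Radon--Nikodym derivative bounded above and below. Writing both curves as Loewner chains in $\HH$ and comparing driving functions, the law of $\tilde\eta$ differs from that of $\eta$ by two reweightings: the change of force point from $p_1$ to $p_2$, whose density is the ratio of the local martingales of Lemma~\ref{lem::sle_kapparho_mart}, and the change of ambient domain from $\HH$ to $D$, whose density is the boundary-perturbation factor (for $\kappa\le 4$ the loop-measure expression $\exp(c\,\mu(\HH;\eta,\HH\setminus D))$ as in Lemma~\ref{lem::sle_domain_mart} and Proposition~\ref{prop::boundary_perturbation}, and for general $\kappa\in(0,8)$ the analogous local absolute continuity of $\SLE_\kappa(\rho)$ away from the force point and the boundary modification). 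On $E$ the curve stays uniformly away from $\{p_1,p_2\}$ and from $\HH\setminus D$, so the corresponding drift changes are bounded smooth functions of the Loewner data, the reweighting martingale stays bounded above and below, and it converges as capacity time tends to infinity to a finite positive limit on $E$. Thus the two laws are equivalent on $E$, hence not mutually singular.

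\textbf{Main obstacle.} The delicate step is the last one: controlling the Radon--Nikodym derivative \emph{uniformly over the infinite capacity time} needed for the curve to reach the target $y^L$, and doing so for every $\kappa\in(0,8)$. For $\kappa\in(4,8)$ the simple-curve restriction and loop-measure machinery behind Lemma~\ref{lem::sle_domain_mart} is unavailable, so one must instead invoke the local absolute continuity of $\SLE_\kappa(\rho)$ on the event that the curve keeps positive distance from the force point and from the region where the two domains differ, and then check that the associated martingale converges to a strictly positive limit rather than degenerating as the curve escapes toward the target.
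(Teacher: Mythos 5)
Your high-level strategy (a tube event $E$ of positive probability, mutual absolute continuity of the two laws on $E$, then a maximal-coupling argument) matches the logical skeleton of the paper's proof, but the mechanism you propose for the absolute continuity has a genuine gap exactly where you flag your ``main obstacle,'' and that obstacle is not a technicality one can defer --- it is the entire content of the lemma. For $\kappa\in(4,8)$ you write that one should ``invoke the local absolute continuity of $\SLE_{\kappa}(\rho)$'' under simultaneous change of domain and force point; but no such statement is available from the tools you cite. Lemma~\ref{lem::sle_domain_mart} is stated for $\kappa\le 4$ and force point $x_+$, Proposition~\ref{prop::boundary_perturbation} is for $\hSLE_{\kappa}(\nu)$ with $\kappa\le 4$, and Lemma~\ref{lem::sle_kapparho_mart} only compares different force points \emph{in the same domain} $\HH$ relative to plain $\SLE_{\kappa}$. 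Assembling these into a Radon--Nikodym derivative between $\SLE_{\kappa}(\rho)$ in $\HH$ with force point $p_1$ and $\SLE_{\kappa}(\rho)$ in $D$ with force point $p_2$, and proving uniform integrability of the resulting local martingale over infinite capacity time on the tube event, is plausible for $\kappa\le 4$ but is unproved (and has no proposed proof) for $\kappa\in(4,8)$. So your argument, as written, establishes the lemma at best on a strict subrange of the stated $\kappa\in(0,8)$.

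The paper sidesteps this entirely, following \cite[Lemma 4.2]{MillerSheffieldIG2}: view $\eta$ and $\tilde{\eta}$ as flow lines of GFFs $h$ on $\Omega$ and $\tilde{h}$ on $\tilde{\Omega}$. Because the force points $y^R$ and $w^R$ both lie (weakly) outside the arc $(w^Rx^L)$, the boundary data of $h$ and $\tilde{h}$ agree on a neighborhood $U$ of that arc, and the domains agree there too; hence $h|_U$ and $\tilde{h}|_U$ are mutually absolutely continuous by the GFF absolute-continuity results of \cite{MillerSheffieldIG1}. Since flow lines are deterministic functions of the field, the laws of the curves stopped upon exiting $U$ are mutually absolutely continuous, and the positive chance of staying in $U$ gives the coupling --- for every $\kappa\in(0,8)$ at once, with the domain change and the force-point change handled in a single stroke. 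The lesson is that the ``local absolute continuity'' you want to invoke for $\kappa\in(4,8)$ is, in the literature, \emph{proved} via this GFF representation; so to close your gap you would end up reproducing the paper's argument, at which point the martingale computation in your second step becomes unnecessary. If you want to salvage your route as a genuinely independent proof, you would need to supply a direct (non-GFF) proof that the domain-perturbation local martingale for $\SLE_{\kappa}(\rho)$, $\kappa\in(4,8)$, is uniformly integrable on the tube event; nothing in this paper provides that.
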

\begin{proof}
Although the setting is different from it is in the proof of \cite[Lemma 4.2]{MillerSheffieldIG2}, the same proof works here. We can view $\eta$ (resp. $\tilde{\eta}$) as the flow line of a GFF $h$ in $\Omega$ (resp. the flow line of a GFF $\tilde{h}$ in $\tilde{\Omega}$). The key point is that the boundary value of $h$ and $\tilde{h}$ agree in a neighborhood $U$ of $(w^Rx^L)$. Therefore $h|_U$ and $\tilde{h}|_{U}$ are mutually absolutely continuous. Since the flow lines are  deterministic functions of the $\GFF$, this implies that the laws of $\eta$ and $\tilde{\eta}$ stopped upon first exiting $U$ are mutually absolutely continuous. Since there is a positive chance for $\eta$ to stay in $U$, the absolute continuity implies the conclusion.  
\end{proof}

%%%
\subsection{Proof of Theorem~\ref{thm::SLEpair_CMP_SYM} and Corollary \ref{cor::SLEpair_CMP_REV}}
In Definition~\ref{def::CMP_slepair}, there is an ambiguity in the definition of domain Markov property: we need to specify what happens when 
$\eta^R[0,\tau^R]$ disconnects $y^R$ from $y^L$ (or $\eta^L[0,\tau^L]$ disconnects $y^L$ from $y^R$). In this case, we think the CMP in Definition~\ref{def::CMP_slepair} becomes the CMP for $\eta^L|_{t\ge\tau^L}$ in Definition~\ref{def::CMP_threepoints}. In fact, this is the key point in showing the characterization part in Theorem~\ref{thm::SLEpair_CMP_SYM}. 

\begin{proof}[Proof of Theorem~\ref{thm::SLEpair_CMP_SYM}]
Suppose $(\eta^L;\eta^R)\sim\PP_q$ with $q=(\Omega; x^R, y^R, y^L, x^L)$, and assume the same notations as in Figure~\ref{fig::def_forcepoint}. Since $(\eta^L;\eta^R)$ satisfies CMP, we know that the conditional law of $\eta^L$ given $\eta^R$ satisfies CMP in Definition~\ref{def::CMP_threepoints}. By Theorem~\ref{thm::CMP_threepoints}, we know that the conditional law of $\eta^L$ given $\eta^R$ is $\SLE_{\kappa}(\rho)$ with force point $w^R$ for some $\kappa$ and $\rho$. Since we require the curves to be simple, we know that $\kappa\in (0,4]$ and $\rho>-2$. Similarly, the conditional law of $\eta^R$ given $\eta^L$ is $\SLE_{\tilde{\kappa}}(\tilde{\rho})$ for some $\tilde{\kappa}\in (0,4]$ and $\tilde{\rho}>-2$. By Symmetry in Definition~\ref{def::sym}, we know that $\tilde{\kappa}=\kappa$ and $\tilde{\rho}=\rho$. 
This implies $\PP_q$ equals the probability measures $\QQ_q(\kappa, \rho)$ in Proposition~\ref{prop::slepair_cmp_sym}. 

Next, we show that the pair $(\eta^L;\eta^R)\sim \QQ_q(\kappa, \rho)$ satisfies CMP. For every $\eta^L$-stopping time $\tau^L$ and every $\eta^R$-stopping time $\tau^R$, consider the conditional law of $(\eta^L|_{t\ge \tau^L}; \eta^R|_{t\ge\tau^R})$ given $\eta^L[0,\tau^L]$ and $\eta^R[0,\tau^R]$, it is clear that the conditional law of $\eta^R|_{t\ge\tau^R}$ given $\eta^L|_{t\ge \tau^L}$ is $\SLE_{\kappa}(\rho)$ and the conditional law of $\eta^L|_{t\ge \tau^L}$ given $\eta^R|_{t\ge\tau^R}$ is $\SLE_{\kappa}(\rho)$. Therefore, the pair $(\eta^L;\eta^R)$ satisfies CMP. 

Finally, we show that the pair $(\eta^L;\eta^R)\sim\QQ_q(\kappa,\rho)$ satisfies Condition C1. We only need to show that $\eta^L$ satisfies Condition C1. Suppose $(Q; a, b, c, d)$ is an avoidable quad for $\eta^L$. By the comparison principle of extremal distance (see \cite[Section 4-3]{AhlforsConformalInvariants}), we know that 
\[d_{Q\setminus\eta^R}((ab), (cd))\ge d_Q((ab), (cd)).\]
Note that the conditional law of $\eta^L$ given $\eta^R$ is $\SLE_{\kappa}(\rho)$ and $\SLE_{\kappa}(\rho)$ satisfies Condition C1 by Lemma~\ref{lem::sle_c1}, combining with the above inequality, we see that $\eta^L$ satisfies Condition C1. 
\end{proof}

Next, we will show Corollary~\ref{cor::SLEpair_CMP_REV}. To this end, we first discuss the reversibility of $\SLE_{\kappa}(\rho)$ processes. 
Suppose $x\le w\le y$, and let $\eta$ be an $\SLE_{\kappa}(\rho)$ in $\HH$ from $x$ to $y$ with force point $w$. The process $\eta$ does not have reversibility when $x<w<y$, see Lemma~\ref{lem::sle_rev}; but it enjoys reversibility when $w=x_+$, see \cite[Theorem 1.1]{MillerSheffieldIG2} and \cite[Theorem 1.2]{MillerSheffieldIG3}.
The reversibility when $w=x_+$ is a deep result, and we do not need it in our paper.   

\begin{lemma}\label{lem::sle_rev}
Fix $\kappa\in (0,8)$, $\rho>-2$ and $x<w, \tilde{w}<y$. Suppose $\eta$ is an $\SLE_{\kappa}(\rho)$ in $\HH$ from $x$ to $y$ with force point $w$. Then the time-reversal of $\eta$ is an $\SLE_{\tilde{\kappa}}(\tilde{\rho})$ from $y$ to $x$ with force point $\tilde{w}$ if and only if $\tilde{\kappa}=\kappa$ and $\rho=\tilde{\rho}=0$. 
\end{lemma}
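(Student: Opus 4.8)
The plan is to treat the two implications separately: the ``if'' direction is immediate, while the ``only if'' direction rests on a comparison of the behaviour of the curve at its two endpoints. For the ``if'' direction, observe that $\SLE_{\kappa}(0)$ is ordinary $\SLE_{\kappa}$, whose time reversal is $\SLE_{\kappa}$ from $y$ to $x$ by the reversibility of $\SLE_{\kappa}$ for $\kappa\in(0,8)$ recalled in Section~\ref{sec::pre}; since $\tilde{\rho}=0$ renders the force point $\tilde{w}$ irrelevant, this reversal is exactly $\SLE_{\kappa}(0)$ from $y$ to $x$ with force point $\tilde{w}$. So it remains to prove the ``only if'' direction.

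By conformal invariance I would first reduce to $\HH$ with $x=0$, $y=\infty$ and force point $w>0$. The value $\tilde{\kappa}$ is identified at once: since $\SLE_{\kappa'}(\rho')$ is, away from its marked points, locally absolutely continuous with respect to $\SLE_{\kappa'}$, the quadratic variation $\kappa' t$ of the driving function (equivalently the Hausdorff dimension $1+\kappa'/8$ of the trace) is a local, reversal-invariant functional of the curve; the equality in law of $\eta$ and the reversal of an $\SLE_{\tilde{\kappa}}(\tilde{\rho})$ therefore forces $\tilde{\kappa}=\kappa$.

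The heart of the matter is a dichotomy governed by the threshold $\kappa/2-4$. If $\rho\le\kappa/2-4$, then the $\SLE_{\kappa}(\rho)$ curve accumulates at $w$ almost surely (as recalled in Section~\ref{sec::pre}), so $\eta$ connects $x$ to $w$ rather than $x$ to $y$; its time reversal connects $w$ to $x$ and hence cannot agree in law with any $\SLE_{\kappa}(\tilde{\rho})$ from $y$ to $x$, because $w\neq y$. This in particular disposes of $\rho=\kappa-8$, as $\kappa-8<\kappa/2-4$ for every $\kappa\in(0,8)$. In the complementary regime $\rho>\kappa/2-4$, the curve does reach $y$, and here I would use that $\SLE_{\kappa}(\rho)$ from $x$ to $y$ with force point $w$ is $\SLE_{\kappa}$ weighted by the local martingale of Lemma~\ref{lem::sle_kapparho_mart}, and that the conformal weight it carries at the target $y$ equals $\frac{(\rho+2)(\rho+6-\kappa)}{4\kappa}$, the constant $b$ of~\eqref{eqn::hSLE_constants} with $\nu=\rho$, whereas any $\SLE_{\kappa}(\tilde{\rho})$ \emph{started} from $y$ carries at $y$ the generic starting exponent $h=\frac{6-\kappa}{2\kappa}$. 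The weight at a boundary endpoint is an invariant of the unoriented curve, governing the law of its approach to that endpoint, so reversibility forces $b=h$, i.e. $(\rho+2)(\rho+6-\kappa)=2(6-\kappa)$, whose only roots are $\rho=0$ and $\rho=\kappa-8$; as $\kappa-8$ is excluded by $\rho>\kappa/2-4$, we get $\rho=0$, and the symmetric comparison at the endpoint $x$ for the reversed process gives $\tilde{\rho}=0$.

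The main obstacle is making rigorous the claim that the target weight $b$ is a genuine reversal-invariant of the curve, distinguishing a ``target'' from a ``start.'' The natural route is to analyse the terminal value of the Radon--Nikodym derivative of $\SLE_{\kappa}(\rho)$ with respect to $\SLE_{\kappa}$ near the endpoint $y$, following the scheme used for $M_{\infty}$ in Proposition~\ref{prop::hypersle_mart}: the limiting derivative is a power $b$ of a boundary Poisson kernel seen from $y$, and comparing it with the corresponding quantity for $\SLE_{\kappa}(\tilde{\rho})$ seen from its start $y$ pins down $b=h$. The delicate point is the intermediate range $\kappa/2-4<\rho<\kappa/2-2$, where the curve reaches $y$ but swallows $w$ at a finite time, so the local martingale of Lemma~\ref{lem::sle_kapparho_mart} must be continued past the swallowing time and one must check that the endpoint exponent is insensitive to the interior history at $w$. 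Alternatively, one can bypass the endpoint analysis entirely by computing, through Girsanov together with the reversibility of $\SLE_{\kappa}$ employed as a change of measure, the drift of the driving function of the reversed curve as $t\to0$, and verifying directly that it cannot match the drift $\tilde{\rho}/(\tilde{W}_t-g_t(\tilde{w}))$ of an $\SLE_{\kappa}(\tilde{\rho})$ unless $\rho=0$.
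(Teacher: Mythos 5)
Your ``if'' direction and the identification $\tilde{\kappa}=\kappa$ (via the dimension $1+\kappa/8$ of the trace) are fine and coincide with the paper's argument. The ``only if'' direction, however, has two genuine gaps.

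First, your treatment of the regime $\rho\le\kappa/2-4$ rests on a false premise. For every $\rho>-2$ the $\SLE_{\kappa}(\rho)$ process never reaches its continuation threshold and is generated by a continuous \emph{transient} curve (as recalled in Section~\ref{sec::pre}), so $\eta$ still connects $x$ to $y$; ``accumulates at $w$'' only means that $w$ lies in the closure of the trace, not that the curve terminates there (note also that this regime is nonempty only for $\kappa>4$). Consequently the time-reversal \emph{is} a curve from $y$ to $x$, and your argument that it ``cannot agree in law with any $\SLE_{\kappa}(\tilde{\rho})$ from $y$ to $x$ because $w\neq y$'' collapses. What is true, and what the paper uses, is that for $\kappa\in(4,8)$ and $\rho\in(-2,\kappa/2-4]$ the trace fills the whole interval $(w\vee\tilde{w},y)$, whereas the intersection of an $\SLE_{\kappa}(\tilde{\rho})$ trace with the boundary has empty interior; this asymmetry of almost-sure properties is what rules out this case.

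Second, in the complementary regime $\rho>\kappa/2-4$ (which for $\kappa\le 4$ is the entire range $\rho>-2$), your proof reduces to the assertion that the conformal weight carried at a boundary endpoint ($b$ at the target versus $h$ at the start) is an invariant of the unoriented curve. You correctly flag this as the main obstacle, but it is not a technical detail: it is precisely the content of the lemma in this regime, and neither of your proposed rigorizations is carried out. In particular, ``computing the drift of the reversed driving function by Girsanov'' is not an elementary computation; determining the law of the time-reversal of an SLE-type curve is exactly what the deep reversibility theorems accomplish, and it cannot be read off from a finite-time Radon--Nikodym derivative near one endpoint without substantial work (your own caveat about continuing the martingale of Lemma~\ref{lem::sle_kapparho_mart} past the swallowing time of $w$ points at the difficulty). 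The paper avoids endpoint weights altogether and distinguishes the laws by almost-sure properties of the traces: for $\kappa\in(4,8)$ and $\rho\ge\kappa/2-2$, $\eta$ avoids $(w,y)$ while $\tilde{\eta}$ hits it; for $\kappa\in(4,8)$ and $\rho\in(\kappa/2-4,\kappa/2-2)$, the Hausdorff dimensions of $\eta\cap(w\vee\tilde{w},y)$ and $\tilde{\eta}\cap(w\vee\tilde{w},y)$, namely $1-(\rho+2)(\rho+4-\kappa/2)/\kappa$ and $1-(8-\kappa)/\kappa$ by \cite{MillerWuSLEIntersection}, agree only for $\rho=0$; for $\kappa\in(0,4]$ and $\rho<\kappa/2-2$, $\eta$ hits $(w\vee\tilde{w},y)$ while $\tilde{\eta}$ does not; and for $\kappa\in(0,4]$ and $\rho\ge\kappa/2-2$ it invokes the already-established reversibility of $\hSLE$ (Theorem~\ref{thm::hyperSLE_reversibility}), under which the reversal is identified as $\hSLE_{\kappa}(\rho-2)$ with marked points $(y_-,w)$, a process that coincides with an $\SLE_{\kappa}(\tilde{\rho})$ only when $\rho=\tilde{\rho}=0$. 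You would need to supply arguments of this concrete kind, or genuinely prove the endpoint-weight invariance, for your proposal to close.
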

\begin{proof}
Let $\hat{\eta}$ be the time-reversal of $\eta$. If $\hat{\eta}$ has the law of $\SLE_{\tilde{\kappa}}(\tilde{\rho})$, since the dimension of $\SLE_{\kappa}(\rho)$ process is $1+\kappa/8$ \cite{BeffaraDimension}, we have $\tilde{\kappa}=\kappa$. It remains to show $\tilde{\rho}=\rho=0$. Let $\tilde{\eta}$ be an $\SLE_{\kappa}(\tilde{\rho})$ in $\HH$ from $y$ to $x$ with force point $\tilde{w}$. 

When $\kappa\in (4,8)$ and $\rho\ge\kappa/2-2$, we have
$\eta\cap(w,y)=\emptyset$ and $\tilde{\eta}\cap(w,y)\neq\emptyset$ almost surely. Thus $\hat{\eta}$ can not have the same law as $\tilde{\eta}$. 
When $\kappa\in (4,8)$ and $\rho\in (\kappa/2-4, \kappa/2-2)$, we have almost surely (see \cite[Theorem 1.6]{MillerWuSLEIntersection})
\[\dim\eta\cap(w\vee\tilde{w},y)=1-(\rho+2)(\rho+4-\kappa/2)/\kappa,\quad \dim \tilde{\eta}\cap(w\vee\tilde{w},y)=1-(8-\kappa)/\kappa.\]
If $\hat{\eta}$ has the same law as $\tilde{\eta}$, then these two dimensions have to coincide, and hence $\rho=0$ and therefore $\tilde{\rho}=0$. When $\kappa\in (4,8)$ and $\rho\in (-2, \kappa/2-4]$, we know that $\eta$ fills interval $(w\vee\tilde{w},y)$ whereas $\tilde{\eta}\cap (w\vee\tilde{w},y)$ has no interior point. Thus the $\hat{\eta}$ can not have the same law as $\tilde{\eta}$.

When $\kappa\in (0,4]$ and $\rho<\kappa/2-2$, we have $\eta\cap (w\vee\tilde{w}, y)\neq\emptyset$ and $\tilde{\eta}\cap (w\vee\tilde{w}, y)=\emptyset$ almost surely. Thus $\hat{\eta}$ can not have the same law as $\tilde{\eta}$. When $\kappa\in (0,4]$ and $\rho\ge\kappa/2-2$, it is proved in Theorem~\ref{thm::hyperSLE_reversibility} that $\hat{\eta}$ is $\hSLE_{\kappa}(\rho-2)$ in $\HH$ from $y$ to $x$ with marked points $(y_{-}, w)$ which equals $\SLE_{\kappa}(\tilde{\rho})$ process from $y$ to $x$ with force point $\tilde{w}$ if and only if $\rho=\tilde{\rho}=0$.  
\end{proof}

Now, we are ready to show Corollary~\ref{cor::SLEpair_CMP_REV}

\begin{proof}[Proof of Corollary~\ref{cor::SLEpair_CMP_REV}]
Suppose $(\eta^L;\eta^R)\sim \PP_q$ for $q=(\Omega; x^R, y^R, y^L, x^L)$. 
By the proof of Theorem~\ref{thm::SLEpair_CMP_SYM}, there exists $\kappa\in (0,4]$ and $\rho>-2$ such that the conditional law of $\eta^L$ given $\eta^R$ is $\SLE_{\kappa}(\rho)$ in $\Omega^R$ from $x^L$ to $y^L$ with force point $w^R$. By the reversibility in Definition~\ref{def::rev}, the conditional law of $\eta^L$ given $\eta^R$ should be reversible. Combining with Lemma~\ref{lem::sle_rev}, we see that $\rho=0$. Therefore, the conditional law of $\eta^L$ given $\eta^R$ is $\SLE_{\kappa}$. Similarly, the conditional law of $\eta^R$ given $\eta^L$ is $\SLE_{\kappa}$. By Proposition~\ref{prop::slepair_rev}, there exists a unique such probability measure and the marginal of $\eta^R$ is $\hSLE_{\kappa}$.
\end{proof}

To end this section, we will discuss the relation between Propositions~\ref{prop::slepair_rev} and \ref{prop::slepair_cmp_sym}. 
First, when $\rho^L\neq 0$ or $\rho^R\neq 0$, the pair $(\eta^L;\eta^R)$ in Proposition~\ref{prop::slepair_rev} satisfies neither CMP in Definition~\ref{def::CMP_slepair} nor the symmetry in Definition~\ref{def::sym}, whereas it satisfies the reversibility in Definition~\ref{def::rev}.  

Next, we compare Proposition~\ref{prop::slepair_cmp_sym} with $\rho=0$ and Proposition~\ref{prop::slepair_rev} with $\rho^L=\rho^R=0$. In this case, the two propositions describe the same law on the pair $(\eta^L;\eta^R)$. 
From Proposition~\ref{prop::slepair_cmp_sym}, we see that the marginal law of $\eta^R$ is $\hSLE_{\kappa}(\kappa-8)$ from $x^R$ to $x^L$; whereas, from Proposition~\ref{prop::slepair_rev}, the marginal law of $\eta^R$ is $\hSLE_{\kappa}$ from $x^R$ to $y^R$. This implies that $\hSLE_{\kappa}$ from $x^R$ to $y^R$ has the same law as $\hSLE_{\kappa}(\kappa-8)$ from $x^R$ to $x^L$. This is consistent with the target-independence proved in Proposition~\ref{prop::hsle_change_target}.

%%%
\section{Convergence of Ising Interfaces to Hypergeometric SLE}
\label{sec::ising}
\subsection{Ising Model}

\textbf{Notation and terminology.}
We focus on the square lattice $\Z^2$. Two vertices $x=(x_1, x_2)$ and $y=(y_1, y_2)$ are neighbors if $|x_1-y_1|+|x_2-y_2|=1$, and we write $x\sim y$. 
The \textit{dual square lattice} $(\Z^2)^*$ is the dual graph of $\Z^2$. The vertex set is $(1/2, 1/2)+\Z^2$ and the edges are given by nearest neighbors.  The vertices and edges of $(\Z^2)^*$ are called dual-vertices and dual-edges. In particular, for each edge $e$ of $\Z^2$, it is associated to a dual edge, denoted by $e^*$, that it crosses $e$ in the middle. 
For a finite subgraph $G$, we define $G^*$ to be the subgraph of $(\Z^2)^*$ with edge-set $E(G^*)=\{e^*: e\in E(G)\}$ and vertex set given by the end-points of these dual-edges. The \textit{medial lattice} $(\Z^2)^{\diamond}$ is the graph with the centers of edges of $\Z^2$ as vertex set, and edges connecting nearest vertices. This lattice is a rotated and rescaled version of $\Z^2$. The vertices and edges of $(\Z^2)^{\diamond}$ are called medial-vertices and medial-edges. We identify the faces of $(\Z^2)^{\diamond}$ with the vertices of $\Z^2$ and $(\Z^2)^*$. A face of $(\Z^2)^{\diamond}$ is said to be black if it corresponds to a vertex of $\Z^2$ and white if it corresponds to a vertex of $(\Z^2)^*$. 

%\begin{comment}
%\begin{figure}[ht!]
%\begin{subfigure}[b]{0.33\textwidth}
%\begin{center}
%\includegraphics[width=0.8\textwidth]{figures/lattice_square}
%\end{center}
%\caption{The square lattice. }
%\end{subfigure}
%\begin{subfigure}[b]{0.33\textwidth}
%\begin{center}\includegraphics[width=0.8\textwidth]{figures/lattice_dual}
%\end{center}
%\caption{The dual square lattice.}
%\end{subfigure}
%\begin{subfigure}[b]{0.33\textwidth}
%\begin{center}
%\includegraphics[width=0.8\textwidth]{figures/lattice_medial}
%\end{center}
%\caption{The medial lattice. }
%\end{subfigure}
%\caption{\label{fig::medial_lattice} The lattices. }
%\end{figure}
%\end{comment}

Let $\Omega$ be a finite subset of $\Z^2$. The Ising model with free boundary conditions is a random assignment $\sigma\in \{\ominus, \oplus\}^{\Omega}$ of spins $\sigma_x\in \{\ominus, \oplus\}$, where $\sigma_x$ denotes the spin at the vertex $x$. The Hamiltonian of the Ising model is defined by 
\[H^{\free}_{\Omega}(\sigma)=-\sum_{x\sim y}\sigma_x\sigma_y.\] 
The Ising measure is the Boltzmann measure with Hamiltonian $H^{\free}_{\Omega}$ and inverse-temperature $\beta>0$: 
\[\mu^{\free}_{\beta,\Omega}[\sigma]=\frac{\exp(-\beta H^{\free}_{\Omega}(\sigma))}{Z^{\free}_{\beta, \Omega}},\quad\text{where }Z^{\free}_{\beta, \Omega}=\sum_{\sigma}\exp(-\beta H^{\free}_{\Omega}(\sigma)).\]

For a graph $\Omega$ and $\tau\in \{\ominus, \oplus\}^{\Z^2}$, one may also define the Ising model with boundary conditions $\tau$ by the Hamiltonian
\[H^{\tau}_{\Omega}(\sigma)=-\sum_{x\sim y, \{x,y\}\cap \Omega\neq\emptyset}\sigma_x\sigma_y,\quad \text{if }\sigma_x=\tau_x, \forall x\not\in \Omega.\]
%Ising model satisfies domain Markov property: 
%Let $\Omega\subset \Omega'$ be two finite subsets of $\Z^2$. Let $\tau\in \{\ominus, \oplus\}^{\Z^2}$ and $\beta>0$. Let $X$ be a random variable which is measurable with respect to vertices in $\Omega$. Then we have 
%\[\mu^{\tau}_{\beta, \Omega'}[X\cond \sigma_x=\tau_x, \forall x\in \Omega'\setminus \Omega]=\mu^{\tau}_{\beta, \Omega}[X].\] 
Suppose that $(\Omega; a, b)$ is a Dobrushin domain. The \textit{Dobrushin boundary conditions} is the following: $\oplus$ along $(ab)$, and $\ominus$ along $(ba)$. 
%This boundary condition is also called domain-wall boundary condition. 

The set $\{\ominus, \oplus\}^{\Omega}$ is equipped with a partial order: $\sigma\le \sigma'$ if $\sigma_x\le \sigma_x'$ for all $x\in \Omega$. A random variable $X$ is increasing if $\sigma\le \sigma'$ implies $X(\sigma)\le X(\sigma')$. An event $\LA$ is increasing if $\one_{\LA}$ is increasing. The Ising model satisfies FKG inequality:
Let $\Omega$ be a finite subset and $\tau$ be boundary conditions, and $\beta>0$. For any two increasing events $\LA$ and $\LB$, we have 
$\mu^{\tau}_{\beta, \Omega}[\LA\cap\LB]\ge \mu^{\tau}_{\beta, \Omega}[\LA]\mu^{\tau}_{\beta, \Omega}[\LB]$.
As a consequence of FKG inequality, we have the comparison between boundary conditions: For boundary conditions $\tau_1\le \tau_2$ and an increasing event $\LA$, we have
\begin{equation}\label{eqn::ising_boundary_comparison}
\mu^{\tau_1}_{\beta, \Omega}[\LA]\le \mu^{\tau_2}_{\beta, \Omega}[\LA]. 
\end{equation}

The critical Ising model ($\beta=\beta_c$) is conformal invariant in the scaling limit, see \cite{DCParafermionic} for general background. We only collect several properties of the critical Ising model that will be useful later: strong RSW and the convergence of the interface.

Given a quad $(Q; a, b, c, d)$ on the square lattice, we denote by $d_{Q}((ab), (cd))$ the discrete extermal distance between $(ab)$ and $(cd)$ in $Q$, see \cite[Section 6]{ChelkakRobustComplexAnalysis}. The discrete extremal distance is uniformly comparable to and converges to its continuous counterpart--- the classical extremal distance. The quad $(Q; a, b, c, d)$ is crossed by $\oplus$ in an Ising configuration $\sigma$ if there exists a path of $\oplus$ going from $(ab)$ to $(cd)$ in $Q$. We denote this event by $(ab)\overset{\oplus}{\longleftrightarrow}(cd)$. 
\begin{proposition}
\label{prop::ising_rsw}\cite[Corollary 1.7]{ChelkakDuminilHonglerCrossingprobaFKIsing}.
For each $L>0$ there exists $c(L)>0$ such that the following holds: for any quad $(Q; a, b, c, d)$ with $d_{Q}((ab), (cd))\ge L$, 
\[\mu^{\text{mixed}}_{\beta_c, Q}\left[(ab)\overset{\oplus}{\longleftrightarrow}(cd)\right]\le 1-c(L),\]
where the boundary conditions are $\free$ on $(ab)\cup(cd)$ and $\ominus$ on $(bc)\cup(da)$. 
\end{proposition}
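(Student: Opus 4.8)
The plan is to pass to the dual crossing event and then to invoke a Russo--Seymour--Welsh lower bound in the complementary direction. By planar duality of the spin configuration on $\Z^2$, a nearest-neighbour $\oplus$-path joining $(ab)$ to $(cd)$ exists if and only if there is \emph{no} $\ominus$-path (in the $*$-connected sense, allowing diagonal steps) joining $(bc)$ to $(da)$; that is,
\[
\left\{(ab)\overset{\oplus}{\longleftrightarrow}(cd)\right\}^{c}
=\left\{(bc)\overset{\ominus,\,*}{\longleftrightarrow}(da)\right\}.
\]
Hence the asserted upper bound $\mu^{\text{mixed}}_{\beta_c,Q}[(ab)\overset{\oplus}{\longleftrightarrow}(cd)]\le 1-c(L)$ is \emph{equivalent} to the lower bound $\mu^{\text{mixed}}_{\beta_c,Q}[(bc)\overset{\ominus,\,*}{\longleftrightarrow}(da)]\ge c(L)$, and the prescribed mixed data ($\ominus$ along $(bc)\cup(da)$, free along $(ab)\cup(cd)$) are precisely the data that favour such a $\ominus$-crossing.

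The key geometric observation is that the discrete extremal distance is reciprocal under swapping the two pairs of opposite arcs, up to uniform multiplicative constants by the comparability of discrete and continuous extremal distance recalled above; thus $d_Q((ab),(cd))\ge L$ forces $d_Q((bc),(da))$ to be bounded above by a constant multiple of $1/L$. Therefore the $\ominus$-crossing we must produce is in the \emph{easy} direction, whose extremal distance is bounded above, and we only need it to occur with probability bounded below by a constant depending on $L$ alone. I would establish this through the Edwards--Sokal coupling, transferring the problem to the critical FK-Ising (random-cluster, $q=2$) model: under the $\ominus$ boundary conditions the arcs $(bc)$ and $(da)$ are wired, so any FK-connection between them yields a monochromatic $\ominus$ path, whence
\[
\mu^{\text{mixed}}_{\beta_c,Q}\!\left[(bc)\overset{\ominus,\,*}{\longleftrightarrow}(da)\right]
\ge \mu^{\text{mixed}}_{\beta_c,Q}\!\left[(bc)\overset{\mathrm{FK}}{\longleftrightarrow}(da)\right].
\]
The right-hand side is then lower-bounded by chaining box-crossings of bounded aspect ratio via the FKG inequality, self-duality of the critical FK-Ising model supplying box-crossing probabilities bounded away from $0$ and $1$; the favourable (wired) boundary conditions only help, by the monotonicity~\eqref{eqn::ising_boundary_comparison}.

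The main obstacle is \textbf{uniformity}. The bound must hold with a single constant $c(L)$ for \emph{every} quad with $d_Q((ab),(cd))\ge L$ carrying the given mixed boundary conditions, depending on the extremal distance alone and not on the microscopic shape of $Q$ nor on the mesh. This is exactly the content of \emph{strong} RSW, and it is the delicate step, since near the free arcs $(ab)\cup(cd)$ the influence of the boundary must be controlled. I would handle it by one of the two standard routes: either the self-dual second-moment bootstrap of Duminil-Copin--Hongler--Nolin, in which an exploration/conditioning argument decouples the crossing from the boundary data, or the discrete holomorphic fermionic observable of Chelkak--Smirnov, whose precise boundary behaviour furnishes crossing estimates that are uniform over boundary conditions. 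Once a boundary-uniform box-crossing estimate is available, the FKG chaining above upgrades it to the stated bound for all quads of bounded complementary extremal distance, completing the argument.
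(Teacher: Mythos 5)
First, note that the paper does not prove this proposition at all: it is imported verbatim from \cite[Corollary~1.7]{ChelkakDuminilHonglerCrossingprobaFKIsing}, so the only meaningful comparison is with the proof given in that reference. Your reduction is correct and, in substance, retraces it: on the square lattice the complement of a nearest-neighbour $\oplus$-crossing from $(ab)$ to $(cd)$ is exactly a $*$-connected $\ominus$-crossing from $(bc)$ to $(da)$; the discrete extremal distances in the two conjugate directions are reciprocal up to uniform constants, by comparability with the continuum quantity (\cite[Section~6]{ChelkakRobustComplexAnalysis}); under the Edwards--Sokal coupling the given mixed boundary conditions correspond to a critical FK-Ising measure in which $(bc)\cup(da)$ is wired (into one component, since both arcs carry the spin $\ominus$) and the remaining arcs are free, and an FK-open connection between the wired arcs forces a $\ominus$-spin connection; finally, monotonicity in boundary conditions only helps. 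So the statement is correctly reduced to a lower bound, in terms of an upper bound on $d_Q((bc),(da))$ alone, for critical FK-Ising crossing probabilities of arbitrary quads.

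The genuine gap is in how you propose to supply that last input. You assert that a boundary-uniform \emph{box}-crossing estimate (DCHN-type RSW, or estimates extracted from the Chelkak--Smirnov observable) can be upgraded to arbitrary quads of bounded extremal distance by ``FKG chaining''. For quads with no boundary regularity this upgrade is not a routine chaining argument; it is the main content of \cite{ChelkakDuminilHonglerCrossingprobaFKIsing} itself. The hypothesis $d_Q((bc),(da))\le C/L$ gives no way to place a bounded number of overlapping rectangles of definite size inside $Q$ joining $(bc)$ to $(da)$: the arcs may be reached only through arbitrarily thin necks, fjords or spiralling boundary pieces, since small extremal distance is a conformal, not a Euclidean, notion of width. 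Crossing bounds depending on the extremal distance only, uniformly over boundary conditions and over arbitrary discrete quads, are exactly Theorem~1.1 of \cite{ChelkakDuminilHonglerCrossingprobaFKIsing}, proved there with the discrete complex analysis toolbox rather than by chaining. This generality is not optional in the present paper: the proposition is applied (e.g.\ in Lemma~\ref{lem::ising_distance_tight}) to domains slit by an Ising interface, whose boundaries are as rough as possible. In short, your argument is a valid reduction of the statement to \cite[Theorem~1.1]{ChelkakDuminilHonglerCrossingprobaFKIsing}, but the route you offer for producing that theorem-strength input would not work; and once one cites it, one may as well quote its Corollary~1.7 directly, which is precisely what the paper does.
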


\begin{figure}[ht!]
\begin{center}
\includegraphics[width=0.7\textwidth]{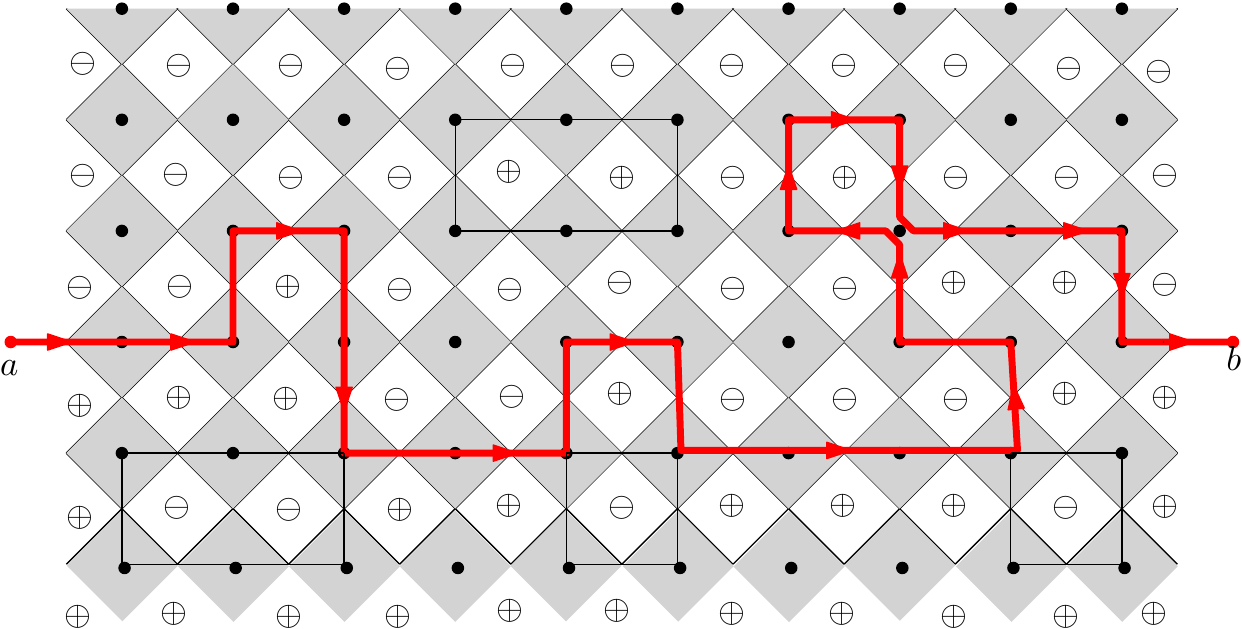}
\end{center}
\caption{\label{fig::ising_interface} The Ising interface with Dobrushin boundary conditions. }
\end{figure}

For $\delta>0$, we consider the rescaled square lattice $\delta\Z^2$. The definitions of dual lattice, medial lattice and  Dobrushin domains extend to this context, and they will be denoted by $(\Omega_{\delta}; a_{\delta}, b_{\delta})$, $(\Omega^*_{\delta}; a^*_{\delta}, b^*_{\delta})$, $(\Omega^{\diamond}_{\delta}; a^{\diamond}_{\delta}, b^{\diamond}_{\delta})$ respectively. 
Consider the critical Ising model on $(\Omega^*_{\delta}; a^*_{\delta}, b^*_{\delta})$. The boundary $\partial \Omega^*_{\delta}$ is divided into two parts $(a^*_{\delta}b^*_{\delta})$ and $(b^*_{\delta}a^*_{\delta})$. We fix the Dobrushin boundary conditions: $\ominus$ on $(b^*_{\delta}a^*_{\delta})$ and $\oplus$ on $(a^*_{\delta}b^*_{\delta})$. Define the \textit{interface} as follows. It starts from $a_{\delta}^{\diamond}$, lies on the primal lattice and turns at every vertex of $\Omega_{\delta}$ is such a way that it has always dual vertices with spin $\ominus$ on its left and $\oplus$ on its right. If there is an indetermination when arriving at a vertex (this may happen on the square lattice), turn left. See Figure~\ref{fig::ising_interface}. We have the convergence of the interface:  

\begin{theorem}\label{thm::ising_cvg_minusplus}\cite{CDCHKSConvergenceIsingSLE}.
Let $(\Omega^{\diamond}_{\delta}; a^{\diamond}_{\delta}, b^{\diamond}_{\delta})$ be a family of Dobrushin domains converging to a Dobrushin domain $(\Omega; a, b)$ in the Carath\'eodory sense. The interface of the critical Ising model in $(\Omega^*_{\delta}; a^*_{\delta}, b^*_{\delta})$ with Dobrushin boundary conditions converges weakly to $\SLE_{3}$ as $\delta\to 0$.
\end{theorem}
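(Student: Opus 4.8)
The plan is to combine a precompactness (tightness) argument for the family of discrete interfaces with a discrete-holomorphic-observable argument that identifies every subsequential limit as $\SLE_3$. These are precisely the two ingredients in the Kemppainen--Smirnov scheme recorded in Theorem~\ref{thm::cvg_curves_chordal}: once the family is tight and each subsequential limit is a Loewner chain driven by a continuous process, it suffices to identify that driving process.

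First I would establish precompactness. After mapping $(\Omega_\delta; a_\delta, b_\delta)$ to $(\HH; 0, \infty)$, parameterize each interface $\eta_\delta$ by half-plane capacity. The strong RSW estimate of Proposition~\ref{prop::ising_rsw} gives a uniform bound on crossing probabilities of avoidable quads of bounded extremal distance; combined with the domain Markov property of the Ising interface, this is exactly Condition~C2. Hence Theorem~\ref{thm::cvg_curves_chordal} applies: the laws of the driving functions $W_\delta$, of the curves $\eta_\delta$, and of the capacity-parameterized curves are all tight, and along any weakly convergent subsequence the limiting curve is driven by the limiting continuous driving function. It therefore remains only to show that every subsequential limit of $W_\delta$ is $\sqrt3\,B$.

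To identify the limit I would use Smirnov's s-holomorphic fermionic (spin) observable $F_\delta$ on the medial lattice of the Dobrushin domain. The analytic input is that $F_\delta$ is \emph{s-holomorphic} and satisfies the Riemann-type boundary conditions dictated by the Dobrushin data, so that, after the standard normalization by $\sqrt{\delta}$, it converges uniformly on compacts away from $\{a,b\}$ to the conformally covariant limit $\sqrt{\phi'}$, where $\phi$ maps $\Omega$ to the reference domain sending $a,b$ to the two prime ends. The decisive feature is the \textbf{martingale property}: running the interface up to a stopping time and mapping the remaining domain back by the Loewner flow $g_t$, the observable evaluated at a fixed interior point is a discrete martingale. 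Passing to the scaling limit along the chosen subsequence yields a continuous local martingale $M_t$ expressed through $W_t$ and $g_t$.

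The identification then comes from applying It\^o's formula to $M_t$. Writing $dW_t=\sqrt{\kappa}\,dB_t+b_t\,dt$ for the a priori unknown semimartingale decomposition of a subsequential limit, the requirement that both the drift and the nonmartingale part of $M_t$ vanish forces relations among the coefficients, and the conformal weight carried by $\sqrt{\phi'}$ is consistent with these relations precisely when $\kappa=3$ and $b_t\equiv0$. Hence $W_t=\sqrt3\,B_t$ for every subsequential limit, so the limit is $\SLE_3$ and, being unique, the whole family converges. The main obstacle is the observable step: proving s-holomorphicity of $F_\delta$ and, above all, its convergence to $\sqrt{\phi'}$ together with the control of the boundary behaviour near the tip of the curve — this is the substantial discrete-complex-analysis core of \cite{CDCHKSConvergenceIsingSLE}, whereas the precompactness and It\^o steps are comparatively routine given Theorem~\ref{thm::cvg_curves_chordal}.
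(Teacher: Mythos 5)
Your proposal is correct and follows essentially the same route as the proof in \cite{CDCHKSConvergenceIsingSLE}, which is exactly what the paper relies on here (Theorem~\ref{thm::ising_cvg_minusplus} is stated as a citation and not reproved): Kemppainen--Smirnov precompactness via RSW and Condition~C2 as in Theorem~\ref{thm::cvg_curves_chordal}, followed by identification of every subsequential limit through the martingale property of Smirnov's s-holomorphic spin observable and an It\^o computation that forces $\kappa=3$ (the fermionic weight $1/2$ matching $h=(6-\kappa)/(2\kappa)$). Since the paper itself offers no argument beyond the citation, there is nothing further to compare.
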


\begin{theorem}\label{thm::ising_cvg_minusfree}
Let $(\Omega^{\diamond}_{\delta}; a^{\diamond}_{\delta}, w_{\delta}^{\diamond},  b^{\diamond}_{\delta})$ be a family of triangles converging to a triangle $(\Omega; a, w, b)$ in the Carath\'eodory sense. The interface of the critical Ising model in $(\Omega^*_{\delta}; a^*_{\delta}, w_{\delta}^*, b^*_{\delta})$ with the boundary conditions $\ominus$ along $(b^*_{\delta}a^*_{\delta})$, $\oplus$ along $(a_{\delta}^*w_{\delta}^*)$ and $\free$ along $(w^*_{\delta}b^*_{\delta})$ converges weakly to $\SLE_{3}(-3/2)$ as $\delta\to 0$.
\end{theorem}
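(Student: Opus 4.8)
The plan is to first establish tightness, and then to identify every subsequential limit as $\SLE_3(-3/2)$ by combining the conformal Markov characterization of Theorem~\ref{thm::CMP_threepoints} with a comparison to the Dobrushin interface of Theorem~\ref{thm::ising_cvg_minusplus}. I would begin with tightness. The interface $\eta_\delta$ is a simple curve from $a_\delta^\diamond$ to $b_\delta^\diamond$, and the strong RSW estimate (Proposition~\ref{prop::ising_rsw}), applied in the slit domains produced as the curve grows, shows that $\{\eta_\delta\}$ satisfies Condition~C2: an avoidable quad of large extremal distance is crossed with probability bounded away from $1$, uniformly in $\delta$ and in the revealed initial segment. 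By Theorem~\ref{thm::cvg_curves_chordal} the laws of $\eta_\delta$ are then tight in all three topologies, and every subsequential limit is almost surely a continuous Loewner chain with continuous driving function. Fix such a limit $\eta$ with driving function $W$.

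Next I would identify $\eta$. Varying the triangle over all Carath\'eodory-convergent approximations produces a family $(\PP_c, c\in\LT)$ of limiting laws. The domain Markov property of $\eta$ is inherited from the spatial Markov property of the Ising model: conditioning on $\eta[0,\tau]$, the conditional law in the component $\Omega_\tau$ containing $b$ is again the limit of a critical Ising interface with $\ominus$ on the left, $\oplus$ on $(\eta(\tau)w)$, and $\free$ on $(wb)$, the force point being tracked exactly as in Definition~\ref{def::CMP_threepoints}. Conformal invariance of the family is inherited from that of the critical Ising scaling limit, and away from the free arc it can be bootstrapped from Theorem~\ref{thm::ising_cvg_minusplus} by local absolute continuity; Condition~C1 follows from Proposition~\ref{prop::ising_rsw}. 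Thus $(\PP_c)$ satisfies CMP and Condition~C1, and Theorem~\ref{thm::CMP_threepoints} gives that $\eta$ is an $\SLE_\kappa(\rho)$ in $\Omega$ from $a$ to $b$ with force point $w$. Moreover, before $\eta$ approaches $w$ or the free arc the boundary data around the tip is exactly Dobrushin, so the restricted Ising measures are mutually absolutely continuous with the Dobrushin measure; passing to the limit, $\eta$ is locally absolutely continuous with respect to the $\SLE_3$ of Theorem~\ref{thm::ising_cvg_minusplus}, which forces $\kappa=3$.

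It remains to pin down $\rho=-3/2$. Here I would compare Model~B (free on $(wb)$) with the fully wired Dobrushin Model~A ($\oplus$ on all of $(ab)$), both interfaces running from $a$ to $b$. By the spatial Markov property the Radon--Nikodym derivative of the interface law in Model~B with respect to that in Model~A, restricted to $\eta[0,t]$, equals the normalized ratio of Ising partition functions $Z^{\free}/Z^{\oplus}$ on the slit domain $\Omega\setminus\eta[0,t]$. This is a discrete martingale, and its scaling limit is governed by the $\oplus$-to-free boundary--condition--changing operator of the Ising model, of conformal weight $1/16$, inserted at $w$ (the contribution at $b=\infty$ being constant), so that its continuum limit is
\[
M_t=g_t'(w)^{1/16}\,\bigl(g_t(w)-W_t\bigr)^{-1/2},
\]
which is precisely the Radon--Nikodym derivative of Lemma~\ref{lem::sle_kapparho_mart} transforming $\SLE_3$ into $\SLE_3(-3/2)$. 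Reweighting the $\SLE_3$ limit of Model~A by $M_\infty/M_0$ therefore produces exactly $\SLE_3(-3/2)$, and uniqueness of the limit upgrades subsequential to full weak convergence. This value is consistent with the monotone sandwiching $\rho\in[-3,0]$ obtained from~\eqref{eqn::ising_boundary_comparison} (comparing the free arc with $\oplus$ and with $\ominus$, the latter giving an $a$-to-$w$ Dobrushin interface, i.e.\ $\SLE_3(-3)$ after the target change of Lemma~\ref{lem::sle_kapparho_targetchanging}), and with the self-duality $\kappa-6-\rho=\rho=-3/2$.

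The main obstacle is the last step: establishing convergence of the discrete ratio $Z^{\free}/Z^{\oplus}$ to the explicit martingale $M_t$, and in particular extracting the Ising-specific exponent $1/16$ at $w$. This is the only place where model-specific input beyond the Dobrushin convergence of Theorem~\ref{thm::ising_cvg_minusplus} enters, and making it rigorous requires either the convergence of the relevant boundary correlation or a bespoke argument relating the free-arc reweighting to crossing events controlled by Proposition~\ref{prop::ising_rsw}; by contrast, tightness, the domain Markov property, and the determination of $\kappa=3$ are comparatively routine.
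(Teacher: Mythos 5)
The paper itself disposes of this theorem in one line: it invokes the observable-based proofs of \cite{HonglerKytolaIsingFree, BenoistDuminilHonglerIsingFree}, where a discrete holomorphic observable adapted to free boundary conditions is constructed and shown to converge to a conformally covariant limit, and remarks that the same proof works with $\oplus$ in place of $\free$ on one arc. Your proposal attempts a genuinely different, observable-free route (tightness, then the Miller--Sheffield characterization, then a partition-function reweighting to fix $\rho$), but it has two genuine gaps, and they sit exactly where the model-specific content of the theorem lives.

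First, Theorem~\ref{thm::CMP_threepoints} takes conformal invariance of the family $(\PP_c, c\in\LT)$ as a hypothesis (it is the first bullet of Definition~\ref{def::CMP_threepoints}). You assert this is \emph{inherited from the conformal invariance of the critical Ising scaling limit}, but the conformal invariance of this scaling limit is precisely (part of) what the theorem asserts: a subsequential limit in $\Omega$ and one in $\psi(\Omega)$ arise from unrelated lattice approximations, and nothing a priori relates them. Your suggested bootstrap via local absolute continuity with the Dobrushin limit of Theorem~\ref{thm::ising_cvg_minusplus} runs into the same difficulty as your final step: the discrete Radon--Nikodym derivative is a ratio of Ising partition functions on slit domains, and its convergence (indeed even its uniform integrability as $\delta\to 0$) is exactly what you have not controlled; the same unproven absolute continuity also underlies your determination of $\kappa=3$. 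Second, as you yourself flag, the identification $\rho=-3/2$ rests on the convergence of $Z^{\free}/Z^{\oplus}$ to $M_t=g_t'(w)^{1/16}\bigl(g_t(w)-W_t\bigr)^{-1/2}$ (your formula does match Lemma~\ref{lem::sle_kapparho_mart} for $\kappa=3$, $\rho=-3/2$), which you do not prove. This is not a finishing touch: proving it is equivalent to the convergence of a boundary-condition-changing correlation, i.e.\ to the observable input of \cite{IzyurovObservableFree, HonglerKytolaIsingFree}. With both gaps open, your argument yields at most tightness and (modulo passing the domain Markov property to the limit, which itself needs care) that subsequential limits are $\SLE_3(\rho)$ for some $\rho\in[-3,0]$; it reduces the theorem to precisely the statements the cited works prove by constructing observables, rather than providing a proof.
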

\begin{proof}
It is proved in \cite{HonglerKytolaIsingFree, BenoistDuminilHonglerIsingFree} that the interface with $(\free\free)$ boundary conditions converges weakly to $\SLE_3(-3/2;-3/2)$ as $\delta\to 0$. The same proof works here. 
\end{proof}

\subsection{Proof of Proposition~\ref{prop::ising_hypersle}}
Let $(\Omega_{\delta}; x^R_{\delta}, y^R_{\delta}, y^L_{\delta}, x^L_{\delta})$ be a sequence of discrete quads on the square lattice $\delta\Z^2$ approximating some quad $q=(\Omega; x^R, y^R, y^L, x^L)$. Consider the critical Ising model in $\Omega_{\delta}^*$ with alternating boundary conditions: $\ominus$ along $(x_{\delta}^Lx^R_{\delta})$ and $(y_{\delta}^Ry_{\delta}^L)$, and $\xi^R\in\{\oplus, \free\}$ along $(x_{\delta}^Ry_{\delta}^R)$, and $\xi^L\in\{\oplus,\free\}$ along $(y_{\delta}^Lx_{\delta}^L)$. 
The quad is vertically crossed by $\ominus$ if there exists a path of $\ominus$ going from $(x^L_{\delta}x^R_{\delta})$ to $(y^R_{\delta}y^L_{\delta})$. 
The quad is horizontally crossed by $\oplus$ in an Ising configuration if there exists a path of $\oplus$ going from $(y^L_{\delta}x^L_{\delta})$ to $(x^R_{\delta}y^R_{\delta})$.
We denote these events by
\[ \LC_v^{\ominus}(q)=\{(x^L_{\delta}x^R_{\delta})\overset{\ominus}{\longleftrightarrow}(y^R_{\delta}y^L_{\delta})\},\quad \LC^{\oplus}_h(q)=\{(y^L_{\delta}x^L_{\delta})\overset{\oplus}{\longleftrightarrow}(x^R_{\delta}y^R_{\delta}\}. \]

Suppose there is a vertical crossing of $\ominus$.
Let $\eta^L_{\delta}$ be the interface starting from $x^L_{\delta}$ lying on the primal lattice. It turns at every vertex in the way that it has spin $\oplus$ on its left and $\ominus$ on its right, and that it turns left when there is ambiguity. 
Let $\eta^R_{\delta}$ be the interface starting from $x^R_{\delta}$ lying on the primal lattice. It turns at every vertex in the way that it has spin $\ominus$ to its left and $\oplus$ to its right, and turns right when there is ambiguity. Then $\eta^L_{\delta}$ will end at $y^L_{\delta}$ and $\eta^R_{\delta}$ will end at $y^R_{\delta}$. See Figure~\ref{fig::ising_pair}. Let $\Omega^L_{\delta}$ be the connected component of $\Omega_{\delta}\setminus\eta^L_{\delta}$ with $(x_{\delta}^Ry_{\delta}^R)$ on the boundary and denote by $\LD^L_{\delta}$ the discrete extremal distance between $\eta^L_{\delta}$ and $(x^R_{\delta}y^R_{\delta})$ in $\Omega^L_{\delta}$. Define $\Omega^R_{\delta}$ and $\LD^R_{\delta}$ similarly. 
\begin{lemma}\label{lem::ising_distance_tight}
The family of variables $\{(\LD^L_{\delta}; \LD^R_{\delta})\}_{\delta>0}$ is tight in the following sense: for any $u>0$, there exists $\eps>0$ such that
\[\PP\left[\LD^L_{\delta}\ge\eps, \LD^R_{\delta}\ge \eps\cond \LC_v^{\ominus}(q) \right]\ge 1-u,\quad \forall \delta>0.\]
\end{lemma}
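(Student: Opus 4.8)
The plan is to derive the lemma from the strong RSW bound of Proposition~\ref{prop::ising_rsw} together with the comparison between boundary conditions~\eqref{eqn::ising_boundary_comparison}. First I would remove the conditioning. Since the quad $q$ is fixed and the discrete extremal distance is uniformly comparable to its continuous counterpart, $d_{\Omega_\delta}((x^L_\delta x^R_\delta),(y^R_\delta y^L_\delta))$ stays bounded, so by Proposition~\ref{prop::ising_rsw} applied to the complementary horizontal $\oplus$-crossing (using the planar duality $\LC_v^{\ominus}(q)=\LC_h^{\oplus}(q)^c$ and~\eqref{eqn::ising_boundary_comparison}) there is $c_0=c_0(q)>0$ with $\PP[\LC_v^{\ominus}(q)]\ge c_0$ for all small $\delta$. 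Hence it suffices to bound the joint probabilities $\PP[\LD^L_\delta<\eps,\ \LC_v^{\ominus}(q)]$ and $\PP[\LD^R_\delta<\eps,\ \LC_v^{\ominus}(q)]$; by the left--right symmetry of the boundary data and a union bound I may concentrate on the first one and show it tends to $0$ as $\eps\to0$, uniformly in $\delta$. Dividing by $c_0$ then gives the claim.

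Second, the real content is that the interface $\eta^L_\delta$ issued from $x^L_\delta$ (with $\oplus$ on its left and $\ominus$ on its right) does not approach, in the sense of extremal distance, the opposite arc $(x^R_\delta y^R_\delta)$ to which it is not attached. I would encode this through Condition~C2. Viewing $\eta^L_\delta$ as a family of random curves and exploring it up to a stopping time, the domain Markov property reduces the conditional law of the remaining interface to an Ising interface in the explored domain with explicit boundary data. For any avoidable quad $(Q;a,b,c,d)$ with $d_Q((ab),(cd))\ge L_0$, the event that $\eta^L_\delta$ crosses $Q$ precludes a monochromatic transverse crossing of $Q$, and by Proposition~\ref{prop::ising_rsw}, after using~\eqref{eqn::ising_boundary_comparison} to pass to the worst-case boundary data, such a transverse crossing occurs with probability at least $c(L_0)>0$. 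Thus $\eta^L_\delta$ crosses $Q$ with probability at most $1-c(L_0)$, which is exactly the one-step bound underlying Condition~C2, uniformly in $\delta$.

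Third, I would turn the smallness of $\LD^L_\delta$ into a forced crossing of a deep collar of the arc $(x^R_\delta y^R_\delta)$. The event $\{\LD^L_\delta<\eps\}$ means precisely that $\eta^L_\delta$ comes within extremal distance $\eps$ of $(x^R_\delta y^R_\delta)$; to do so the interface must traverse a nested family of avoidable quads forming a collar of $(x^R_\delta y^R_\delta)$, each of extremal distance at least $L_0$, whose number $M(\eps)\asymp \log(1/\eps)/L_0\to\infty$ as $\eps\to0$. Applying the one-step estimate of the previous paragraph successively, at each stage to the current avoidable quad and under the worst-case boundary data allowed by~\eqref{eqn::ising_boundary_comparison}, gives $\PP[\LD^L_\delta<\eps,\ \LC_v^{\ominus}(q)]\le (1-c(L_0))^{M(\eps)}\to0$, as required.

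The step I expect to be the main obstacle is making the third paragraph rigorous uniformly in $\delta$ and in the \emph{random} domain $\Omega^L_\delta$: one must produce, on the event $\{\LD^L_\delta<\eps\}$, an honest sequence of avoidable quads crossed by $\eta^L_\delta$, verify that they are genuinely avoidable (they do not disconnect the tip of $\eta^L_\delta$ from its target $y^L_\delta$), and organise the successive conditioning so that the RSW bound applies at each stage regardless of the configuration already revealed. The crucial ingredient that legitimises the product bound is the uniformity of the strong RSW estimate over boundary conditions, supplied by Proposition~\ref{prop::ising_rsw} and~\eqref{eqn::ising_boundary_comparison}. Once Condition~C2 is established, one may alternatively conclude more softly: by Theorem~\ref{thm::cvg_curves_chordal} the family $\{\eta^L_\delta\}$ is tight and every subsequential limit is almost surely a continuous curve from $x^L$ to $y^L$ that does not touch $(x^Ry^R)$, whence $\LD^L_\delta$ cannot concentrate near $0$.
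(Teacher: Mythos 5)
Your opening reduction (uniform lower bound on $\PP[\LC_v^{\ominus}(q)]$, then bounding the joint probability) agrees with the paper's first step, but the heart of your argument --- the third paragraph --- contains a genuine gap. The family of quads you invoke does not exist: nested quads ``forming a collar of $(x^R_\delta y^R_\delta)$'', each separating $\eta^L_\delta$ from that arc and each of extremal distance at least $L_0$ in the direction the curve must cross, cannot number $M(\eps)\asymp\log(1/\eps)/L_0$. By the serial rule for extremal length, the moduli of disjoint separating quads add up, and their sum is bounded by the finite, $q$-dependent extremal distance between a neighbourhood of $x^L$ and $(x^Ry^R)$; hence at most $O_q(1/L_0)$ such collars exist, independently of $\eps$. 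The logarithmic count is correct only for half-annuli centred at the point of closest approach; but that point is random, and a union bound over a net of $\asymp\eps^{-1}$ possible centres costs more than the bound $(1-c(L_0))^{M(\eps)}=\eps^{c'}$ recovers, unless $c'>1$, which the one-step estimate does not provide. Note also that Condition C2 bounds the crossing of one \emph{fixed} avoidable quad, whereas ``the curve gets one scale closer to the arc'' is a union of crossings over uncountably many quads, so it cannot be fed into C2 directly. Finally, your soft fallback is circular: tightness from Theorem~\ref{thm::cvg_curves_chordal} does not imply that subsequential limits avoid $(x^Ry^R)$ --- that avoidance is essentially the content of Lemma~\ref{lem::ising_distance_tight} --- and $\LD^L_\delta$ is not a functional of the curve that passes to the limit under sup-norm convergence.

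The paper's proof bypasses all curve-regularity estimates, and this is the idea you are missing: condition on the \emph{entire} interface $\eta^L_\delta$ at once. Given $\eta^L_\delta$ (ending at $y^L_\delta$), the event $\LC_v^{\ominus}(q)$ still requires a nearest-neighbour $\ominus$-crossing inside $\Omega^L_\delta$ connecting $(x^L_\delta x^R_\delta)$ to $(y^R_\delta y^L_\delta)$: the interface itself only supplies a $*$-connected $\ominus$-chain along its right side, and no nearest-neighbour $\ominus$-path can cross $\eta^L_\delta$, so the crossing must live in $\Omega^L_\delta$. By reciprocity of extremal distance in a quad, on $\{\LD^L_\delta\le\eps\}$ the extremal distance in $\Omega^L_\delta$ between $(x^L_\delta x^R_\delta)$ and $(y^R_\delta y^L_\delta)$ is at least $1/\eps$ (up to the uniform discrete-to-continuum comparison), so the required crossing is in the \emph{hard} direction of a quad of modulus at least $1/\eps$. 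A single application of Proposition~\ref{prop::ising_rsw}, combined with~\eqref{eqn::ising_boundary_comparison} to pass to the worst boundary conditions, bounds this conditional probability by $c(\eps)\to 0$; averaging over $\eta^L_\delta$ gives $\PP\left[\{\LD^L_\delta\le\eps\}\cap\LC_v^{\ominus}(q)\right]\le c(\eps)$, and dividing by the uniform lower bound on $\PP[\LC_v^{\ominus}(q)]$ finishes. One conditioning plus one RSW estimate replaces your multiscale iteration, and no Condition C2 for the interface is needed at this stage.
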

\begin{proof}
Since $(\Omega_{\delta}; x^L_{\delta}, x^R_{\delta}, y^R_{\delta}, y^L_{\delta})$ approximates $(\Omega; x^L, x^R, y^R, y^L)$, 
by Proposition~\ref{prop::ising_rsw} and \eqref{eqn::ising_boundary_comparison}, we know that $\PP[\LC_v^{\ominus}(q)]$ can be bounded from below by some quantity that depends only on the extremal distance in $\Omega$ between $(x^Lx^R)$ and $(y^Ry^L)$ and that is uniform over $\delta$. 
Thus, it is sufficient to show $\PP\left[\{\LD^L_{\delta}\le\eps\}\cap\LC_v^{\ominus}(q)\right]$ is small for $\eps>0$ small. Given $\eta^L_{\delta}$ and on the event $\{\LD^L_{\delta}\le\eps\}$, combining Proposition~\ref{prop::ising_rsw} and \eqref{eqn::ising_boundary_comparison}, we know that the probability to have a vertical crossing of $\ominus$ in $\Omega^L_{\delta}$ is bounded by $c(\eps)$ which only depends on $\eps$ and goes to zero as $\eps\to 0$. Thus $\PP\left[\{\LD^L_{\delta}\le\eps\}\cap\LC_v^{\ominus}(q)\right]\le c(\eps)$. This implies the conclusion. 
\end{proof}

\begin{lemma}\label{lem::ising_cvg_slepair}
Let $(\Omega_{\delta}; x^R_{\delta}, y^R_{\delta}, y^L_{\delta}, x^L_{\delta})$ be a sequence of discrete quads on the square lattice $\delta\Z^2$ approximating some quad $q=(\Omega; x^R, y^R, y^L, x^L)$ as $\delta\to 0$. Consider the critical Ising model in $\Omega_{\delta}$ with the alternating boundary condition: 
\[\ominus \text{ along } (x^L_{\delta}x^R_{\delta})\cup (y^R_{\delta}y^L_{\delta}),\quad \xi^R\in\{\oplus, \free\}\text{ along }(x^R_{\delta}y^R_{\delta}), \quad \text{ and }\xi^L\in\{\oplus, \free\}\text{ along }(y^L_{\delta}x^L_{\delta}).\] 
Conditioned on the event $\LC_v^{\ominus}(q)$, then there exists a pair of interfaces $(\eta^L_{\delta};\eta^R_{\delta})$ where $\eta^L_{\delta}$ (resp. $\eta^R_{\delta}$) is the interface connecting $x^L_{\delta}$ to $y^L_{\delta}$ (resp. connecting $x^R_{\delta}$ to $y^R_{\delta}$).  The law of the pair $(\eta^L_{\delta}; \eta^R_{\delta})$ converges weakly to the pair of $\SLE$ curves in Proposition~\ref{prop::slepair_rev} as $\delta\to 0$ where $\kappa=3$ and $\xi^R, \xi^L,\rho^R,\rho^L$ are related in the following way: for $q\in\{L, R\}$,
\[\rho^q=0,\quad\text{if }\xi^q=\oplus;\quad \rho^q=-3/2,\quad\text{if }\xi^q=\free.\]
\end{lemma}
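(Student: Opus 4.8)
The plan is to follow the standard two-step route: first establish relative compactness of the conditional laws of $(\eta^L_\delta;\eta^R_\delta)$ given $\LC_v^{\ominus}(q)$, and then identify every subsequential limit through its conditional-law structure, invoking the uniqueness statement of Proposition~\ref{prop::slepair_rev}. For relative compactness I would verify the hypotheses of Theorem~\ref{thm::cvg_pairs}. The tightness of the extremal distances $(\LD^L_\delta;\LD^R_\delta)$ is exactly Lemma~\ref{lem::ising_distance_tight}. For Condition~C2 I would argue as in Lemma~\ref{lem::sle_c1}: given explored segments $\eta^L[0,\tau^L]$, $\eta^R[0,\tau^R]$ and an avoidable quad $Q^R$ for $\eta^R$ of large extremal distance, the continuation of $\eta^R$ lives in a slit domain whose boundary carries $\ominus$ on the long arc and $\xi^R$ on $(x^Ry^R)$; crossing $Q^R$ forces a monochromatic crossing of $Q^R$, whose probability is bounded below $1$ by the strong RSW estimate Proposition~\ref{prop::ising_rsw} together with the comparison of boundary conditions~\eqref{eqn::ising_boundary_comparison}, uniformly in $\delta$ and in the conditioning. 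Since $\PP[\LC_v^{\ominus}(q)]$ is bounded away from $0$ uniformly in $\delta$ (again by Proposition~\ref{prop::ising_rsw} and~\eqref{eqn::ising_boundary_comparison}, as in the proof of Lemma~\ref{lem::ising_distance_tight}), these bounds survive the conditioning. Thus Theorem~\ref{thm::cvg_pairs} gives relative compactness in $X_0(\Omega;x^R,y^R,y^L,x^L)$.

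Next I would take a weakly convergent subsequence $(\eta^L_{\delta_k};\eta^R_{\delta_k})\to(\eta^L;\eta^R)$ and, via the Skorohod representation as in the proof of Theorem~\ref{thm::cvg_pairs}, realize the convergence almost surely. The key observation is that $\LC_v^{\ominus}(q)$ is measurable with respect to $\eta^L_\delta$ — it holds exactly when $\eta^L_\delta$ connects $x^L_\delta$ to $y^L_\delta$, since the $\ominus$ spins on the right of $\eta^L_\delta$ then form the vertical crossing — so that conditionally on $\eta^L_\delta$ the curve $\eta^R_\delta$ is \emph{precisely} the critical Ising interface from $x^R_\delta$ to $y^R_\delta$ in the Dobrushin-type domain $\Omega^L_\delta$, whose boundary value is $\ominus$ along the whole arc from $y^R_\delta$ to $x^R_\delta$ (including the trace of $\eta^L_\delta$) and $\xi^R$ along $(x^R_\delta y^R_\delta)$. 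When $\xi^R=\oplus$ this is the Dobrushin boundary condition, so by Theorem~\ref{thm::ising_cvg_minusplus} the conditional law converges to $\SLE_3$ in $\Omega^L$, i.e. $\SLE_3(\rho^R)$ with $\rho^R=0$. When $\xi^R=\free$ it is the $\ominus/\free$ boundary condition, which is the degenerate case of Theorem~\ref{thm::ising_cvg_minusfree} in which the $\oplus$-arc shrinks to the point $x^R$; the limit is $\SLE_3(-3/2)$ in $\Omega^L$ from $x^R$ to $y^R$ with force point $x^R_+$, i.e. $\rho^R=-3/2$. Since $\Omega^L_\delta\to\Omega^L$ in the Carath\'eodory sense and the map sending a domain to the law of $\SLE_3(\rho^R)$ in it is continuous, I would pass to the limit in $\E[F(\eta^L_\delta)G(\eta^R_\delta)]=\E[F(\eta^L_\delta)\,\Phi_\delta(\eta^L_\delta)]$, where $\Phi_\delta(\eta^L_\delta)$ is the conditional expectation of $G(\eta^R_\delta)$; this shows that the conditional law of $\eta^R$ given $\eta^L$ is $\SLE_3(\rho^R)$ in $\Omega^L$ from $x^R$ to $y^R$ with force point $x^R_+$. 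By the left--right symmetry of the construction, the conditional law of $\eta^L$ given $\eta^R$ is likewise $\SLE_3(\rho^L)$ in $\Omega^R$ from $x^L$ to $y^L$ with force point $x^L_-$, with $\rho^L$ determined by $\xi^L$ in the same way.

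The subsequential limit $(\eta^L;\eta^R)$ therefore satisfies exactly the conditional-law characterization of the pair in Proposition~\ref{prop::slepair_rev} with $\kappa=3$ and the stated values $\rho^L,\rho^R\in\{0,-3/2\}\subset(-2,\infty)$. Since that proposition provides a \emph{unique} probability measure with this property, every subsequential limit agrees with it, and hence the whole family converges weakly to the pair of $\SLE$ curves of Proposition~\ref{prop::slepair_rev}.

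I expect the main obstacle to be the limit-exchange in the identification step: conditioning does not commute with weak convergence, so one must control the conditional expectations $\Phi_\delta$ uniformly, combining the Carath\'eodory stability of the single-interface convergence (Theorems~\ref{thm::ising_cvg_minusplus} and~\ref{thm::ising_cvg_minusfree}) with the continuity of the continuum $\SLE_3(\rho^R)$ law in its domain. A secondary technical point is the $\xi^R=\free$ case, which requires the degenerate form of Theorem~\ref{thm::ising_cvg_minusfree} with the force point located at the starting point of the interface.
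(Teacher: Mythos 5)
Your proposal follows essentially the same route as the paper's proof: relative compactness from Theorem~\ref{thm::cvg_pairs} (via Proposition~\ref{prop::ising_rsw}, \eqref{eqn::ising_boundary_comparison} and Lemma~\ref{lem::ising_distance_tight}), identification of every subsequential limit through the conditional law of one curve given the other (the Dobrushin, resp.\ minus-free, Ising interface in the slit domain $\Omega^L_{\delta_k}$, which converges by Theorem~\ref{thm::ising_cvg_minusplus}, resp.\ Theorem~\ref{thm::ising_cvg_minusfree}, together with Carath\'eodory convergence of the slit domains), and finally the uniqueness statement of Proposition~\ref{prop::slepair_rev} to pin down the limit and upgrade to convergence of the full sequence. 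The differences are only presentational: the paper writes out just the case $\xi^L=\xi^R=\oplus$ and handles $\free$ by substituting Theorem~\ref{thm::ising_cvg_minusfree}, exactly as you do, while your explicit remarks (measurability of $\LC_v^{\ominus}(q)$ with respect to $\eta^L_{\delta}$, and the limit-exchange in the conditional law) spell out steps the paper leaves implicit.
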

\begin{proof}
We only prove the conclusion for $\xi^R=\xi^L=\oplus$, and the other cases can be proved similarly (by replacing Theorem~\ref{thm::ising_cvg_minusplus} with \ref{thm::ising_cvg_minusfree} when necessary). 
Combining Proposition~\ref{prop::ising_rsw} with (\ref{eqn::ising_boundary_comparison}) and Lemma~\ref{lem::ising_distance_tight}, we see that the sequence $\{(\eta^L_{\delta};\eta^R_{\delta})\}_{\delta>0}$ satisfies the requirements in Theorem~\ref{thm::cvg_pairs}, thus the sequence is relatively compact. Suppose $(\eta^L;\eta^R)\in X_0(\Omega; x^L, x^R, y^R, y^L)$ is any sub-sequential limit and, for some $\delta_k\to 0$, 
\[(\eta^L_{\delta_k};\eta^R_{\delta_k})\overset{d}{\longrightarrow}(\eta^L;\eta^R)\quad \text{ in } X_0(\Omega; x^L, x^R, y^R, y^L).\]
Since $\eta^L_{\delta_k}\to \eta^L$, by Theorem~\ref{thm::cvg_curves_chordal}, we know that we have the convergence in all three topologies. In particular, this implies the convergence of $\Omega^L_{\delta_k}$ in Carath\'eodory sense. Note that the conditional law of $\eta^R_{\delta_k}$ in $\Omega^L_{\delta_k}$ given $\eta^L_{\delta_k}$ is the interface of the critical planar Ising model with Dobrushin boundary condition. Combining with Theorem~\ref{thm::ising_cvg_minusplus}, we know that, the conditional law of $\eta^R$ in $\Omega^L$ given $\eta^L$ is $\SLE_3$. By symmetry, the conditional law of $\eta^L$ in $\Omega^R$ given $\eta^R$ is $\SLE_3$. By Proposition~\ref{prop::slepair_rev}, there exists a unique such measure. Thus it has to be the unique sub-sequential limit. This proves the convergence of the whole sequence. 
\end{proof}

\begin{figure}[ht!]
\begin{subfigure}[b]{0.48\textwidth}
\begin{center}
\includegraphics[width=0.75\textwidth]{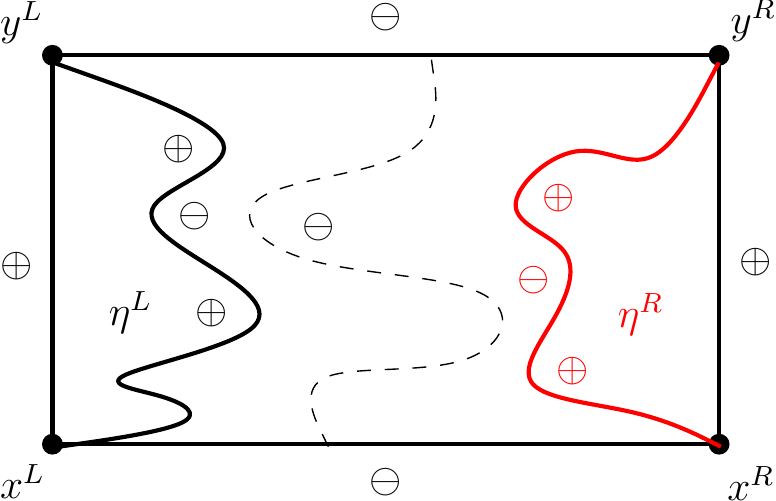}
\caption{When there is a vertical crossing of $\ominus$, there exists a pair of interfaces $(\eta^L;\eta^R)$: $\eta^R$ is the interface from $x^R$ to $y^R$ and $\eta^L$ is the interface from $x^L$ to $y^L$.}
\end{center}
\end{subfigure}
$\quad$
\begin{subfigure}[b]{0.48\textwidth}
\begin{center}\includegraphics[width=0.75\textwidth]{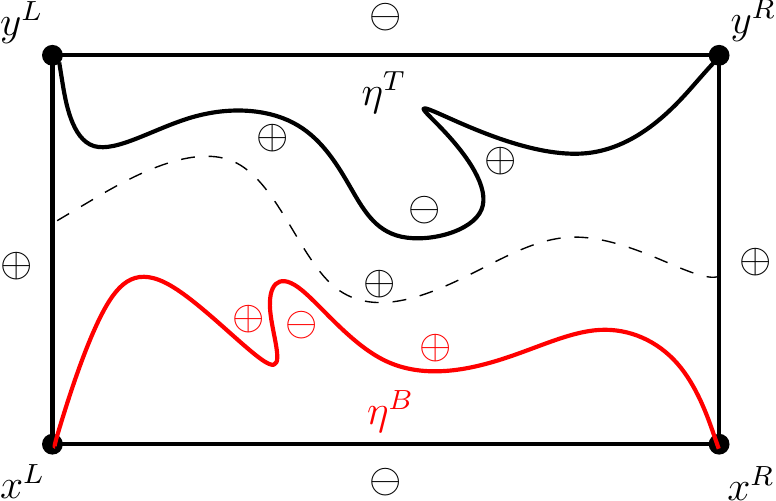}
\caption{When there is a horizontal crossing of $\oplus$, there exists a pair of interfaces $(\eta^B;\eta^T)$: $\eta^B$ is the interface from $x^R$ to $x^L$ and $\eta^T$ is the interface from $y^R$ to $y^L$.}
\end{center}
\end{subfigure}
\caption{\label{fig::ising_oplusominus} Consider the critical Ising in $\Omega$ with the boundary condition: $\ominus$ along $(x^Lx^R)\cup(y^Ry^L)$ and $\oplus$ along $(x^Ry^R)\cup (y^Lx^L)$. }
\end{figure}

\begin{lemma}\label{lem::ising_oplusominus}
Let $(\Omega_{\delta}; x^R_{\delta}, y^R_{\delta}, y^L_{\delta}, x^L_{\delta})$ be a sequence of discrete quads on the square lattice $\delta\Z^2$ approximating some quad $q=(\Omega; x^R, y^R, y^L, x^L)$ as $\delta\to 0$. Consider the critical Ising in $\Omega_{\delta}$ with the following boundary condition: 
\[\ominus\text{ along }(x^L_{\delta}x^R_{\delta})\cup(y^R_{\delta}y^L_{\delta}), \quad \oplus\text{ along }(x^R_{\delta}y^R_{\delta})\cup (y^L_{\delta}x^L_{\delta}).\]
\begin{itemize}
\item On the event $\LC_v^{\ominus}(q)$, let $\eta_{\delta}$ be the interface connecting $x^R_{\delta}$ and $y^R_{\delta}$. Then the law of $\eta_{\delta}$ converges weakly to $\hSLE_3$ in $\Omega$ from $x^R$ to $y^R$ with marked points $(x^L, y^L)$.
\item On the event $\LC_h^{\oplus}(q)$, let $\eta_{\delta}$ be the interface connecting $x^R_{\delta}$ and $x^L_{\delta}$. Then the law of $\eta_{\delta}$ converges weakly to $\hSLE_3$ in $\Omega$ from $x^R$ to $x^L$ with marked points $(y^R, y^L)$.
\end{itemize}
\end{lemma}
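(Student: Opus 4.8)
The plan is to derive both statements from Lemma~\ref{lem::ising_cvg_slepair}, which already identifies the scaling limit of the \emph{pair} of interfaces; the only remaining work is to extract the correct marginal and, for the second item, to set up a spin-flip symmetry that reduces it to the first.

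For the first item, I would apply Lemma~\ref{lem::ising_cvg_slepair} with $\xi^R=\xi^L=\oplus$, so that $\kappa=3$ and $\rho^R=\rho^L=0$. Conditioned on $\LC_v^{\ominus}(q)$, the pair $(\eta^L_\delta;\eta^R_\delta)$ converges weakly to the pair $(\eta^L;\eta^R)$ of Proposition~\ref{prop::slepair_rev} with these parameters, and in particular the marginal law of $\eta^R_\delta$ converges to that of $\eta^R$. It then remains to identify this marginal. Since $\rho^L=\rho^R=0$, the characterizing property in Proposition~\ref{prop::slepair_rev}---the conditional law of $\eta^R$ given $\eta^L$ is $\SLE_3$ from $x^R$ to $y^R$, and symmetrically for $\eta^L$ given $\eta^R$---is invariant under interchanging the two curves together with the exchange $(x^L,y^L)\leftrightarrow(x^R,y^R)$. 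By the uniqueness in Proposition~\ref{prop::slepair_rev}, the law of the pair is invariant under this exchange, so the Identification part applies with $\eta^L$ and $\eta^R$ swapped: the marginal of $\eta^R$ is $\hSLE_3$ in $\Omega$ from $x^R$ to $y^R$ with marked points $(x^L,y^L)$, as claimed.

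For the second item I would reduce to the first via the global spin flip $\oplus\leftrightarrow\ominus$ combined with a cyclic relabeling of the four marked points. Set $\tilde q=(\Omega;\tilde x^R,\tilde y^R,\tilde y^L,\tilde x^L):=(\Omega;y^R,y^L,x^L,x^R)$, which is again a quad since $y^R,y^L,x^L,x^R$ are in counterclockwise order. In this relabeling the given boundary conditions read $\ominus$ on $(\tilde x^R\tilde y^R)\cup(\tilde y^L\tilde x^L)$ and $\oplus$ on $(\tilde x^L\tilde x^R)\cup(\tilde y^R\tilde y^L)$; applying the spin flip, which is a measure-preserving bijection carrying these to the standard alternating conditions for $\tilde q$, one checks that $\LC_h^{\oplus}(q)$ is carried to $\LC_v^{\ominus}(\tilde q)$ and that the interface $\eta_\delta$ from $x^R_\delta$ to $x^L_\delta$ (the curve $\eta^B$ of Figure~\ref{fig::ising_oplusominus}) is carried to the left interface $\tilde\eta^L_\delta$ from $\tilde x^L_\delta=x^R_\delta$ to $\tilde y^L_\delta=x^L_\delta$ of the standard configuration on $\tilde q$. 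Applying Lemma~\ref{lem::ising_cvg_slepair} to $\tilde q$ together with the marginal identification just obtained (now for $\tilde\eta^L$), the law of $\eta_\delta$ converges weakly to $\hSLE_3$ in $\Omega$ from $\tilde x^L=x^R$ to $\tilde y^L=x^L$ with marked points $(\tilde x^R,\tilde y^R)=(y^R,y^L)$, which is precisely the asserted limit.

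The genuinely nontrivial input is Lemma~\ref{lem::ising_cvg_slepair} itself; given it, the remaining steps are bookkeeping, and the main obstacle is to carry out this bookkeeping without sign or labeling errors. Two points require care: (i) verifying that the marginal-identification symmetry in the first item is legitimate, that is, that the $L\leftrightarrow R$ exchange genuinely preserves the characterizing conditions and hence, by uniqueness, the law of the pair; and (ii) in the second item, checking that the cyclic relabeling preserves Carath\'eodory convergence of the approximating quads (the normalization of the uniformizing maps changes, but all four marked points converge, so this is routine) and that the interface-tracing rule---turning so as to keep $\oplus$ on one side and $\ominus$ on the other---is correctly transported by the spin flip, so that $\eta_\delta$ is identified with $\tilde\eta^L_\delta$ rather than with its time reversal or with the other interface.
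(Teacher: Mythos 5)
Your proposal is correct and follows essentially the same route as the paper: the first item is exactly the paper's argument (Lemma~\ref{lem::ising_cvg_slepair} plus the identification in Proposition~\ref{prop::slepair_rev} with $\rho^L=\rho^R=0$, using the $L\leftrightarrow R$ symmetry of the characterization, by uniqueness, to read off the marginal of $\eta^R$), and your spin-flip/cyclic-relabeling reduction of the second item is precisely what the paper compresses into ``the other case can be proved similarly.'' The two bookkeeping points you flag (the exchange argument, which strictly speaking also needs an anti-conformal reflection since the relabeled quad is clockwise, together with reflection invariance of $\SLE_3$ and $\hSLE_3$; and the stability of Carath\'eodory convergence under relabeling) are real but routine, and the paper elides them entirely.
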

\begin{proof}
On the event $\LC_v^{\ominus}(q)$, there is a pair of Ising interfaces $(\eta^L_{\delta};\eta^R_{\delta})$, as indicated in Figure~\ref{fig::ising_oplusominus}~(a). By Lemma~\ref{lem::ising_cvg_slepair}, we know that the sequence $(\eta^L_{\delta};\eta^R_{\delta})$ converges weakly to the pair of $\SLE$s in Proposition~\ref{prop::slepair_rev} with $\kappa=3$ and $\rho^L=\rho^R=0$. In particular, the law of $\eta^R_{\delta}$ conditioned on $\LC_v^{\ominus}(q)$ converges weakly to $\hSLE_3$ in $\Omega$ from $x^R$ to $y^R$. The other case can be proved similarly.
%Note that the limit of the probability $\PP[\LC_v^{\ominus}(q)]$ was derived in \cite[Remark 4.5]{IzyurovObservableFree}. 
%\[\lim_{\delta\to 0}\PP[\LC_v^{\ominus}(q)]=\frac{z^{2/3}F(z)}{z^{2/3}F(z)+(1-z)^{2/3}F(1-z)},\]
%where $\varphi$ is any conformal map from $\Omega$ onto $\HH$ such that $\varphi(x^R)<\varphi(y^R)<\varphi(y^L)<\varphi(x^L)$, and 
%\[F(z)=\hF\left(\frac{4}{3}, \frac{-1}{3}, \frac{8}{3}; z\right),\quad z=\frac{(\varphi(y^R)-\varphi(x^R))(\varphi(x^L)-\varphi(y^L))}{(\varphi(y^L)-\varphi(x^R))(\varphi(x^L)-\varphi(y^R))},\]
\end{proof}

\begin{figure}[ht!]
\begin{subfigure}[b]{0.48\textwidth}
\begin{center}
\includegraphics[width=0.75\textwidth]{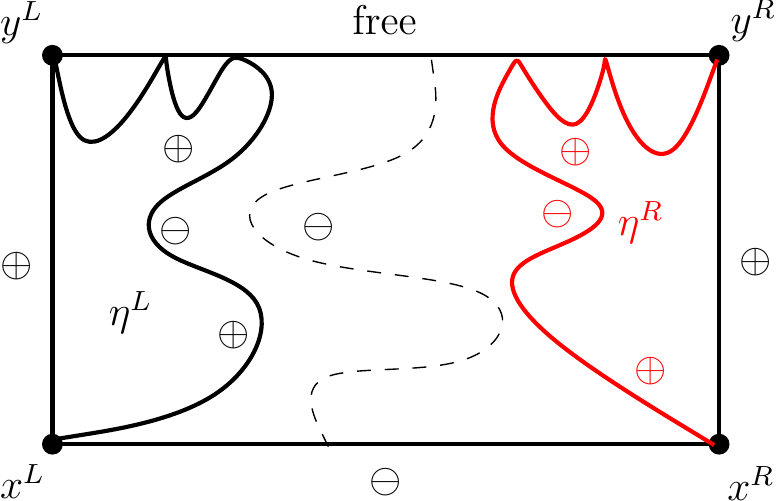}
\caption{When there is a vertical crossing of $\ominus$, there exists a pair of interfaces $(\eta^L;\eta^R)$: $\eta^R$ is the interface from $x^R$ to $y^R$ and $\eta^L$ is the interface from $x^L$ to $y^L$.}
\end{center}
\end{subfigure}
$\quad$
\begin{subfigure}[b]{0.48\textwidth}
\begin{center}\includegraphics[width=0.75\textwidth]{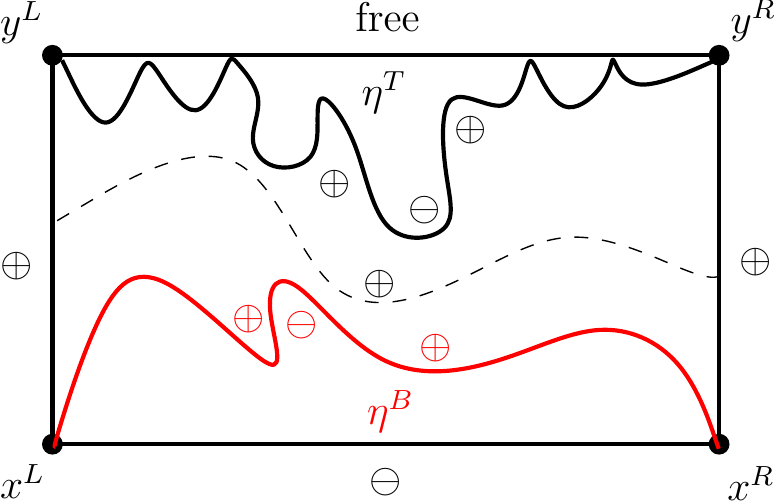}
\caption{When there is a horizontal crossing of $\oplus$, there exists a pair of interfaces $(\eta^B;\eta^T)$: $\eta^B$ is the interface from $x^R$ to $x^L$ and $\eta^T$ is the interface from $y^R$ to $y^L$.}
\end{center}
\end{subfigure}
\caption{\label{fig::ising_oplusominusfree} Consider the critical Ising in $\Omega$ with the boundary condition: $\ominus$ along $(x^Lx^R)$, $\oplus$ along $(x^Ry^R)\cup (y^Lx^L)$, and $\free$ along $(y^Ry^L)$. }
\end{figure}

\begin{lemma}\label{lem::ising_oplusominusfree}
Let $(\Omega_{\delta}; x^R_{\delta}, y^R_{\delta}, y^L_{\delta}, x^L_{\delta})$ be a sequence of discrete quads on the square lattice $\delta\Z^2$ approximating some quad $q=(\Omega; x^R, y^R, y^L, x^L)$ as $\delta\to 0$. Consider the critical Ising in $\Omega_{\delta}$ with the following boundary condition: 
\[\ominus\text{ along }(x^L_{\delta}x^R_{\delta}), \quad \oplus\text{ along }(x^R_{\delta}y^R_{\delta})\cup (y^L_{\delta}x^L_{\delta}),\quad \free\text{ along }(y^R_{\delta}y^L_{\delta}).\]
\begin{itemize}
\item On the event $\LC_v^{\ominus}(q)$, let $\eta_{\delta}$ be the interface connecting $x^R_{\delta}$ and $y^R_{\delta}$. Then the law of $\eta_{\delta}$ (up to the first hitting time of $[y^R_{\delta}y^L_{\delta}]$) converges weakly to $\hSLE_3(-7/2)$ from $x^R$ to $x^L$ conditioned to hit $[y^Ry^L]$ (up to the first hitting time of $[y^Ry^L]$).
\item On the event $\LC_h^{\oplus}(q)$, let $\eta_{\delta}$ be the interface connecting $x^R_{\delta}$ and $x^L_{\delta}$. Then the law of $\eta_{\delta}$ converges weakly to $\hSLE_3(-3/2)$ from $x^R$ to $x^L$.
\end{itemize}
\end{lemma}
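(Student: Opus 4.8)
The plan is to handle the two bullet points separately, in each case realising the interface in question as one marginal of a commuting pair of interfaces and then reading off its law from the identification results of Section~\ref{sec::slepairs}. Throughout $\kappa=3$, and the inputs that replace the Dobrushin convergence used in Lemma~\ref{lem::ising_cvg_slepair} are the free-arc convergence Theorem~\ref{thm::ising_cvg_minusfree} (giving $\SLE_3(-3/2)$) together with the target-switching Lemma~\ref{lem::sle_kapparho_targetchanging}. In both cases relative compactness of the relevant pair of interfaces follows from Theorem~\ref{thm::cvg_pairs}, whose hypotheses---Condition~C2 and the tightness of the extremal distances $\LD^L_\delta,\LD^R_\delta$---are supplied by the strong RSW estimate Proposition~\ref{prop::ising_rsw}, the boundary comparison~\eqref{eqn::ising_boundary_comparison}, and the argument of Lemma~\ref{lem::ising_distance_tight}; every subsequential limit is then pinned down by identifying its conditional laws, exactly as in the proof of Lemma~\ref{lem::ising_cvg_slepair}.

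The second bullet reduces directly to Lemma~\ref{lem::ising_cvg_slepair}. Conditioned on $\LC_h^{\oplus}(q)$ there is a pair $(\eta^B;\eta^T)$ as in Figure~\ref{fig::ising_oplusominusfree}~(b), with $\eta^B$ the interface from $x^R$ to $x^L$ and $\eta^T$ the interface from $y^R$ to $y^L$. Relabelling the quad as $\tilde q=(\Omega; y^R, y^L, x^L, x^R)$ (still counterclockwise) and flipping every spin turns this into exactly the configuration of Lemma~\ref{lem::ising_cvg_slepair}: the two arcs $(x^Ry^R)$ and $(y^Lx^L)$ become the bottom and top of $\tilde q$ and carry $\ominus$, the arc $(y^Ry^L)$ plays the role of the right side and carries $\free$ so that $\tilde\xi^R=\free$, the arc $(x^Lx^R)$ plays the role of the left side and carries $\oplus$ so that $\tilde\xi^L=\oplus$, and $\LC_h^{\oplus}(q)$ becomes the vertical $\ominus$-crossing event of $\tilde q$. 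Hence Lemma~\ref{lem::ising_cvg_slepair} applies with $\kappa=3$, $\rho^R=-3/2$ and $\rho^L=0$, and the pair $(\eta^B;\eta^T)=(\tilde\eta^L;\tilde\eta^R)$ converges to the pair of Proposition~\ref{prop::slepair_rev}. Specialising the identification part of Proposition~\ref{prop::slepair_rev} to $\rho^L=0$ shows that the marginal of $\tilde\eta^L=\eta^B$ converges to $\hSLE_3(\rho^R)=\hSLE_3(-3/2)$ in $\Omega$ from $x^R$ to $x^L$ with marked points $(y^R,y^L)$, which is the second assertion.

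For the first bullet the free arc $[y^Ry^L]$ is the common target of both curves, so the relevant object is the commuting pair characterised by inner force points and I would use Proposition~\ref{prop::slepair_cmp_sym} in place of Proposition~\ref{prop::slepair_rev}. Conditioned on $\LC_v^{\ominus}(q)$ there is a pair $(\eta^L;\eta^R)$ as in Figure~\ref{fig::ising_oplusominusfree}~(a), and I stop each curve at its first hitting time of $[y^Ry^L]$, writing $w^L=\eta^L(T^L)$ and $w^R=\eta^R(T^R)$. Given $\eta^L[0,T^L]$, the domain $\Omega^L$ carries $\oplus$ on $(x^Ry^R)$, $\free$ on $(y^Rw^L)$ and $\ominus$ on the remaining arc running from $w^L$ along $\eta^L$ and the bottom back to $x^R$; this is precisely the triangle of Theorem~\ref{thm::ising_cvg_minusfree} with $a=x^R$, $w=y^R$, $b=w^L$, so the conditional law of $\eta^R$ converges to $\SLE_3(-3/2)$ in $\Omega^L$ from $x^R$ to $w^L$ with force point $y^R$. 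By the target-switching Lemma~\ref{lem::sle_kapparho_targetchanging} this equals $\SLE_3(-3/2)$ from $x^R$ to $y^R$ with force point $w^L$ (note $\kappa-6-\rho=\rho$ when $\kappa=3$, $\rho=-3/2$), which is exactly the conditional law defining $\QQ_q(3,-3/2)$ in Proposition~\ref{prop::slepair_cmp_sym}; the symmetric statement holds for $\eta^L$ given $\eta^R$. Combining relative compactness with the uniqueness part of Proposition~\ref{prop::slepair_cmp_sym}, every subsequential limit of the stopped pair equals $\QQ_q(3,-3/2)$, and the identification part with $\rho=-3/2$ then gives that the marginal of $\eta^R$, stopped at its first hitting of $[y^Ry^L]$, converges to $\hSLE_3(\kappa-8-\rho)=\hSLE_3(-7/2)$ in $\Omega$ from $x^R$ to $x^L$ with marked points $(y^R,y^L)$ conditioned to hit $[y^Ry^L]$.

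The main difficulty is the first bullet, precisely because the free boundary lies at the target of the curves. This forces two points of care absent from the second bullet: one must work with the interfaces stopped upon reaching $[y^Ry^L]$, which I would do by running the relative-compactness and conditional-law argument on the curves truncated in an $\eps$-neighbourhood of $[y^Ry^L]$ and then letting $\eps\to0$, using the continuity of $\hSLE$ up to and including the first hitting time from Propositions~\ref{prop::hyperSLE} and~\ref{prop::hsle_continuity_lownu}; and one must recognise that the correct characterisation is the inner-force-point family $\QQ_q(\kappa,\rho)$ of Proposition~\ref{prop::slepair_cmp_sym}, with its ``conditioned to hit'' identification, rather than Proposition~\ref{prop::slepair_rev}. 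It is the matching of the Ising conditional law $\SLE_3(-3/2)$ (via Theorem~\ref{thm::ising_cvg_minusfree} and Lemma~\ref{lem::sle_kapparho_targetchanging}) to this family that pins down $\rho=-3/2$ and hence the parameter $-7/2$ of the limiting hypergeometric $\SLE$.
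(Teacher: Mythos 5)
Your proposal is correct and follows essentially the same route as the paper: the second bullet is reduced to Lemma~\ref{lem::ising_cvg_slepair} with the rotated/spin-flipped quad and then read off from the identification part of Proposition~\ref{prop::slepair_rev} with $\rho^L=0$, $\rho^R=-3/2$, while the first bullet runs the compactness-plus-conditional-law argument and identifies the limit as $\QQ_q(3,-3/2)$ of Proposition~\ref{prop::slepair_cmp_sym}, whose identification part yields $\hSLE_3(\kappa-8-\rho)=\hSLE_3(-7/2)$ conditioned to hit $[y^Ry^L]$. The details you supply (Theorem~\ref{thm::ising_cvg_minusfree} for the conditional law in the triangle $(\Omega^L;x^R,y^R,w^L)$, the target-switching Lemma~\ref{lem::sle_kapparho_targetchanging} with $\kappa-6-\rho=\rho$, and the $\eps$-truncation near the free arc) are precisely what the paper's phrase ``by a similar argument as in Lemma~\ref{lem::ising_cvg_slepair}'' leaves implicit.
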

\begin{proof}
On the event $\LC_v^{\ominus}(q)$, there is a pair of Ising interfaces $(\eta^L_{\delta};\eta^R_{\delta})$, as indicated in Figure~\ref{fig::ising_oplusominusfree}~(a). By a similar argument as in Lemma~\ref{lem::ising_cvg_slepair}, the sequence $(\eta^L_{\delta};\eta^R_{\delta})$ converges weakly to the pair of $\SLE$s in Proposition~\ref{prop::slepair_cmp_sym} with $\kappa=3$ and $\rho=-3/2$. In particular, the law of $\eta^R_{\delta}$ conditioned on $\LC_v^{\ominus}(q)$ converges weakly to $\hSLE_3(-7/2)$ in $\Omega$ from $x^R$ to $x^L$ conditioned to hit $[y^Ry^L]$ (here is $\hSLE_3(-7/2)$ from $x^R$ to $x^L$, this is not a typo). 

On the event $\LC^{\oplus}_h(q)$, there is a pair of Ising interfaces $(\eta^B_{\delta};\eta^T_{\delta})$ as indicted in Figure~\ref{fig::ising_oplusominusfree}~(b). By Lemma~\ref{lem::ising_cvg_slepair}, we see that the sequence $(\eta^B_{\delta};\eta^T_{\delta})$ converges weakly to the pair of $\SLE$s in Proposition~\ref{prop::slepair_rev} (rotated by 90 degree counterclockwise) with $\kappa=3$ and $\rho=-3/2$. In particular, the law of $\eta^B_{\delta}$ conditioned on $\LC^{\oplus}_h(q)$ converges weakly to $\hSLE_3(-3/2)$ in $\Omega$ from $x^R$ to $x^L$. 
%Note that the limit of the probability $\PP[\LC_h^{\oplus}(q)]$ was derived in \cite[Corollary 4.4]{IzyurovObservableFree}.
\end{proof}

\begin{proof}[Proof of Proposition~\ref{prop::ising_hypersle}]
Proposition~\ref{prop::ising_hypersle} is a collection of Lemma~\ref{lem::ising_oplusominus} and Lemma~\ref{lem::ising_oplusominusfree}.
\end{proof}

%%%
\section{Pure Partition Functions of Multiple SLEs}
\label{sec::purepartitionfunctions}
In this section, we will prove Theorem~\ref{thm::purepartition}. 
Recall that the multiple $\SLE$ pure partition functions is the collection $\{\PartF_{\alpha}: \alpha\in\LP\}$ of positive smooth functions $\PartF_{\alpha}: \chamber_{2N}\to\R_+$ for $\alpha\in\LP_N$, satisfying $\PartF_{\emptyset}=1$, PDE~\eqref{eqn::purepartition_PDE}, COV~\eqref{eqn::purepartition_COV}, ASY~\eqref{eqn::purepartition_ASY}, and the power law bound~\eqref{eqn::purepartition_PLB}. The uniqueness part was proved in \cite{FloresKlebanPDE1} and we only discuss the existence part in this section. To state our conclusion, we need to introduce some notations and properties first. 

Fix the constants in this section: 
\[\kappa\in (0,6],\quad h=\frac{6-\kappa}{2\kappa}.\]
As the pure partition functions satisfy the conformal covariance, we can define the pure partition function in polygon via conformal image. Suppose $(\Omega; x_1, \ldots, x_{2N})$ is a polygon such that $x_1, \ldots, x_{2N}$ lie on analytic segments of $\partial\Omega$. Define, for $\alpha\in\LP_N$,
\begin{equation}\label{eqn::purepartition_COV_polygon}
\PartF_{\alpha}(\Omega; x_1, \ldots, x_{2N})=\prod_{j=1}^{2N}\varphi'(x_j)^h\times\PartF_{\alpha}(\varphi(x_1),\ldots, \varphi(x_{2N})),
\end{equation} 
where $\varphi$ is any conformal map from $\Omega$ onto $\HH$ with $\varphi(x_1)<\cdots<\varphi(x_{2N})$. 

\smallbreak
Next, we introduce the cascade relation of the pure partition functions. Consider the polygon $(\Omega; x_1, \ldots, x_{2N})$.
Suppose $\alpha=\{\{a_1, b_1\}, \ldots, \{a_{N}, b_{N}\}\}\in\LP_N$, and we may assume $a_j<b_j$ for all $1\le j\le N$.  
For $1\le k\le N$, let $\eta_k$ be an $\SLE_{\kappa}$ in $c\Omega$ from $x_{a_k}$ to $x_{b_k}$. 
Define $\LE_k$ to be the event that $\eta_k\cap (x_{a_k+1}x_{b_k-1})=\emptyset$ and $\eta_k\cap (x_{b_k+1}x_{a_k-1})=\emptyset$. 
On $\LE_k$, consider the set $\Omega\setminus\eta_k$, we denote by $D_k^R$  the connected component having $(x_{a_k+1}x_{b_k-1})$ on the boundary, and denote by $D_k^L$ the connected component having $(x_{b_k+1}x_{a_k-1})$ on the boundary, see Figure~\ref{fig::purepartition_cascade_SYM}(a).
The link $\{a_k, b_k\}$ divides the link pattern $\alpha$ into two sub-link patterns, connecting $\{a_k+1, \ldots, b_k-1\}$ and $\{b_k+1, \ldots, a_{k}-1\}$ respectively. After relabelling the indices, we denote these two link patterns by $\alpha_k^R$ and $\alpha_k^L$. We expect the following \textit{cascade relation} of the pure partition functions:
\begin{align}\label{eqn::purepartition_CAS}
&\PartF_{\alpha}(\Omega; x_1, \ldots, x_{2N})\notag\\
&=H_{\Omega}( x_{a_k}, x_{b_k})^h\E\left[\PartF_{\alpha_k^R}(D_k^R; x_{a_k+1}, \ldots, x_{b_k-1})\times\PartF_{\alpha_k^L}(D_k^L; x_{b_k+1},\ldots, x_{a_k-1})\one_{\LE_k}\right].
\end{align}

We will prove the following proposition in this section.
\begin{proposition}\label{prop::purepartition_existence}
Let $\kappa \in (0,6]$. 
For each $N\ge 1$, there exists a collection $\{\PartF_{\alpha}: \alpha\in \LP_n, n\le N\}$ of smooth
functions $\PartF_\alpha : \chamber_{2n} \to \R_+$, for $\alpha\in\LP_n$, 
satisfying the normalization $\PartF_{\emptyset}=1$, 
PDE~\eqref{eqn::purepartition_PDE}, 
COV~\eqref{eqn::purepartition_COV}, 
ASY~\eqref{eqn::purepartition_ASY},
the power law bound~\eqref{eqn::purepartition_PLB}, 
and the cascade relation~\eqref{eqn::purepartition_CAS}.
\end{proposition}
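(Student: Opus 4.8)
The plan is to construct the functions by induction on $N$, taking the cascade relation~\eqref{eqn::purepartition_CAS} as the \emph{definition} of $\PartF_\alpha$ for $\alpha\in\LP_N$ in terms of the functions already built for link patterns in $\LP_{N-1}$. The base cases are $\PartF_\emptyset=1$ and, for $N=1$, where~\eqref{eqn::purepartition_CAS} forces $\PartF_{\{1,2\}}(\Omega;x_1,x_2)=H_\Omega(x_1,x_2)^h$ (both side patterns are empty and the event $\LE_1$ occurs almost surely), which already meets all requirements. For the inductive step I fix $\alpha\in\LP_N$, choose one link $\{a_k,b_k\}\in\alpha$, and \emph{define} $\PartF_\alpha$ by the right-hand side of~\eqref{eqn::purepartition_CAS}; I then verify positivity, finiteness, COV~\eqref{eqn::purepartition_COV}, smoothness, the bound~\eqref{eqn::purepartition_PLB}, and ASY~\eqref{eqn::purepartition_ASY}, and finally the PDE~\eqref{eqn::purepartition_PDE} together with the independence of the chosen link.

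Most of these properties are inherited from the induction hypothesis together with standard facts about $\SLE_\kappa$, the boundary Poisson kernel and the Brownian loop measure. Positivity and finiteness hold because the integrand is positive on the positive-probability event $\LE_k$ (here $\kappa\le 6$ is used to keep the relevant avoidance and weighting in range) and is dominated, using the inductive bound~\eqref{eqn::purepartition_PLB} in covariant form and the domain monotonicity $H_{D}(\cdot,\cdot)\le H_\Omega(\cdot,\cdot)$ for $D\subset\Omega$, by $\prod_{\{a,b\}\in\alpha}H_\Omega(x_a,x_b)^h$; the same domination yields~\eqref{eqn::purepartition_PLB}. Conformal covariance follows from the conformal invariance of $\SLE_\kappa$ and of the loop measure together with the covariance of $H_\Omega$ and of the smaller functions, after bookkeeping the conformal factors. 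Smoothness is obtained by writing the expectation, via Girsanov and target change (Lemma~\ref{lem::sle_kapparho_mart} and Lemma~\ref{lem::sle_kapparho_targetchanging}), as an $\SLE_\kappa$ expectation with a weight depending smoothly on the marked points; this is what later justifies the It\^o computation. For ASY I would choose, when $\{j,j+1\}\in\alpha$, that very link in~\eqref{eqn::purepartition_CAS}: as $x_j,x_{j+1}\to\xi$ one has $H_\Omega(x_j,x_{j+1})^h\sim(x_{j+1}-x_j)^{-2h}$, the curve degenerates, $\LE_k$ has probability tending to $1$, and the two side-domains converge to $\Omega$ with $\xi$ removed, producing $\PartF_{\hat\alpha}$; when $\{j,j+1\}\notin\alpha$ a comparison argument shows the normalized limit vanishes.

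The heart of the argument, and the main obstacle, is the PDE system~\eqref{eqn::purepartition_PDE}, one equation per index $i$. To treat the equation in $x_i$, I take the link $\{a_k,b_k\}\ni i$ in~\eqref{eqn::purepartition_CAS} and grow the curve $\eta$ from $x_i$. Using target change (Lemma~\ref{lem::sle_kapparho_targetchanging}) I rewrite the expectation over $\SLE_\kappa$ from $x_i$ to $x_{b_k}$ as one over the bare $\SLE_\kappa$ from $x_i$, absorbing the target into a weight that combines with the prefactor $H_\Omega(x_{a_k},x_{b_k})^h$. Then, by the domain Markov property and the cascade relation applied inside the slit domain $\HH\setminus\eta[0,t]$, the quantity
\[M_t:=\prod_{j\ne i} g_t'(x_j)^h\,\PartF_\alpha\bigl(g_t(x_1),\ldots,W_t,\ldots,g_t(x_{2N})\bigr)\]
is a conditional expectation of a fixed terminal random variable, hence a local martingale; It\^o's formula then identifies the vanishing of its drift with the $i$-th equation of~\eqref{eqn::purepartition_PDE}. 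The consistency of the analysis near the growing tip is exactly the two-marked-point hypergeometric input: for $N=2$ the two functions in $\LP_2$ are, up to normalization, the hypergeometric partition functions $\PartF_{\kappa,\nu}$ of~\eqref{eqn::hSLE_partition} with $\nu$ chosen so that $b=h$ (cf.\ Proposition~\ref{prop::hypersle_mart}), which solve their null-vector equation by Corollary~\ref{cor::hsle_commutation} and Proposition~\ref{prop::localsle_existence}.

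It remains to show that the construction does not depend on the link used in~\eqref{eqn::purepartition_CAS}; this both makes $\PartF_\alpha$ well defined and lets all $2N$ equations of~\eqref{eqn::purepartition_PDE} hold for the single function $\PartF_\alpha$, since the equation at index $i$ is obtained from the link containing $i$. Independence follows from Dub\'edat's commutation relation underlying Corollary~\ref{cor::hsle_commutation}: growing the two curves attached to two different links, in either order, yields the same law, so the two cascade expansions agree; I would establish this for a pair of links and propagate it through the recursion. I expect the PDE step---checking that the target-change weight combines with the hypergeometric tip contribution to annihilate the full second-order operator---to be the most delicate part, and the place where the properties of $\hSLE$ are indispensable.
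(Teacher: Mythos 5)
Your overall strategy coincides with the paper's own proof of Proposition~\ref{prop::purepartition_existence}: induction on $N$, the cascade relation~\eqref{eqn::purepartition_cascade_def} taken as the definition, COV/PLB/ASY inherited via the monotonicity of the Poisson kernel (this is where $\kappa\le 6$, i.e.\ $h\ge 0$, enters), a Girsanov/target-change martingale argument for the PDE, and independence of the chosen link as the pivotal step. However, two of your steps have genuine gaps, and they are exactly the two places where the paper has to work hardest. The first is smoothness. You propose to get smoothness by writing the cascade expectation ``as an $\SLE_\kappa$ expectation with a weight depending smoothly on the marked points,'' and you then use this smoothness to justify It\^o's formula. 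This fails as stated: the integrand contains the indicator $\one_{\LE_k}$ and the random domains $D_k^L, D_k^R$, which are not smooth functionals, so differentiating the expectation in the marked points is unjustified; and applying It\^o to $\PartF_\alpha^{(k)}$ classically requires precisely the $C^2$ regularity you are trying to prove, so the logic is circular. The paper's route (Lemma~\ref{lem::purepartition_cascade_PDE_first}) avoids this: the local-martingale property is obtained purely probabilistically (a Radon--Nikodym martingale times a conditional expectation of a fixed terminal variable), this gives PDE~\eqref{eqn::purepartition_PDE} only in the distributional sense, and then smoothness and the classical PDE follow from hypoellipticity of the operator, citing Dub\'edat's localization lemma (see also \cite[Proposition~2.5]{PeltolaWuGlobalMultipleSLEs}). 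Some such hypoellipticity input is indispensable and is absent from your outline.

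The second gap is link-independence. You attribute it to Dub\'edat's commutation relation alone (``growing the two curves in either order yields the same law''). But Corollary~\ref{cor::hsle_commutation} is a statement about \emph{localized initial segments}, whereas the two cascade expansions $\PartF_\alpha^{(k)}$ and $\PartF_\alpha^{(n)}$ are expectations over \emph{entire} curves; local commutation by itself does not identify the two global sampling procedures. The paper's proof of Lemma~\ref{lem::purepartition_cascade_SYM} bridges local and global with two further ingredients: Proposition~\ref{prop::hypersle_mart}, which says the Poisson-kernel martingale is uniformly integrable, so that reweighting the $\SLE_\kappa$ curve $\eta_k$ by $H_{D_k^R}(x_{a_n},x_{b_n})^h$ turns its law exactly into $\hSLE_\kappa$; and Proposition~\ref{prop::hsle_symmetry}, the existence, \emph{uniqueness} and symmetry of the global $2$-$\SLE_\kappa$ together with the identification of its marginal as $\hSLE_\kappa$ (whose proof for $\kappa\in(4,6]$ invokes global uniqueness results from the literature and the continuity/transience of $\hSLE$ from Proposition~\ref{prop::hyperSLE}, not just commutation). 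It is this global uniqueness, applied to the measure ``sample $\eta_k$ as $\hSLE_\kappa$, then $\eta_n$ as $\SLE_\kappa$ in $D_k^R$,'' that makes the expansion symmetric in $k$ and $n$. Without this local-to-global step your key claim---and hence the full PDE system, which you derive one index at a time from it---remains unproved.
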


This proposition is different from Theorem~\ref{thm::purepartition}. In fact, S. Flores and P. Kleban proved in \cite{FloresKlebanPDE1} that the collection of smooth functions $\{\PartF_{\alpha}: \alpha\in \LP\}$ satisfying the normalization
$\PartF_\emptyset = 1$ and PDE~\eqref{eqn::purepartition_PDE}, COV~\eqref{eqn::purepartition_COV}, ASY~\eqref{eqn::purepartition_ASY}
and the power law bound~\eqref{eqn::purepartition_PLB} is unique (if exists). 
In Proposition~\ref{prop::purepartition_existence}, we also require the cascade relation in the statement. In fact, the cascade relation plays an essential role during our proof. 

\smallbreak
Before we proceed, we collect some basic properties here. With the general definition of $\PartF_{\alpha}$ in~\eqref{eqn::purepartition_COV_polygon}, we can rewrite ASY~\eqref{eqn::purepartition_ASY} as follows: for all $\alpha \in \LP_N$ and for all $j \in \{1, \ldots, 2N\}$ and $\xi \in (x_{j-1}x_{j+2})$, 
\begin{align}\label{eqn::purepartition_ASY_polygon}
\lim_{x_j , x_{j+1} \to \xi} 
\frac{\PartF_\alpha(\Omega; x_1 , \ldots , x_{2N})}{H_{\Omega}(x_j, x_{j+1})^h} 
=\begin{cases}
0 \quad &
    \text{if } \{j,j+1\} \notin \alpha \\
\PartF_{\hat{\alpha}}(\Omega; x_{1},\ldots,x_{j-1},x_{j+2},\ldots,x_{2N}) &
    \text{if } \{j,j+1\} \in \alpha
\end{cases}
\end{align}
where
$\hat{\alpha} = \alpha \removal\{j,j+1\} \in \LP_{N-1}$.

For convenience, we define the power law bound function: for $\alpha=\{\{a_1, b_1\}, \ldots, \{a_N, b_N\}\}\in\LP_N$, 
\[\LB_{\alpha}(\Omega; x_1, \ldots, x_{2N})=\prod_{j=1}^N H_{\Omega}(x_{a_j}, x_{b_j})^h,\]
where $H_{\Omega}(x,y)$ is the boundary Poisson kernel, then the power law bound~\eqref{eqn::purepartition_PLB} can be written as follows:
\begin{equation}\label{eqn::purepartition_PLB_polygon}
0<\PartF_{\alpha}(\Omega; x_1, \ldots, x_{2N})\le \LB_{\alpha}(\Omega; x_1, \ldots, x_{2N}).
\end{equation}

The boundary Poisson kernel has monotonicity: suppose $(\Omega; x, y)$ is a Dobrushin domain such that $x,y$ lie on analytic segments of $\partial\Omega$, and suppose $U\subset\Omega$ is simply connected and agrees with $\Omega$ is neighborhoods of $x$ and $y$, then 
%\begin{equation}\label{eqn::poisson_mono}
$H_{U}(x,y)\le H_{\Omega}(x,y)$.
%\end{equation}
As a consequence, we have the monotonicity of $\LB_{\alpha}$: suppose $(\Omega; x_1, \ldots, x_{2N})$ is a polygon such that $x_1, \ldots, x_{2N}$ lie on analytic segments of $\partial\Omega$, and suppose $U\subset\Omega$ is simply connected and agrees with $\Omega$ in neighborhoods of $\{x_1, \ldots, x_{2N}\}$, then, for any $\alpha\in\LP_N$,
\begin{equation}\label{eqn::balpha_mono}
\LB_{\alpha}(U; x_1, \ldots, x_{2N})\le \LB_{\alpha}(\Omega; x_1, \ldots, x_{2N}),
\end{equation}
because $h\ge 0$ when $\kappa\le 6$.

%%%
\subsection{Proof of Proposition~\ref{prop::purepartition_existence}}

We will prove the existence in Proposition~\ref{prop::purepartition_existence} by induction on $N$. It is immediate to check the existence for $N=1$ and $N=2$. 
When $N=1$, for $x<y$ and  
$\vcenter{\hbox{\includegraphics[scale=0.3]{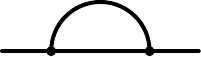}}} = \{\{1,2\}\}$,
\[ \PartF_{\vcenter{\hbox{\includegraphics[scale=0.2]{figures/link-0.pdf}}}}(x,y) 
= (y-x)^{-2h} . \]
When $N=2$, we obtain 
for $\vcenter{\hbox{\includegraphics[scale=0.3]{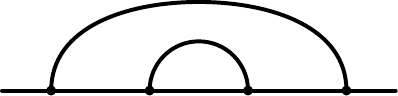}}} = \{\{1,4\}, \{2,3\} \}$ 
and $\vcenter{\hbox{\includegraphics[scale=0.3]{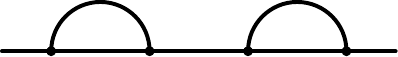}}} = \{\{1,2\}, \{3,4\}\}$, 
and for $x_1<x_2<x_3<x_4$,
\begin{align*}
\PartF_{\vcenter{\hbox{\includegraphics[scale=0.2]{figures/link-2.pdf}}}} (x_1,x_2,x_3,x_4)
= \; & (x_4-x_1)^{-2h}(x_3-x_2)^{-2h}z^{2/\kappa}F(z) , \\ 
\PartF_{\vcenter{\hbox{\includegraphics[scale=0.2]{figures/link-1.pdf}}}} (x_1,x_2,x_3,x_4)
= \; &  (x_2-x_1)^{-2h}(x_4-x_3)^{-2h}(1-z)^{2/\kappa}F(1-z),
\end{align*}
where $z$ is the cross-ratio and $F$ is the hypergeometric function as in~\eqref{eqn::hSLE_partition}:
\[z=\frac{(x_2-x_1)(x_4-x_3)}{(x_4-x_2)(x_3-x_1)},\quad F(z):=\hF\left(\frac{4}{\kappa}, 1-\frac{4}{\kappa}, \frac{8}{\kappa}; z\right).\]

Suppose the collection of pure partition functions exists up to $N$, and we consider $\LP_{N+1}$. Suppose $\alpha=\{\{a_1, b_1\}, \ldots, \{a_{N+1}, b_{N+1}\}\}$, and we may assume $a_j<b_j$ for all $1\le j\le N+1$. Consider the polygon $(\Omega; x_1, \ldots, x_{2N+2})$. 
For $1\le k\le N+1$, let $\eta_k$ be an $\SLE_{\kappa}$ in $\Omega$ from $x_{a_k}$ to $x_{b_k}$. We define $D_k^R, D_k^L, \alpha_k^R, \alpha_k^L$ in the same way as before: we denote by $D_k^R$  the connected component having $(x_{a_k+1}x_{b_k-1})$ on the boundary, and denote by $D_k^L$ the connected component having $(x_{b_k+1}x_{a_k-1})$ on the boundary.
The link $\{a_k, b_k\}$ divides the link pattern $\alpha$ into two sub-link patterns, connecting $\{a_k+1, \ldots, b_k-1\}$ and $\{b_k+1, \ldots, a_{k}-1\}$ respectively. After relabelling the indices, we denote these two link patterns by $\alpha_k^R$ and $\alpha_k^L$. 
As the pure partition functions exist up to $N$, the following two functions are well-defined: 
\[\PartF_{\alpha_k^R}(D_k^R; x_{a_k+1}, x_{a_k+2}, \ldots, x_{b_k-1}),\quad \PartF_{\alpha_k^L}(D_k^L; x_{b_k+1}, x_{b_k+2},\ldots, x_{a_k-1}). \] 
Then, we define 
\begin{align}\label{eqn::purepartition_cascade_def}
&\PartF_{\alpha}^{(k)}(\Omega; x_1, \ldots, x_{2N+2})\notag\\
&=H_{\Omega}( x_{a_k}, x_{b_k})^h\E\left[\PartF_{\alpha_k^R}(D_k^R; x_{a_k+1}, \ldots, x_{b_k-1})\times\PartF_{\alpha_k^L}(D_k^L; x_{b_k+1},\ldots, x_{a_k-1})\one_{\LE_k}\right].
\end{align}
When $\Omega=\HH$, we denote $\PartF_{\alpha}^{(k)}(\HH; x_1, \ldots, x_{2N+2})$ by $\PartF_{\alpha}^{(k)}(x_1, \ldots, x_{2N+2})$. 
At this point, the definition of $\PartF_{\alpha}^{(k)}$ depends on the choice of $k\in\{1, \ldots, N+1\}$, but we will show that, in fact, the function defined in~\eqref{eqn::purepartition_cascade_def} does not depend on the choice of $k$. 

\begin{lemma}\label{lem::purepartition_cascade_SYM}
Suppose Propsition~\ref{prop::purepartition_existence} holds up to $N$. The function $\PartF_{\alpha}^{(k)}(\Omega; x_1, \ldots, x_{2N+2})$ defined in~\eqref{eqn::purepartition_cascade_def} does not depend on the choice of $k$.
\end{lemma}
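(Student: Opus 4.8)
The plan is to argue by induction on $N$ exactly as the statement is organized: assuming Proposition~\ref{prop::purepartition_existence} (in particular cascade-independence) through level $N$, I prove that for the level-$(N+1)$ pattern $\alpha$ the two definitions $\PartF_\alpha^{(k)}$ and $\PartF_\alpha^{(k')}$ agree for \emph{any} two indices $k\neq k'$, which gives independence of $k$. Since both sides transform identically under COV~\eqref{eqn::purepartition_COV_polygon}, I first reduce to $\Omega=\HH$. Fix $k\neq k'$; because $\alpha$ is a link pattern the links $\{a_k,b_k\}$ and $\{a_{k'},b_{k'}\}$ are non-crossing, so the two endpoints of link $k'$ lie on a single one of the arcs $(x_{a_k+1}x_{b_k-1})$, $(x_{b_k+1}x_{a_k-1})$; say link $k'$ belongs to the sub-pattern $\alpha_k^R$, so that it can be resolved inside $D_k^R$.

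The key step is to rewrite $\PartF_\alpha^{(k)}$ as a symmetric double expectation over two independent curves. Starting from~\eqref{eqn::purepartition_cascade_def}, I expand the inner factor $\PartF_{\alpha_k^R}(D_k^R;\cdots)$ using the induction hypothesis, choosing to resolve it through link $k'$; this samples an $\SLE_\kappa$ curve $\eta_{k'}$ inside the random domain $D_k^R$, weighted by $H_{D_k^R}(x_{a_{k'}},x_{b_{k'}})^h$, an indicator, and the product of the still-lower partition functions over the pieces of $D_k^R\setminus\eta_{k'}$. I then apply the boundary perturbation (restriction) property of $\SLE_\kappa$, in the form of Lemma~\ref{lem::sle_domain_mart} with $\rho=0$ (so $b=h$), to transfer the curve from $D_k^R$ to $\HH$: writing $\ell_{k'}=\{x_{a_{k'}},x_{b_{k'}}\}$ and using $H_{D_k^R}(x,y)=\varphi'(x)\varphi'(y)H_{\HH}(x,y)$ to cancel the restriction Jacobian, one gets
\[
H_{D_k^R}(\ell_{k'})^h\,\E_{\eta_{k'}\sim\SLE(D_k^R)}[\,\cdot\,]
=H_{\HH}(\ell_{k'})^h\,\E_{\eta_{k'}\sim\SLE(\HH)}\Big[\one_{\{\eta_{k'}\subset D_k^R\}}e^{c\,\mu(\HH;\eta_{k'},\HH\setminus D_k^R)}\;\cdot\;\Big].
\]
A short topological remark identifies the loop term: since $D_k^R$ is a connected component of $\HH\setminus\eta_k$ whose relative boundary is $\eta_k$, any Brownian loop meeting $\eta_{k'}\subset D_k^R$ and $\HH\setminus D_k^R$ must cross $\eta_k$, whence $\mu(\HH;\eta_{k'},\HH\setminus D_k^R)=\mu(\HH;\eta_{k'},\eta_k)$.

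Carrying out this replacement turns $\PartF_\alpha^{(k)}$ into
\[
H_{\HH}(\ell_k)^hH_{\HH}(\ell_{k'})^h\,\E\Big[\one_{\mathcal{G}}\,e^{c\,\mu(\HH;\eta_k,\eta_{k'})}\prod_{P}\PartF_{\alpha_P}(P;\cdots)\Big],
\]
where $\eta_k,\eta_{k'}$ are independent chordal $\SLE_\kappa$ from $x_{a_k}$ to $x_{b_k}$ and from $x_{a_{k'}}$ to $x_{b_{k'}}$, the event $\mathcal{G}$ records that each curve avoids its two prescribed arcs and that the two curves are disjoint (so that $\eta_{k'}\subset D_k^R$ and in particular $\LE_k$ holds), the product runs over the connected components $P$ of $\HH\setminus(\eta_k\cup\eta_{k'})$ carrying marked points, and $\alpha_P$ is the induced sub-link-pattern. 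Because the links are non-crossing, this final decomposition into pieces, the good event $\mathcal{G}$, the loop weight $e^{c\,\mu(\HH;\eta_k,\eta_{k'})}$, and the prefactor $H_{\HH}(\ell_k)^hH_{\HH}(\ell_{k'})^h$ are all manifestly invariant under exchanging $k$ and $k'$. Running the identical computation from $\PartF_\alpha^{(k')}$ yields the same expression, whence $\PartF_\alpha^{(k)}=\PartF_\alpha^{(k')}$.

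I expect the main obstacle to be the bookkeeping in the second and third steps: verifying that the Poisson-kernel Jacobians cancel exactly, that the indicator events from the two resolutions merge into the single symmetric event $\mathcal{G}$ (in particular that $\{\eta_{k'}\subset D_k^R\}$ together with $\LE_k$ is the same planar constraint read from either side), and that the loop-measure identity holds with the correct components. A secondary point is the range of $\kappa$: Lemma~\ref{lem::sle_domain_mart} and Proposition~\ref{prop::boundary_perturbation} are stated for $\kappa\in(0,4]$, whereas here $\kappa\in(0,6]$. For $\kappa\in(4,6]$ the curves are non-simple, so I would instead invoke the boundary perturbation (restriction) property of chordal $\SLE_\kappa$ in the form valid for all $\kappa\in(0,8)$, and check that on $\mathcal{G}$ the components $D_k^R,D_k^L$ are still well defined and carry the marked points as required.
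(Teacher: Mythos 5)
For $\kappa\in(0,4]$ your argument is correct, and it is genuinely different from the paper's: after the common first step (expanding $\PartF_{\alpha_k^R}(D_k^R;\cdots)$ through the link $k'$ via the induction hypothesis), you symmetrize by transferring $\eta_{k'}$ from the random domain $D_k^R$ to $\HH$ using the restriction-type boundary perturbation formula of Lemma~\ref{lem::sle_domain_mart} (with $\rho=0$, $b=h$), arriving at a manifestly exchange-symmetric expression with weight $\one_{\mathcal{G}}\,e^{c\,\mu(\HH;\eta_k,\eta_{k'})}$. The Poisson-kernel cancellation, the identity $\mu(\HH;\eta_{k'},\HH\setminus D_k^R)=\mu(\HH;\eta_{k'},\eta_k)$, and the component bookkeeping all go through on the disjointness event, since for $\kappa\le 4$ the curves are simple and a.s. at positive distance. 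This is essentially the Brownian-loop-measure route of \cite{PeltolaWuGlobalMultipleSLEs}, whereas the paper instead weights the \emph{outer} curve $\eta_k$ by $H_{D_k^R}(x_{a_{k'}},x_{b_{k'}})^h$, which by Proposition~\ref{prop::hypersle_mart} turns its law into $\hSLE_{\kappa}$, and then invokes the exchange symmetry of the unique global $2$-$\SLE_{\kappa}$ of Proposition~\ref{prop::hsle_symmetry}.

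The genuine gap is the range $\kappa\in(4,6]$, and your proposed repair does not work: there is no boundary perturbation formula of this type for $\kappa>4$, because the two laws being compared are not mutually absolutely continuous. For $\kappa\in(4,8)$, chordal $\SLE_{\kappa}$ in $D_k^R$ touches the part of $\partial D_k^R$ formed by $\eta_k$ with positive probability --- indeed the cascade event in the induction hypothesis explicitly \emph{allows} $\eta_{k'}\cap\eta_k\neq\emptyset$ when the links are adjacent ($a_{k'}=a_k+1$ or $b_{k'}=b_k-1$) --- while under $\SLE_{\kappa}$ in $\HH$ restricted to $\{\eta_{k'}\subset D_k^R\}$ such touching configurations carry zero mass; no Radon--Nikodym derivative can reconcile a positive-probability event with a null event. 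Two further symptoms of the same failure: your event $\mathcal{G}$ imposes disjointness, so for $\kappa\in(4,6]$ it is not even the correct event appearing in the cascade relation; and on touching configurations $\mu(\HH;\eta_k,\eta_{k'})=\infty$ with $c=(3\kappa-8)(6-\kappa)/(2\kappa)>0$ for $\kappa\in(4,6)$, so the weight $e^{c\mu}$ blows up. This is precisely the obstruction the paper is designed to avoid: Proposition~\ref{prop::hypersle_mart} (weighting by a Poisson-kernel power to produce $\hSLE_\kappa$) holds for all $\kappa\in(0,8)$, and the symmetry is then extracted not from a loop-measure identity but from the uniqueness of the global $2$-$\SLE_\kappa$ in Proposition~\ref{prop::hsle_symmetry}, whose defining property (each conditional law given the other curve is chordal $\SLE_\kappa$) is symmetric by inspection and whose uniqueness for $\kappa\in(4,6]$ rests on GFF/Markov-chain arguments. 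Without replacing your step by something of this kind, your proof establishes the lemma only for $\kappa\le 4$, which is exactly the regime already covered by \cite{PeltolaWuGlobalMultipleSLEs} and falls short of Theorem~\ref{thm::purepartition}.
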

This lemma is the one that we need to use properties of hypergeometric SLE. We leave its proof to Section~\ref{subsec::proof_SYM}. 
Next, we show that the functions $\PartF_{\alpha}^{(k)}$ satisfy all the requirements in Proposition~\ref{prop::purepartition_existence} one by one in Lemmas~\ref{lem::purepartition_cascade_PDE_first} to~\ref{lem::purepartition_cascade_PLB}. 

\begin{lemma}\label{lem::purepartition_cascade_PDE_first}
Suppose Proposition~\ref{prop::purepartition_existence} holds up to $N$. The function $\PartF_{\alpha}^{(k)}(x_1, \ldots, x_{2N+2})$ defined in~\eqref{eqn::purepartition_cascade_def} is smooth and satisfies PDE~\eqref{eqn::purepartition_PDE} with $i=a_k$ and $i=b_k$. 
\end{lemma}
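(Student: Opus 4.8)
The plan is to establish PDE~\eqref{eqn::purepartition_PDE} at $i=a_k$ first, and then deduce the case $i=b_k$ by a symmetry argument; throughout I would work in $\Omega=\HH$, reducing the general polygon to this case through the conformal covariance already built into the defining formula~\eqref{eqn::purepartition_cascade_def}. The guiding principle is the correspondence (the same one underlying Proposition~\ref{prop::localsle_existence}) between positive solutions of the second-order equation at an index and local martingales for the chordal $\SLE_\kappa$ started at that index: if $\PartF_\alpha^{(k)}$ solves~\eqref{eqn::purepartition_PDE} with $i=a_k$, then for the plain $\SLE_\kappa$ from $x_{a_k}$ to $\infty$, with driving function $W_t$ and maps $g_t$, the process
\[ M_t = \prod_{j\ne a_k} g_t'(x_j)^h\, \PartF_\alpha^{(k)}\big(W_t, (g_t(x_j))_{j\ne a_k}\big) \]
must be a local martingale, and conversely the vanishing of its Itô drift is exactly the PDE (the potential terms $-2h/(x_j-x_{a_k})^2$ arising precisely from the logarithmic derivative of the weights $g_t'(x_j)^h$ under the Loewner flow). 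So the first and main task is to exhibit $M_t$ as a genuine martingale.

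To this end I would represent $M_t$ as a conditional expectation. Write $\Psi = \PartF_{\alpha_k^R}(D_k^R;\cdots)\,\PartF_{\alpha_k^L}(D_k^L;\cdots)\,\one_{\LE_k}$ for the curve functional in~\eqref{eqn::purepartition_cascade_def}, so that $\PartF_\alpha^{(k)} = H_\HH(x_{a_k},x_{b_k})^h\,\E[\Psi]$ with the expectation taken over $\eta_k\sim\SLE_\kappa$ from $x_{a_k}$ to $x_{b_k}$ (equivalently $\SLE_\kappa(\kappa-6)$ with force point $x_{b_k}$). I would pass from this curve to the plain $\SLE_\kappa$ aimed at $\infty$ via the target-change Radon–Nikodym derivative of Lemma~\ref{lem::sle_kapparho_mart}, namely $N_t = g_t'(x_{b_k})^h(g_t(x_{b_k})-W_t)^{-2h}$, whose initial value $N_0=(x_{b_k}-x_{a_k})^{-2h}$ is exactly $H_\HH(x_{a_k},x_{b_k})^h$; this rewrites $\PartF_\alpha^{(k)}$ as an expectation of $\Psi$ against the target-change martingale, and hence realizes $M_t$ as the conditional expectation of a fixed integrable random variable for the plain $\SLE_\kappa$. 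The identity $M_t=\prod_{j\ne a_k}g_t'(x_j)^h\,\PartF_\alpha^{(k)}(\cdots)$ would then follow from the domain Markov property and conformal invariance of $\SLE_\kappa$ together with the covariance COV~\eqref{eqn::purepartition_COV} of the lower-level functions $\PartF_{\alpha_k^R},\PartF_{\alpha_k^L}$ (valid by induction): mapping the tip domain $H_t=\HH\setminus\eta_k[0,t]$ back to $\HH$ by $g_t$ reconstitutes the $\HH$-value of $\PartF_\alpha^{(k)}$ at the image points, times the weight. With $M_t$ recognized as a closed martingale I would deduce that $\PartF_\alpha^{(k)}$ is a distributional solution of the operator $\mathcal{L}_{a_k}$ in~\eqref{eqn::purepartition_PDE}, and obtain smoothness from Hörmander's hypoellipticity theorem: the second-order part of $\mathcal{L}_{a_k}$ is $\tfrac{\kappa}{2}\partial_{a_k}^2$, and the iterated brackets of $\partial_{a_k}$ with the first-order field $\sum_{j\ne a_k}\tfrac{2}{x_j-x_{a_k}}\partial_{x_j}$ yield the fields $\sum_{j\ne a_k}(x_j-x_{a_k})^{-m}\partial_{x_j}$, which span all coordinate directions by a Vandermonde argument, so $\mathcal{L}_{a_k}$ is hypoelliptic. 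Hence $\PartF_\alpha^{(k)}$ is smooth and the PDE holds classically.

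The case $i=b_k$ I would then obtain for free: the functional $\Psi$, the prefactor $H_\HH(x_{a_k},x_{b_k})^h$, the domains $D_k^R,D_k^L$ and the link patterns $\alpha_k^R,\alpha_k^L$ are all unchanged when $\eta_k$ is viewed as running from $x_{b_k}$ to $x_{a_k}$, and by the reversibility of $\SLE_\kappa$ for $\kappa\in(0,8)$ the reversed curve is again $\SLE_\kappa$ from $x_{b_k}$ to $x_{a_k}$. Thus $\PartF_\alpha^{(k)}$ admits the identical cascade representation with the roles of $a_k$ and $b_k$ exchanged, and the argument above, applied now at the starting point $b_k$, gives~\eqref{eqn::purepartition_PDE} with $i=b_k$. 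The main obstacle in this program is the martingale identification of the second paragraph: one must control carefully the bookkeeping among the target-change factor $N_t$, the boundary Poisson kernel prefactor, and the covariance weights, and in particular justify the integrability and the limiting behaviour of the weighting as $\eta_k$ is absorbed at $x_{b_k}$, as well as the passage from the martingale property to the distributional PDE before smoothness is available.
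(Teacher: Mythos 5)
Your proposal is correct and follows essentially the same route as the paper's proof: the same conditional-expectation representation of the cascade formula, the same target-change factor $N_t=g_t'(x_{b_k})^h(g_t(x_{b_k})-W_t)^{-2h}$ to transfer the martingale property to the chordal $\SLE_{\kappa}$ aimed at $\infty$, the same passage from the (local) martingale property to the PDE in the distributional sense followed by hypoellipticity of the operator to upgrade to a smooth classical solution, and the same appeal to reversibility of $\SLE_{\kappa}$ to handle $i=b_k$. The only cosmetic difference is that you propose verifying H\"{o}rmander's bracket condition directly, whereas the paper cites the known hypoellipticity result of Dub\'{e}dat.
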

\begin{proof}
We only prove the conclusion for $i=a_k$ and the case when $i=b_k$ can be proved similarly as $\SLE$ is reversible. 

Recall that, in the definition of $\PartF_{\alpha}^{(k)}$, the curve $\eta_k$ is an $\SLE_{\kappa}$ in $\HH$ from $x_{a_k}(=x_i)$ to $x_{b_k}$. We parametrize $\eta_k$ by the half-plane capacity and denote by $(g_t, t\ge 0)$ the corresponding conformal maps in the Loewner chain. Let us calculate the conditional expectation $\E\left[\PartF_{\alpha_k^R}(D_k^R)\times\PartF_{\alpha_k^L}(D_k^L)\cond \eta_k[0,t]\right]$ for small $t>0$. By the conformal invariance of $\PartF_{\alpha_k^L}$ and $\PartF_{\alpha_k^R}$ in the hypothesis, we have
\begin{align*}
&\E\left[\PartF_{\alpha_k^R}(D_k^R; x_{i+1}, \ldots, x_{b_k-1})\times\PartF_{\alpha_k^L}(D_k^L; x_{b_k+1},\ldots, x_{i-1})\one_{\LE_k}\cond \eta_k[0,t]\right]\\
&=\prod_{j\neq i, b_k}g_t'(x_j)^h
\times\E\left[\PartF_{\alpha_k^R}\left(g_t(D_k^R); g_t(x_{i+1}), \ldots, g_t(x_{b_k-1})\right)\times\PartF_{\alpha_k^L}\left(g_t(D_k^L); g_t(x_{b_k+1}),\ldots, g_t(x_{i-1})\right)\one_{\LE_k}\cond \eta_k[0,t]\right]\\
&=\prod_{j\neq i, b_k}g_t'(x_j)^h\times\frac{\PartF_{\alpha}^{(k)}(g_t(x_1),\ldots, g_t(x_{i-1}), W_t, g_t(x_{i+1}), \ldots, g_t(x_{2N+2}))}{(g_t(x_{b_k})-W_t)^{-2h}}\\
&=\prod_{j\neq i}g_t'(x_j)^h\times\PartF_{\alpha}^{(k)}(g_t(x_1),\ldots, g_t(x_{i-1}), W_t, g_t(x_{i+1}), \ldots, g_t(x_{2N+2}))/N_t,
\end{align*}
where 
$N_t=g_t'(x_{b_k})^h(g_t(x_{b_k})-W_t)^{-2h}$.

Suppose $\gamma$ is an $\SLE_{\kappa}$ from $x_i$ to $\infty$. 
By Lemmas~\ref{lem::sle_kapparho_targetchanging} and~\ref{lem::sle_kapparho_mart}, the law of $\eta_k$ is the same as 
the law of $\gamma$ weighted by $N_t$. Therefore, the following functions is a local martingale for $\gamma$: 
\[\prod_{j\neq i}g_t'(x_j)^h\times\PartF_{\alpha}^{(k)}(g_t(x_1),\ldots, g_t(x_{i-1}), W_t, g_t(x_{i+1}), \ldots, g_t(x_{2N+2})).\]
By It\^{o}'s formula, the function $\PartF_{\alpha}^{(k)}$ satisfies the PDE~\eqref{eqn::purepartition_PDE} with $i=a_k$ in the distribution sense. By \cite[Lemma~5]{DubedatSLEVirasoroLocalization} (see also \cite[Proposition~2.5]{PeltolaWuGlobalMultipleSLEs}), the operator 
\[\frac{\kappa}{2}\partial^2_i + \sum_{j\neq i}\left(\frac{2}{x_{j}-x_{i}}\partial_j - 
\frac{2h}{(x_{j}-x_{i})^{2}}\right)\]
in PDE~\eqref{eqn::purepartition_PDE} is hypoelliptic. 
Therefore, the function $\PartF_{\alpha}^{(k)}$ is a smooth solution to the PDE~\eqref{eqn::purepartition_PDE} with $i=a_k$. 
\end{proof}

\begin{lemma}\label{lem::purepartition_cascade_PDE}
Suppose Propsition~\ref{prop::purepartition_existence} holds up to $N$. The function $\PartF_{\alpha}^{(k)}(x_1, \ldots, x_{2N+2})$ defined in~\eqref{eqn::purepartition_cascade_def} is smooth and satisfies the PDE system~\eqref{eqn::purepartition_PDE} of $2N+2$ partial differential equations. 
\end{lemma}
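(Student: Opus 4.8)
The plan is to deduce the full system from the two preceding lemmas with essentially no further analysis. The point is that Lemma~\ref{lem::purepartition_cascade_PDE_first} on its own yields only two of the $2N+2$ equations for a fixed representative $\PartF_{\alpha}^{(k)}$ (namely those with $i=a_k$ and $i=b_k$), and the remaining equations come for free once one invokes the $k$-independence established in Lemma~\ref{lem::purepartition_cascade_SYM}.

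First I would record the combinatorial fact that, since $\alpha=\{\{a_1,b_1\},\ldots,\{a_{N+1},b_{N+1}\}\}$ is a link pattern on $\{1,\ldots,2N+2\}$, the collection of endpoints $\{a_k,b_k:1\le k\le N+1\}$ is precisely the full index set $\{1,\ldots,2N+2\}$, and each index $i$ belongs to exactly one link $\{a_k,b_k\}$. Next, by Lemma~\ref{lem::purepartition_cascade_SYM} the functions $\PartF_{\alpha}^{(1)},\ldots,\PartF_{\alpha}^{(N+1)}$ all coincide, so I may denote their common value by $\PartF_{\alpha}$. Fixing an arbitrary $i\in\{1,\ldots,2N+2\}$, let $k$ be the unique index with $i\in\{a_k,b_k\}$; applying Lemma~\ref{lem::purepartition_cascade_PDE_first} to this particular $k$ shows that $\PartF_{\alpha}^{(k)}=\PartF_{\alpha}$ satisfies PDE~\eqref{eqn::purepartition_PDE} with that value of $i$. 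Since $i$ was arbitrary, $\PartF_{\alpha}$ satisfies all $2N+2$ equations. Smoothness is already supplied by Lemma~\ref{lem::purepartition_cascade_PDE_first} through the hypoellipticity of the operator, so nothing additional is required there.

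There is no genuine obstacle internal to this lemma: all the substance has been front-loaded into Lemmas~\ref{lem::purepartition_cascade_SYM} and~\ref{lem::purepartition_cascade_PDE_first}. The truly hard ingredient is the $k$-independence of Lemma~\ref{lem::purepartition_cascade_SYM}, which is exactly where the properties of hypergeometric SLE (target-independence and the conditional-law identities of Section~\ref{sec::hypersle}) enter. Conditional on that input, the present statement is a short combination, and the only care needed is to verify once more that the link structure of $\alpha$ really does exhaust every index so that no equation of the system is missed.
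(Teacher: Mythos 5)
Your proposal is correct and follows exactly the paper's own argument: the paper likewise combines Lemma~\ref{lem::purepartition_cascade_PDE_first} (the equations with $i=a_k,b_k$) with the $k$-independence from Lemma~\ref{lem::purepartition_cascade_SYM}, using the fact that the links of $\alpha$ exhaust all indices in $\{1,\ldots,2N+2\}$, so that every equation of the system is covered. Your explicit remark that each index lies in exactly one link is the same (implicitly used) combinatorial point, and smoothness is inherited from Lemma~\ref{lem::purepartition_cascade_PDE_first} in both treatments.
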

\begin{proof}
In Lemma~\ref{lem::purepartition_cascade_PDE_first}, we have shown that $\PartF_{\alpha}^{(k)}$ satisfies PDE~\eqref{eqn::purepartition_PDE} with $i=a_k, b_k$. By Lemma~\ref{lem::purepartition_cascade_SYM}, we know that $\PartF_{\alpha}^{(k)}=\PartF_{\alpha}^{(n)}$ for any $n\neq k$, combining with Lemma~\ref{lem::purepartition_cascade_PDE_first}, we know that $\PartF_{\alpha}^{(k)}=\PartF_{\alpha}^{(n)}$ also satisfies PDE~\eqref{eqn::purepartition_PDE} with $i=a_n, b_n$. This completes the proof. 
\end{proof}

\begin{lemma}\label{lem::purepartition_cascade_COV}
Suppose Propsition~\ref{prop::purepartition_existence} holds up to $N$. The function $\PartF_{\alpha}^{(k)}(x_1, \ldots, x_{2N+2})$ defined in~\eqref{eqn::purepartition_cascade_def} satisfies COV~\eqref{eqn::purepartition_COV}.
\end{lemma}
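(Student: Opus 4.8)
The plan is to read the covariance off directly from the cascade definition~\eqref{eqn::purepartition_cascade_def}, using that every ingredient on its right-hand side transforms covariantly: the boundary Poisson kernel $H_\Omega$, the lower-order functions $\PartF_{\alpha_k^R}$ and $\PartF_{\alpha_k^L}$ (by the induction hypothesis), and the law of $\eta_k$ (by conformal invariance of $\SLE_\kappa$). It is cleanest to establish the general conformal covariance~\eqref{eqn::purepartition_COV_polygon} for $\PartF_\alpha^{(k)}(\Omega; \cdots)$ and to note that COV~\eqref{eqn::purepartition_COV} is the special case $\Omega = \HH$ with $\varphi$ a M\"{o}bius map of $\HH$.

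First I would fix the map $\varphi$ and push the whole construction forward by $\varphi$. Since $\SLE_\kappa$ in $\HH$ from $x_{a_k}$ to $x_{b_k}$ is conformally invariant, the image $\varphi(\eta_k)$ has the law of $\SLE_\kappa$ in $\HH$ from $\varphi(x_{a_k})$ to $\varphi(x_{b_k})$; the avoidance event $\LE_k$ is preserved, as it is defined through disjointness of $\eta_k$ from boundary arcs, which is a conformal invariant; and the relevant connected components satisfy $\varphi(D_k^R) = \tilde{D}_k^R$ and $\varphi(D_k^L) = \tilde{D}_k^L$, the analogous components for the pushed-forward curve. Hence the expectation defining $\PartF_\alpha^{(k)}(\HH; \varphi(x_1), \ldots, \varphi(x_{2N+2}))$ may be rewritten as an expectation over the original curve $\eta_k$.

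Second I would substitute the covariance of the ingredients. The boundary Poisson kernel obeys $H_\HH(\varphi(x_{a_k}), \varphi(x_{b_k})) = \varphi'(x_{a_k})^{-1}\varphi'(x_{b_k})^{-1} H_\HH(x_{a_k}, x_{b_k})$, while the induction hypothesis applied to $\varphi \colon D_k^R \to \varphi(D_k^R)$ and $\varphi \colon D_k^L \to \varphi(D_k^L)$ produces factors $\prod \varphi'(x_j)^{-h}$ over the two boundary arcs. These derivative factors are deterministic, being evaluated at the fixed marked points, so they pull outside the expectation. Collecting everything, the Poisson kernel contributes $\varphi'(x_{a_k})^{-h}\varphi'(x_{b_k})^{-h}$ and the two lower-order functions contribute $\prod_{j \neq a_k, b_k}\varphi'(x_j)^{-h}$; together they give exactly $\prod_{j=1}^{2N+2}\varphi'(x_j)^{-h}$, so that
\[
\PartF_\alpha^{(k)}(\HH; \varphi(x_1), \ldots, \varphi(x_{2N+2})) = \prod_{j=1}^{2N+2}\varphi'(x_j)^{-h}\, \PartF_\alpha^{(k)}(\HH; x_1, \ldots, x_{2N+2}),
\]
which is COV~\eqref{eqn::purepartition_COV}.

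There is no serious obstacle here; the only point to watch is the bookkeeping of the conformal factors, namely verifying that the $2N$ arc points together with the two endpoints $x_{a_k}, x_{b_k}$ account for all $2N+2$ indices exactly once, so that the exponents combine into a single product over $\{1, \ldots, 2N+2\}$. This index count (and hence the covariance) is insensitive to the choice of $k$ by Lemma~\ref{lem::purepartition_cascade_SYM}.
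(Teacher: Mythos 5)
Your proof is correct and takes essentially the same route as the paper's, whose entire proof is a one-line citation of precisely your three ingredients: conformal invariance of $\SLE_{\kappa}$, conformal covariance of the boundary Poisson kernel, and conformal covariance of the lower-order functions $\PartF_{\alpha_k^R}, \PartF_{\alpha_k^L}$ from the induction hypothesis; your write-up just carries out the bookkeeping of the derivative factors explicitly. One small superfluity: the final appeal to Lemma~\ref{lem::purepartition_cascade_SYM} is unnecessary, since the covariance holds for each fixed $k$ on its own.
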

\begin{proof}
This is true because: (a) $\SLE_{\kappa}$ is conformally invariant; (b) the boundary Poisson kernel is conformally covariant; (c) the pure partition functions $\PartF_{\alpha_k^R}$ and $\PartF_{\alpha_k^L}$ are conformally covariant by the hypothesis. 
\end{proof}

\begin{lemma}\label{lem::purepartition_cascade_ASY}
Suppose Propsition~\ref{prop::purepartition_existence} holds up to $N$. The function $\PartF_{\alpha}^{(k)}(\Omega; x_1, \ldots, x_{2N+2})$ defined in~\eqref{eqn::purepartition_cascade_def} satisfies ASY~\eqref{eqn::purepartition_ASY_polygon}. 
\end{lemma}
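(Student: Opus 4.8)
The plan is to exploit the $k$-independence established in Lemma~\ref{lem::purepartition_cascade_SYM}: for each index $j$ we may evaluate $\PartF_\alpha^{(k)}$ using whichever link $\{a_k,b_k\}$ is most convenient, and we split according to the two cases in ASY~\eqref{eqn::purepartition_ASY_polygon}. In the case $\{j,j+1\}\in\alpha$ I would choose $k$ with $\{a_k,b_k\}=\{j,j+1\}$, so that $a_k=j$ and $b_k=j+1$. Then the prefactor $H_\Omega(x_{a_k},x_{b_k})^h=H_\Omega(x_j,x_{j+1})^h$ is exactly the normalization in the denominator, the arc $(x_{a_k+1}x_{b_k-1})$ contains no marked point so that $\alpha_k^R=\emptyset$ and $\PartF_{\alpha_k^R}\equiv1$, and $\alpha_k^L=\hat\alpha$. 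Thus
\[
\frac{\PartF_\alpha^{(k)}(\Omega;x_1,\dots,x_{2N+2})}{H_\Omega(x_j,x_{j+1})^h}
=\E\!\left[\PartF_{\hat\alpha}(D_k^L;x_{j+2},\dots,x_{j-1})\,\one_{\LE_k}\right].
\]
As $x_j,x_{j+1}\to\xi$ the curve $\eta_k$ (an $\SLE_\kappa$ between two merging points) shrinks to $\xi$; hence $\PP[\LE_k]\to1$, the domains $D_k^L$ converge to $\Omega$ in the Carath\'eodory sense with respect to the fixed remaining marked points, and by the continuity of the lower-order function $\PartF_{\hat\alpha}$ (induction hypothesis) the integrand converges pointwise to $\PartF_{\hat\alpha}(\Omega;x_1,\dots,x_{j-1},x_{j+2},\dots,x_{2N+2})$. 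Dominating by the deterministic bound $\PartF_{\hat\alpha}(D_k^L;\cdot)\le\LB_{\hat\alpha}(D_k^L;\cdot)\le\LB_{\hat\alpha}(\Omega;\cdot)$ from~\eqref{eqn::purepartition_PLB_polygon} and~\eqref{eqn::balpha_mono}, dominated convergence yields the claimed limit.

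Second, consider $\{j,j+1\}\notin\alpha$, where the limit should be $0$. Let $\{j+1,q\}\in\alpha$ be the link containing $j+1$ and choose $k$ to be this link. Bounding the two lower-order factors by their power-law bounds, using $\LB_{\alpha_k^R}(D_k^R)\le\LB_{\alpha_k^R}(\Omega)$ and $\LB_{\alpha_k^L}(D_k^L)\le\LB_{\alpha_k^L}(\Omega)$ from~\eqref{eqn::balpha_mono}, and $\one_{\LE_k}\le1$, I obtain the deterministic estimate
\[
\PartF_\alpha^{(k)}(\Omega;\cdot)\le \LB_\alpha(\Omega;\cdot)\,\PP[\LE_k],
\]
so the quantity to control is at most $\LB_\alpha(\Omega;\cdot)\,H_\Omega(x_j,x_{j+1})^{-h}\,\PP[\LE_k]$. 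When $\kappa<6$ we have $h>0$; since $x_j$ and $x_{j+1}$ are linked in $\alpha$ to points other than each other, $\LB_\alpha(\Omega;\cdot)$ stays bounded as $x_j,x_{j+1}\to\xi$ while $H_\Omega(x_j,x_{j+1})\to\infty$, so already the first two factors force the ratio to $0$. When $\kappa=6$ we have $h=0$, so $\LB_\alpha\equiv1$ and $H_\Omega^{h}\equiv1$, and the vanishing must come entirely from $\PP[\LE_k]$.

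The main obstacle is precisely this last point: proving $\PP[\LE_k]\to0$ at $\kappa=6$. The mechanism is that $x_{j+1}$ is an endpoint of the $\SLE_6$ curve $\eta_k$, while $x_j$ is the endpoint of one of the two boundary arcs that $\LE_k$ requires $\eta_k$ to avoid; as $x_j\to x_{j+1}$ this forbidden arc reaches to within distance $o(1)$ of the curve's endpoint. Because $\SLE_6$ is non-simple and touches the boundary at every scale near its endpoints, it should hit this arc with probability tending to $1$, whence $\PP[\LE_k]\to0$. Turning this into a rigorous estimate---via a one-sided boundary-hitting (RSW-type) bound for $\SLE_6$ together with the conformal Markov property---is the delicate step; the remaining ingredients (Carath\'eodory convergence of the domains and continuity and domination of the lower-order partition functions) are routine consequences of the induction hypothesis.
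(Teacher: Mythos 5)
Your skeleton is the same as the paper's: expand $\PartF_{\alpha}^{(k)}$ via the cascade definition~\eqref{eqn::purepartition_cascade_def}, dominate the lower-order factors by the power law bound together with the monotonicity~\eqref{eqn::balpha_mono}, and conclude by bounded convergence when $\{j,j+1\}\in\alpha$ and by a vanishing prefactor when $\{j,j+1\}\notin\alpha$. The genuine difference is organizational: you invoke Lemma~\ref{lem::purepartition_cascade_SYM} to pick, for each $j$, the most convenient link, so you only ever meet two configurations, $\{a_k,b_k\}=\{j,j+1\}$ and $\#\big(\{a_k,b_k\}\cap\{j,j+1\}\big)=1$. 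The paper instead fixes $k$ and checks three cases: (a) $\{a_k,b_k\}=\{j,j+1\}$, (b) one common index, and (c) $\{a_k,b_k\}\cap\{j,j+1\}=\emptyset$; its case (c) is the heaviest, since there one must apply the induction-hypothesis ASY \emph{inside the random domain} $D_k^R$, compare $H_{D_k^R}(x_j,x_{j+1})$ with $H_{\Omega}(x_j,x_{j+1})$, and reassemble the limit via the cascade relation. Since Lemma~\ref{lem::purepartition_cascade_SYM} is proved before ASY and independently of it (the paper itself uses it this way in Lemma~\ref{lem::purepartition_cascade_PDE}), your reduction is legitimate, and eliminating case (c) is a real simplification; your first case is then identical to the paper's case (a), at the same level of rigor regarding $\PP[\LE_k]\to 1$ and the Carath\'eodory continuity of $\PartF_{\hat{\alpha}}$.

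The gap is the one you flag yourself: at $\kappa=6$ we have $h=0$, every power-law bound degenerates to the constant $1$, and in the case $\{j,j+1\}\notin\alpha$ your estimate reduces to $\PartF_{\alpha}^{(k)}/H_{\Omega}(x_j,x_{j+1})^h\le \PP[\LE_k]$, so the entire burden falls on proving $\PP[\LE_k]\to 0$, which you leave open. Two comments. First, this step is easier than you suggest; no RSW-type input is needed. Map conformally so that $\eta_k$ is an $\SLE_6$ in $\HH$ from $0$ to $\infty$; the forbidden arc with endpoint $x_j$ becomes (after a reflection if necessary) an interval $[\delta,c]\subset(0,\infty)$ with $\delta\to 0$ and $c\ge c_0>0$. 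By scale invariance, $\PP[\eta\cap(0,\eps]\neq\emptyset]$ does not depend on $\eps$; letting $\eps\to\infty$ these events increase to $\{\eta\cap(0,\infty)\neq\emptyset\}$, which has probability $1$ because $\kappa=6>4$, so $\SLE_6$ almost surely hits $(0,\eps]$ for every $\eps$. Continuity from above then gives $\PP[\eta\cap[\delta,c]=\emptyset]\le\PP[\eta\cap[\delta,c_0]=\emptyset]\to 0$ as $\delta\to 0$, i.e.\ $\PP[\LE_k]\to 0$. Second, you should know that the paper's own proof has exactly the same lacuna, unacknowledged: in its cases (b) and (c) it asserts $\LB_{\alpha_k^R}(\Omega;\cdots)/H_{\Omega}(x_j,x_{j+1})^h\to 0$, which is correct for $\kappa<6$ (bounded numerator, denominator tending to $\infty$) but false at $\kappa=6$, where numerator and denominator are both identically $1$. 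So the subtlety you isolated is genuine rather than an artifact of your route, and your proposal becomes a complete proof of ASY~\eqref{eqn::purepartition_ASY_polygon} on the full range $\kappa\in(0,6]$ once the elementary hitting argument above is inserted.
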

\begin{proof}
In order to prove ASY~\eqref{eqn::purepartition_ASY_polygon}, we need to check the following cases: Case (a). $\{a_k, b_k\}=\{j, j+1\}$; Case (b). $a_k=j$ and $b_k\neq j+1$. The cases $\#\{a_k, b_k\}\cap\{j,j+1\}=1$ can be proved similarly; Case (c). $\{a_k, b_k\}\cap\{j, j+1\}=\emptyset$. 

\smallbreak
Case (a). Suppose $\{a_k, b_k\}=\{j, j+1\}$. Note that $\eta_k$ is the $\SLE_{\kappa}$ in $\HH$ from $x_j$ to $x_{j+1}$. In this case, $\alpha_k^R=\emptyset$ and $\alpha_k^L=\hat{\alpha}:=\alpha\removal\{j,j+1\}$. 
Then we have 
\begin{align*}
\PartF_{\alpha}^{(k)}(\Omega; x_1, \ldots, x_{2N+2})=H_{\Omega}(x_{j}, x_{j+1})^h\E\left[\PartF_{\hat{\alpha}}(D_k^L; x_{j+2}, \ldots, x_{j-1})\one_{\LE_k}\right].
\end{align*}
By the power law bound in the hypothesis and~\eqref{eqn::balpha_mono}, we have
\[\PartF_{\hat{\alpha}}(D_k^L; x_{j+2}, \ldots, x_{j-1})
\le \LB_{\hat{\alpha}}(D_k^L; x_{j+2}, \ldots, x_{j-1})
\le \LB_{\hat{\alpha}}(\Omega; x_{j+2}, \ldots, x_{j-1}).\]
Bounded convergence theorem gives 
\begin{align*}
\lim_{x_j, x_{j+1}\to \xi}\frac{\PartF_{\alpha}^{(k)}(\Omega; x_1, \ldots, x_{2N+2})}{H_{\Omega}(x_{j}, x_{j+1})^h}=\lim_{x_j, x_{j+1}\to \xi}\E\left[\PartF_{\hat{\alpha}}(D_k^L; x_{j+2}, \ldots, x_{j-1})\one_{\LE_k}\right]=\PartF_{\hat{\alpha}}(\Omega; x_{j+2}, \ldots, x_{j-1}).
\end{align*}

\smallbreak
Case (b). $a_k=j$ and $b_k\neq j+1$. In this case, we have 
\begin{align*}
&\frac{\PartF_{\alpha}^{(k)}(\Omega; x_1, \ldots, x_{2N+2})}{H_{\Omega}(x_j, x_{j+1})^h}\\
&=H_{\Omega}( x_{a_k}, x_{b_k})^h\E\left[\frac{\PartF_{\alpha_k^R}(D_k^R; x_{a_k+1}, \ldots, x_{b_k-1})}{H_{\Omega}(x_j, x_{j+1})^h}\times\PartF_{\alpha_k^L}(D_k^L; x_{b_k+1},\ldots, x_{a_k-1})\one_{\LE_k}\right].
\end{align*} 
By the power law bound in the hypothesis and~\eqref{eqn::balpha_mono}, we have 
\begin{align*}
\PartF_{\alpha_k^L}(D_k^L; x_{b_k+1},\ldots, x_{a_k-1})
&\le \LB_{\alpha_k^L}(\Omega; x_{b_k+1},\ldots, x_{a_k-1});\\
\frac{\PartF_{\alpha_k^R}(D_k^R; x_{a_k+1}, \ldots, x_{b_k-1})}{H_{\Omega}(x_j, x_{j+1})^h}
&\le \frac{\LB_{\alpha_k^R}(\Omega; x_{a_k+1}, \ldots, x_{b_k-1})}{H_{\Omega}(x_j, x_{j+1})^h}\to 0,\quad\text{as }x_j, x_{j+1}\to \xi. 
\end{align*}
Thus,
\[\lim_{x_j, x_{j+1}\to \xi}\frac{\PartF_{\alpha}^{(k)}(\Omega; x_1, \ldots, x_{2N+2})}{H_{\Omega}(x_j, x_{j+1})^h}=0.\]

\smallbreak
Case (c). $\{a_k, b_k\}\cap\{j, j+1\}=\emptyset$. We may assume $a_k<j<j+1<b_k$. In this case, we have 
\begin{align*}
&\frac{\PartF_{\alpha}^{(k)}(\Omega; x_1, \ldots, x_{2N+2})}{H_{\Omega}(x_j, x_{j+1})^h}\\
&=H_{\Omega}( x_{a_k}, x_{b_k})^h\E\left[\frac{\PartF_{\alpha_k^R}(D_k^R; x_{a_k+1}, \ldots, x_{b_k-1})}{H_{\Omega}(x_j, x_{j+1})^h}\times\PartF_{\alpha_k^L}(D_k^L; x_{b_k+1},\ldots, x_{a_k-1})\one_{\LE_k}\right].
\end{align*} 
By the power law bound in the hypothesis and~\eqref{eqn::balpha_mono}, we have
\begin{align*}
\PartF_{\alpha_k^L}(D_k^L; x_{b_k+1},\ldots, x_{a_k-1})
&\le \LB_{\alpha_k^L}(\Omega; x_{b_k+1},\ldots, x_{a_k-1});\\
\frac{\PartF_{\alpha_k^R}(D_k^R; x_{a_k+1}, \ldots, x_{b_k-1})}{H_{\Omega}(x_j, x_{j+1})^h}
&\le \frac{\LB_{\alpha_k^R}(\Omega; x_{a_k+1}, \ldots, x_{b_k-1})}{H_{\Omega}(x_j, x_{j+1})^h}.
\end{align*}
If $\{j,j+1\}\not\in\alpha$, then we have 
\[\frac{\LB_{\alpha_k^R}(\Omega; x_{a_k+1}, \ldots, x_{b_k-1})}{H_{\Omega}(x_j, x_{j+1})^h}\to 0,\quad \text{as }x_j, x_{j+1}\to \xi.\]
Thus 
\[\lim_{x_j,x_{j+1}\to \xi}\frac{\PartF_{\alpha}^{(k)}(\Omega; x_1, \ldots, x_{2N+2})}{H_{\Omega}(x_j, x_{j+1})^h}=0.\]
If $\{j, j+1\}\in\alpha$, then we have $\{j, j+1\}\in\alpha_k^{R}$, denote by $\hat{\alpha}=\alpha\removal\{j, j+1\}$ and $\hat{\alpha}_k^R=\alpha_k^R\removal\{j,j+1\}$. We have
\[\frac{\PartF_{\alpha_k^R}(D_k^R; x_{a_k+1}, \ldots, x_{b_k-1})}{H_{\Omega}(x_j, x_{j+1})^h}
\le \frac{\LB_{\alpha_k^R}(\Omega; x_{a_k+1}, \ldots, x_{b_k-1})}{H_{\Omega}(x_j, x_{j+1})^h}=\LB_{\hat{\alpha}_k^R}(\Omega; x_{a_k+1}, \ldots, x_{j-1}, x_{j+2}, \ldots, x_{b_k-1}).
\]
By the asymptotic in the hypothesis, we have almost surely on $\LE_k$
\[\lim_{x_j,x_{j+1}\to \xi}\frac{\PartF_{\alpha_k^R}(D_k^R; x_{a_k+1}, \ldots, x_{b_k-1})}{H_{\Omega}(x_j, x_{j+1})^h}=\PartF_{\hat{\alpha}_k^R}(D_k^R; x_{a_k+1}, \ldots, x_{j-1}, x_{j+2}, \ldots, x_{b_k-1} ).\]
Bounded convergence theorem and the cascade relation in the hypothesis give
\begin{align*}
&\lim_{x_j,x_{j+1}\to \xi}\frac{\PartF_{\alpha}^{(k)}(\Omega; x_1, \ldots, x_{2N+2})}{H_{\Omega}(x_j, x_{j+1})^h}\\
&=H_{\Omega}( x_{a_k}, x_{b_k})^h
\E\left[\PartF_{\hat{\alpha}_k^R}(D_k^R; x_{a_k+1}, \ldots, x_{j-1}, x_{j+2}, \ldots, x_{b_k-1} )\times\PartF_{\alpha_k^L}(D_k^L; x_{b_k+1},\ldots, x_{a_k-1})\one_{\LE_k}\right]\\
&=\PartF_{\hat{\alpha}}(\Omega; x_1, \ldots, x_{j-1}, x_{j+2}, \ldots, x_{2N}).
\end{align*} 
This completes the proof of Case (c) and hence completes the proof of this lemma. 
\end{proof}

\begin{lemma}\label{lem::purepartition_cascade_PLB}
Suppose Propsition~\ref{prop::purepartition_existence} holds up to $N$. The function $\PartF_{\alpha}^{(k)}(\Omega; x_1, \ldots, x_{2N+2})$ defined in~\eqref{eqn::purepartition_cascade_def} satisfies the power law bound~\eqref{eqn::purepartition_PLB_polygon}.
\end{lemma}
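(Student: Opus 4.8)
The plan is to establish the two inequalities in the power law bound~\eqref{eqn::purepartition_PLB_polygon} separately, working directly from the cascade definition~\eqref{eqn::purepartition_cascade_def} together with the induction hypothesis (that Proposition~\ref{prop::purepartition_existence} holds up to $N$). The positivity and the upper bound are essentially independent, so I would treat them in turn.

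For the strict positivity $\PartF_{\alpha}^{(k)}>0$, I would observe that the integrand in~\eqref{eqn::purepartition_cascade_def} is nonnegative: the boundary Poisson kernel $H_{\Omega}(x_{a_k},x_{b_k})^h$ is positive, and the factors $\PartF_{\alpha_k^R}(D_k^R;\cdots)$ and $\PartF_{\alpha_k^L}(D_k^L;\cdots)$ are positive by the induction hypothesis. It then suffices to note that the event $\LE_k$ has positive probability, since for $\kappa\in(0,6]$ the $\SLE_{\kappa}$ curve $\eta_k$ from $x_{a_k}$ to $x_{b_k}$ avoids the two complementary boundary arcs $(x_{a_k+1}x_{b_k-1})$ and $(x_{b_k+1}x_{a_k-1})$ with positive probability. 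As the two factors are genuinely positive on $\LE_k$, the expectation is strictly positive, whence $\PartF_{\alpha}^{(k)}>0$.

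For the upper bound, the idea is to apply the induction hypothesis power law bound to each factor inside the expectation, giving $\PartF_{\alpha_k^R}(D_k^R;\cdots)\le\LB_{\alpha_k^R}(D_k^R;\cdots)$ and likewise for the $L$-factor. On $\LE_k$ the random domains $D_k^R$ and $D_k^L$ are subdomains of $\Omega$ that agree with $\Omega$ in a neighborhood of each of their marked points (because $\eta_k$ does not touch the two arcs), so the monotonicity~\eqref{eqn::balpha_mono} applies and replaces $\LB_{\alpha_k^R}(D_k^R;\cdots)$ and $\LB_{\alpha_k^L}(D_k^L;\cdots)$ by the \emph{deterministic} quantities $\LB_{\alpha_k^R}(\Omega;\cdots)$ and $\LB_{\alpha_k^L}(\Omega;\cdots)$. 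Pulling these out of the expectation and using $\E[\one_{\LE_k}]\le 1$ yields
\[\PartF_{\alpha}^{(k)}(\Omega;x_1,\ldots,x_{2N+2})\le H_{\Omega}(x_{a_k},x_{b_k})^h\,\LB_{\alpha_k^R}(\Omega;x_{a_k+1},\ldots,x_{b_k-1})\,\LB_{\alpha_k^L}(\Omega;x_{b_k+1},\ldots,x_{a_k-1}).\]

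The last step is purely combinatorial: by the definition of $\LB_{\alpha}$ as a product of boundary Poisson kernels over the links of $\alpha$, and since the links of $\alpha$ decompose into $\{a_k,b_k\}$ together with the links of $\alpha_k^R$ (among $x_{a_k+1},\ldots,x_{b_k-1}$) and those of $\alpha_k^L$ (among $x_{b_k+1},\ldots,x_{a_k-1}$), the right-hand side above is exactly $\LB_{\alpha}(\Omega;x_1,\ldots,x_{2N+2})$, which finishes the proof. I expect the only delicate point to be verifying that $D_k^R$ and $D_k^L$ really do agree with $\Omega$ near each relevant marked point on the event $\LE_k$, so that~\eqref{eqn::balpha_mono} may be legitimately invoked; once this is granted, the remainder is bookkeeping.
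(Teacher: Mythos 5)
Your proof is correct and follows essentially the same route as the paper's: apply the induction-hypothesis power law bound to each factor, use the monotonicity~\eqref{eqn::balpha_mono} to pass from the random domains $D_k^R, D_k^L$ to $\Omega$, and observe that the resulting product of boundary Poisson kernels is exactly $\LB_{\alpha}(\Omega; x_1,\ldots,x_{2N+2})$. The paper's one-line proof records only this upper-bound chain, so your explicit treatment of strict positivity (via $\PP[\LE_k]>0$ and positivity of the integrand on $\LE_k$) and of the boundary-agreement hypothesis needed to invoke~\eqref{eqn::balpha_mono} simply fills in details the paper leaves implicit.
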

\begin{proof}
By the power law bound in the hypothesis and~\eqref{eqn::balpha_mono}, we have
\begin{align*}
&\PartF_{\alpha}^{(k)}(\Omega; x_1, \ldots, x_{2N+2})\\
&\le H_{\Omega}( x_{a_k}, x_{b_k})^h\times\LB_{\alpha_k^R}(\Omega; x_{a_k+1}, \ldots, x_{b_k-1})\times\LB_{\alpha_k^L}(\Omega; x_{b_k+1},\ldots, x_{a_k-1})\\
&=\LB_{\alpha}(\Omega; x_1, \ldots, x_{2N+2}). 
\end{align*}
\end{proof}

Now, we are ready to prove the conclusion.
\begin{proof}[Proof of Proposition~\ref{prop::purepartition_existence}]
We have seen that the conclusion holds for $N=1$. Suppose the conclusion holds up to $N$, for each $1\le k\le N+1$, we define $\PartF_{\alpha}^{(k)}$ as in~\eqref{eqn::purepartition_cascade_def}. By Lemma~\ref{lem::purepartition_cascade_SYM}, it does not depend on the choice of $k$, thus we denote it by $\PartF_{\alpha}$. Consider the collection $\{\PartF_{\alpha}, \alpha\in\LP_{N+1}\}$, 
it satisfies PDE~\eqref{eqn::purepartition_PDE} by 
Lemma~\ref{lem::purepartition_cascade_PDE}, 
it satisfies COV~\eqref{eqn::purepartition_COV} by 
Lemma~\ref{lem::purepartition_cascade_COV}, 
it satisfies ASY~\eqref{eqn::purepartition_ASY} by 
Lemma~\ref{lem::purepartition_cascade_ASY}, 
and it satisfies the power law bound~\eqref{eqn::purepartition_PLB} by Lemma~\ref{lem::purepartition_cascade_PLB}. 
Combining Lemma~\ref{lem::purepartition_cascade_SYM} and the expression in~\eqref{eqn::purepartition_cascade_def}, we obtain the cascade relation~\eqref{eqn::purepartition_CAS}. 
These complete the proof. 
\end{proof}

Combining the uniqueness result in \cite{FloresKlebanPDE1} and Proposition~\ref{prop::purepartition_existence}, we obtain Theorem~\ref{thm::purepartition}. Moreover, as a consequence of Proposition~\ref{prop::purepartition_existence}, we also obtain the cascade relation of the pure partition functions.
\begin{corollary}\label{cor::purepartition_CAS}
The collection of pure partition functions in Theorem~\ref{thm::purepartition} also satisfies the cascade relation~\eqref{eqn::purepartition_CAS}. 
\end{corollary}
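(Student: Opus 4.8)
The plan is to deduce this immediately from the uniqueness statement of Theorem~\ref{thm::purepartition} together with the construction carried out in Proposition~\ref{prop::purepartition_existence}. Recall that Theorem~\ref{thm::purepartition} asserts that, for $\kappa\in(0,6]$, there is a \emph{unique} collection $\{\PartF_\alpha:\alpha\in\LP\}$ of smooth functions satisfying the normalization $\PartF_\emptyset=1$, the PDE system~\eqref{eqn::purepartition_PDE}, the covariance COV~\eqref{eqn::purepartition_COV}, the asymptotics ASY~\eqref{eqn::purepartition_ASY}, and the power law bound~\eqref{eqn::purepartition_PLB}; the uniqueness itself is the deep input of Flores--Kleban~\cite{FloresKlebanPDE1}. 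On the other hand, Proposition~\ref{prop::purepartition_existence} produces, by induction on $N$, a collection $\{\PartF_\alpha\}$ that satisfies \emph{all of these same properties} and, in addition, the cascade relation~\eqref{eqn::purepartition_CAS}.

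First I would observe that the collection constructed in Proposition~\ref{prop::purepartition_existence} is a collection of the type to which the uniqueness statement applies: it is smooth (Lemma~\ref{lem::purepartition_cascade_PDE}), it is normalized with $\PartF_\emptyset=1$, and it satisfies PDE~\eqref{eqn::purepartition_PDE}, COV~\eqref{eqn::purepartition_COV}, ASY~\eqref{eqn::purepartition_ASY} and the power law bound~\eqref{eqn::purepartition_PLB} by Lemmas~\ref{lem::purepartition_cascade_PDE},~\ref{lem::purepartition_cascade_COV},~\ref{lem::purepartition_cascade_ASY} and~\ref{lem::purepartition_cascade_PLB} respectively. Consequently, by the uniqueness part of Theorem~\ref{thm::purepartition}, this constructed collection must coincide with the collection $\{\PartF_\alpha\}$ of Theorem~\ref{thm::purepartition}. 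Since the constructed collection satisfies~\eqref{eqn::purepartition_CAS} by the very definition~\eqref{eqn::purepartition_cascade_def} (whose $k$-independence is guaranteed by Lemma~\ref{lem::purepartition_cascade_SYM}), the collection in Theorem~\ref{thm::purepartition} inherits the cascade relation, which is exactly the assertion of the corollary.

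The only point requiring a word of care is the matching of the two collections across all $N$ simultaneously, rather than level by level: the uniqueness of Flores--Kleban is a statement about the whole collection $\{\PartF_\alpha:\alpha\in\LP\}$, and the inductive construction of Proposition~\ref{prop::purepartition_existence} yields, for each fixed $N$, functions on $\chamber_{2n}$ for all $n\le N$; letting $N\to\infty$ assembles these into a single collection indexed by $\LP=\sqcup_{N\ge 0}\LP_N$. Since for each $\alpha$ the value $\PartF_\alpha$ stabilizes once $N$ is at least the number of links in $\alpha$, this assembly is unambiguous. I do not expect any genuine obstacle here: the entire content of the corollary is already embedded in Proposition~\ref{prop::purepartition_existence}, and the statement is nothing more than the transfer of the cascade property to the canonically characterized functions via uniqueness.
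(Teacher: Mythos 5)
Your proposal is correct and is essentially identical to the paper's own (implicit) argument: the paper obtains the corollary precisely by noting that the collection built in Proposition~\ref{prop::purepartition_existence} satisfies the normalization, PDE~\eqref{eqn::purepartition_PDE}, COV~\eqref{eqn::purepartition_COV}, ASY~\eqref{eqn::purepartition_ASY} and the bound~\eqref{eqn::purepartition_PLB}, hence coincides with the unique collection of Theorem~\ref{thm::purepartition} by the Flores--Kleban uniqueness, and therefore inherits the cascade relation~\eqref{eqn::purepartition_CAS} built into the construction~\eqref{eqn::purepartition_cascade_def}. Your additional remark about assembling the level-by-level inductive construction into a single collection indexed by $\LP$ is a fine point the paper glosses over, and you resolve it correctly.
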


In fact, the proof of Lemma~\ref{lem::purepartition_cascade_ASY}  implies the following refined asymptotic. We do not need this refined asymptotic in this paper, but it is very useful when one tries to prove the crossing probabilities in related models, see \cite[Section~5]{PeltolaWuGlobalMultipleSLEs}. So we record this result here. 
\begin{corollary}
The collection of pure partition functions in Theorem~\ref{thm::purepartition} also satisfies the following refined asymptotic:
for all $\alpha \in \LP_N$ and for all $j \in \{1, \ldots, 2N-1 \}$ and 
 $x_1 < x_2 < \cdots < x_{j-1} < \xi < x_{j+2} < \cdots < x_{2N}$,
\begin{align*}%\label{eqn::partf_alpha_asy_refined}
\lim_{\substack{\tilde{x}_j , \tilde{x}_{j+1} \to \xi, \\ \tilde{x}_i\to x_i \text{ for } i \neq j, j+1}} 
\frac{\PartF_\alpha(\tilde{x}_1 , \ldots , \tilde{x}_{2N})}{(\tilde{x}_{j+1} - \tilde{x}_j)^{-2h}} 
=\begin{cases}
0 \quad &
    \text{if } \{j,j+1\} \notin \alpha \\
\PartF_{\hat{\alpha}}(x_{1},\ldots,x_{j-1},x_{j+2},\ldots,x_{2N}) &
    \text{if } \{j, j+1\} \in \alpha
\end{cases},
\end{align*}
where $\hat{\alpha} = \alpha \removal \{j, j+1\}$.
\end{corollary}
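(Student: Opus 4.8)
The plan is to deduce the refined asymptotic directly from the cascade relation~\eqref{eqn::purepartition_CAS} (available by Corollary~\ref{cor::purepartition_CAS}), following the case analysis in the proof of Lemma~\ref{lem::purepartition_cascade_ASY}, but now allowing the spectator points $\tilde{x}_i$, $i\neq j,j+1$, to move to their limits $x_i$ simultaneously with $\tilde{x}_j,\tilde{x}_{j+1}\to\xi$. I would work in $\HH$, where the denominator is exactly $H_{\HH}(\tilde{x}_j,\tilde{x}_{j+1})^h=(\tilde{x}_{j+1}-\tilde{x}_j)^{-2h}$; the general-domain version is not needed here. The crucial simplification compared with Lemma~\ref{lem::purepartition_cascade_ASY} is that the $k$-independence of the cascade (Lemma~\ref{lem::purepartition_cascade_SYM}) lets me choose, for each fixed $j$, the most convenient link $\{a_k,b_k\}\in\alpha$ to expand along.

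If $\{j,j+1\}\in\alpha$, I would expand along the link $\{a_k,b_k\}=\{j,j+1\}$, landing in Case~(a) of Lemma~\ref{lem::purepartition_cascade_ASY}. Here $\alpha_k^R=\emptyset$ and $\alpha_k^L=\hat{\alpha}$, and the ratio reduces to $\E[\PartF_{\hat{\alpha}}(D_k^L;\tilde{x}_i,\,i\neq j,j+1)\one_{\LE_k}]$. The key observation is that the auxiliary curve $\eta_k$ is an $\SLE_{\kappa}$ from $\tilde{x}_j$ to $\tilde{x}_{j+1}$, so it depends only on the two colliding points, and as these merge at $\xi$ it converges in law to the singleton $\{\xi\}$; hence $D_k^L\to\HH$ in the Carath\'eodory sense and $\one_{\LE_k}\to 1$. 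Since the remaining points do not collide and $\PartF_{\hat{\alpha}}$ is smooth (Theorem~\ref{thm::purepartition}), it is jointly continuous under Carath\'eodory convergence of the domain together with convergence of the marked points, so the integrand tends to the constant $\PartF_{\hat{\alpha}}(x_1,\ldots,x_{j-1},x_{j+2},\ldots,x_{2N})$. The power-law bound together with the monotonicity~\eqref{eqn::balpha_mono} supplies a deterministic, uniformly integrable dominating function (namely a fixed multiple of $\LB_{\hat{\alpha}}(\HH;x_i,\,i\neq j,j+1)$, valid once $\tilde{x}$ is close to the limit), and bounded convergence yields the claimed limit.

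If $\{j,j+1\}\notin\alpha$, then for \emph{any} choice of $k$ we have $\{a_k,b_k\}\neq\{j,j+1\}$, so we are in Case~(b) or in the branch of Case~(c) with $\{j,j+1\}\notin\alpha$. In both I would bound the integrand deterministically using the power-law bound and~\eqref{eqn::balpha_mono}: the factor carrying $\tilde{x}_j,\tilde{x}_{j+1}$ has the form $\LB_{\alpha_k^R}(\HH;\ldots)/H_{\HH}(\tilde{x}_j,\tilde{x}_{j+1})^h$ (or its $\alpha_k^L$-analogue), and since $\{j,j+1\}$ is not a link of the relevant sub-pattern, the numerator stays bounded while the denominator blows up, forcing the whole expression to $0$ uniformly. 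This bound is independent of $\eta_k$ and tends to $0$, hence so does the expectation.

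The main obstacle, and the only place where genuinely new work is needed beyond Lemma~\ref{lem::purepartition_cascade_ASY}, is the joint continuity invoked in Case~(a): one must verify that $\PartF_{\hat{\alpha}}(D_k^L;\tilde{x}_i)$ converges to $\PartF_{\hat{\alpha}}(\HH;x_i)$ when both the random domain $D_k^L$ degenerates to $\HH$ and the marked points $\tilde{x}_i$ move, and that the dominating function can be taken uniform in the spectator points. Both follow from the smoothness of the pure partition functions and the continuity of conformal maps (and their boundary derivatives) under Carath\'eodory convergence, but this is precisely the upgrade that turns the fixed-spectator asymptotic~\eqref{eqn::purepartition_ASY} into the joint one. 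I emphasize that the clever choice of $k$ is exactly what lets me avoid the branch of Case~(c) with $\{j,j+1\}\in\alpha$, where the auxiliary $\SLE$ would connect two moving points and its law would shift along the limit, requiring an additional coupling argument.
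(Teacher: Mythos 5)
Your proposal is correct, and its skeleton is exactly the paper's: the paper's entire proof of this corollary is the remark that the proof of Lemma~\ref{lem::purepartition_cascade_ASY} (cascade relation plus the three-case analysis, bounded convergence, and the monotone bound~\eqref{eqn::balpha_mono}) goes through when the spectator points move. Where you genuinely depart from a literal rereading of that lemma is in using the $k$-independence of the cascade (Lemma~\ref{lem::purepartition_cascade_SYM}, packaged as Corollary~\ref{cor::purepartition_CAS}) to \emph{choose} the expansion link for each $j$: when $\{j,j+1\}\in\alpha$ you expand along $\{a_k,b_k\}=\{j,j+1\}$ itself, so that the auxiliary $\SLE_{\kappa}$ runs between the two colliding points and shrinks to $\{\xi\}$, and the only analytic input is continuity of $\PartF_{\hat{\alpha}}$ under Carath\'eodory convergence with moving marked points plus the deterministic domination $\PartF_{\hat{\alpha}}(D_k^L;\cdot)\le\LB_{\hat{\alpha}}(\HH;\cdot)$. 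This sidesteps the branch of Case~(c) with $\{j,j+1\}\in\alpha$, which is precisely the branch the paper's one-line proof silently asks the reader to redo: there the auxiliary curve joins $\tilde{x}_{a_k}$ to $\tilde{x}_{b_k}$, both of which move, so its law shifts along the limit, and the inner asymptotic would have to be invoked in the refined (joint) form inside the expectation --- requiring either an induction on the refined statement or a M\"obius-map coupling to freeze the $\SLE$ endpoints. Your choice-of-$k$ trick buys a proof that only ever needs the already-established fixed-domain continuity and the deterministic bounds, at no cost; note the trick is legitimate only because the functions are already constructed and $k$-independent, i.e.\ it proves the corollary but could not have replaced the full case analysis inside the inductive proof of Lemma~\ref{lem::purepartition_cascade_ASY}, where all $j$ must be handled for a fixed $k$. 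The remaining details you flag (shrinking of an $\SLE_\kappa$ between merging endpoints by scale invariance, convergence of the normalized mapping-out function and its boundary derivatives away from $\xi$, and uniformity of the dominating function near the limit configuration) are all standard and correctly identified as the only new work needed.
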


Finally, let us discuss the range of $\kappa$ in Theorem~\ref{thm::purepartition}. In \cite{PeltolaWuGlobalMultipleSLEs}, the existence was proved for $\kappa\le 4$. The construction there uses a Brownian loop soup construction of global multiple SLEs. Such construction only works for $\kappa\le 4$. In our construction, we use the cascade relation~\eqref{eqn::purepartition_cascade_def}. Such cascade relation is believed to be true for all $\kappa\in (0,8)$, but we need the monotonicity~\eqref{eqn::balpha_mono} in various places during the proof. Such monotonicity only holds for $\kappa\le 6$, and this explains the range $\kappa\in (0,6]$ in Theorem~\ref{thm::purepartition}. 
%In fact, we believe the existence of pure partition functions for all $\kappa\in (0,8)$, i.e. there exists a collection $\{\PartF_{\alpha}, \alpha\in\LP\}$ of smooth positive functions $\PartF_{\alpha}: \chamber_{2N}\to\R_+$, for $\alpha\in\LP_N$, satisfying the normalization $\PartF_{\emptyset}=1$, PDE~\eqref{eqn::purepartition_PDE}, 
%COV~\eqref{eqn::purepartition_COV},
%ASY~\eqref{eqn::purepartition_ASY}, 
%the cascade relation~\eqref{eqn::purepartition_CAS}, and a power law bound. Such conclusion is proved in Theorem~\ref{thm::purepartition} where the power law bound is given by~\eqref{eqn::purepartition_PLB}. When $\kappa\in (6,8)$, the existence is still open, and it is not clear for me how the power law bound looks like. 

%%%
\subsection{Proof of Lemma~\ref{lem::purepartition_cascade_SYM}}
\label{subsec::proof_SYM}
To show Lemma~\ref{lem::purepartition_cascade_SYM}, we need the following property of hypergeometric SLE and Proposition~\ref{prop::hypersle_mart}. 
\begin{proposition}\label{prop::hsle_symmetry}
Fix $\kappa\in (0,8)$ and a quad $q=(\Omega; x^R, y^R, y^L, x^L)$
\begin{itemize}
\item (Existence and Uniqueness)
There exists a unique probability measure on pairs of continuous curves $(\eta^L;\eta^R)\in X_0(\Omega; x^R, y^R, y^L, x^L)$ such that the conditional law of $\eta^R$ given $\eta^L$ is $\SLE_{\kappa}$ in $\Omega^L$ from $x^R$ to $y^R$; and that the conditional law of $\eta^L$ given $\eta^R$ is $\SLE_{\kappa}$ in $\Omega^R$ from $x^L$ to $y^L$. 
\item (Identification)
Under this probability measure, the marginal law of $\eta^L$ is $\hSLE_{\kappa}$ in $\Omega$ from $x^L$ to $y^L$ with marked points $(x^R, y^R)$.  
\end{itemize}
\end{proposition}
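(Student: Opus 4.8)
The plan is to normalise to $\HH$ with $x^L=x_1<x^R=x_2<y^R=x_3<y^L=x_4$ (sending $y^L$ to $\infty$ by a M\"obius map), and to establish the three assertions in the order existence, identification, uniqueness. For $\kappa\le 4$ the statement is already contained in Proposition~\ref{prop::slepair_rev} with $\rho^L=\rho^R=0$ (equivalently Proposition~\ref{prop::slepair_cmp_sym} with $\rho=0$), so the real content is the range $\kappa\in(4,8)$. The basic regularity input is that we sit at $\nu=0$: since $0>(-4)\vee(\kappa/2-6)$ and $0\ge\kappa/2-4$ for every $\kappa\in(0,8)$, Proposition~\ref{prop::hyperSLE} shows that the $\hSLE_\kappa$ from $x^L$ to $y^L$ with marked points $(x^R,y^R)$ is a continuous transient curve that almost surely avoids the arc $[x^R,y^R]$; hence $\Omega^L$ always carries $(x^Ry^R)$ on its boundary and an $\SLE_\kappa$ in $\Omega^L$ from $x^R$ to $y^R$ is well defined.

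For existence and identification I would construct the pair by sampling $\eta^L$ first and $\eta^R$ second. Sample $\eta^L$ as an $\SLE_\kappa$ from $x^L$ to $y^L$ reweighted by the boundary-Poisson-kernel martingale $M_\infty=H_{\Omega^L}(x^R,y^R)^b$ of Proposition~\ref{prop::hypersle_mart}; at $\nu=0$ one has $b=h$ and $\nu\ge\kappa/2-4$, so that proposition applies for all $\kappa\in(0,8)$, $M$ is a uniformly integrable martingale, and the reweighted law of $\eta^L$ is exactly $\hSLE_\kappa$ from $x^L$ to $y^L$ with marked points $(x^R,y^R)$. This yields the identification immediately. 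Conditionally on $\eta^L$, sample $\eta^R$ as $\SLE_\kappa$ in $\Omega^L$ from $x^R$ to $y^R$. It then remains to verify that under this joint law the conditional law of $\eta^L$ given $\eta^R$ is $\SLE_\kappa$ in $\Omega^R$, i.e.\ that the two sampling orders agree. Following the existence proof of Proposition~\ref{prop::slepair_cmp_sym}, I would grow the initial segments of the two curves in disjoint localisation neighbourhoods with $\{y^R,y^L\}$ bounded away, invoke Dub\'edat's commutation relation through Corollary~\ref{cor::hsle_commutation} (with $\nu=0$) and the compatibility of localisations in Lemma~\ref{lem::localsle_compatible} to see that the segments may be grown in either order, then exhaust all dyadic localisations and pass to the limit.

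For uniqueness I would run the resampling argument used in the uniqueness parts of Propositions~\ref{prop::slepair_rev} and \ref{prop::slepair_cmp_sym}, taken from \cite[Theorem~4.1]{MillerSheffieldIG2} (see also \cite[Section~3.2]{BeffaraPeltolaWuUniqueness}): form the Markov chain on $X_0(\Omega; x^R, y^R, y^L, x^L)$ that resamples one of the two curves from its conditional law, establish continuity of the transition kernel of the $\eps$-chain, and exclude mutual singularity of two extremal stationary measures via the coupling furnished by Lemma~\ref{lem::positive_chance}. Here the conditional laws are plain $\SLE_\kappa$, i.e.\ GFF flow lines, so the absolute-continuity input needed for that coupling is available.

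The hard part will be the case $\kappa\in(4,8)$. There the pair no longer consists of disjoint simple curves: $\eta^R$ is a non-simple $\SLE_\kappa$ in $\Omega^L$ and may touch both $\eta^L$ and the boundary, so the configuration lives in $X_0$ rather than among simple curves, and the global restriction/Brownian-loop-measure identity (Lemma~\ref{lem::sle_domain_mart}, Proposition~\ref{prop::boundary_perturbation}) that powers the $\kappa\le4$ construction of Proposition~\ref{prop::slepair_rev} is unavailable. The commutation relation, being local, still applies as long as the initial segments are grown inside disjoint neighbourhoods before they can reach one another or the marked points; the work is to push the localisations all the way to the relevant hitting and swallowing times, which is exactly where I would use the continuity of $\hSLE_\kappa$ up to and including those times (Propositions~\ref{prop::hyperSLE} and \ref{prop::hsle_continuity_lownu}) together with the target-independence of Proposition~\ref{prop::hsle_change_target} to reconcile the $\hSLE$ description of each marginal with the conditional $\SLE_\kappa$ driving the other curve. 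The same touching phenomenon must also be checked not to spoil the continuity of the $\eps$-chain kernel or the coupling step in the uniqueness argument.
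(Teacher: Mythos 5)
Your treatment of $\kappa\le 4$ coincides with the paper's (it is the $\rho^L=\rho^R=0$ case of Proposition~\ref{prop::slepair_rev}), and your use of Proposition~\ref{prop::hypersle_mart} at $\nu=0$ to produce a joint law whose $\eta^L$-marginal is $\hSLE_{\kappa}$ is valid for all $\kappa\in(0,8)$. The genuine gap is exactly the step you defer for $\kappa\in(4,8)$: verifying that the two sampling orders agree. The argument you invoke---Corollary~\ref{cor::hsle_commutation} together with Lemma~\ref{lem::localsle_compatible} and exhaustion by dyadic localization neighbourhoods---is the one used in the existence part of Proposition~\ref{prop::slepair_cmp_sym}, and there the paper states explicitly that it requires $\kappa\le 4$ and that the two curves almost surely do not hit each other: the commutation relation only governs initial segments grown in disjoint neighbourhoods kept at positive distance from the marked points, and only when the curves are disjoint can these localizations be made to exhaust the whole pair. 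For $\kappa\in(4,8)$ the conditional $\SLE_{\kappa}$ almost surely touches $\eta^L$ and the boundary, so no sequence of disjoint localizations captures the full curves, and neither the continuity of $\hSLE_{\kappa}$ up to hitting times nor the target-independence of Proposition~\ref{prop::hsle_change_target} repairs this limiting step. The same touching breaks your uniqueness plan: in the paper's adaptation of the resampling argument of \cite{MillerSheffieldIG2}, continuity of the $\epsilon$-chain transition kernel is obtained precisely under the hypotheses that the conditional laws can be sampled as flow lines of a GFF and that the two curves are almost surely disjoint. Your sentence that the touching phenomenon ``must also be checked'' is the unfilled gap; filling it for intersecting curves is the substantial content of \cite{MillerSheffieldIG3}, \cite[Section~4]{MillerWernerConnectionProbabilitiesCLE} and, for $\kappa\in(4,6]$, of \cite{BeffaraPeltolaWuUniqueness}.

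The paper does not attempt a self-contained construction in this range: for $\kappa\in(4,8)$ it quotes existence and uniqueness of the global $2$-$\SLE_{\kappa}$ from those references, and what it actually proves is only the identification, by an argument different from yours. It extracts four initial segments---of $\eta^L$, of $\eta^R$, and of their time-reversals, using the reversibility of $\SLE_{\kappa}$---stopped upon exiting four disjoint neighbourhoods of the four corner points, checks that this quadruple forms a local multiple $\SLE_{\kappa}$, and then applies Dub\'edat's commutation relation \cite{DubedatCommutationSLE} together with the classification of local multiple $\SLE$s in \cite[Theorem~A.4]{KytolaPeltolaPurePartitionFunctions} to conclude that the segment of $\eta^L$ started at $x^L$ is locally an $\hSLE_{\kappa}$; the local-to-global step then uses that $\hSLE_{\kappa}$ is a continuous transient curve staying at positive distance from $\{x^R,y^R\}$ (Proposition~\ref{prop::hyperSLE}). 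Your identification, by contrast, is free only because it is built into a construction whose symmetry step is the part that fails; to make the proof correct you would either have to cite the external existence and uniqueness results as the paper does, or supply a genuinely new argument valid for intersecting curves.
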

\begin{proof}
When $\kappa\le 4$, this proposition is a special case of Proposition~\ref{prop::slepair_rev} when $\rho^L=\rho^R=0$. When $\kappa\in (4,8)$, the existence and the uniqueness were argued in \cite{MillerSheffieldIG3}, see also the discussion in \cite[Section~4]{MillerWernerConnectionProbabilitiesCLE}. In \cite{BeffaraPeltolaWuUniqueness}, the authors provided another perspective for the existence and the uniqueness with $\kappa\in (4,6]$. We define \textit{global $2$-$\SLE_{\kappa}$} to be this unique probability measure. 
It remains to derive the marginal law of $\eta^L$ in global $2$-$\SLE_{\kappa}$. Such question is included in some form in previous papers:  
\cite[Section~8]{BauerBernardKytolaMultipleSLE}, 
\cite[Section~4]{DubedatEulerIntegralsCommutingSLEs}, 
and \cite[Section~4]{MillerWernerConnectionProbabilitiesCLE}. Let us briefly summarize how they derived the marginal law.

Suppose $(\eta^L;\eta^R)\in X_0(\Omega; x^R, y^R, y^L, x^L)$ is the global $2$-$\SLE_{\kappa}$. Suppose $U_1, \ldots, U_4$ are neighborhoods of the points $x^L, y^L, y^R, x^R$ respectively such that $\Omega\setminus U_j$ are simply connected and $U_j\cap U_k=\emptyset$ for $j\neq k$. 
Let $\gamma_1$ be the part of $\eta^L$ that starts from $x^L$ and ends at exiting $U_1$; 
let $\gamma_2$ be the part of $\eta^R$ that starts from $x^R$ and ends at exiting $U_2$;
let $\gamma_3$ be the part of the time-reversal of $\eta^R$ that starts from $y^R$ and ends at exiting $U_3$;
let $\gamma_4$ be the part of the time-reversal of $\eta^L$ that starts from $y^L$ and ends at exiting $U_4$.  
By the conformal invariance of the global $2$-$\SLE_{\kappa}$ and the reversibility of $\SLE_{\kappa}$, we could argue that $(\gamma_1, \ldots, \gamma_4)$ is a local $2$-$\SLE_{\kappa}$. 
%%%%
%We refer to the framework summarized in \cite[Appendix~A.2]{KytolaPeltolaPurePartitionFunctions}. From the definition of the pair $(\eta^L; \eta^R)$ and the reversibility of $\SLE_{\kappa}$, we see that $(\gamma_1, \ldots, \gamma_4)$ satisfies the conformal invariance and the domain Markov property. It remains to check the absolute continuity of the marginals. We only prove it for $\gamma_1$; the other pieces can be proved similarly. 
%We the following two observations: (a) the curve $\gamma_1$ is an $\SLE_{\kappa}$ in $\Omega^R\subset\Omega$, restricted to $U_1$; (b) suppose $U\subset\Omega^R$ such that $\dist(U, \Omega\setminus\Omega^R)>0$, then $\SLE_{\kappa}$ in $\Omega^R$, restricted to $U$, is absolutely continuous with respect to $\SLE_{\kappa}$ in $\Omega$, restricted to $U$. Combining these two observations, we know that $\gamma_1$ is absolutely continuous with respect to $\SLE_{\kappa}$ in $\Omega$ restricted to $U$. Denote by $\tau$ the first time that $\eta^L$ exits $U_1$ and by $R_{\tau}$ the Radon-Nikodym derivative between the law of $\gamma_1$ and the law of $\SLE_{\kappa}$ in $\Omega$ from $x^L$ to $y^L$. By the conformal invariance and the domain Markov property of the pair $(\eta^L; \eta^R)$, we know that $R_{\tau}$ is a function of the extremal distance of the quad $(\Omega\setminus\gamma_1[0,\tau]; x^R, y^R, y^L, \gamma_1(\tau))$. Therefore, the driving function of $\gamma_1$ has the desired form. 
%%%%
Therefore, by the commutation relation in \cite{DubedatCommutationSLE} and a complete classification summarized in \cite[Theorem~A.4]{KytolaPeltolaPurePartitionFunctions}, we know that $\gamma_1$ has the law of $\hSLE_{\kappa}$. In other words, the law of $\eta^L$ restricted to $U_1$ has the law of $\hSLE_{\kappa}$. This is true for any localization neighborhoods $(U_1, \ldots, U_4)$. This implies that $\eta^L$ is an $\hSLE_{\kappa}$ up to any stopping time $\tau$ as long as $\eta^L[0,\tau]$ has positive distance from the points $\{x^R, y^R, y^L\}$.  
By Proposition~\ref{prop::hyperSLE}, $\hSLE_{\kappa}$ in $\Omega$ from $x^L$ to $y^L$ with marked points $(x^R, y^R)$ has  positive distance from the points $\{x^R, y^R\}$ and it is a continuous transient curve, thus $\eta^L$ has the law of $\hSLE_{\kappa}$ as desired. 
\end{proof}

\begin{figure}[ht!]
\begin{subfigure}[b]{0.48\textwidth}
\begin{center}
\includegraphics[width=0.7\textwidth]{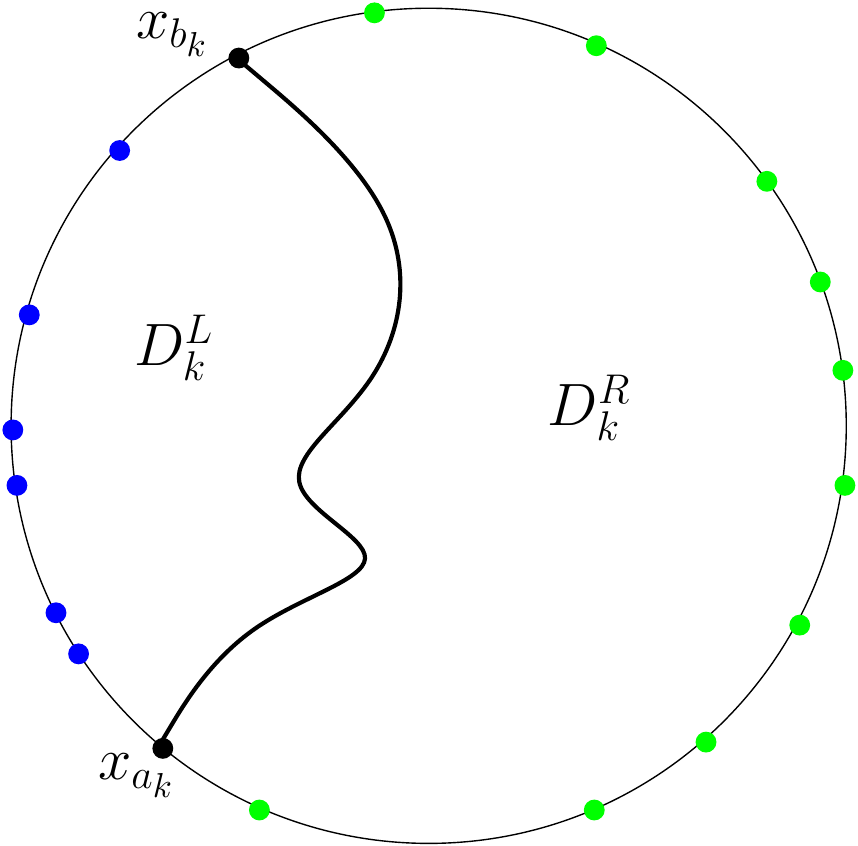}
\end{center}
\caption{The blue marked points correspond to $\alpha_k^L$ and the green marked points correspond to $\alpha_k^R$.}
\end{subfigure}
$\quad$
\begin{subfigure}[b]{0.48\textwidth}
\begin{center}\includegraphics[width=0.7\textwidth]{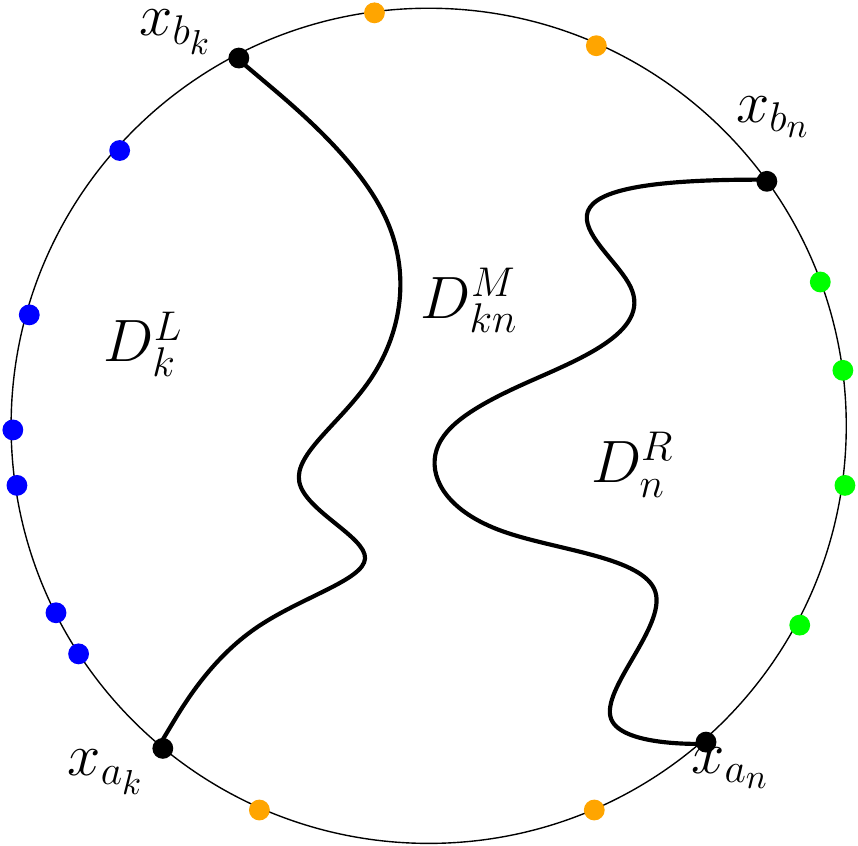}
\end{center}
\caption{The orange marked points correspond to $\alpha_{kn}^M$ and the green marked points correspond to $\alpha_{n}^R$.}
\end{subfigure}
\caption{\label{fig::purepartition_cascade_SYM} }
\end{figure}

\begin{proof}[Proof of Lemma~\ref{lem::purepartition_cascade_SYM}]
Pick $n\neq k$, we will show that $\PartF_{\alpha}^{(k)}=\PartF_{\alpha}^{(n)}$. Assume $a_k<a_n<b_n<b_k$. 
Recall that $\eta_k$ is an $\SLE_{\kappa}$ in $\Omega$ from $x_{a_k}$ to $x_{b_k}$, and $D_k^R$ is the connected component of $\Omega\setminus\eta_k$ with $(x_{a_k+1}x_{b_k-1})$ on the boundary, and $D_k^L$ is the connected component of $\Omega\setminus\eta_k$ with $(x_{b_k+1}x_{a_k-1})$ on the boundary. Recall that
\begin{align*}%\label{eqn::purepartition_cascade_def}
&\PartF_{\alpha}^{(k)}(\Omega; x_1, \ldots, x_{2N+2})\notag\\
&=H_{\Omega}( x_{a_k}, x_{b_k})^h
\E\left[\PartF_{\alpha_k^R}(D_k^R; x_{a_k+1}, \ldots, x_{b_k-1})\times\PartF_{\alpha_k^L}(D_k^L; x_{b_k+1},\ldots, x_{a_k-1})\one_{\LE_k}\right].
\end{align*}

Let $\eta_n$ be an $\SLE_{\kappa}$ in $D_k^R$ from $x_{a_n}$ to $x_{b_n}$. 
Define $\LE_{kn}$ to be the event that $\eta_n\cap (x_{a_n+1}x_{b_n-1})=\emptyset$ and $\eta_n$ does not hit the boundary arc $(x_{b_n+1}x_{a_n-1})$ inside $D_k^R$. Note that, if $a_n=a_k+1$ or $b_n=b_k-1$, then $\eta_k\cap\eta_n\neq\emptyset$ is allowed in $\LE_{kn}$ when $\kappa\in (4,6]$. 
On $\LE_{kn}$, let $D_{n}^R$ be the connected component of $D_k^R\setminus\eta_n$ with $(x_{a_n+1}x_{b_n-1})$ on the boundary, and $D_{kn}^M$ be the connected component of $D_k^R\setminus\eta_n$ with $(x_{a_k+1}x_{a_n-1})\cup (x_{b_n+1}x_{b_k-1})$ on the boundary, see Figure~\ref{fig::purepartition_cascade_SYM}(b). The links $\{a_k, b_k\}$ and $\{a_n, b_n\}$ divide the link pattern $\alpha$ into three sub-link patterns, connecting $\{b_k+1, \ldots, a_k-1\}$, $\{a_k+1,\ldots, a_n-1, b_n+1,\ldots, b_k-1\}$, and $\{a_n+1, \ldots, b_n-1\}$ respectively. After relabelling the remaining indices, we denote these link patterns by $\alpha_k^L, \alpha_{kn}^M, \alpha_{n}^R$. 
The marked points of the domains $D_k^L, D_{kn}^M, D_{n}^R$ are clear, so we omit them from the notation. 

%{\color{red} there is a subtlety here: when $\alpha_{kn}^M=\emptyset$, we allow $\eta_k\cap\eta_n\neq\emptyset$; when $\alpha_{kn}^M\neq\emptyset$, what do we do with the case $\eta_k\cap\eta_n\neq\emptyset$?}

By the cascade relation in the hypothesis, we have
\begin{align*}
\PartF_{\alpha_k^R}(D_k^R; x_{a_k+1}, \ldots, x_{b_k-1})=H_{D_k^R}(x_{a_n}, x_{b_n})^h\E\left[\PartF_{\alpha_{n}^R}(D_{n}^R;\ldots)\times\PartF_{\alpha_{kn}^M}(D_{kn}^M; \ldots)\one_{\LE_{kn}}\right].
\end{align*}
Plugging into the definition of $\PartF_{\alpha}^{(k)}$, we have 
\begin{align*}%\label{eqn::purepartition_cascade_def}
&\PartF_{\alpha}^{(k)}(\Omega; x_1, \ldots, x_{2N+2})\notag\\
&=H_{\Omega}( x_{a_k}, x_{b_k})^h
\E\left[H_{D_k^R}(x_{a_n}, x_{b_n})^h\PartF_{\alpha_{n}^R}(D_{n}^R;\ldots)\times\PartF_{\alpha_{kn}^M}(D_{kn}^M; \ldots)\times\PartF_{\alpha_k^L}(D_k^L; \ldots)\one_{\LE_k\cap\LE_{kn}}\right].
\end{align*}
Here $\E$ corresponds to the following probability measure: 
sample $\eta_k$ as $\SLE_{\kappa}$ in $\Omega$ from $x_{a_k}$ to $x_{b_k}$; given $\eta_k$, sample $\eta_n$ as $\SLE_{\kappa}$ in $D_k^R$ from $x_{a_n}$ to $x_{b_n}$. Note that $\LE_k\cap\LE_{kn}$ can be written as $\LE_k\cap\LE_n\cap\LF_{kn}$ where the event $\LF_{kn}$ is defined as follows: if $a_n>a_k+1$ and $b_n<b_k-1$, then $\LF_{kn}=\{\eta_k\cap\eta_n=\emptyset\}$; if not, then $\LF_{kn}$ is the event that $\eta_k$ stays to the left of $\eta_n$.

From Proposition~\ref{prop::hypersle_mart}, we know that the law of $\eta_k$ weighted by $H_{D_k^R}(x_{a_n}, x_{b_n})^h$ becomes $\hSLE_{\kappa}$ in $\Omega$ from $x_{a_k}$ to $x_{b_k}$ with marked points $(x_{a_n}, x_{b_n})$. Denote by $q=(\Omega; x_{a_n}, x_{b_n}, x_{b_k}, x_{a_k})$ and denote by $\QQ_q$ the following probability measure: 
sample $\eta_k$ as $\hSLE_{\kappa}$ in $\Omega$ from $x_{a_k}$ to $x_{b_k}$ with marked points $(x_{a_n}, x_{b_n})$; given $\eta_k$, sample $\eta_n$ as $\SLE_{\kappa}$ in $D_k^R$ from $x_{a_n}$ to $x_{b_n}$. Then we have
\begin{align}\label{eqn::pureparition_cascade_sym}
&\PartF_{\alpha}^{(k)}(\Omega; x_1, \ldots, x_{2N+2})\notag\\
&=\PartF_{\vcenter{\hbox{\includegraphics[scale=0.2]{figures/link-2.pdf}}}}(\Omega; x_{a_k}, x_{a_n}, x_{b_n}, x_{b_k})
\QQ_q\left[\PartF_{\alpha_{n}^R}(D_{n}^R;\ldots)\times\PartF_{\alpha_{kn}^M}(D_{kn}^M; \ldots)\times\PartF_{\alpha_k^L}(D_k^L; \ldots)\one_{\LE_k\cap\LE_n\cap\LF_{kn}}\right].
\end{align}

By Proposition~\ref{prop::hsle_symmetry}, we know that $\QQ_q$ is the same as the unique probability measure there. In particular, it is symmetric in $\eta_k$ and $\eta_n$. Therefore,
the function $\PartF_{\alpha}^{(n)}(\Omega; x_1, \ldots, x_{2N+2})$ can be expanded in the same way as the right hand side of~\eqref{eqn::pureparition_cascade_sym}. As a consequence,  
\[\PartF_{\alpha}^{(k)}(\Omega; x_1, \ldots, x_{2N+2})=\PartF_{\alpha}^{(n)}(\Omega; x_1, \ldots, x_{2N+2}),\]
as desired. 
\end{proof}

\appendix
\section{Appendix: Hypergeometric Functions}
\label{sec::appendix}
For $A, B, C\in \R$, the hypergeometric function is defined for $|z|<1$ by the power series:
\[F(z)=\hF(A, B, C; z)=\sum_{n=0}^{\infty}\frac{(A)_n(B)_n}{(C)_n}\frac{z^n}{n!},\]
where $(x)_n$ is the Pochhammer symbol $(x)_n:=x(x+1)\cdots(x+n-1)$ for $n\ge 1$ and $(x)_n=1$ for $n=0$.  The power series is well-defined when $C\not\in\{0, -1, -2, -3, \ldots\}$, and it is absolutely convergent on $z\in [0,1]$ when $C>A+B$. 
When $C>A+B$ and $C\not\in\{0, -1, -2, -3, \ldots\}$, we have 
\begin{equation}\label{eqn::hyperF_one}
F(1)=\frac{\Gamma(C)\Gamma(C-A-B)}{\Gamma(C-A)\Gamma(C-B)},
\end{equation}
where $\Gamma$ is Gamma Function. 
The hypergeometric function is a solution of Euler's hypergeometric differential equation
\begin{equation}\label{eqn::hyper_pde}
z(1-z)F''(z)+(C-(A+B+1)z)F'(z)-ABF(z)=0.
\end{equation}

If $C$ is not an integer, around the point $z=0$, two independent solutions are 
\[\hF(A, B, C; z),\quad z^{1-c}\hF(1+A-C, 1+B-C, 2-C; z).\]

If $C-A-B$ is not an integer, around the point $z=1$, two independent solutions are 
\[\hF(A, B, 1+A+B-C; 1-z),\quad (1-z)^{C-A-B}\hF(C-A, C-B, 1+C-A-B; 1-z).\]

\begin{lemma}\label{lem::hyperF_increasing}
When $C>0$ and $AB> 0$, the function $F(z)$ is increasing for all $z\in [0,1)$. 
\end{lemma}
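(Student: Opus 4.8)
The plan is to reduce the monotonicity statement to the positivity of a single shifted hypergeometric function. Differentiating the defining power series term by term, or equivalently invoking the contiguous-derivative relation \cite[Eq.~(15.2.1)]{AbramowitzHandbook}, one obtains
\[F'(z)=\frac{AB}{C}\,\hF(A+1,B+1,C+1;z).\]
Since $C>0$ and $AB>0$, the prefactor $AB/C$ is strictly positive, so it suffices to prove that $\hF(A+1,B+1,C+1;z)>0$ for all $z\in[0,1)$; this yields $F'(z)>0$ and hence that $F$ is (strictly) increasing on $[0,1)$.

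Next I would split according to the common sign of $A$ and $B$, which coincide because $AB>0$. If $A>0$ and $B>0$, then $A+1,B+1>0$ and $C+1>0$, so each Pochhammer symbol $(A+1)_n,(B+1)_n,(C+1)_n$ is positive; consequently every coefficient of the series $\hF(A+1,B+1,C+1;z)=\sum_{n\ge 0}\frac{(A+1)_n(B+1)_n}{(C+1)_n\,n!}z^n$ is positive, and the function is positive on $[0,1)$. This case is immediate.

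The delicate case is $A<0$ and $B<0$, where the shifted numerator parameters $A+1,B+1$ need not be positive and the series may have coefficients of mixed sign, so direct inspection fails. Here the key move is to apply Euler's transformation \cite[Eq.~(15.3.3)]{AbramowitzHandbook},
\[\hF(A+1,B+1,C+1;z)=(1-z)^{C-A-B-1}\,\hF(C-A,C-B,C+1;z),\]
which relocates the now-positive parameters $C-A>0$ and $C-B>0$ (using $A,B<0<C$) into the numerator, together with $C+1>0$ in the denominator. The transformed series therefore again has all positive coefficients and is positive on $[0,1)$, while the factor $(1-z)^{C-A-B-1}$ is positive since its base $1-z\in(0,1]$ stays positive regardless of the sign of the exponent. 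Multiplying these gives positivity of $\hF(A+1,B+1,C+1;z)$, completing the argument. I expect this last case to be the main obstacle: the naive coefficient-positivity argument breaks down and one must invoke the Euler transformation to restore it, the compensating power of $1-z$ being harmless on the half-open interval.
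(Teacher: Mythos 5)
Your proof is correct, but it takes a genuinely different route from the paper's. The paper argues by contradiction using the hypergeometric ODE~\eqref{eqn::hyper_pde}: noting $F(0)=1$ and $F'(0)=AB/C>0$, if $F$ were not increasing there would be a local maximum $z_0\in(0,1)$ with $F'(z_0)=0$, $F''(z_0)<0$ and $F(z_0)\ge 1$, while evaluating the ODE at $z_0$ forces $z_0(1-z_0)F''(z_0)=ABF(z_0)>0$, a contradiction. You instead prove the stronger statement $F'>0$ on all of $[0,1)$ directly, via the contiguous-derivative relation $F'(z)=\tfrac{AB}{C}\hF(A+1,B+1,C+1;z)$, coefficient positivity when $A,B>0$, and Euler's transformation $\hF(A+1,B+1,C+1;z)=(1-z)^{C-A-B-1}\hF(C-A,C-B,C+1;z)$ to restore positive numerator parameters when $A,B<0$ (where indeed $C-A>C>0$ and $C-B>C>0$). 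Your argument is complete: the case split is exhaustive since $AB>0$ forces $A$ and $B$ to share a sign, the transformation is a valid identity of analytic functions on the unit disk because $C+1>0$ is never a nonpositive integer, and the power of $1-z$ is positive on $[0,1)$ whatever the sign of its exponent. As for what each approach buys: the paper's ODE argument is shorter and requires no transformation identities, using only calculus at a putative turning point; yours is constructive, yields strict monotonicity with an explicit positive expression for $F'$, and is closer in spirit to the derivative-and-sign bookkeeping that the paper itself carries out (via \cite[Eq.\ (15.2.2)]{AbramowitzHandbook}) in the mixed-sign case $AB<0$ of Lemma~\ref{lem::hyperF_bound}.
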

\begin{proof}
We have $F(0)=1$ and $F'(0)=AB/C> 0$. If the conclusion is false, then there exists $z_0\in (0,1)$ such that $F$ is increasing for $z\in (0, z_0)$ and is decreasing for $z\in (z_0, z_0+\eps)$ for some $\eps>0$. This implies that $z_0$ is a local maximum and thus $F(z_0)\ge 1$, $F'(z_0)=0$ and $F''(z_0)< 0$ (note that $F''(z_0)\neq 0$ since the zero points of $F'$ are simple). However, by \eqref{eqn::hyper_pde}, we have $z_0(1-z_0)F''(z_0)=ABF(z_0)$, contradiction. 
\end{proof}

%{\small
%\bibliographystyle{alpha}
%\bibliography{bibliography}}

\begin{thebibliography}{CDCH{\etalchar{+}}14}

\bibitem[Ahl10]{AhlforsConformalInvariants}
Lars~Valerian Ahlfors.
\newblock {\em Conformal invariants: topics in geometric function theory},
  volume 371.
\newblock American Mathematical Soc., 2010.

\bibitem[AS92]{AbramowitzHandbook}
Milton Abramowitz and Irene~A. Stegun, editors.
\newblock {\em Handbook of mathematical functions with formulas, graphs, and
  mathematical tables}.
\newblock Dover Publications, Inc., New York, 1992.
\newblock Reprint of the 1972 edition.

\bibitem[BBK05]{BauerBernardKytolaMultipleSLE}
Michel Bauer, Denis Bernard, and Kalle Kyt{\"o}l{\"a}.
\newblock Multiple {S}chramm-{L}oewner evolutions and statistical mechanics
  martingales.
\newblock {\em J. Stat. Phys.}, 120(5-6):1125--1163, 2005.

\bibitem[BDCH16]{BenoistDuminilHonglerIsingFree}
St\'ephane Benoist, Hugo Duminil-Copin, and Cl\'ement Hongler.
\newblock Conformal invariance of crossing probabilities for the {I}sing model
  with free boundary conditions.
\newblock {\em Ann. Inst. Henri Poincar\'e Probab. Stat.}, 52(4):1784--1798,
  2016.

\bibitem[Bef08]{BeffaraDimension}
Vincent Beffara.
\newblock The dimension of the {SLE} curves.
\newblock {\em Ann. Probab.}, 36(4):1421--1452, 2008.

\bibitem[Bil99]{BillingsleyConvergenceProbabilityMeasures}
Patrick Billingsley.
\newblock {\em Convergence of Probability Measures}.
\newblock 1999.

\bibitem[BPW18]{BeffaraPeltolaWuUniqueness}
Vincent Beffara, Eveliina Peltola, and Hao Wu.
\newblock On the uniqueness of global multiple {SLE}s, 2018.

\bibitem[CDCH{\etalchar{+}}14]{CDCHKSConvergenceIsingSLE}
Dmitry Chelkak, Hugo Duminil-Copin, Cl{\'e}ment Hongler, Antti Kemppainen, and
  Stanislav Smirnov.
\newblock Convergence of {I}sing interfaces to {S}chramm's {SLE} curves.
\newblock {\em Comptes Rendus Mathematique}, 352(2):157--161, 2014.

\bibitem[CDCH16]{ChelkakDuminilHonglerCrossingprobaFKIsing}
Dmitry Chelkak, Hugo Duminil-Copin, and Cl{\'e}ment Hongler.
\newblock Crossing probabilities in topological rectangles for the critical
  planar {FK}-{I}sing model.
\newblock {\em Electron. J. Probab.}, 21:Paper No. 5, 28, 2016.

\bibitem[Che16]{ChelkakRobustComplexAnalysis}
Dmitry Chelkak.
\newblock Robust discrete complex analysis: A toolbox.
\newblock {\em Ann. Probab.}, 44(1):628--683, 2016.

\bibitem[CN07]{CamiaNewmanPercolation}
Federico Camia and Charles~M. Newman.
\newblock Critical percolation exploration path and {${\rm SLE}\sb 6$}: a proof
  of convergence.
\newblock {\em Probab. Theory Related Fields}, 139(3-4):473--519, 2007.

\bibitem[CS12]{ChelkakSmirnovIsing}
Dmitry Chelkak and Stanislav Smirnov.
\newblock Universality in the 2{D} {I}sing model and conformal invariance of
  fermionic observables.
\newblock {\em Invent. Math.}, 189(3):515--580, 2012.

\bibitem[DC13]{DCParafermionic}
Hugo Duminil-Copin.
\newblock Parafermionic observables and their applications to planar
  statistical physics models.
\newblock {\em Ensaios Matematicos}, 25:1--371, 2013.

\bibitem[Dub06]{DubedatEulerIntegralsCommutingSLEs}
Julien Dub\'edat.
\newblock Euler integrals for commuting {SLE}s.
\newblock {\em J. Stat. Phys.}, 123(6):1183--1218, 2006.

\bibitem[Dub07]{DubedatCommutationSLE}
Julien Dub{\'e}dat.
\newblock Commutation relations for {S}chramm-{L}oewner evolutions.
\newblock {\em Comm. Pure Appl. Math.}, 60(12):1792--1847, 2007.

\bibitem[Dub09]{DubedatSLEDuality}
Julien Dub{\'e}dat.
\newblock Duality of {S}chramm-{L}oewner evolutions.
\newblock {\em Ann. Sci. \'Ec. Norm. Sup\'er. (4)}, 42(5):697--724, 2009.

\bibitem[Dub15]{DubedatSLEVirasoroLocalization}
Julien Dub\'edat.
\newblock S{LE} and {V}irasoro representations: localization.
\newblock {\em Comm. Math. Phys.}, 336(2):695--760, 2015.

\bibitem[FK15]{FloresKlebanPDE1}
Steven~M. Flores and Peter Kleban.
\newblock A solution space for a system of null-state partial differential
  equations: {P}art 1.
\newblock {\em Comm. Math. Phys.}, 333(1):389--434, 2015.

\bibitem[Gra07]{GrahamSLE}
K.~Graham.
\newblock On multiple {S}chramm-{L}oewner evolutions.
\newblock {\em J. Stat. Mech. Theory Exp.}, (3):P03008, 21, 2007.

\bibitem[HK13]{HonglerKytolaIsingFree}
Cl{\'e}ment Hongler and Kalle Kyt{\"o}l{\"a}.
\newblock Ising interfaces and free boundary conditions.
\newblock {\em Journal of the American Mathematical Society}, 26(4):1107--1189,
  2013.

\bibitem[Izy15]{IzyurovObservableFree}
Konstantin Izyurov.
\newblock Smirnov's observable for free boundary conditions, interfaces and
  crossing probabilities.
\newblock {\em Comm. Math. Phys.}, 337(1):225--252, 2015.

\bibitem[KP16]{KytolaPeltolaPurePartitionFunctions}
Kalle Kyt\"ol\"a and Eveliina Peltola.
\newblock Pure partition functions of multiple {SLE}s.
\newblock {\em Comm. Math. Phys.}, 346(1):237--292, 2016.

\bibitem[KS17]{KemppainenSmirnovRandomCurves}
Antti Kemppainen and Stanislav Smirnov.
\newblock Random curves, scaling limits and loewner evolutions.
\newblock {\em Ann. Probab.}, 45(2):698--779, 03 2017.

\bibitem[Law05]{LawlerConformallyInvariantProcesses}
Gregory~F. Lawler.
\newblock {\em Conformally invariant processes in the plane}, volume 114 of
  {\em Mathematical Surveys and Monographs}.
\newblock American Mathematical Society, Providence, RI, 2005.

\bibitem[Law09]{LawlerNoteBrownianLoop}
Gregory Lawler.
\newblock A note on the {B}rownian loop measure.
\newblock 2009.

\bibitem[LSW03]{LawlerSchrammWernerConformalRestriction}
Gregory~F. Lawler, Oded Schramm, and Wendelin Werner.
\newblock Conformal restriction: the chordal case.
\newblock {\em J. Amer. Math. Soc.}, 16(4):917--955 (electronic), 2003.

\bibitem[LSW04]{LawlerSchrammWernerLERWUST}
Gregory~F. Lawler, Oded Schramm, and Wendelin Werner.
\newblock Conformal invariance of planar loop-erased random walks and uniform
  spanning trees.
\newblock {\em Ann. Probab.}, 32(1B):939--995, 2004.

\bibitem[LW04]{LawlerWernerBrownianLoopsoup}
Gregory~F. Lawler and Wendelin Werner.
\newblock The {B}rownian loop soup.
\newblock {\em Probab. Theory Related Fields}, 128(4):565--588, 2004.

\bibitem[MS16a]{MillerSheffieldIG1}
Jason Miller and Scott Sheffield.
\newblock Imaginary geometry {I}: interacting {SLE}s.
\newblock {\em Probab. Theory Related Fields}, 164(3-4):553--705, 2016.

\bibitem[MS16b]{MillerSheffieldIG2}
Jason Miller and Scott Sheffield.
\newblock Imaginary geometry {II}: reversibility of {$\operatorname{SLE}\sb
  \kappa(\rho\sb 1;\rho\sb 2)$} for {$\kappa\in(0,4)$}.
\newblock {\em Ann. Probab.}, 44(3):1647--1722, 2016.

\bibitem[MS16c]{MillerSheffieldIG3}
Jason Miller and Scott Sheffield.
\newblock Imaginary geometry {III}: reversibility of {$\rm SLE\sb \kappa$}
  for {$\kappa\in(4,8)$}.
\newblock {\em Ann. of Math. (2)}, 184(2):455--486, 2016.

\bibitem[MW17a]{MillerWernerConnectionProbabilitiesCLE}
Jason Miller and Wendelin Werner.
\newblock Connection probabilities for conformal loop ensembles, 2017.

\bibitem[MW17b]{MillerWuSLEIntersection}
Jason Miller and Hao Wu.
\newblock Intersections of {SLE} paths: the double and cut point dimension of
  {SLE}.
\newblock {\em Probab. Theory Related Fields}, 167(1-2):45--105, 2017.

\bibitem[PW17]{PeltolaWuGlobalMultipleSLEs}
Eveliina Peltola and Hao Wu.
\newblock Global and local multiple {SLE}s for $\kappa\leq 4$ and
  connection probabilities for level lines of {GFF}, 2017.

\bibitem[Qia16]{QianConformalRestrictionTrichordal}
Wei Qian.
\newblock Conformal restriction: The trichordal case, 2016.

\bibitem[RS05]{RohdeSchrammSLEBasicProperty}
Steffen Rohde and Oded Schramm.
\newblock Basic properties of {SLE}.
\newblock {\em Ann. of Math. (2)}, 161(2):883--924, 2005.

\bibitem[Sch00]{SchrammFirstSLE}
Oded Schramm.
\newblock Scaling limits of loop-erased random walks and uniform spanning
  trees.
\newblock {\em Israel J. Math.}, 118:221--288, 2000.

\bibitem[Smi01]{SmirnovPercolationConformalInvariance}
Stanislav Smirnov.
\newblock Critical percolation in the plane: conformal invariance, {C}ardy's
  formula, scaling limits.
\newblock {\em C. R. Acad. Sci. Paris S\'er. I Math.}, 333(3):239--244, 2001.

\bibitem[SS09]{SchrammSheffieldDiscreteGFF}
Oded Schramm and Scott Sheffield.
\newblock Contour lines of the two-dimensional discrete {G}aussian free field.
\newblock {\em Acta Math.}, 202(1):21--137, 2009.

\bibitem[SS13]{SchrammSheffieldContinuumGFF}
Oded Schramm and Scott Sheffield.
\newblock A contour line of the continuum {G}aussian free field.
\newblock {\em Probab. Theory Related Fields}, 157(1-2):47--80, 2013.

\bibitem[SW05]{SchrammWilsonSLECoordinatechanges}
Oded Schramm and David~B. Wilson.
\newblock S{LE} coordinate changes.
\newblock {\em New York J. Math.}, 11:659--669 (electronic), 2005.

\bibitem[Wu18]{WuAlternatingArmIsing}
Hao Wu.
\newblock Alternating arm exponents for the critical planar 
{I}sing model.
\newblock To
  appear in {\em Ann. Probab.}, 2018.

\bibitem[WW13]{WernerWuCLEtoSLE}
Wendelin Werner and Hao Wu.
\newblock From {CLE}($\kappa$) to {SLE}($\kappa,\rho$).
\newblock {\em Electron. J. Probab.}, 18:no. 36, 1--20, 2013.

\bibitem[WW17]{WangWuLevellinesGFFI}
Menglu Wang and Hao Wu.
\newblock Level lines of {G}aussian {F}ree {F}ield {I}: {Z}ero-boundary {GFF}.
\newblock {\em Stochastic Process. Appl.}, 127(4):1045--1124, 2017.

\bibitem[Zha08]{ZhanReversibility}
Dapeng Zhan.
\newblock Reversibility of chordal {SLE}.
\newblock {\em Ann. Probab.}, 36(4):1472--1494, 2008.

\end{thebibliography}

{\small
\newcommand{\etalchar}[1]{$^{#1}$}

}

\end{document}